\documentclass[11pt]{amsart}
\usepackage{a4wide,pdfsync,hyperref}
\usepackage{amsmath,amssymb,esint}
\usepackage{enumerate}
\usepackage{color}
\usepackage{enumitem}

\numberwithin{equation}{section}

\allowdisplaybreaks

\newtheorem{theorem}{Theorem}[section]
\newtheorem{lemma}[theorem]{Lemma}

\newtheorem{corollary}[theorem]{Corollary}
\newtheorem{proposition}[theorem]{Proposition}
\newtheorem{remark}[theorem]{Remark}

\newcommand{\eps}{\varepsilon}

\newcommand{\beqq}{\begin{eqnarray}}
\newcommand{\enqq}{\end{eqnarray}}

\newcommand{\enn}{\end{equation}}
\newcommand{\bef}{\begin{proof}}
\newcommand{\enf}{\end{proof}}

\let\e=\varepsilon

\let\f=\frac

\let\om=\omega

\let\na=\nabla

\let\pa=\partial

\def\dv{\mbox{div}}

\def\curl{\mathop{\rm curl}\nolimits}

\def\ef{\hphantom{MM}\hfill\llap{$\square$}\goodbreak}

\newcommand{\beq}{\begin{equation}}
\newcommand{\eeq}{\end{equation}}
\newcommand{\ben}{\begin{eqnarray}}
\newcommand{\een}{\end{eqnarray}}
\newcommand{\beno}{\begin{eqnarray*}}
\newcommand{\eeno}{\end{eqnarray*}}

\begin{document}
\title[Point Vortex Existence and Uniqueness]{
The Navier-Stokes equations in $\mathbb R^2_+$ with point vortex initial data: construction of the solution
}

\author[C. Wang]{Chao Wang}
\address{School of Mathematical Sciences\\ Peking University\\ Beijing 100871, China}
\email{wangchao@math.pku.edu.cn}

\author[J. Yue]{Jingchao Yue}
\address{School of Mathematical Sciences\\ Peking University\\ Beijing 100871, China}
\email{wasakarumi@163.com}

\author[Z. Zhang]{Zhifei Zhang}
\address{School of Mathematical Sciences\\ Peking University\\ Beijing 100871, China}
\email{zfzhang@math.pku.edu.cn}

\date{\today}

\maketitle

\bigskip

\begin{abstract}
This is the first of two papers concerning the asymptotic behavior of the incompressible Navier-Stokes equations in a half-space at high Reynolds numbers, with initial data given by a point vortex. In the present work, we establish the existence and uniqueness of solutions subject to the non-slip boundary condition. This result was established in \cite{Ken} under the condition that the total mass is sufficiently small. Here, we eliminate the smallness assumption by analyzing the linearized operator near the point vortex and constructing a tailored functional framework-one designed to capture the distinct behaviors of the solution in the vicinity of the point vortex and the boundary, respectively.
\end{abstract}

\section{Introduction}

\subsection{Presentation of the problem and main result}

In this paper and its companion Part II, we investigate the asymptotic behavior of the incompressible Navier-Stokes equations in a half-plane at high Reynolds numbers, with point vortex initial data. This study is particularly challenging due to the singular initial data and the boundary layer, both of which can introduce singularities. To the best of our knowledge, the existence and uniqueness of solutions to the Navier-Stokes equations remain an open problem-one that constitutes the main objective of the present work. Moreover, this work serves as a prelude to the second paper in this series.

To simplify the notations, we consider the incompressible Navier-Stokes system in $\mathbb R^2_+$, where the viscosity coefficient equals to $1$, namely,
\begin{align}\label{eq: NS}
	\left\{
	\begin{aligned}
		\pa_t U-\Delta U+U\cdot\nabla U
		+\nabla p&=0,\quad (x,y)\in\mathbb R^2_+\\
		\operatorname{div}U&=0,\quad (x,y)\in\mathbb R^2_+\\
		U|_{y=0}&=0,
	\end{aligned}
	\right.
\end{align}
where $U=(u,v)$ and $p$ denote the fluid velocity and pressure, respectively.  Here we consider the following initial data $U_0$ satisfying
\begin{align}\label{initial data}
	\omega_0=\curl U_0=\pa_xv_0-\pa_y u_0=\alpha\delta_{(x_0,y_0)},\quad\text{with}\quad \alpha\in\mathbb R,\ y_0>0.
\end{align}
where $\delta_{(x_0,y_0)}$ is the Dirac measure concentrated at the point $(x_0,y_0)$.  Without loss of generality, we assume that 
\[
(x_0, y_0)=(0,20).
\] 
The initial velocity $U_0$ is given by the Biot-Savart law:
\ben\label{def: U_0}
U_0= BS_{\mathbb R^2_+}[\om_0] 
  	=\frac{1}{2\pi}\int_{\mathbb R^2_+}\Big(\frac{(X-Y)^\perp}{|X-Y|^2}-\frac{(X-Y^*)^\perp}{|X-Y^*|^2}\Big)\om_0(Y)dY,
\een
  where $X=(x, y), Y=(\tilde x,\tilde y)\in\mathbb R^2$ and $X^*=(x,-y)$.

We introduce  the vorticity $\om=\pa_x v-\pa_y u$, which satisfies 
\begin{align}\label{eq: NS vorticity}
\pa_t\omega+U\cdot\nabla\omega=&\Delta\omega.
\end{align}
In light of the non-slip boundary condition, the boundary condition of the vorticity  is established in \cite{Maekawa}:
 \begin{align}\label{eq: BC of NS}
	(\pa_y+|D_x|)\omega \big|_{y=0}=\pa_y\Delta_D^{-1}(U\cdot\nabla\omega) \big|_{y=0}.
\end{align}

The well-posedness problem with rough initial data has been extensively studied. Ben-Artzi \cite{BA} studied the Cauchy problem for $\omega_0\in L^1(\mathbb R^2)$. For more singular initial vorticity $\om_0\in \mathcal{M}(\mathbb R^2)$, the space of all finite measures on $\mathbb R^2$, Cottet \cite{Cottet} and Giga, Miyakawa, and Osada \cite{GMO} independently established the existence result via pointwise estimates of the fundamental solution to the advection-diffusion equation. They further proved the uniqueness result under the condition that the purely atomic component of the Radon measure is sufficiently small. Subsequently, Gallay and Wayne \cite{Gallay 1} established uniqueness for initial vorticity composed of arbitrarily large Dirac masses. By synthesizing techniques from \cite{GMO} and \cite{Gallay 1}, Gallagher and Gallay \cite{Gallagher} proved uniqueness for initial vorticity in $\mathcal{M}(\mathbb R^2)$. The core insight in \cite{Gallagher, Gallay 1} resides in the self-similar transformation, which reduces the vorticity equation to
\begin{align*}
	\pa_\tau\mathcal W_R
  	+\alpha\mathcal V_R\cdot\nabla_\eta G
  	+\alpha\mathcal V^G\cdot\nabla_\eta\mathcal W_R
  	-\mathcal L \mathcal W_R=\textrm{small terms},
\end{align*}
where $\om=G+\mathcal W_R$, $\mathcal V_R=BS_{\mathbb R^2}[\mathcal W_R]$ and $\mathcal V^G=BS_{\mathbb R^2}[G]$ with $G(\eta)=\frac{1}{4\pi} e^{-|\eta|^2/4}$. Let $T_\alpha(\tau)$ denote the semigroup generated by the linearized operator above.  Based on the  properties of $T_\alpha(\tau)$, the authors in  \cite{Gallagher, Gallay 1} eliminated the smallness assumption.


In a half space, Abe \cite{Ken} employed a solution formula associated with Stokes flow to derive the following expression:
\beno
\om(t) = T(t) \om_0+\int_0^t \nabla^\bot \cdot S(t-s) \mathbb{P} (\om u^\bot) ds,
\eeno
where $T(f)=-\na^\bot S(t) BS_{\mathbb R^2_+}[f] $, $S(t)$ denotes the Stokes semigroup and $\mathbb{P}$ denotes the Helmholtz projection. The author further established existence and uniqueness for initial vorticities given by Radon measures with a small purely atomic component. That is to say,

\begin{theorem}\label{thm: Ken}$($\cite{Ken}$)$
	There exists $\delta>0$ such that for initial data $\omega_0\in \mathcal{M}(\overline{\mathbb R^2_+})$ with $\|\omega_{0,pp}\|_M \leq \delta$, where $\omega_{0,pp}$ denotes the purely atomic part, there exists a unique solution $(\omega, U)$ satisfying \eqref{eq: NS vorticity} and 
	\begin{align*}
		\omega\in BC_w([0,+\infty),\mathcal{M}(\overline{\mathbb R^2_+})),
		\qquad U\in BC_w([0,+\infty),L^{2,\infty}(\mathbb R^2_+) ).
	\end{align*}
\end{theorem}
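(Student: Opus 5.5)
This is Abe's theorem, which we would prove by a fixed point argument on the mild formulation
\[
\omega(t)=T(t)\omega_0+\int_0^t \nabla^\perp\cdot S(t-s)\mathbb P(\omega u^\perp)\,ds .
\]
The first step is to collect linear estimates for the vorticity semigroup $T(t)$ and for the operator $f\mapsto \nabla^\perp\cdot S(t)\mathbb P\,\mathrm{div}$-type terms. Concretely, we need $L^p$–$L^q$ smoothing for $T(t)$ on measures:
\[
\|T(t)\mu\|_{L^q}\le C t^{-(1-1/q)}\|\mu\|_M ,
\]
obtained from the Stokes semigroup bounds in $\mathbb R^2_+$ (Ukai's formula, or the resolvent/kernel estimates of Maekawa/Abe). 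We also need
\[
\|\nabla^\perp\cdot S(t)\mathbb P F\|_{L^q}\le C t^{-1/2-(1/p-1/q)}\|F\|_{L^p} .
\]
Finally, we need the Biot–Savart bound $\|u\|_{L^{r}}\lesssim \|\omega\|_{L^{p}}$ with $1/r=1/p-1/2$.

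The second step is to work in the Kato-type space
\[
X=\{\omega:\ \sup_{t>0} t^{1-1/q}\|\omega(t)\|_{L^q}<\infty\}
\]
for some $q\in(1,2)$, with the velocity controlled by $\sup_t t^{1/2-1/r}\|u\|_{L^r}$. The bilinear term is then bounded in $X$ by $C\|\omega\|_X^2$. The key observation is that the linear part satisfies
\[
\limsup_{t\to0} t^{1-1/q}\|T(t)\omega_0\|_{L^q}\le C\|\omega_{0,pp}\|_M .
\]
The continuous part of the measure contributes $o(1)$: approximate it by $L^1$ functions, for which the weighted norm tends to $0$ as $t\to0$. Hence, on a short interval $(0,T)$, the free evolution has norm $\le C\delta+o(1)$, and the contraction mapping principle in a small ball gives a local mild solution. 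We must be careful that the ball's smallness refers only to the $\limsup$ as $t\to0$, as in Giga–Miyakawa–Osada and Kato.

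The third step is to extend to global-in-time existence. For $t\ge T$ the solution lies in $L^1\cap L^q$, and we obtain a global bound from the $L^1$ nonincrease of vorticity, or more precisely from a bounded $L^1$ norm modulo boundary vorticity production. This is a genuine issue in the half-plane because the vorticity is not conserved in $L^1$. Instead, we would rely on energy-type bounds for $u$ in $L^{2,\infty}$ together with parabolic smoothing for $t\ge T$, and iterate the local result using uniform estimates in $L^{2,\infty}$. We then verify weak-$*$ continuity at $t=0$ in $\mathcal M$ and in $L^{2,\infty}$ for $U$, using the approximate-identity property of $T(t)$.

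For uniqueness, take any solution in the stated class, show it satisfies the mild formulation, and show its $X$-norm is small near $t=0$ with a limsup bounded by $C\|\omega_{0,pp}\|$; the difference of two solutions then contracts. I expect this to be the main obstacle: an a priori solution in $BC_w(\mathcal M)$ need not obviously lie in $X$ with small $\limsup$. One could handle this by a Gallagher–Gallay type argument, proving that the solution's atomic part at time $0$ is governed by $\omega_{0,pp}$ via continuity in the weak topology combined with the smoothing of the Stokes semigroup. The half-space boundary term, and the loss of $L^1$ conservation, is the delicate point. We would also need uniform kernel bounds for $\nabla S(t)\mathbb P$ in $\mathbb R^2_+$, for which we would cite the known Stokes semigroup estimates.
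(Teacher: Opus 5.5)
\textbf{The paper does not prove this statement.} It quotes it from \cite{Ken} and only indicates the mechanism: the mild formula; an $L^1$ estimate for the Stokes vorticity operator $T(t)$ and for the Duhamel term (Abe's Lemma~4.1 and Section~5, as used in the proof of Lemma~\ref{lem: eq of omega-omega c}); and a smallness condition that absorbs the nonlinear term in an estimate of the type $\sup_t\|\omega\|_{L^1}+\sup_t t^{1/2}\|U\|_{L^\infty}$, as displayed in the introduction. Your outline has the same mild-formulation/contraction skeleton, but it leaves out exactly this $L^1$ ingredient, so there are genuine gaps.

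\textbf{The missing $L^1$ ingredient and the small-time estimate.} Your space $X$ only controls $t^{1-1/q}\|\omega(t)\|_{L^q}$ with $q>1$. So the construction never yields $\sup_t\|\omega(t)\|_{L^1}<\infty$, which $\omega\in BC_w([0,\infty);\mathcal M(\overline{\mathbb R^2_+}))$ requires. For that you need endpoint estimates: $\|T(t)\mu\|_{L^1}\le C\|\mu\|_M$, and an $L^1$ bound with integrable time singularity for the Duhamel operator $F\mapsto\nabla^\perp\cdot S(t)\mathbb P F$. These are not part of the classical $L^p$ theory ($1<p<\infty$) of $S(t)$ and $\mathbb P$; indeed $\mathbb P$ is not bounded on $L^1$. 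They are precisely the $L^1$ estimate on which, according to the paper, Abe's proof rests, so ``citing the known Stokes semigroup estimates'' omits the heart of the argument. The same dependence hides in your linear bound for $1<q<2$. Smoothing of $S(t)$ applied to a general $U_0\in L^{2,\infty}$ never produces integrability below $L^2$, so $\|T(t)\mu\|_{L^q}\le Ct^{-(1-1/q)}\|\mu\|_M$ has to come from interpolating the uniform $L^1$ bound with the $L^\infty$ bound.

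Your proof of the key small-time estimate is also wrong as written. A measure with a singular continuous part, for example a vortex sheet, cannot be approximated by $L^1$ functions in total variation: if $\nu$ is singular then $\|\nu-f\|_M=\|\nu\|_M+\|f\|_{L^1}$ for every $f\in L^1$. The correct argument uses that $\sup_X|\mu_c|(B(X,r))\to0$ as $r\to0$ for the non-atomic part. That in turn needs pointwise bounds on the kernel of $T(t)$, including its boundary part, not just operator norms.

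\textbf{Initial time, global existence and uniqueness.} $T(t)$ is not an approximate identity on $\mathcal M(\overline{\mathbb R^2_+})$. Because $U_0$ violates the no-slip condition, $T(t)\omega_0\rightharpoonup\omega_0-u_0\delta_{\partial\mathbb R^2_+}$ (Abe's Lemma~4.1, quoted in the proof of Lemma~\ref{lem: eq of omega-omega c}). This initial layer is why Theorem~\ref{main theorem} has the limit \eqref{initial limit} rather than $\omega_0$. Your continuity claim at $t=0$ is therefore false as stated, and the boundary sheet must also be tracked in the small-time estimate.

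With only $\|\omega_{0,pp}\|_M$ small, your contraction closes on a short interval, and the extension step presupposes what must be proved. $BC_w([0,\infty);\cdot)$ requires bounds on $\|\omega(t)\|_M$ and $\|U(t)\|_{L^{2,\infty}}$ uniform in $t$, and you give no mechanism for them. There is no $L^1$ monotonicity here and no energy identity for $L^{2,\infty}$ velocities. Iterating a local result also needs a lower bound on the existence time in terms of a quantity shown to stay bounded. Note that the paper's abstract describes Abe's result as assuming small total mass; in that case the linear terms are small for all $t>0$ and the contraction closes on $(0,\infty)$ at once.

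Finally, uniqueness is left open. The whole-plane route you invoke (Giga--Miyakawa--Osada, Gallagher--Gallay) gets a priori bounds for an arbitrary solution from Gaussian estimates for the fundamental solution of $\partial_t+U\cdot\nabla-\Delta$ in $\mathbb R^2$. These have no analogue here, because the vorticity obeys the nonlocal boundary condition \eqref{eq: BC of NS}. Moreover, the uniqueness class has to be specified; the paper's own uniqueness result, Proposition~\ref{prop: uniqueness part}, is stated within a class defined by its energy functionals.
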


The proof of the above theorem relies on an $L^1-$estimate for the solution operator associated with the vorticity equations of Stokes flow. Following the arguments in \cite{Ken}, the Stokes operator cannot control the nonlinear terms without smallness assumptions. This motivates us to develop a novel functional framework to remove such smallness constraints.

\medskip

In this paper, we investigate point vortex solutions without any smallness condition. The main result is stated as follows.

\begin{theorem}\label{main theorem}
Let the initial vorticity $\omega_0$ be given by  \eqref{initial data}. Then the Navier-Stokes equations \eqref{eq: NS vorticity} admit a unique global solution $(U, \omega)\in L^{2,\infty}(\mathbb R^2_+)\times L^1(\mathbb R^2_+)$. Moreover, the vorticity $\omega$ satisfies 
	\begin{align}\label{initial limit}
		\omega(t)\rightharpoonup \alpha\delta_{(x_0,y_0)}-u_0\delta_{\partial \mathbb R_+^2},\quad \text{vaguely in}\  \mathcal{M}(\overline{\mathbb R^2_+})\ \text{as}\ t\rightarrow0^+,
	\end{align}
	where $u_0$ denotes the first component of the initial velocity $U_0=(u_0,v_0)$ defined in \eqref{def: U_0}.

\end{theorem}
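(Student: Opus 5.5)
We construct the solution as a perturbation of an explicit approximate solution that has two parts: the Lamb--Oseen vortex centred at $(x_0,y_0)=(0,20)$, and a boundary-layer corrector which enforces the non-slip condition. Write
\[
\omega=\omega_{app}+\omega_R,\qquad \omega_{app}=\frac{\alpha}{t}G\Big(\frac{X-X_0}{\sqrt t}\Big)\chi+\omega_{bl},
\]
where $\chi$ is a cutoff supported in a neighborhood of $X_0$ (say $|X-X_0|\le 10$). The corrector $\omega_{bl}$ solves the Stokes vorticity problem $\partial_t\omega_{bl}-\Delta\omega_{bl}=0$ with the boundary condition \eqref{eq: BC of NS} linearized around the vortex velocity. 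Its boundary data is the slip velocity that the Lamb--Oseen/image flow generates on $y=0$.

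For small $t$ this data is smooth, since the vortex sits at distance $20$ from the wall. Hence $\omega_{bl}$ behaves like a heat layer of thickness $\sqrt t$ carrying total mass $-\int u_0\,dx$. This is exactly what produces the term $-u_0\delta_{\partial\mathbb R^2_+}$ in \eqref{initial limit}. The cross-interactions are also small in a suitable sense. The vortex velocity is $O(1)$ near the boundary, and the layer's velocity near $X_0$ is $O(1)$ and smooth. So the error $\partial_t\omega_{app}+U_{app}\cdot\nabla\omega_{app}-\Delta\omega_{app}$ is bounded by $Ct^{-1/2}$ times a profile, or better, in the norms below. The cutoff commutator terms are exponentially small, $O(e^{-c/t})$.

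The remainder equation contains two kinds of large linear terms. Near $X_0$ these are $\alpha V^G\cdot\nabla\omega_R+U_R\cdot\nabla(\alpha G)$, with no smallness in $\alpha$. Near the boundary they are transport by the $O(1)$ smooth velocity. We use a tailored norm with two pieces:
\begin{itemize}
\item Near the vortex, in self-similar variables $\xi=(X-X_0)/\sqrt t$, $\tau=\log t$, a Gaussian-weighted space $L^2(e^{|\xi|^2/4})$ restricted to the cutoff region.
\item Near the wall, a weighted $L^1\cap L^\infty$ norm together with $t^{1/2}$-scaled derivative control.
\end{itemize}
In the vortex piece, the operator $\mathcal L-\alpha\Lambda$ with $\Lambda w=V^G\cdot\nabla w+BS[w]\cdot\nabla G$ is the Gallay--Wayne operator. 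The key fact is that $\Lambda$ is skew-symmetric in this weighted $L^2$. It therefore does not affect the energy estimate, and the semigroup $T_\alpha(\tau)$ decays like $e^{-\tau/2}$ on mean-zero functions, uniformly in $\alpha$. Near the boundary, the velocity is smooth and bounded, so its transport is handled by standard maximal-regularity and $L^1$ estimates for the Stokes vorticity semigroup (Abe's formula and the Maekawa boundary condition) over a short time $[0,T_0]$. Because the two regions are separated, the couplings between them carry factors $e^{-c/t}$ or positive powers of $t$. For these there is a priori no smallness available except through time.

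We then run a fixed point for $\omega_R$ on $[0,T_0]$ in a space requiring $\|\omega_R\|\lesssim t^{\gamma}$ for some $\gamma>0$. The gain $t^\gamma$ makes the nonlinear term $U_R\cdot\nabla\omega_R$ and the coupling terms contractive for $T_0$ small. The $t^\gamma$ weight also ensures $\omega_R(t)\to0$ in measure, which yields \eqref{initial limit}.

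For $t\ge T_0$ the vorticity lies in $L^1\cap L^\infty$ and is smooth. Global existence then follows from Abe's $L^1$ theory combined with standard $L^p$ bounds. For these the 2D energy/$L^1$ dissipation suffices, since smallness is only needed for rough data. The velocity bound $U\in L^{2,\infty}$ follows from the Biot--Savart law on $L^1$ vorticity.

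For uniqueness, take any solution in the stated class satisfying \eqref{initial limit}. We show it has the same decomposition with a remainder $o(1)$ in our norm, arguing by localized $L^1$ estimates as in Gallagher--Gallay. The difference of two solutions then satisfies the linearized equation. Gronwall in the weighted norm, with the skew-symmetry of $\Lambda$ absorbing the large term, forces the difference to vanish.

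The main obstacle is the vortex-region estimate with large $\alpha$ in the presence of the boundary. The Gaussian-weighted skew-symmetry holds only in the whole plane. The cutoff and the image vortex generate commutators, and the Biot--Savart law in $\mathbb R^2_+$ differs from $BS_{\mathbb R^2}$ by a smooth kernel. These discrepancies can destroy the structure. My plan is to treat them as perturbations bounded by $e^{-c/t}$, using the distance $20$ to the wall. The step to verify carefully is that nonlocal velocity from the boundary layer fed back into $\alpha BS[\cdot]\cdot\nabla G$ is only $O(t^{1/2})$ relative to the weighted norm.
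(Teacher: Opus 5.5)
\paragraph{Main gap: the uniqueness reduction.}
Your existence plan follows a different route from the paper's: the paper mollifies the data, proves $\delta$-uniform bounds on $E_{vp}+E_m+E_b$ by a continuity argument, and passes to the limit by compactness. The main gap, however, is in your uniqueness part.

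You propose to take an arbitrary solution in $L^{2,\infty}\times L^1$ satisfying \eqref{initial limit} and show, ``by localized $L^1$ estimates as in Gallagher--Gallay,'' that it has your decomposition with an $o(1)$ remainder in your weighted norms. This step is not justified, and the cited tool does not transfer.

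\begin{itemize}
\item The a priori bounds in Gallagher--Gallay rest on Gaussian (Osada/Carlen--Loss type) bounds for the fundamental solution of $\partial_t+U\cdot\nabla-\Delta$ in the whole plane. That is a positivity-preserving, mass-conserving linear problem.
\item In $\mathbb R^2_+$ with no-slip, the vorticity obeys the nonlocal condition \eqref{eq: BC of NS} and is created at the wall. No maximum principle or kernel bound of that type is available.
\item A merely vague limit $-u_0\delta_{\partial\mathbb R^2_+}$ gives no control of the layer in the strong norms that your difference estimate near the wall needs.
\end{itemize}

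The paper does not attempt this reduction. Proposition~\ref{prop: uniqueness part} assumes $\lim_{t\to0}E_i(t)=0$ for both solutions. So uniqueness is proved only among solutions that stay close to the Lamb--Oseen vortex near $X_0$ and to $\omega_c$ near the wall in the tailored norms; this is the sense stated in the remark after the theorem. Within that class the difference bound of Proposition~\ref{prop: uniform estimate for overline E(t)} closes. You should either restrict your uniqueness claim to such a perturbative class, or supply a genuinely new argument for the reduction.

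\paragraph{Second gap: the Gaussian weight in the vortex region.}
There is also a concrete hole in the vortex piece of your norm. The weight $e^{|\xi|^2/4}$ on $|X-X_0|\le R$ is of size $e^{R^2/(4t)}$ at the cutoff. The commutator terms $[\mathcal L,\chi]\,w$ involve the actual remainder in the annulus. They are small in $L^2(e^{|\xi|^2/4})$ only if $|\omega_R|\ll t^{-1}e^{-R^2/(8t)}$ there.

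Your fixed-point space controls $\omega_R$ outside the core only by $t^\gamma$. So ``commutators bounded by $e^{-c/t}$'' does not follow from your scheme. You would need either a separate exponentially weighted estimate in the intermediate region with rate at least $R^2/8$, or a suitably truncated Gaussian weight.

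The paper avoids this in three ways.
\begin{itemize}
\item It works in the polynomial space $L^2(m)$. There $\Lambda$ is not skew-symmetric, but the Gallagher--Gallay bounds for $T_\alpha$, together with the gradient bounds \eqref{semigroup est 2} and \eqref{semigroup est 4}, hold for every $\alpha$. The cutoff terms then cost only powers such as $(t+\delta)^{-m/2}$.
\item Those powers are absorbed by the middle-region functional $E_m$. Its weight $e^{\Psi}$, with $\Psi\sim\varepsilon_0/t$, makes the vorticity between the vortex and the wall exponentially small.
\item Near the wall it uses Maekawa's representation with analytic-in-$x$ weights, which absorb the derivative loss from the $|D_x|$ part of the boundary condition.
\end{itemize}
If you keep Abe's formula near the wall instead, you must make its localization explicit.
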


\begin{remark}
	The limit of the solution as $t\rightarrow0^+$ given in \eqref{initial limit} does not conflict the initial data \eqref{initial data}, since the initial velocity $U_0$ does not satisfy the non-slip boundary condition, which leads to the formation of an initial layer. This behavior is rigorously justified in Section \ref{sec: Convergence of the integral equations}. Consequently, the vorticity exhibits a discontinuity at $t=0$. To address this discontinuity, we introduce an initial layer corrector, as shown in \eqref{omega c} or \eqref{def: omega c without delta}.
\end{remark}

\begin{remark}

Regarding the asymptotic behavior of the solution constructed in this paper, we have the following more precise description as $t\rightarrow0$:
	\begin{align*}
		\lim_{t\rightarrow0}\|\chi_{vp}\big(\omega(t)-\frac{\alpha}{t}G(\frac{\cdot-X_0}{\sqrt{t}})\|_{H^1_m}
		+\|\chi_m\omega(t)\|_{L^2_m}
		+\|\chi_b(\omega-\omega_c)(t)\|_{Y(t)}=0,
	\end{align*}
	where $\chi_{vp},\chi_m,\chi_b$ are smooth cut-off functions defined in Section \ref{Notations}, $H^1_m$ is a weighted Sobolev space defined in \eqref{def: Evp(t)} via self-similar variables, $L^2_m$ is a weighted $L^2$ space defined in \eqref{def: Em(t)}, $Y(t)$ is a weighted $L^1_y$ space defined in \eqref{def: Yk(t)}, $\omega_c$ is the initial layer corrector defined in \eqref{def: omega c without delta}.   Moreover, uniqueness holds in the sense that the solution remains sufficiently close to the Lamb-Oseen vortex near the point vortex and to the initial layer corrector near the boundary within the functional space described above.
	
\end{remark}

\begin{remark}
	It is worth mentioning that Dalibard and Gallay recently established the same result in \cite{Dalibard} using a different approach. One of the key distinctions lies in the function spaces we employ. We have fully utilized the conditions of being curl-free and divergence-free near the boundary, which implies that the solution is analytic in the vicinity of the boundary.  Our ultimate goal is to prove the inviscid limit, and therefore our framework is designed to be applicable to the inviscid limit as well. In our next paper, we will address the inviscid limit problem, and both articles will employ the same framework for handling boundary layer problem.  
\end{remark}

\begin{remark}
	For general Radon measure data $\nu$ supported away from the boundary $y=0$, we can decompose $\nu$ into two parts:
	\begin{align*}
		\nu=\nu_0+\sum_{i=1}^N \alpha_i\delta_{X_i},
	\end{align*}
	here $\nu_0$ possesses small purely atomic part and $X_i$ denotes the position of point vortex. We decompose the solution as $\omega=\omega^{(0)}+\sum_{i=1}^N\omega^{(i)}, U=U^{(0)}+\sum_{i=1}^N U^{(i)}$ where
	\begin{align*}
		\left\{
		\begin{aligned}
			&\pa_t\omega^{(i)}
			+U\cdot\nabla\omega^{(i)}
			=\Delta\omega^{(i)},\quad \text{for}\ 0\leq i\leq N,\\
			&\omega^{(0)}|_{t=0}=\nu_0,\quad \omega^{(i)}|_{t=0}=\alpha_i \delta_{X_i},\quad \text{for}\ 1\leq i\leq N.
		\end{aligned}
		\right.
	\end{align*}
By employing the methods in \cite{Gallagher} to treat $(\omega^{(0)}, U^{(0)})$ and the methods developed in this paper to handle $(\omega^{(i)}, U^{(i)})$,  we are able to construct solutions to \eqref{eq: NS vorticity} with general Radon measure data supported away from the boundary.	
\end{remark}

\subsection{Main strategies}
In this subsection, we outline the proof strategy for Theorem \ref{main theorem}. First, we regularize the initial data as $\omega_0^\delta$, which generates a global solution. We aim to establish uniform estimates for this global solution with respect to $\delta$.

Since the point vortex lies in $\mathcal M(\mathbb R^2_+)$-a critical function space-we can only expect the following uniform estimates over $\mathbb R^2_+$ via the Biot-Savart law:
\begin{align}\label{uniform in whole R2+}
	\sup_{0<t<T}\|\omega(t)\|_{L^1}+t^{1/2}\sup_{0<t<T}\|U(t)\|_{L^\infty}\leq C.
\end{align}
Thus, if we treat the nonlinear term $U\cdot\nabla\omega$ in \eqref{eq: NS vorticity} as a perturbation to the heat equation, we obtain
\begin{align*}
	\|\omega(t)\|_{L^1}
	&\leq \left\|e^{t\Delta}\omega_0\right\|_{L^1}
	+\int_0^t \left\| e^{(t-s)\Delta}(U\cdot\nabla\omega)(s)\right\|_{L^1}ds\\
	&\leq \left\|e^{t\Delta}\omega_0\right\|_{L^1}
	+C\int_0^t(t-s)^{-1/2}\|U(s)\|_{L^\infty}\|\omega(s)\|_{L^1}ds\\
	&\leq \left\|e^{t\Delta}\omega_0\right\|_{L^1}
	+C\sup_{0<t<T}t^{1/2}\|U(t)\|_{L^\infty}\sup_{0<t<T}\|\omega(t)\|_{L^1}.
\end{align*}
Consistent with the approach in \cite{Ken}, we impose a smallness assumption on the initial data to close the estimate. 

To obtain a precise characterization of the vorticity, we seek a solution $\omega$ that adheres to the following structure:
\begin{align}\label{decomp of om intro}
	\omega=\omega_{vp}\chi_{vp}+\omega_b\chi_b+\omega_R,
\end{align}
where $\chi_{vp}$ and $\chi_b$ denote cut-off functions supported in the vicinity of the point vortex and the boundary, respectively. In \eqref{decomp of om intro}, $\omega_{vp}$ and $\omega_b$, which will be constructed below, denote the approximate solutions near the point vortex and the boundary, respectively. Our objective is to derive uniform estimates for $\omega_R$.

To remove the smallness condition, we adopt some ideas from \cite{Gallagher} and expect the dominate term near the point vortex to be the Oseen vortex. Namely, we set $\omega_{vp}=\frac{\alpha}{\delta}G(\frac{\cdot-X_0}{\delta^{1/2}})$ with $G(\eta)=\frac{1}{4\pi}e^{-\frac{|\eta|^2}{4}}$. We introduce self-similar coordinates and  reformulate the Navier-Stokes system as
\begin{align}\label{eq of WR in sec 1}
	\pa_\tau\mathcal W_R
  	+\alpha\mathcal V_R\cdot\nabla_\eta G
  	+\alpha\mathcal V^G\cdot\nabla_\eta\mathcal W_R
  	-\mathcal L \mathcal W_R= -U^b\cdot\na \mathcal W_R+\cdots ,
\end{align}
where $U^b:=BS_{\mathbb R^2_+}[\chi_b\omega]$ arises from the boundary interactions, see Section 2.2 for the details. Here, we treat the right-hand side of \eqref{eq of WR in sec 1} as a perturbation of the linearized operator on the left,  
which generates a semigroup denoted by $T_\alpha(\tau)$.

 Due to \eqref{eq of WR in sec 1}, $\mathcal W_R$ admits the Duhamel-type representation
\beno
\mathcal W_R =\int_{\tau_0}^\tau T_\alpha(\tau-\tau') (U^b\cdot\na \mathcal W_R) +\cdots,
\eeno
which consequently yields a regularity estimate by means of semigroup estimates:
\beno
\| \mathcal W_R\|_{H^1} \leq C  t^{\f12} \|U^b\|_{L^\infty}  \cdot \| \mathcal W_R\|_{H^1}+\cdots.
\eeno

{\bf The key remaining task is to establish the estimates for $\|U^b\|_{L^\infty}$.} The direct use of the estimates in \eqref{uniform in whole R2+}, such as $\|U^b(t)\|_{L^\infty} \leq t^{-\frac12}$, would still entail a smallness assumption to close the energy argument. Hence, a sharper estimate for $U^b$ is necessary. Based on the Biot-Savart law, we have (see Lemma \ref{velocity estimates 2})
\begin{align}\label{est of Ub intro}
	\|U^b(s)\|_{L^\infty}=\|BS_{\mathbb R^2_+}[\chi_b\omega] \|_{L^\infty}
	\leq C\|\chi_b\omega(s)\|_{Y(s)},
\end{align}
where $\|\cdot\|_{Y(s)}$ is a weighted $L^1_y$ norm to describe the vorticity near the boundary.

All that remains is to establish the uniform boundedness of $\|\chi_b\omega(s)\|_{Y(s)}$, which implies $\|U^b(s)\|_{L^\infty} \leq C$, a sharper estimate for $U^b$.
When estimating the vorticity near the boundary, we encounter an additional difficulty: {\bf the emergence of an initial layer}. Since the initial velocity $U_0$ is derived from the point vortex $\omega_0$ via the Biot-Savart law, we have $u_0(x, 0)\neq 0$, which violates the non-slip boundary condition $u(t, x, 0)=0$. The initial layer results in the discontinuity of $\omega$ at  $t=0$. Specifically,
\begin{align*}
	\lim_{t\rightarrow0^+}\omega\neq \omega_0.
\end{align*}
 To resolve this inconsistency, we introduce a corrector $u_c$ with initial data $u_c(0, x, y)=u_0(x, 0)$ subject to the Dirichlet boundary condition. The corresponding vorticity corrector is defined as $\omega_c=\curl (u_c,0)$, which has the same initial layer as $\omega$. Thus,
 \begin{align*}
 	\lim_{t\rightarrow0^+}(\omega-\omega_c)=(\omega-\omega_c)|_{t=0}.
 \end{align*}
  We take $\omega_b=\omega_c$ in \eqref{decomp of om intro}. After introducing $\omega_c$, we rewrite the vorticity equation near the boundary as
	\begin{align}\label{intro:eq of o-oc diff}
	\left\{
	\begin{aligned}
&\pa_t\big(\chi_b(\omega- \omega_c)\big)
		-\Delta\big(\chi_b(\omega- \omega_c)\big)=U\cdot\nabla\big(\chi_b(\omega- \omega_c)\big)+\cdots ,\\
		&(\pa_y+|D_x|)(\omega-\omega_c)|_{y=0}
		=B.
	\end{aligned}
	\right.
\end{align}
The estimate of $U$ in \eqref{uniform in whole R2+} can not close the estimate as before. In order to obtain a sharper estimate of $U$ near the boundary, we introduce the decomposition:
\begin{align*}
	U=BS_{\mathbb R^2_+}[\omega]=BS_{\mathbb R^2_+}[\chi_b\omega]+BS_{\mathbb R^2_+}[\chi_{vp}\omega]+small\ terms.
\end{align*}
The first term is uniformly bounded by \eqref{est of Ub intro}. The fact $dist(\chi_b,\chi_{vp})>0$ yields the uniform boundedness of the second term. Thus, we obtain $\|U(t)\|_{L^\infty}\leq C$ near the boundary, which is sharper than \eqref{uniform in whole R2+}.

By taking Fourier transformation in $x$-direction, we transfer \eqref{intro:eq of o-oc diff} into the following integral equation:
\begin{align*}
	&\chi_b(\omega-\omega_c)_\xi(t,y)\\
	&=\int_0^t\int_0^{+\infty}\Big(H_\xi(t-s,y,z)+R_\xi(t-s,y,z)\Big)\Big(U\cdot\nabla\big(\chi_b(\omega-\omega_c)\big)\Big)_\xi(t,z)dzds+\cdots,
\end{align*}
where $\xi$ denotes the frequency variable of $x$ direction, the convolution kernel $H_\xi(t-s,y,z)$ arises from the $\pa_y$ part in the boundary condition of \eqref{intro:eq of o-oc diff}, and $R_\xi(t-s,y,z)$ arises from the $|D_x|$ part which loses one derivative. Since the initial data vanishes near the boundary, we introduce the weight function $e^{\eps_0(1+\mu-y)_+|\xi|}$ into $Y(s)$ to overcome the loss of one derivative. Subsequently, by means of the following estimate (see Lemma \ref{analytic recovery}), we can convert the derivative loss into a small divisor problem:
\begin{align*}
	e^{\eps_0(1+\mu-y)_+|\xi|}|(\pa_x f)_\xi(y)|
		\leq \frac{C}{\widetilde\mu-\mu}
		e^{\eps_0(1+\widetilde\mu-y)_+|\xi|}|f_\xi(y)|.
\end{align*}
These techniques are commonly employed in inviscid limit problems, as adopted in \cite{Maekawa, HWYZ}.\smallskip

Finally, it is worth noting that if we employ the velocity equations \eqref{eq: NS} to estimate $U$ near the boundary, pressure estimates are indispensable. However, the pressure is a nonlocal term whose estimation requires the behavior of $U$ over the entire  $\mathbb R^2_+$. The lack of regularity of the point vortex gives rise to difficulties in estimating $p$. 
For this reason, we perform our estimates using the vorticity equation rather than the velocity equation.


\subsection{Notations}\label{Notations}
This subsection outlines frequently used notations.

\begin{enumerate}[label=(\arabic*)]
	\item In this paper, velocity is recovered from the vorticity. We frequently utilize the following notations to denote the Biot-Savart law in $\mathbb R^2$ or $\mathbb R^2_+$ for convenience:
  \begin{align}\label{BS law formulation 1}
  	BS_{\mathbb R^2}[f]:&=\nabla^\perp\Delta^{-1} f
  	=\frac{1}{2\pi}\int_{\mathbb R^2}\frac{(X-Y)^\perp}{|X-Y|^2}f(Y)dY,
  \end{align}
  \begin{align}\label{BS law formulation 2}
  	BS_{\mathbb R^2_+}[f]:&=\nabla^\perp\Delta_D^{-1} f
  	=\frac{1}{2\pi}\int_{\mathbb R^2_+}\big(\frac{(X-Y)^\perp}{|X-Y|^2}-\frac{(X-Y^*)^\perp}{|X-Y^*|^2}\big)f(Y)dY,
  \end{align}
  where $X=(x_1,x_2), Y=(y_1,y_2)\in\mathbb R^2$ and we denote $Y^*:=(y_1,-y_2)$.
  
  \item Let $F=(f_1,f_2)$. Define $\widetilde F(x,y)=(-f_1(x,-y),f_2(x,-y)).$  
  
	\item We use $f_\xi(y)$ to denote the Fourier transform about $x$ variable of function $f(x,y)$.
	
	\item $C_0$ denotes a constant independent of $\eps_0,\eps_1,\gamma,t,\delta$ and $C$ independent of $\gamma, t, \delta$.
	
	\item $\nabla$ denotes derivative for variable $(x,y)$ and $\nabla_\eta$ for self-similar variable $\eta$ defined later.
	
	\item For norm $\|\cdot\|_X$, $\|\cdot\|_Y$, let $\|(1,x)f\|_X:=\|f\|_X+\|xf\|_X$ and $\|f\|_{X\cap Y}:=\|f\|_X+\|f\|_Y$.
	
	\item Smooth cut-off function $\chi_{vp}$ is used to describe the vorticity behavior near the point vortex and is defined as
  \begin{align}\label{def chi vp}
  	\chi_{vp}(x,y)=
  	\left\{
  	\begin{aligned}
  		&1,\quad |(x,y)-(0,20)|\leq5,\\
  		&0,\quad |(x,y)-(0,20)|\geq6.
  	\end{aligned}
  	\right.
  \end{align}
  
  For the middle region between the point vortex and boundary, it is necessary to define $\chi_m$ as
  \begin{align}\label{def: chi m}
  	\chi_m(x,y)=
  	\left\{
  	\begin{aligned}
  		&1,\quad|(x,y)-(0,20)|\geq4 \ \text{and}\ y\geq\frac{3}{8},\\
  		&0,\quad|(x,y)-(0,20)|\leq3 \ \text{or}\ y\leq\frac{1}{4}.
  	\end{aligned}
  	\right.
  \end{align}
Near the boundary, it is beneficial to define $\chi_b$ as
  \begin{align}\label{def of chi b}
	\chi_b(y)=
	\left\{
	\begin{aligned}
		&1,\quad y\leq2,\\
		&0,\quad y\geq3.
	\end{aligned}
	\right.
\end{align}
\end{enumerate}

\section{New formulations  and energy functionals} 

\subsection{Approximate solutions}
To obtain a solution to \eqref{eq: NS}, we begin by constructing an approximate solution via mollification of the initial data as follows:
   \begin{align}\label{initial data mollify}
  	\omega_0^\delta(x,y)
  	=\frac{\alpha}{\delta} G\Big(\frac{(x,y)-(0,20)}{\delta^{1/2}}\Big)\chi_{\{|(x,y)-(0,20)|\leq6\}},\quad \forall \delta>0,
  \end{align}
  where $G(\eta)=\frac{1}{4\pi} e^{-|\eta|^2/4}$. We denote $U^\delta_0=(u^\delta_0, v^\delta_0)=BS_{\mathbb R^2_+}[\omega^\delta_0]$. Note that
\beno
\omega_0^\delta \rightharpoonup \omega_0,\quad \textrm{in}\quad \mathcal{M}(\overline{\mathbb R^2_+}).
 \eeno
Furthermore, taking $\omega_0^\delta$ as the initial data for system \eqref{eq: NS} yields smooth solutions $U^\delta$ and $\omega^\delta$, whose existence is guaranteed by Theorem \ref{thm: Ken}. In what follows, we aim to derive uniform estimates for $U^\delta$ that ensure $U^\delta$ converges to a solution $U$ of \eqref{eq: NS} with initial data $U_0$.

The primary difficulty of this problem resides in the interaction between the initial data singularity and the boundary. Accordingly, we decompose the problem into two components: one near the point vortex and the other near the boundary, and apply distinct methodologies to treat each component.

\subsection{Reformulation near the point vortex}

 Multiplying $\chi_{vp}$ on both sides of \eqref{eq: NS vorticity} gives
  \begin{align}\label{eq: NS near vortex point}
  	&\pa_t(\chi_{vp}\omega^\delta)
  	+BS_{\mathbb R^2_+}[\chi_{vp}\omega^\delta]\cdot\nabla(\chi_{vp}\omega^\delta)-\Delta(\chi_{vp}\omega^\delta)\\
  	\nonumber
  	&=-BS_{\mathbb R^2_+}[(1-\chi_{vp})\omega^\delta]\cdot\nabla(\chi_{vp}\omega^\delta)
  	+U\cdot\nabla\chi_{vp}\omega^\delta
  	-2\nabla\chi_{vp}\cdot\nabla\omega^\delta
  	-\Delta\chi_{vp}\omega^\delta.
  \end{align}
 Here, we aim to show that the equation of $\chi_{vp}\omega^\delta$ behaves analogously to that of the point vortex in the whole space. Thus, we adopt some ideas from the full space setting. 
 First of all, we introduce the self-similar coordinate 
 \beno
 \eta:=\frac{(x,y)-(0,20)}{(t+\delta)^{1/2}},\quad \tau:=\log (t+\delta),
 \eeno
 and
  \begin{align}\label{def: self-similar}
  \frac{\alpha}{t+\delta}\mathcal W^\delta \big(\eta,\tau\big):=	\chi_{vp}(x,y)\omega^\delta(t,x,y). 
  \end{align}
 According to the definition above, we obtain
  \begin{align}\label{supp of w}
  	\operatorname{supp}\mathcal W^\delta\subseteq
  \Big\{|\eta|\leq\frac{6}{(t+\delta)^{1/2}}\Big\},\quad
  	\mathcal W^\delta|_{\tau_0}=\chi_{vp} G,
  \end{align}
  where  $ \tau_0=\log\delta.$ 
  
%

  Based on the new notations introduced above, we rewrite the equation of \eqref{eq: NS near vortex point}. To begin with, by virtue of Lemma \ref{derivation of velocity formula}, we have
  \begin{align}\label{velocity self-similar}
  	BS_{\mathbb R^2_+}[\chi_{vp}\omega^\delta](t,x,y)
  	=\frac{\alpha}{(t+\delta)^{1/2}}\Big\{\mathcal V^\delta\big(\eta,\tau)-
  	\widetilde{\mathcal V^\delta}(\eta+\frac{(0,40)}{(t+\delta)^{1/2}},\tau)\Big\},
  \end{align}
 where 
  $\mathcal V^\delta:=BS_{\mathbb R^2}[\mathcal W^\delta]$ and $\widetilde F(x,y)$ is defined by
   \begin{align}
  	\widetilde F(x,y)=(-f_1(x,-y),f_2(x,-y))
  \end{align}   
  with $F=(f_1,f_2):\mathbb R^2\rightarrow\mathbb R^2$.  

Thus, we deduce the equation of $\mathcal W^\delta$:
  \begin{align}\label{eq: mathcal W}
  	&\quad\pa_\tau \mathcal W^\delta
  	+\alpha\big\{ \mathcal V^\delta(\eta,\tau)-\widetilde{\mathcal V^\delta}(\eta+e^{-\tau/2}(0,40),\tau)\big\}\cdot\nabla_\eta\mathcal W^\delta(\eta,\tau)
  	-\mathcal L \mathcal W^\delta\\
  	\nonumber
  	&=-e^{\tau/2}BS_{\mathbb R^2_+}[(1-\chi_{vp})\omega^\delta]\cdot\nabla_\eta\mathcal W^\delta(\eta,\tau)\\
  	\nonumber
  	&\quad+\frac{(t+\delta)^2}{\alpha}U^\delta\cdot\nabla\chi_{vp}\omega^\delta
  	-\frac{2(t+\delta)^2}{\alpha}\nabla\chi_{vp}\cdot\nabla\omega^\delta
  	-\frac{(t+\delta)^2}{\alpha}\Delta\chi_{vp}\omega^\delta,
  \end{align}
  where $\mathcal L:=\Delta_\eta+\frac{1}{2}\eta\cdot\nabla_\eta+1$.
  
 Moreover, we make the following decomposition
  \begin{align}\label{decomp 1}
  	\mathcal W^\delta:=\chi_{vp} G+\mathcal W_R,\quad
  	\mathcal V^\delta:=BS_{\mathbb R^2}[\chi_{vp}G]+\mathcal V_R
  	=\mathcal V^G
  	-BS_{\mathbb R^2}[(1-\chi_{vp})G]
  	+\mathcal V_R,
  \end{align}
  where  $\mathcal V^G:=BS_{\mathbb R^2}[G]=\frac{1}{2\pi}\frac{\eta^\perp}{|\eta|^2}(1-e^{-|\eta|^2/4})$. A direct computation gives $\widetilde{\mathcal V^G}=\mathcal V^G$, thus
  \begin{align}\label{decomp 2}
  	\widetilde{\mathcal V^\delta}=\mathcal V^G-\widetilde{BS_{\mathbb R^2}[(1-\chi_{vp})G]}
  	+\widetilde{\mathcal V_R}.
  \end{align}
  
  \medskip
  
  By substituting \eqref{decomp 1}, \eqref{decomp 2} into \eqref{eq: mathcal W}, we obtain the following lemma.

\begin{lemma}\label{lem: eq of WR}
	It hods that
	\begin{align}\label{eq: mathcal W R}
		\pa_\tau\mathcal W_R
  	+\alpha\mathcal V_R\cdot\nabla_\eta G
  	+\alpha\mathcal V^G\cdot\nabla_\eta\mathcal W_R
  	-\mathcal L \mathcal W_R
  	=\sum_{i=1}^7 F_i,\qquad \mathcal W_R|_{\tau=\tau_0}=0,
	\end{align}
where $F_1\sim F_7$ are defined by
	\begin{align*}
	F_1=&\alpha\Big(BS_{\mathbb R^2}[(1-\chi_{vp})G](\eta)
  	-\widetilde{BS_{\mathbb R^2}[(1-\chi_{vp})G]}(\eta+\frac{(0,40)}{e^{\tau/2}})\\
  	&+\mathcal V^G(\eta+\frac{(0,40)}{e^{\tau/2}})\Big)\cdot\nabla_\eta(\chi_{vp}G+\mathcal W_R),\\
	F_2=&\frac{1}{2}\eta\cdot\nabla_\eta( \chi_{vp})G+2\nabla_\eta(\chi_{vp})\cdot\nabla_\eta G
  	+\Delta_\eta(\chi_{vp}) G,\\
	F_3=&\alpha(1-\chi_{vp})\mathcal V_R\cdot\nabla_\eta G
	-\alpha\mathcal V_R\cdot\nabla_\eta(\chi_{vp}) G,\\
	F_4=&\alpha\widetilde{\mathcal V_R}(\eta+\frac{(0,40)}{e^{\tau/2}},\tau)\cdot\nabla_\eta(\chi_{vp}G),\\
	F_5=&-\alpha\mathcal V_R\cdot\nabla_\eta\mathcal W_R
  	+\alpha\widetilde{\mathcal V_R}(\eta+\frac{(0,40)}{e^{\tau/2}},\tau)\cdot\nabla_\eta\mathcal W_R,\\
	F_6=&-\alpha e^{\tau/2}BS_{\mathbb R^2_+}[(1-\chi_{vp})\omega]\cdot\nabla_\eta(\chi_{vp}G+\mathcal W_R),\\
	F_7=&\frac{(t+\delta)^2}{\alpha}U\cdot(\nabla\chi_{vp})\omega^\delta
  	-\frac{2(t+\delta)^2}{\alpha} (\nabla\chi_{vp})\cdot\nabla\omega^\delta
  	-\frac{(t+\delta)^2}{\alpha}(\Delta\chi_{vp})\omega^\delta.
\end{align*}

\end{lemma}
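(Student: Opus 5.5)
The plan is to obtain \eqref{eq: mathcal W R} by direct substitution. I insert $\mathcal W^\delta=\chi_{vp}G+\mathcal W_R$ together with \eqref{decomp 1} and \eqref{decomp 2} into \eqref{eq: mathcal W}. I keep on the left only the three linear terms $\alpha\mathcal V_R\cdot\nabla_\eta G$, $\alpha\mathcal V^G\cdot\nabla_\eta\mathcal W_R$ and $\partial_\tau\mathcal W_R-\mathcal L\mathcal W_R$, and move everything else to the right. The initial condition $\mathcal W_R|_{\tau_0}=0$ is immediate from \eqref{supp of w}, since $\mathcal W^\delta|_{\tau_0}=\chi_{vp}G$ and $\chi_{vp}$ is evaluated at $\tau_0$ on both sides.

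First I treat the linear part acting on $\chi_{vp}G$. Two facts about the Oseen vortex are used: $\mathcal L G=0$, and $\mathcal V^G\cdot\nabla_\eta G=0$, which holds because $G$ is radial and $\mathcal V^G$ is azimuthal. I also use that, in self-similar variables, $\chi_{vp}=\chi_{vp}(e^{\tau/2}\eta)$ depends on $\tau$, with $\partial_\tau\chi_{vp}|_\eta=\frac12\eta\cdot\nabla_\eta\chi_{vp}$. Expanding $\partial_\tau(\chi_{vp}G)-\mathcal L(\chi_{vp}G)$ by the Leibniz rule leaves only terms in which derivatives fall on $\chi_{vp}$, namely $\eta\cdot\nabla_\eta\chi_{vp}\,G$, $\nabla_\eta\chi_{vp}\cdot\nabla_\eta G$ and $\Delta_\eta\chi_{vp}\,G$. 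These constitute $F_2$, moved to the right. The sign and coefficient bookkeeping of the $\frac12\eta\cdot\nabla_\eta\chi_{vp}G$ term, between the $\partial_\tau$ contribution and the drift in $\mathcal L$, must be carried out carefully and matched to the convention in \eqref{eq: mathcal W}.

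Next I expand the transport term
\[
\alpha\big\{\mathcal V^\delta(\eta)-\widetilde{\mathcal V^\delta}(\eta+e^{-\tau/2}(0,40))\big\}\cdot\nabla_\eta(\chi_{vp}G+\mathcal W_R),
\]
using $\mathcal V^\delta=\mathcal V^G-BS_{\mathbb R^2}[(1-\chi_{vp})G]+\mathcal V_R$ and $\widetilde{\mathcal V^G}=\mathcal V^G$. Sorting the pieces gives the following.

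The terms with $\mathcal V^G(\eta)$: here $\mathcal V^G\cdot\nabla_\eta(\chi_{vp}G)=0$, because both $G$ and $\chi_{vp}$ are radial about the vortex center. So only $\alpha\mathcal V^G\cdot\nabla_\eta\mathcal W_R$ survives, and it stays on the left.

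The terms built from $BS_{\mathbb R^2}[(1-\chi_{vp})G]$ (unshifted and reflected-shifted) and from $\mathcal V^G$ at the shifted point: these give $F_1$.

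The term $\alpha\mathcal V_R\cdot\nabla_\eta(\chi_{vp}G)$: I write it as $\alpha\mathcal V_R\cdot\nabla_\eta G-\alpha(1-\chi_{vp})\mathcal V_R\cdot\nabla_\eta G+\alpha\mathcal V_R\cdot\nabla_\eta\chi_{vp}\,G$. The first part stays on the left and the rest gives $F_3$.

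The reflected shifted $\widetilde{\mathcal V_R}$ acting on $\chi_{vp}G$ gives $F_4$. The quadratic terms $\mathcal V_R\cdot\nabla\mathcal W_R$ and $\widetilde{\mathcal V_R}(\cdot+\ldots)\cdot\nabla\mathcal W_R$ give $F_5$.

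Finally, the right-hand side of \eqref{eq: mathcal W} is copied unchanged. The far-field velocity term becomes $F_6$ after substituting $\mathcal W^\delta=\chi_{vp}G+\mathcal W_R$, and the three cutoff commutator terms form $F_7$.

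There is no real analytic obstacle; the statement is an algebraic identity. The only delicate points are two pieces of bookkeeping. The first is the $\tau$-dependence of $\chi_{vp}$ in self-similar variables, which produces the $\eta\cdot\nabla_\eta\chi_{vp}$ term in $F_2$. The second is tracking signs under the reflection $\widetilde{\cdot}$ and the shift by $e^{-\tau/2}(0,40)$. I would double-check these against \eqref{velocity self-similar} and the identity $\widetilde{\mathcal V^G}=\mathcal V^G$.
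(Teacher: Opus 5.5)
Your route is the paper's own: it obtains the lemma simply by substituting \eqref{decomp 1}--\eqref{decomp 2} into \eqref{eq: mathcal W}. Your sorting of the transport terms into $F_1,F_3,F_4,F_5$ and of the cutoff commutators into $F_7$ is correct. You are also right that discarding $\alpha\mathcal V^G\cdot\nabla_\eta\chi_{vp}\,G$ requires $\chi_{vp}$ to be radial about $(0,20)$; the paper uses this tacitly.

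The one computation you deferred, however, does not come out as you assert. With $\chi_{vp}=\chi_{vp}\big((0,20)+e^{\tau/2}\eta\big)$ you correctly get $\pa_\tau(\chi_{vp}G)=\frac12\eta\cdot\nabla_\eta\chi_{vp}\,G$. On the other hand, $\mathcal L(\chi_{vp}G)=\chi_{vp}\mathcal L G+2\nabla_\eta\chi_{vp}\cdot\nabla_\eta G+\Delta_\eta\chi_{vp}\,G+\frac12\eta\cdot\nabla_\eta\chi_{vp}\,G$ with $\mathcal L G=0$. So the two $\frac12\eta\cdot\nabla_\eta\chi_{vp}\,G$ terms cancel:
\[
(\pa_\tau-\mathcal L)(\chi_{vp}G)=-2\nabla_\eta\chi_{vp}\cdot\nabla_\eta G-\Delta_\eta\chi_{vp}\,G .
\]
A quick check: in physical variables $\chi_{vp}$ is time independent and $\frac{\alpha}{t+\delta}G(\eta)$ solves the heat equation, so only $-2\nabla\chi_{vp}\cdot\nabla-\Delta\chi_{vp}$ survives.

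Thus the $\tau$-dependence of $\chi_{vp}$ does not produce the $\frac12\eta\cdot\nabla_\eta\chi_{vp}\,G$ term of $F_2$; it removes the one coming from the drift in $\mathcal L$. The printed $F_2$ is what one gets by freezing $\chi_{vp}$ as a function of $\eta$. That contradicts the $\tau$-dependent supports ($\operatorname{supp}\mathcal W^\delta\subseteq\{|\eta|\le 6e^{-\tau/2}\}$, etc.) used everywhere else. So no bookkeeping will ``match'' the statement here. What your substitution actually proves is the identity with $F_2=2\nabla_\eta\chi_{vp}\cdot\nabla_\eta G+\Delta_\eta\chi_{vp}\,G$.

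Likewise, copying the right-hand side of \eqref{eq: mathcal W} gives $F_6$ \emph{without} the factor $\alpha$. Dividing the $\chi_{vp}\omega^\delta$ equation by $\alpha/(t+\delta)^2$ leaves $-e^{\tau/2}BS_{\mathbb R^2_+}[(1-\chi_{vp})\omega^\delta]\cdot\nabla_\eta\mathcal W^\delta$.

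Neither discrepancy matters downstream. The spurious term lives in $5e^{-\tau/2}\le|\eta|\le 6e^{-\tau/2}$ with the same Gaussian bound as the rest of $F_2$, and a constant factor in $F_6$ is harmless. Still, you should state and prove the corrected identity rather than claim the printed one.
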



\medskip

\subsection{Reformulation near the boundary}

\subsubsection{Initial layer corrector}
Before presenting the new formulation of the vorticity near the boundary, we introduce an initial layer corrector accounting for the mismatch between the initial data and the boundary conditions. More precisely, by \eqref{BS law formulation 2} and \eqref{initial data mollify}, we have  $U_0^{\delta}|_{y=0}=BS_{\mathbb R^2_+}[\omega_0^\delta]=\big(u_0^\delta(x) ,0\big)$, where $u^\delta_0(x)$ satisfies 
\begin{align}\label{initial velocity}
	u^\delta_0(x)
	=\frac{1}{\pi}\int_{\{|(y_1,y_2)-(0,20)|\leq6\}} \frac{y_2}{(x-y_1)^2+y_2^2}\cdot\frac{\alpha}{\delta}G\big(\frac{(y_1,y_2)-(0,20)}{\delta^{1/2}}\big)dy_1dy_2,
\end{align}
which may not vanish on the boundary. This mismatch gives rise to the initial layer. To remedy this, we  introduce an initial layer corrector $u^\delta_c$ defined by
\begin{align}\label{def: uc}
	\left\{
	\begin{aligned}
		&\pa_t u_c^\delta-\pa_y^2 u_c^\delta=0,\\
		&u_c^\delta|_{t=0}=u_0^\delta(x)\chi_b(y),\qquad
		u_c^\delta|_{y=0}=0,
	\end{aligned}
	\right.
\end{align}
where $\chi_b$ is defined \eqref{def of chi b}. It is easy to see that
\begin{align*}
	u_c^\delta(t,x,y)
	=u_0^\delta(x)\int_0^{+\infty}\frac{1}{(4\pi t)^{1/2}}\big(e^{-\frac{(y-z)^2}{4t}}-e^{-\frac{(y+z)^2}{4t}}\big)\chi_b(z)dz.
\end{align*}
Moreover, we define 
\begin{align}\label{omega c}
	&\omega_c^\delta(t,x,y):=-\pa_y u_c^\delta(t,x,y)\\
	\nonumber
	&=-2u_0^\delta(x)\cdot\frac{1}{(4\pi t)^{1/2}}e^{-\frac{y^2}{4t}}
	-u_0^\delta(x)\int_0^{+\infty}\frac{1}{(4\pi t)^{1/2}}\big(e^{-\frac{(y-z)^2}{4t}}+e^{-\frac{(y+z)^2}{4t}}\big)\chi'_b(z) dz.
\end{align}

From the definition of $u_0^\delta$ (see \eqref{initial velocity}), we obtain
	\begin{align*}
		(u_0^\delta)_\xi
		=\int_{\{|(y_1,y_2)-(0,20)|\leq6\}} e^{2\pi iy_1}e^{-2\pi y_2|\xi|}\cdot\frac{\alpha}{\delta}G\big(\frac{(y_1,y_2)-(0,20)}{\delta^{1/2}}\big)dy_1dy_2 .
	\end{align*}
A direct computation yields the following estimates of $\omega_c^\delta$:

\begin{lemma}\label{est of omega c}
There exist constants $C_0, C'$ and $T_0$ independent of of $\delta$, such that for sufficiently small $\delta$, we have
	\begin{align}\label{boundedness of omega c}
	\sup_{0<t<T_0}\left\|\left\|e^{C'|\xi|}e^{\frac{C'y^2}{t}}(1,x)\big(\pa_x^i(y\pa y)^j\pa_y^k\omega^\delta_c(t)\big)_\xi\right\|_{L^1_y}\right\|_{L^1_\xi\cap L^2_\xi}\leq C_0 t^{-k/2},\quad \text{for}\ i,j,k\geq0,
\end{align}
\begin{align}\label{boundedness of omega c 2}
	\left\|e^{C'|\xi|}(\pa_y+|\xi|)((1,x)\omega^\delta_c)_\xi|_{y=0}(t)\right\|_{L^1_\xi\cap L^2_\xi}\leq C_0 t^{-1/2}.
\end{align}
\end{lemma}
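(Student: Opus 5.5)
The plan is a direct computation from the explicit formula \eqref{omega c}. That formula writes $\omega_c^\delta$ as a product $u_0^\delta(x)\,K(t,y)$. So every quantity in \eqref{boundedness of omega c} factorizes into an $x$-part, $\|e^{C'|\xi|}((1,x)\pa_x^i u_0^\delta)_\xi\|_{L^1_\xi\cap L^2_\xi}$, times a $y$-part, $\|e^{C'y^2/t}(y\pa_y)^j\pa_y^k K(t,\cdot)\|_{L^1_y}$. The kernel is
\[
K(t,y)=-\frac{2}{(4\pi t)^{1/2}}e^{-\frac{y^2}{4t}}-\int_0^{\infty}\frac{1}{(4\pi t)^{1/2}}\big(e^{-\frac{(y-z)^2}{4t}}+e^{-\frac{(y+z)^2}{4t}}\big)\chi_b'(z)\,dz .
\]
I will bound the two factors separately, with constants uniform in $\delta$.

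\textbf{The $x$-factor.} I use the formula for $(u_0^\delta)_\xi$ displayed just before the lemma. The vorticity $\omega_0^\delta$ is supported in $\{|(y_1,y_2)-(0,20)|\le 6\}$, so $y_2\ge 14$ there. Hence each factor $e^{-2\pi y_2|\xi|}$ is at most $e^{-28\pi|\xi|}$, and $\|\omega_0^\delta\|_{L^1}\le|\alpha|$ uniformly in $\delta$. This gives $|(u_0^\delta)_\xi|\le |\alpha|e^{-28\pi|\xi|}$.

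The weight $x$ becomes a $\xi$-derivative, up to constants. That derivative falls on $e^{2\pi i y_1\xi}e^{-2\pi y_2|\xi|}$ and produces the factors $y_1$ and $y_2\,\mathrm{sgn}\,\xi$, both bounded by $26$ on the support. The kink of $|\xi|$ at $\xi=0$ is harmless, since we only need the function and its weak derivative in $L^1\cap L^2$. The derivative $\pa_x^i$ gives a factor $|\xi|^i$, which is absorbed by $e^{-28\pi|\xi|}$. Taking $C'<28\pi/2$, say $C'\le 1$, every $x$-factor is bounded by $C_0|\alpha|$. The same estimate gives the boundary bound \eqref{boundedness of omega c 2}, where $(\pa_y+|\xi|)$ acts on $K$ at $y=0$.

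\textbf{The $y$-factor, main Gaussian term.} Substitute $y=\sqrt t\,\zeta$. Then
\[
(y\pa_y)^j\pa_y^k\Big(t^{-1/2}e^{-y^2/4t}\Big)=t^{-1/2-k/2}\,P_{jk}(\zeta)e^{-\zeta^2/4},
\]
with $P_{jk}$ a polynomial. After the weight, the integrand is $e^{(C'-1/4)\zeta^2}|P_{jk}(\zeta)|$, and $dy=\sqrt t\,d\zeta$. For $C'<1/4$ this yields exactly $C_0t^{-k/2}$.

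At the boundary, $\pa_yK(t,0)$ vanishes for the Gaussian part, and $|\xi|K(t,0)\sim t^{-1/2}|\xi|$. The factor $|\xi|$ is absorbed by the $x$-factor, which gives the $t^{-1/2}$ in \eqref{boundedness of omega c 2}. The $\chi_b'$ part contributes $O(e^{-1/t})$ at $y=0$, because $z\ge2$ on the support of $\chi_b'$.

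\textbf{The $y$-factor, $\chi_b'$ term.} For this term I move the derivatives onto $\chi_b'$ by integrating by parts in $z$. So $\pa_y^k$ costs no power of $t$. The factors $y\pa_y$ are handled by writing $y=(y-z)+z$, and both pieces are bounded on the support.

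\emph{This is the main obstacle.} The integrand is concentrated at $y\approx z\in[2,3]$, where $e^{C'y^2/t}\sim e^{4C'/t}$ is not bounded uniformly as $t\to0$. I would first try to exploit the choice of $T_0$ together with the smallness of $C'$. For $y\ge 2$ one has $e^{C'y^2/t}e^{-(y-z)^2/4t}\le e^{C'(9+\cdots)/t}$, and this is uniform only if the weight is effectively cut off at $y\lesssim\sqrt t$. So the honest route is the following. On $\{y\le1\}$ the $\chi_b'$ term is $O(t^{-N}e^{-1/(4t)})$, which beats $e^{C'/t}$ for $C'<1/4$, and that region is fine. On $\{y\ge1\}$ the stated bound can be obtained only if the weight is understood together with the cut-off $\chi_b$, or restricted as in the later use of $Y(t)$.

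I will check which reading the paper's norm $Y(t)$ actually requires. Then I will verify the $\chi_b'$ contribution in that form, using the exponential smallness $e^{-c/t}$ near $y=0$ and plain $L^1$ bounds of size $O(1)$ in the interior.
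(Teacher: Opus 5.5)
Your factorization $\omega_c^\delta=u_0^\delta(x)K(t,y)$ and your three bounds are the ``direct computation'' the paper invokes without giving details. The three bounds are: the $x$-factor via $y_2\ge 14$ on $\operatorname{supp}\omega_0^\delta$; the Gaussian part via $y=\sqrt t\,\zeta$ with $C'<\frac14$; and the trace. For the trace, $\pa_yK(t,0)=0$ holds exactly, including for the $\chi_b'$-part, since $\pa_y\omega_c^\delta=-\pa_y^2u_c^\delta=-\pa_tu_c^\delta=0$ at $y=0$. On these parts you agree with the paper.

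The obstacle you isolated is real, and it is a defect of the statement, not of your method.
\begin{itemize}
\item \textbf{The literal statement fails.} As $t\to0^+$ the $\chi_b'$-part of $K$ converges to $-\chi_b'(y)\not\equiv0$ on $(2,3)$. So for $i=j=k=0$ and $\alpha\neq0$, the left side of \eqref{boundedness of omega c} with $L^1_y$ over $(0,\infty)$ is at least $c\,e^{4C'/t}$, with $c>0$ uniform in small $\delta$. This contradicts the bound $C_0$ for every $C'>0$. Shrinking $T_0$ or $C'$ does not help.
\item \textbf{Your $\chi_b$ alternative does not work.} Reading the weight together with $\chi_b$ leaves the problem intact, because $\chi_b\chi_b'\not\equiv0$ on $(2,3)$.
\item \textbf{The reading the paper needs.} Lemmas \ref{velocity estimates 1}, \ref{est of N1} and \ref{lem: estimate of B1} use the bound only inside $Y^k_{\mu,t}$. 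There the weighted $y$-integral runs only over $0<y<1+\mu<1+\mu_0=\frac{11}{10}$. The convergence statement \eqref{convergence of omega c delta} uses only unweighted $L^1_y$ norms.
\end{itemize}

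In that form your plan closes. For $y<\frac{11}{10}$ and $z\in\operatorname{supp}\chi_b'\subset[2,3]$ one has $(y-z)^2\ge\frac{81}{100}$ and $(y+z)^2\ge4$. Hence every $(y\pa_y)^j\pa_y^k$-derivative of the $\chi_b'$-part is $O\big(t^{-N}e^{-81/(400t)}\big)$. This absorbs $e^{C'y^2/t}\le e^{121C'/(100t)}$ as soon as $\frac{121}{100}C'<\frac{81}{400}$, for example $C'=\frac1{10}$. That choice also satisfies your constraints $C'<\frac14$ and $C'<28\pi$. On all of $y>0$ without the Gaussian weight, moving $\pa_y^k$ onto $\chi_b^{(k+1)}$, as you propose, gives $O(1)$. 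So you should state and prove the lemma with the weighted $L^1_y$ restricted to $y<1+\mu_0$.

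The same flaw reappears downstream. The term $I_3$ in the proof of Lemma \ref{est of N2} requires $e^{2\eps_0/s}\omega_c$ to be bounded on $1\le y\le 3$, which fails for the same reason. There, the contribution of $\omega_c^\delta$ from $(2,3)$ has to be controlled through the separation between $(2,3)$ and $\{y<1+\mu\}$ in the kernels $H_\xi,R_\xi$, not through smallness of $\omega_c^\delta$.
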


\medskip

\subsubsection{The system of $\omega^\delta-\omega^\delta_c$}

Now, we derive the system of $\omega^\delta-\omega^\delta_c$ near the boundary.
 \begin{lemma}\label{lem: eq of omega-omega c}
It holds that	
\begin{align}\label{omega-omega c}
	\left\{
	\begin{aligned}
		&\pa_t\big(\chi_b(\omega^\delta- \omega^\delta_c)\big)
		-\Delta\big(\chi_b(\omega^\delta- \omega^\delta_c)\big)
		=N^\delta,\\
		&\lim_{t\rightarrow 0^+}\big(\chi_b(\omega^\delta- \omega^\delta_c)\big)=b^\delta,\\
		&(\pa_y+|D_x|)(\omega^\delta-\omega^\delta_c)|_{y=0}
		=B^\delta,
	\end{aligned}
	\right.
\end{align}
where $(b^\delta, N^\delta, B^\delta)$ is defined by 
\begin{align*}
b^\delta=& u_0^\delta \chi_b\chi_b',\\
N^\delta=& -\chi_b U\cdot\nabla(\omega^\delta-\omega^\delta_c)
		-\chi_b U\cdot\nabla\omega^\delta_c
		+\chi_b\pa_x^2\omega^\delta_c -(\chi_b''\omega^\delta+2\chi_b'\pa_y\omega^\delta)\\
&+(\chi_b''\omega^\delta_c+2\chi_b'\pa_y\omega^\delta_c),\\
B^\delta=&	\pa_y\Delta_D^{-1}(U^\delta\cdot\nabla\omega^\delta)|_{y=0}
		-(\pa_y+|D_x|)\omega^\delta_c|_{y=0}.
\end{align*}
 
\end{lemma}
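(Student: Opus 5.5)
The lemma is a direct computation, and I would organize it in three parts: the interior equation, the initial trace, and the boundary condition.

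\emph{Interior equation.} Since $u_c^\delta$ solves $\pa_t u_c^\delta-\pa_y^2u_c^\delta=0$ and $\omega_c^\delta=-\pa_y u_c^\delta$, differentiating in $y$ gives $\pa_t\omega_c^\delta-\pa_y^2\omega_c^\delta=0$. Hence
\begin{align*}
\pa_t\omega^\delta_c-\Delta\omega^\delta_c=-\pa_x^2\omega^\delta_c .
\end{align*}
Subtracting this from the vorticity equation $\pa_t\omega^\delta-\Delta\omega^\delta=-U\cdot\nabla\omega^\delta$, I get
\begin{align*}
(\pa_t-\Delta)(\omega^\delta-\omega^\delta_c)=-U\cdot\nabla(\omega^\delta-\omega^\delta_c)-U\cdot\nabla\omega^\delta_c+\pa_x^2\omega^\delta_c .
\end{align*}
Next I multiply by $\chi_b(y)$ and commute with $\Delta$, using
\begin{align*}
\Delta(\chi_b f)=\chi_b\Delta f+2\chi_b'\pa_y f+\chi_b'' f
\end{align*}
with $f=\omega^\delta-\omega^\delta_c$. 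The commutator terms $-(\chi_b''f+2\chi_b'\pa_y f)$, split as the $\omega^\delta$ part minus the $\omega^\delta_c$ part, are exactly the last two groups in $N^\delta$. The remaining terms carry a factor $\chi_b$ and match the first three terms of $N^\delta$.

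\emph{Initial trace.} As $t\to0^+$, $\omega^\delta(t)\to\omega_0^\delta$ because $\omega^\delta$ is the smooth solution with this data. The data $\omega_0^\delta$ is supported in $\{|(x,y)-(0,20)|\le 6\}\subset\{y\ge14\}$, so $\chi_b\omega_0^\delta=0$. For the corrector, I use the explicit formula \eqref{omega c}. The first term $-2u_0^\delta(x)(4\pi t)^{-1/2}e^{-y^2/4t}$ tends to a boundary delta measure $-u_0^\delta\delta_{y=0}$ and vanishes away from $y=0$; this is the initial layer, and $\lim(\omega^\delta-\omega^\delta_c)$ is understood in the sense of the paper's later analysis. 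The second term converges to $-u_0^\delta(x)\cdot 2\chi_b'(y)$ with the appropriate even-extension convention. After multiplying by $-\chi_b$ this produces $b^\delta$. The step I expect to need the most care is checking the constant and sign here (whether the factor is $u_0^\delta\chi_b\chi_b'$ versus $2u_0^\delta\chi_b\chi_b'$, given the even extension of $\chi_b'$ and the $\frac12$ weight at $z=0$), and stating precisely in which sense the limit holds, namely away from $y=0$ or after removing the boundary delta.

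\emph{Boundary condition.} This follows by linearity from \eqref{eq: BC of NS} applied to $\omega^\delta$: it gives $(\pa_y+|D_x|)\omega^\delta|_{y=0}=\pa_y\Delta_D^{-1}(U^\delta\cdot\nabla\omega^\delta)|_{y=0}$. Subtracting $(\pa_y+|D_x|)\omega^\delta_c|_{y=0}$ yields $B^\delta$. Since $\chi_b\equiv1$ near $y=0$, this is the same as the boundary condition for $\chi_b(\omega^\delta-\omega^\delta_c)$.
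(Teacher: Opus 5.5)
The interior equation and the boundary condition in your proposal are correct. They are the same direct computation the paper intends.

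\textbf{The gap: the initial trace of $\omega^\delta$.} The initial trace is the only part the paper actually proves, and your argument fails there. You assert $\omega^\delta(t)\to\omega_0^\delta$ as $t\to0^+$ because $\omega^\delta$ is the smooth solution with this data. In $\mathcal M(\overline{\mathbb R^2_+})$ this is false. We have $u_0^\delta|_{y=0}\neq0$, while $u^\delta(t)|_{y=0}=0$ for $t>0$. So the no-slip solution itself carries the initial layer:
$\omega^\delta(t)\rightharpoonup\omega_0^\delta-u_0^\delta\delta_{\partial\mathbb R^2_+}$.

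The paper obtains this from Abe's representation $\omega^\delta(t)=T(t)\omega_0^\delta+\int_0^t\nabla^\perp\cdot S(t-s)\mathbb P(\omega^\delta(U^\delta)^\perp)\,ds$. It uses Lemma 4.1 of \cite{Ken} for the linear part and the vanishing $L^1$ norm of the Duhamel part. A more elementary route also works. For $\varphi\in C_c^\infty(\overline{\mathbb R^2_+})$ and $t>0$, no-slip gives $\int\omega^\delta(t)\varphi=\int(u^\delta\partial_y\varphi-v^\delta\partial_x\varphi)$. Let $t\to0$ using (weak) convergence of $U^\delta(t)$ to $U_0^\delta$, then integrate back by parts. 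This produces the boundary term $-\int u_0^\delta(x)\varphi(x,0)\,dx$.

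This boundary mass cancels the $-u_0^\delta\delta_{\partial\mathbb R^2_+}$ coming from the first term of $\omega_c^\delta$, and that cancellation is the whole purpose of the corrector. With your claim you are left with an uncancelled $+u_0^\delta\delta_{\partial\mathbb R^2_+}$. Retreating to a limit ``away from $y=0$'' or ``after removing the boundary delta'' changes the statement. It is also too weak for how the lemma is used. In the Duhamel formula \eqref{integral eq of omega-omega c-1}, a boundary mass in the initial datum would add a term comparable to $(H_\xi+R_\xi)(t,y,0)\,(u_0^\delta)_\xi$. That term is of order one in $L^1_y$ and does not vanish as $t\to0^+$, so Lemma \ref{est of HR b} would fail.

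\textbf{The constant in the corrector.} The second term of \eqref{omega c} converges to $-u_0^\delta\chi_b'(y)$, with no factor $2$. For $y>0$, the piece with $g(t,y+z)$ tends to zero, since $\chi_b'$ is supported in $[2,3]$. The piece with $g(t,y-z)$ tends to $\chi_b'(y)$. Equivalently, the convolution with the even extension of $\chi_b'$ converges to $\chi_b'(|y|)$. Hence $\omega_c^\delta\rightharpoonup-u_0^\delta\delta_{\partial\mathbb R^2_+}-u_0^\delta\chi_b'$. Combine this with the correct trace of $\omega^\delta$, with $\chi_b\omega_0^\delta=0$, and with $\chi_b(0)=1$. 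This gives $\chi_b(\omega^\delta-\omega_c^\delta)\rightharpoonup u_0^\delta\chi_b\chi_b'=b^\delta$ in $\mathcal M(\overline{\mathbb R^2_+})$, as claimed.
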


\begin{proof}
The equation and boundary condition of $\chi_b(\omega^\delta- \omega^\delta_c)$ can be obtained from the definitions  of $\chi_b$ and $\omega_c$ together with the equation of $\omega^\delta$. All that remains is to prove the initial condition in \eqref{omega-omega c}.  Indeed, this initial condition was established in \cite{Ken}. We provide a sketch of the proof below. 

Using the solution formula from \cite{Ken}, we rewrite the solution $\omega^\delta$ to \eqref{eq: NS vorticity} with initial data \eqref{initial data mollify} as follows
\begin{align}\label{Ken formula}
	\omega^\delta(t)=T(t)\omega_0^\delta
	+\int_0^t \nabla^\perp\cdot S(t-s)\mathbb P(\omega^\delta (U^\delta)^\perp)(s)ds,
\end{align}
where $S(t)$ denotes the Stokes semigroup on $\mathbb R^2_+$, $\mathbb P$ denotes the Helmholtz projection. Lemma 4.1 in \cite{Ken} states that
\begin{align*}
	T(t)\omega_0^\delta
\rightharpoonup\omega_0^\delta-u_0^\delta \delta_{\partial\mathbb R^2_+} \quad in \ M(\overline{\mathbb R^2_+}) \quad as \ t\rightarrow0.
\end{align*}
Meantime, Section 5 of \cite{Ken} proved that 
\begin{align*}
	\int_0^t \|\nabla^\perp\cdot S(t-s)\mathbb P(\omega^\delta (U^\delta)^\perp)(s)\|_{L^1}ds<+\infty,
\end{align*}
which implies
\begin{align*}
	\lim_{t\rightarrow0} 
	\left\|\int_0^t \nabla^\perp\cdot S(t-s)\mathbb P(\omega^\delta (U^\delta)^\perp)(s)ds \right\|_{L^1}=0.
\end{align*}
Furthermore, by a direct calculation, we have
\begin{align*}
	\omega_c^\delta\rightharpoonup-u_0^\delta\delta_{\partial\mathbb R^2_+}-u_0^\delta\chi_b' \quad in \ M(\overline{\mathbb R^2_+}) \quad as \ t\rightarrow0,
\end{align*}
which implies \eqref{omega-omega c}.  
\end{proof}

Next, we present the new formulation of $\omega^\delta-\omega_c^\delta$.  By employing the solution formula for the heat equation in the half plane derived in \cite{Maekawa}, we obtain
\begin{align}\label{integral eq of omega-omega c-1}
	(\chi_b\omega^\delta -\chi_b\omega_c^\delta)_\xi(t,y)
 	=&\int_0^{+\infty}\big(H_\xi(t,y,z)+R_\xi(t,y,z)\big)b^\delta_\xi(z)dz\\
 	\nonumber
 	&+\int_0^t\int_0^{+\infty}\big(H_\xi(t-s,y,z)+R_\xi(t-s,y,z)\big)N^\delta_\xi(s,z)dzds\\
 	\nonumber
 	&-\int_0^t\big(H_\xi(t-s,y,0)+R_\xi(t-s,y,0)\big)B^\delta_\xi(s)ds,
\end{align}
 where 
\begin{align}\label{def of H xi R xi}
 	H_\xi(t,y,z)=e^{-\xi^2t}\big(g( t,y-z)+g( t,y+z)\big),\quad R_\xi(t,y,z)=\big(\Gamma( t)-\Gamma(0)\big)_\xi(y+z),
 \end{align}
 with 
  \begin{align}\label{def of g-G}
 	g(t, x)=\frac{1}{(4\pi t)^{1/2}}e^{-\frac{x^2}{4t}},\quad \Gamma(t, x, y)=\big(\Xi E\ast G(t)\big)(x, y).
 \end{align}
Here $(\Xi, E, G)$ is defined by 
 \begin{align}\label{def of E-G}
 \Xi=2(\pa_x^2+|D_x|\pa_y\big), \quad  E(x, y)=-\frac{1}{2\pi}\operatorname{log}(\sqrt{|x|^2+|y|^2}), \quad G(t,x,y)=g(t,x)g(t,y).
  \end{align}

Subsequently, we enumerate several properties of $R_\xi$.
 \begin{lemma}\label{prop of R 1}
 It holds that
 \begin{enumerate}[label=(\arabic*)]
 	\item \begin{align*}
 		\pa_y R_\xi(t,y,z)=\pa_z R_\xi(t,y,z).
 	\end{align*}
 	
 	\item \begin{align*}
 		&|(y\pa_y)^j\pa_y^k R_\xi(t,y,z)|\leq C \langle\xi\rangle ^{k+1}e^{-\theta_0 \langle\xi\rangle(y+z)}
 		+\frac{C}{ t^{(k+1)/2}}e^{-\theta_0\frac{(y+z)^2}{ t}}e^{-\frac{\xi^2t}{8}},\quad j,k\leq3,\\
 		&|(y\pa_y)^kR_\xi(t,y,z)|\leq C \langle\xi\rangle e^{-\frac{\theta_0}{2}\langle\xi\rangle(y+z)}
 		+\frac{C}{\sqrt{ t}}e^{-\frac{\theta_0}{2}\frac{(y+z)^2}{ t}}e^{-\frac{\xi^2t}{8}},\quad k\leq3,
 	\end{align*}
 	where $\theta_0$ is a universal constant and $C$ depends only on $\theta_0$.
 	
 	\item \begin{align*}
 		R_\xi(t,y,z)
 		=-2\int_0^t (-\xi^2+|\xi|\pa_y)\big(e^{-s\xi^2}g(s,y+z)\big)ds.
 	\end{align*}
 \end{enumerate}
 	
 \end{lemma}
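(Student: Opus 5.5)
Everything will come from an explicit formula for $R_\xi$. Since $\Gamma(0)$ is independent of $t$, we have $R_\xi(t,y,z)=\int_0^t\pa_s\Gamma_\xi(s,y+z)\,ds$. I will therefore first establish item (3), then read off item (1), and finally derive the pointwise bounds in item (2) from the formula in (3).

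\emph{Step 1: the formula (3).} Because $G(t)=g(t,x)g(t,y)$ solves the heat equation, $\pa_s\Gamma=\Xi E\ast\Delta G(s)=\Xi\,\Delta E\ast G(s)=-\Xi G(s)$, using $-\Delta E=\delta$. Taking the Fourier transform in $x$ gives $(\Xi G(s))_\xi(y)=2(-\xi^2+|\xi|\pa_y)\big(e^{-s\xi^2}g(s,y)\big)$, up to the paper's normalization of the Fourier transform (the factor $4\pi^2$ is absorbed into the convention). Evaluating at $y+z$ and integrating in $s$ from $0$ to $t$ yields exactly
\[
R_\xi(t,y,z)=-2\int_0^t(-\xi^2+|\xi|\pa_y)\big(e^{-s\xi^2}g(s,y+z)\big)\,ds.
\]
The interchange of $\Xi$ with convolution and the behaviour of $\Gamma(s)$ as $s\to0$ both need justification. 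The cleanest route is to work on the Fourier side in $x$ from the start, where $E_\xi(y)=\frac{1}{2|\xi|}e^{-|\xi||y|}$.

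\emph{Step 2: item (1).} By (3), $R_\xi(t,y,z)$ depends on $(y,z)$ only through $y+z$, so $\pa_yR_\xi=\pa_zR_\xi$ immediately.

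\emph{Step 3: item (2), the main work.} I would compute the $s$-integral in closed form rather than estimate it crudely. Write $w=y+z\ge0$. Then
\[
-2\int_0^t\big(-\xi^2+|\xi|\pa_w\big)e^{-s\xi^2}g(s,w)\,ds
\]
is the classical half-line Robin kernel. Concretely, $e^{-s\xi^2}\pa_w g=-|\xi|\,\cdot$ (the relevant combination), and the standard identity expresses $R_\xi$ as
\[
c\,|\xi|\,e^{|\xi|w}\,e^{t\xi^2}\operatorname{erfc}\!\Big(\frac{w}{2\sqrt t}+|\xi|\sqrt t\Big)\,e^{-\xi^2 t}
\]
plus Gaussian terms. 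This is Maekawa's representation: $R_\xi$ equals $|\xi|e^{|\xi|w}\operatorname{erfc}(\cdots)$ minus a Gaussian. One then splits into the regions $w\gtrsim|\xi|t$ and $w\lesssim|\xi|t$.
\begin{itemize}
\item In the first region, the erfc asymptotics $\operatorname{erfc}(a)\lesssim e^{-a^2}/a$ give Gaussian decay of the form $t^{-1/2}e^{-\theta_0w^2/t}e^{-\xi^2t/8}$.
\item In the second region, $e^{|\xi|w-\ldots}$ combined with erfc gives the boundary-layer type bound $|\xi|e^{-\theta_0|\xi|w}$.
\end{itemize}
For $|\xi|\le1$ one replaces $|\xi|$ by $\langle\xi\rangle$ and the estimate is trivial.

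\emph{Step 4: derivatives.} Each $\pa_y$ either falls on the exponential, costing $\langle\xi\rangle$, or on the Gaussian, costing $t^{-1/2}$. The weight $(y\pa_y)^j$ is absorbed using $y\le w$ together with $w^ae^{-\theta w^2/t}\lesssim t^{a/2}e^{-\theta' w^2/t}$ and $w^a\langle\xi\rangle^ae^{-\theta\langle\xi\rangle w}\lesssim e^{-\theta'\langle\xi\rangle w}$. This loss of exponent is why the second line of (2) has $\theta_0/2$.

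\emph{Main obstacle.} The hardest part is making the two-region split in Step 3 uniform in $\xi$ and $t$ with explicit constants. In particular, one must ensure that the factor $e^{|\xi|w}$ is always compensated by the erfc, which uses $\frac{w}{2\sqrt t}+|\xi|\sqrt t\ge$ the relevant quantities. I would follow Maekawa's computation \cite{Maekawa} for this step.
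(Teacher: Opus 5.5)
Your Step 1 is the paper's own proof of (3), and Step 2 is fine; (1) is even immediate from the definition $R_\xi(t,y,z)=(\Gamma(t)-\Gamma(0))_\xi(y+z)$. The paper does not prove (2); it cites \cite{Kukavica} and \cite{TT Nguyen}. Deriving (2) from an explicit formula would therefore be a more self-contained route. \emph{However, the formula your Step 3 rests on is wrong.}

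Your expression $|\xi|e^{|\xi|w}\operatorname{erfc}\big(\frac{w}{2\sqrt t}+|\xi|\sqrt t\big)$ satisfies $(\pa_w-|\xi|)(\cdot)=-2|\xi|e^{-\xi^2t}g(t,w)$. So it is the correction kernel for the dissipative condition $(\pa_y-|\xi|)\omega|_{y=0}=0$, not for $(\pa_y+|\xi|)\omega|_{y=0}=0$. Since $\operatorname{erfc}(x)\le e^{-x^2}$ for $x\ge0$, it is bounded by $|\xi|e^{-\frac{w^2}{4t}-\xi^2t}$, which is purely Gaussian with no boundary-layer part.

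Letting $w=y+z\to0^+$ in (3) instead gives $R_\xi(t,0^+)=|\xi|\big(1+\operatorname{erf}(|\xi|\sqrt t)\big)\ge|\xi|$. The discrepancy is $2|\xi|\operatorname{erf}(|\xi|\sqrt t)\approx2|\xi|$ once $|\xi|\sqrt t\gg1$. No Gaussian correction can account for this, since such terms are $\lesssim t^{-1/2}e^{-\xi^2t/8}\ll|\xi|$ there. Consequently the term $\langle\xi\rangle^{k+1}e^{-\theta_0\langle\xi\rangle(y+z)}$ in (2) cannot come out of your representation. That non-decaying boundary-layer part is the real content of the lemma. Your two-region discussion of how ``$e^{|\xi|w}$ is compensated by the erfc'' analyses the wrong function.

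The repair makes Step 3 easier than you expect. From $E_\xi(w)=\frac{1}{2|\xi|}e^{-|\xi||w|}$ one gets $(\Xi E)_\xi(w)=-2|\xi|e^{-|\xi|w}\mathbf{1}_{w>0}$. Convolve this with $e^{-\xi^2t}g(t,\cdot)$ and subtract $\Gamma_\xi(0)$. Alternatively, check that both sides below vanish at $t=0$ and have $t$-derivative $2e^{-\xi^2t}(\xi^2g-|\xi|\pa_wg)(t,w)$. Either way, for $w=y+z>0$ and in the normalization of (3),
\[
R_\xi(t,y,z)=2|\xi|e^{-\xi^2t}\int_0^\infty e^{|\xi|\eta}g(t,w+\eta)\,d\eta=|\xi|e^{-|\xi|w}\operatorname{erfc}\Big(\frac{w}{2\sqrt t}-|\xi|\sqrt t\Big).
\]
There is no growing exponential to compensate:
\begin{itemize}
\item Since $0\le\operatorname{erfc}\le2$, you get $|R_\xi|\le2|\xi|e^{-|\xi|w}$ at once.
\item A $\pa_w$ falling on $e^{-|\xi|w}$ costs $|\xi|$.
\item A $\pa_w$ falling on the erfc gives $-\frac{1}{\sqrt{\pi t}}e^{-(\frac{w}{2\sqrt t}-|\xi|\sqrt t)^2}$. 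Its product with $e^{-|\xi|w}$ is exactly $-\frac{1}{\sqrt{\pi t}}e^{-\frac{w^2}{4t}-\xi^2t}$.
\item Further derivatives only add Hermite factors. Together with $|\xi|^it^{i/2}e^{-\xi^2t/2}\lesssim1$, this yields the $t^{-(k+1)/2}$ Gaussian terms. The $(y\pa_y)^j$ weights are absorbed as you describe.
\end{itemize}

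The one genuine case split is converting $|\xi|$ into $\langle\xi\rangle$ for $|\xi|\le1$, and it is not trivial. For small $|\xi|$ and large $w$, $|\xi|e^{-|\xi|w}$ is not bounded by $Ce^{-\theta_0\langle\xi\rangle w}$. So when $w\ge4|\xi|t$, use $\operatorname{erfc}\big(\frac{w}{2\sqrt t}-|\xi|\sqrt t\big)\le e^{-\frac{w^2}{16t}}$, which lands in the Gaussian term. Otherwise $w<4|\xi|t\le4T_0$ is bounded.

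Finally, (3) must be read for $y+z>0$. Evaluated literally at $y+z=0$, the $s$-integral equals $|\xi|\operatorname{erf}(|\xi|\sqrt t)$. This is off by $|\xi|$ from the limit $w\to0^+$, because $\int_0^t\pa_wg(s,w)\,ds$ jumps at $w=0$. This is the concrete form of the $s\to0$ issue you flagged.
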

 
 \begin{proof}
 	For the proof of (1) and (2), we refer to \cite{Kukavica} and \cite{TT Nguyen}. For (3), since $\pa_t G=\Delta G$, 
 	\begin{align*}
 		\Gamma(t)-\Gamma(0)
 		=\Xi E \ast \int_0^t \Delta G(s)ds
 		=-\int_0^t \Xi G(s)ds,
 	\end{align*}
which implies 
 	\begin{align*}
 		R_\xi(t,y,z)
 		=\big(\Gamma( t)-\Gamma(0)\big)_\xi(y+z)
 		=-2\int_0^t (-\xi^2+|\xi|\pa_y)\big(e^{-s\xi^2}g(s,y+z)\big)ds.
 	\end{align*}
 \end{proof}

\medskip

At the end of this subsection, we derive some estimates of initial data $b^\delta$ which are sufficiently small as $t\rightarrow 0$.
\begin{lemma}\label{est of HR b}
There exist positive constants $C, C'$ and $T>0$, such that for $t\in[0,T],j\leq10$, it holds that
	\begin{align*}
		&\left\|\left\|e^{C'|\xi|}e^{\frac{C'y^2}{t}}\int_0^{+\infty}
		(y\pa_y)^j\big(H_\xi(t,y,z)+R_\xi(t,y,z)\big)
		\big((1,x)b
		^\delta\big)_\xi(z) dz\right\|_{L^1_y\cap L^\infty_y(y\leq\frac{3}{2})}\right\|_{L^1_\xi\cap L^2_\xi}\\
		&\leq Ce^{-\frac{C'}{ t}}.
	\end{align*}
\end{lemma}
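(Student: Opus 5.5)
The plan rests on a support separation. Since $\chi_b'$ vanishes for $z\le 2$ and for $z\ge3$, the initial datum $b^\delta=u_0^\delta\chi_b\chi_b'$ is supported in $z\in[2,3]$. The output, however, is only measured for $y\le \frac32$, so every kernel is evaluated with $|y-z|\ge \frac12$ and $y+z\ge 2$. The Gaussian parts of $H_\xi$ and $R_\xi$ will therefore produce factors like $e^{-c/t}$, which absorb the weight $e^{C'y^2/t}\le e^{9C'/(4t)}$ once $C'$ is small. Everything then reduces to controlling $(u_0^\delta)_\xi$ with an exponential weight in $\xi$, and to handling the non-Gaussian part of $R_\xi$.

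\textbf{Step 1: the data.} Since $\chi_b\chi_b'$ is a fixed smooth function of $z$, we have $((1,x)b^\delta)_\xi(z)=\chi_b\chi_b'(z)\,((1,x)u_0^\delta)_\xi$. From the Fourier formula for $(u_0^\delta)_\xi$ and $y_2\ge14$ on the support of $\omega_0^\delta$, we get $|(u_0^\delta)_\xi|\le C|\alpha|e^{-28\pi|\xi|}$. Multiplication by $x$ corresponds to a $\partial_\xi$, which brings down a bounded factor $y_1$ and the factor $y_2\,\mathrm{sgn}\,\xi$. Together these give $e^{2C'|\xi|}|((1,x)u_0^\delta)_\xi|\in L^1_\xi\cap L^2_\xi$ uniformly in $\delta$ for $C'$ small. (This is the same computation that underlies Lemma~\ref{est of omega c}.)

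\textbf{Step 2: the $H_\xi$ part.} For $y\le\frac32$ and $z\in[2,3]$, $(y\pa_y)^j$ acting on $g(t,y\mp z)$ produces polynomial factors in $y(y\mp z)/t$ times the Gaussian. This is bounded by $C t^{-1/2-j}e^{-(y-z)^2/(4t)}\le Ct^{-N}e^{-1/(16t)}$. Multiplying by $e^{C'y^2/t}\le e^{9C'/(4t)}$ and by $e^{-\xi^2t}\le1$, then integrating over $z\in[2,3]$ and $y\le\frac32$, we obtain $Ce^{-2C'/t}$ for $C'$ small (say $C'<1/100$) and $t\le T$. This holds pointwise in $\xi$, times $|((1,x)u_0^\delta)_\xi|$, so the $L^1_\xi\cap L^2_\xi$ norm follows from Step 1.

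\textbf{Step 3: the $R_\xi$ part, the main obstacle.} The bound in Lemma~\ref{prop of R 1}(2) contains the non-decaying-in-time piece $\langle\xi\rangle^{k+1}e^{-\theta_0\langle\xi\rangle(y+z)}$. This piece gives no $e^{-C'/t}$ smallness and cannot absorb $e^{C'y^2/t}$. I will therefore not use (2) directly, but rather the representation in Lemma~\ref{prop of R 1}(3):
\[
R_\xi(t,y,z)=-2\int_0^t(-\xi^2+|\xi|\pa_y)\big(e^{-s\xi^2}g(s,y+z)\big)ds .
\]
With $y+z\ge2$, each integrand, including its $(y\pa_y)^j$ derivatives, is bounded by $C(1+\xi^2)s^{-N}e^{-(y+z)^2/(4s)}\le C(1+\xi^2)e^{-1/(2s)}$. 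Integrating over $s\in(0,t)$ gives $|(y\pa_y)^jR_\xi|\le C\langle\xi\rangle^2e^{-1/(2t)}$. The polynomial loss $\langle\xi\rangle^2$ is absorbed by the surplus decay $e^{-C'|\xi|}$ from Step 1, since we only need weight $e^{C'|\xi|}$ against data decaying like $e^{-2C'|\xi|}$ or better. The $e^{C'y^2/t}$ weight is handled exactly as in Step 2.

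Combining Steps 2 and 3 yields both the $L^1_y$ and $L^\infty_y(y\le\frac32)$ bounds by $Ce^{-C'/t}$, after possibly shrinking $C'$ and $T$. The only delicate point is to check that the derivatives $(y\pa_y)^j$, $j\le10$, passed inside the $s$-integral keep the Gaussian structure, which they do since only polynomial factors in $y/\sqrt s$ and $(y+z)/s$ appear.
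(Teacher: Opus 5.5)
Your proposal is correct and follows the paper's own argument. The support separation $z\in[2,3]$ versus $y\le\frac32$ makes the kernels Gaussian-small, of order $e^{-c/t}$, which absorbs the weight $e^{C'y^2/t}$; the exponential decay of $((1,x)u_0^\delta)_\xi$, coming from $y_2\ge 14$, absorbs $e^{C'|\xi|}$. Your use of the time-integral representation in Lemma~\ref{prop of R 1}(3) for $R_\xi$, instead of the pointwise bound (2) whose first term has no smallness in $t$, is a careful way to justify the paper's brief appeal to Lemma~\ref{prop of R 1} for the $R_\xi$ part, and the $\langle\xi\rangle^2$ loss it costs is harmlessly absorbed by the data's exponential decay in $\xi$.
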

\begin{proof}

	By virtue of the expression of $H_\xi$ in \eqref{def of H xi R xi} and $R_\xi$ in Lemma \ref{prop of R 1}, we have for $z\in[2,3], y\in[0,\f32],j\leq10$,
	\begin{align*}
		e^{\frac{C'y^2}{t}}\Big| (y\pa_y)^j\big(H_\xi(t,y,z)+R_\xi(t,y,z)\big)\Big|
		\leq \frac{C}{t^{1/2}}e^{-\frac{(y-z)^2}{5t}}
		\leq Ce^{-\frac{C'}{t}},
	\end{align*}
	which implies
	\begin{align*}
		\left\| e^{\frac{C'y^2}{t}} (y\pa_y)^j\big(H_\xi(t,y,z)+R_\xi(t,y,z)\big) \right\|_{L^1_y\cap L^\infty_y(y\leq\frac{3}{2})}
		\leq Ce^{-\frac{C'}{t}}.
	\end{align*}
	Thus, we have
	\begin{align*}
		&\left\|\left\|e^{C'|\xi|}e^{\frac{C'y^2}{t}}\int_0^{+\infty}
		(y\pa_y)^j\big(H_\xi(t,y,z)+R_\xi(t,y,z)\big)
		\big((1,x)b
		^\delta\big)_\xi(z) dz\right\|_{L^1_y\cap L^\infty_y(y\leq\frac{3}{2})}\right\|_{L^1_\xi\cap L^2_\xi}\\
		&\leq Ce^{-\frac{C'}{t}}
		\left\|e^{C'|\xi|}\int_0^{+\infty}
		|\big((1,x)b
		^\delta\big)_\xi(z)| dz \right\|_{L^1_\xi \cap L^2_\xi}.
	\end{align*}
	Taking the Fourier transformation of $b^\delta$, we obtain
	\begin{align*}
		&\big((1,x)b
		^\delta\big)_\xi(z)\\
		&=\frac{\chi_b\chi_b'(z)}{\pi}\int_{\mathbb R^2_+} (1,-2\pi y_2sgn\xi) e^{2\pi iy_1\xi-2\pi y_2|\xi|}\frac{\alpha}{\delta}G(\frac{(y_1,y_2)-(0,20)}{\delta^{1/2}})\chi_{vp}(y_1,y_2)dy_1dy_2,
	\end{align*}
	which gives $\operatorname{supp} b^\delta_\xi\subseteq[2,3]$ and $|b^\delta_\xi(z)|\leq Ce^{-10|\xi|}.$ Therefore, we obtain the desired result.
\end{proof}

\smallskip
  
 \subsection{Energy functionals}
 
 In this subsection, we introduce the energy functionals.
The following energy functional $E_{vp}^\delta(t)$ is to describe the vorticity in the vicinity of the point vortex:
  \begin{align}\label{def: Evp(t)}
  	E_{vp}^\delta(t):=\sup_{\log\delta\leq\tau'<\log t}
  	\Big( \|\mathcal W_R (\tau')\|_{L^2(m)}
  	+\|\nabla_\eta\mathcal W_R(\tau')\|_{L^2(m)}\Big),
  \end{align}
where $ L^p(m)$ is defined by
  \begin{align*}
  	L^p(m):=\Big\{f\in L^p(\mathbb R^2)
  	:\|f\|^p_{L^p(m)}=\int_{\mathbb R^2}|f(\eta)|^p\langle\eta\rangle^{pm}d\eta<+\infty \Big\},\quad 1\leq p<+\infty.
  \end{align*}

 We utilize the following energy functional $E_b^\delta(t)$ to describe the vorticity in the vicinity of the boundary:
\begin{align}\label{def: Eb(t)}
	E_b^\delta(t):=\left\|(1,x)\big(\omega^\delta(t)-\omega_c^\delta(t)\big)\right\|_{Y_1(t)\cap Y_2(t)},
\end{align}
where $Y_k(t)$ is defined by
\begin{align}\label{def: Yk(t)}
	\|f\|_{Y_k(t)}
	=\sup_{\mu<\mu_0-\gamma t}
	\Big(\sum_{i+j\leq2}\|\pa_x^i(y\pa_y)^jf\|_{Y^k_{\mu,t}}
	+(\mu_0-\mu-\gamma t)^\beta\sum_{i+j=3}\|\pa_x^i(y\pa_y)^jf\|_{Y^k_{\mu,t}}\Big),\quad k=1,2, 
\end{align}
 with $\mu_0=\frac{1}{10}, \beta\in(\frac{1}{2},1)$ and
\begin{align*}
	\|f\|_{Y^1_{\mu,t}}
	=\int_{-\infty}^{+\infty}\|e^{\eps_0(1+\mu-y)_+|\xi|}f_\xi\|_{\mu,t}d\xi,\quad
	\|f\|_{Y^2_{\mu,t}}
	=\Big(\int_{-\infty}^{+\infty}\|e^{\eps_0(1+\mu-y)_+|\xi|}f_\xi\|_{\mu,t}^2d\xi\Big)^{\frac{1}{2}},
\end{align*}
and
\begin{align*}
	\| f\|_{\mu,t}
	=\int_0^{1+\mu}e^{\eps_0(1+\mu)\frac{y^2}{t}}|f(y)|dy.
\end{align*}
Here, the parameter $\eps_0$ depends on the initial data and will be chosen later.

Last but not least, the following energy functional $E^\delta_m(t)$ is to describe the vorticity in the middle region:
   \begin{align}\label{def: Em(t)}
   	E_m^\delta(t):=&\sup_{0<s<t}\left\|e^\Psi\psi\chi_m\omega^\delta(s)\right\|_{L^2\cap L^4}+\left\|e^\Psi\psi \na(\chi_m\omega^\delta)\right\|_{L^2(0, t;L^2)}\\
   	\nonumber
   	&+e^{\frac{5\eps_0}{t}}\sup_{0<s<t}\|(1,x)\omega^\delta(s)\|_{H^4(\frac{7}{8}\leq y\leq 4)},
   \end{align}
   where
  \begin{align}\label{def: psi}
\psi(x,y)=y^2(1+|x|),\quad  \Psi(t,x,y)=\lim_{t_0\rightarrow0}\Psi_{t_0}(t,x,y),
  \end{align}
  with
     \begin{align*}
  	\Psi_{t_0}(t,x,y)=\frac{20\eps_0}{t+t_0}\big(1-\gamma t-\theta(x,y)\big)_+^2,
  \end{align*}
and $\theta(x,y)$ is defined as 
  \begin{align}\label{def: theta}
  	\theta(x,y)=
  	\left\{
  	\begin{aligned}
  		&1,\quad|(x,y)-(0,20)|\leq4 \ \text{or}\ y\leq\frac{3}{8},\\
  		&\frac{1}{4},\quad|(x,y)-(0,20)|\geq5 \ \text{and}\ y\geq\frac{1}{2}.
  	\end{aligned}
  	\right.
  \end{align}
 Here,  $\gamma, \frac1{\eps_0}$ are large quantities which will be determined later. Define $\Gamma(t):=\{(x,y)\in\mathbb R^2_+: 1-\gamma t-\theta(x,y)\geq0\}$.

  Now, we are in a position to define the total energy functional $E^\delta(t)$:
\begin{align}\label{def: E(t)}
	E^\delta(t):=E^\delta_{vp}(t)
	+E^\delta_m(t)
	+E^\delta_b(t).
\end{align}

At the end of this subsection, we enumerate uniform estimates for the energy functionals defined above.

\begin{proposition}\label{prop: uniform estimates for E(t)}
	There exist positive constants $T_0, \gamma_0, C$ independent of $\delta$, such that for all $0<t_0<T_0$ and $\gamma>\gamma_0$, we have
	\begin{align}
		\sup_{0<\delta\leq\delta_0}E^\delta(t)\leq C,
		\qquad
		\lim_{t\rightarrow0}E^\delta(t)\leq C\big( \delta^{1/2}+\gamma^{-1/2} \big).
	\end{align}
\end{proposition}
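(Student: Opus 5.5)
The proof will be a continuity (bootstrap) argument on $E^\delta(t)$. For fixed $\delta$ the solution is smooth, so $E^\delta(t)$ is continuous in $t$. By Lemma \ref{lem: eq of WR}, Lemma \ref{est of HR b} and the Gaussian localization of $\omega_0^\delta$, its small-time limit is $O(\delta^{1/2}+\gamma^{-1/2})$. We assume $E^\delta(s)\le 2C_1$ on $(0,t]$ and aim to prove
\[
E^\delta(t)\le C_0\big(\delta^{1/2}+\gamma^{-1/2}\big)+C(t^{1/2}+\gamma^{-1}+\eps_0)\,P(E^\delta(t)),
\]
with $P$ a polynomial. Choosing $\gamma$ large and $T_0$ small then closes the bootstrap uniformly in $\delta$.

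The first ingredient is a set of velocity bounds. From the decomposition $\omega=\chi_{vp}\omega+\chi_b\omega+\chi_m\omega+\dots$ and the Biot--Savart law we obtain:
\begin{itemize}
\item $\|BS_{\mathbb R^2_+}[\chi_b\omega]\|_{L^\infty}\le C(E_b+1)$, using $\|\chi_b\omega\|_{Y}$, Lemma \ref{est of omega c} for $\omega_c$, and the analog of \eqref{est of Ub intro};
\item a bound $C(E_m+1)$ for the contribution of $\chi_m\omega$;
\item boundedness of the contribution of $\chi_{vp}\omega$ away from the vortex, since $\operatorname{dist}(\operatorname{supp}\chi_{vp},\{y\le 3\})>0$;
\item $\|\mathcal V_R\|_{L^\infty}\lesssim\|\mathcal W_R\|_{L^2(m)\cap H^1}$ in self-similar variables.
\end{itemize}

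\emph{Step 1 (near the vortex).} Let $T_\alpha$ be the semigroup generated by $\mathcal L-\alpha(\mathcal V^G\cdot\nabla+\,\cdot\,\cdot\nabla G)$ on $L^2(m)$. Following \cite{Gallay 1,Gallagher}, $T_\alpha$ is uniformly bounded on $L^2(m)$ for $m>1$ (for $\tau\ge0$), with the smoothing bound $\|\nabla T_\alpha(\tau)f\|\lesssim a(\tau)^{-1/2}\|f\|$, where $a(\tau)=1-e^{-\tau}$; on the perturbation part we use dissipative energy estimates directly. We estimate Duhamel for \eqref{eq: mathcal W R} as follows:
\begin{itemize}
\item $F_1$, $F_4$ and the $G$-part of $F_6$ carry the factor $e^{\tau/2}=(t+\delta)^{1/2}$ (the reflected vortex at distance $40$, and velocities bounded away from the vortex), so they contribute $O(t^{1/2}+\delta^{1/2})$ after time integration.
\item $F_2$, $F_3$ are supported where $|\eta|\gtrsim 5e^{-\tau/2}$, so Gaussian decay gives $O(e^{-c/t})$.
\item $F_5$ is quadratic, $\lesssim E_{vp}^2$, with a small prefactor. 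The reflected term is small because of the distance; for $\mathcal V_R\cdot\nabla\mathcal W_R$ we need a factor $e^{\tau/2}$ or smallness. Here we expect to exploit $\lim E_{vp}\to0$ and continuity, so the quadratic term is $\le E_{vp}^2$, harmless when $E_{vp}$ is small.
\item $F_6,F_7$ are handled by the velocity bounds plus $E_m$ control of $\omega$ on the annulus.
\end{itemize}
This suggests the bootstrap constant should really be small, $\lesssim\delta^{1/2}+\gamma^{-1/2}+t^{1/2}$, matching the stated limit.

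\emph{Step 2 (middle region).} We run weighted $L^2$ and $L^4$ energy estimates for $e^\Psi\psi\chi_m\omega$. Since $\pa_t\Psi+|\nabla\Psi|^2\le -c\gamma\eps_0 t^{-1}(\cdots)$ once $\gamma\gg1$, the time derivative of the weight absorbs the drift term $U\cdot\nabla\Psi$, using the bound $|U|\lesssim t^{-1/2}$ from \eqref{uniform in whole R2+}. The commutator terms are supported where $\theta=1$, i.e. where $\Psi=0$, or near the vortex and boundary, where $E_{vp},E_b$ give exponentially small input. The $H^4(\frac78\le y\le4)$ bound with weight $e^{5\eps_0/t}$ follows from interior parabolic regularity together with $\Psi\ge 20\eps_0(1-\gamma t-\tfrac14)^2/t\ge 5\eps_0/t\cdot(\text{margin})$ on that region.

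\emph{Step 3 (boundary, the main obstacle).} We estimate $E_b$ via \eqref{integral eq of omega-omega c-1}, applying $(y\pa_y)^j\pa_x^i$ and using Lemma \ref{prop of R 1} and the properties of $H_\xi$ with the Gaussian $y$-weight. The initial term is controlled by Lemma \ref{est of HR b}, and the $\omega_c$ sources by Lemma \ref{est of omega c}. The boundary datum $B^\delta$ is estimated through $\pa_y\Delta_D^{-1}(U\cdot\nabla\omega)$, whose symbol gives $e^{-|\xi|z}$ decay and hence analyticity. The difficulty is the nonlinearity $\chi_bU\cdot\nabla(\omega-\omega_c)$ together with the $R_\xi$ kernel, which loses one $x$-derivative. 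We handle it by the shrinking analyticity radius $\mu<\mu_0-\gamma t$ and Lemma \ref{analytic recovery}: the loss produces $\int_0^t(\mu_0-\gamma s-\mu)^{-\beta}\cdots ds\lesssim\gamma^{-1}(\cdots)$, using $\beta<1$, for the top-order terms. The vertical derivative is replaced by $y\pa_y$ via $v=O(y)$ from the no-slip condition. Terms from $\chi_b'$ lie in $2\le y\le3$ and are fed by $E_m$ with $e^{-5\eps_0/t}$ smallness. Collecting the three steps yields the bootstrap inequality above and hence the proposition.
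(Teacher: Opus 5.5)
Your overall architecture coincides with the paper's. It is a continuity argument on $E^\delta=E^\delta_{vp}+E^\delta_m+E^\delta_b$, fed by three regional estimates: a Duhamel estimate with $T_\alpha$ near the vortex, a weighted energy estimate with $e^{\Psi}\psi$ in the middle region, and the analytic Duhamel formula with shrinking radius $\mu<\mu_0-\gamma t$ near the boundary. What this yields, however, is not your displayed target inequality. As you concede in Step 1, $\mathcal V_R\cdot\na_\eta\mathcal W_R$ carries no small prefactor. So you actually obtain the paper's form $E\le CE^2+C(t+\delta)^{1/4}+Ct^{1/4}(E+1)^8+C(\gamma^{-1/2}+t^{1/2})(E+1)^2$, which closes only because $E^\delta$ is small near $t=0$. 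That is also how the paper closes it.

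\textbf{Step 2 fails as written.} Integrating the transport term by parts leaves $\int U\cdot\na\Psi\,e^{2\Psi}\psi^2|\chi_m\om|^2$, with $|\na\Psi|\lesssim\eps_0t^{-1}(1-\gamma t-\theta)_+$. But $-\pa_t\Psi$ supplies only $40\eps_0\gamma t^{-1}(1-\gamma t-\theta)_+ +20\eps_0t^{-2}(1-\gamma t-\theta)_+^2$.

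With the crude bound $|U|\le Kt^{-1/2}$, where $K\gtrsim|\alpha|$ is not small, the drift density is $\eps_0Kt^{-3/2}(1-\gamma t-\theta)_+$. Once $t\ll K^2/\gamma^2$, neither good term controls it on the layer $\{0<1-\gamma t-\theta\lesssim Kt^{1/2}\}$. The best you can do is leave a remainder $C\eps_0K^2t^{-1}\|e^\Psi\psi\chi_m\om\|_{L^2}^2$. Its Gronwall factor $\exp\big(C\eps_0K^2\int_0^t ds/s\big)$ is infinite. Keeping the transport term and using the dissipation instead gives the same $t^{-1}$ remainder. This is exactly the smallness obstruction the paper's framework is built to avoid.

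\textbf{What is missing.} You need the uniform bound $\|U\|_{L^\infty(\operatorname{supp}\chi_m)}\le C(E+1)$, the first estimate of Lemma \ref{velocity estimates 3}. With it, the drift is absorbed by the $\gamma$-term as soon as $\gamma\gtrsim(1+E)\|\na\theta\|_{L^\infty}$.

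Your list of velocity bounds does not provide this. Your third bullet only handles $\chi_{vp}\om$ evaluated in $\{y\le3\}$. But $\operatorname{supp}\chi_m$ contains the annulus $3\le|(x,y)-(0,20)|\le6$, which overlaps $\operatorname{supp}\chi_{vp}$, so no distance argument applies there. Instead you must split $\chi_{vp}\om$ into two pieces:
\begin{itemize}
\item the rescaled Oseen vortex, whose velocity is bounded for $|(x,y)-(0,20)|\ge3$;
\item the $\mathcal W_R$ part, controlled by $\|\mathcal W_R\|_{L^1}$ and by the $\langle\eta\rangle^m$-weighted decay of $\mathcal W_R$ on $|\eta|\gtrsim(t+\delta)^{-1/2}$.
\end{itemize}
The contribution of the boundary vorticity is then bounded through the $Y_1$ norm, as in Lemma \ref{velocity estimates 2}.

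Relatedly, on $3\le|(x,y)-(0,20)|\le4$ the functional $E_{vp}$ gives only boundedness for the commutator terms, not exponentially small input. This is harmless, because those terms enter integrated in time and so pick up a factor $t$.
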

 The proof of Proposition \ref{prop: uniform estimates for E(t)} is structured into the following three parts.
\begin{proposition}\label{prop: uniform estimates for Evp(t)}
There exist positive constants $C, T_0>0$ independent of $\delta$,  such that for $0<t<T_0$, it holds that
		\begin{align}\label{est: Evp(t)}
			E_{vp}^\delta (t)\leq CE^\delta(t)^2
		+C(t+\delta)^{1/4}.
		\end{align}
\end{proposition}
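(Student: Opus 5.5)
The estimate will follow from a Duhamel representation of $\mathcal W_R$ with respect to the semigroup $T_\alpha(\tau)$ generated by
\[
\Lambda_\alpha:=\mathcal L-\alpha\mathcal V^G\cdot\nabla_\eta-\alpha BS_{\mathbb R^2}[\,\cdot\,]\cdot\nabla_\eta G
\]
on $L^2(m)$. By Lemma \ref{lem: eq of WR}, and since $\mathcal W_R|_{\tau_0}=0$,
\[
\mathcal W_R(\tau)=\int_{\tau_0}^{\tau}T_\alpha(\tau-\tau')\sum_{i=1}^7F_i(\tau')\,d\tau'.
\]
From \cite{Gallay 1,Gallagher} I will use the following facts about $T_\alpha$ on $L^2(m)$ with $m>1$. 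It is uniformly bounded on $L^2(m)$ for $\tau\ge0$; the boundedness holds for all $\alpha$ because the perturbation $\alpha(\mathcal V^G\cdot\nabla+BS[\cdot]\cdot\nabla G)$ is skew-adjoint in the Gaussian weighted space, and this is exactly what removes any smallness of $\alpha$. It also satisfies the smoothing bound
\[
\|\nabla T_\alpha(\tau)f\|_{L^2(m)}\le C a(\tau)^{-1/2}\|f\|_{L^2(m)},\qquad a(\tau)=1-e^{-\tau},
\]
together with its divergence form $\|T_\alpha(\tau)\nabla\cdot f\|\le Ca(\tau)^{-1/2}\|f\|$. Because the time interval $[\tau_0,\log t]$ has length $\log((t+\delta)/\delta)$, which is unbounded, I cannot simply integrate $a^{-1/2}$. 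I will instead work on the subspace of zero mean, where $T_\alpha$ decays like $e^{-\tau/2}$. Each $F_i$ is either a divergence or has vanishing integral up to an exponentially small error, because $\int\mathcal W_R$ stays small. With this decay, integrals of the type $\int_{\tau_0}^\tau e^{-(\tau-\tau')/2}a(\tau-\tau')^{-1/2}e^{\kappa\tau'}\,d\tau'$ come out $\lesssim e^{\kappa\tau}=(t+\delta)^{\kappa}$.

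I then bound each forcing term, measuring time by $e^{\tau'}=t'+\delta$.

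$F_1$: on $\operatorname{supp}\chi_{vp}$, the fields $BS[(1-\chi_{vp})G]$, its reflection, and $\mathcal V^G(\eta+40e^{-\tau/2}e_2)$ are $O(e^{\tau/2})$ in $L^\infty$ with bounded derivatives. The reflected image point sits at distance about $40e^{-\tau/2}$, and $(1-\chi_{vp})G$ is Gaussian-small, so this gives $O(e^{\tau/2})(1+E)$.

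$F_2$ and $F_3$: these are supported where $|\eta|\gtrsim 5e^{-\tau/2}$, so the Gaussian gives $O(e^{-c/(t+\delta)})$. In $F_3$, I will use $\|\mathcal V_R\|_{L^\infty}\lesssim\|\mathcal W_R\|_{L^2(m)}^{1/2}\|\nabla\mathcal W_R\|_{L^2(m)}^{1/2}$.

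$F_4$: this is $O(e^{\tau/2})E$, since the reflected field is smooth and of size $e^{\tau/2}\|\mathcal W_R\|$ near the support.

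$F_5$: in divergence form $\nabla\cdot(\mathcal V_R\mathcal W_R)$, it is bounded by $E^2$.

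$F_6$: here $\|BS_{\mathbb R^2_+}[(1-\chi_{vp})\omega]\|_{L^\infty(\operatorname{supp}\chi_{vp})}\lesssim\|\omega\|_{L^1}+\|\chi_b\omega\|_{Y}+E_m$, using Lemma \ref{velocity estimates 2}, the separation of supports, and the bound $\|\omega\|_{L^1}\le C$. This yields $e^{\tau/2}C(1+E)\cdot(1+E)$.

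$F_7$: this lives in the annulus $5\le|X-X_0|\le6$, which is contained in the region where $\chi_m=1$. There $E_m$ carries the weight $e^{\Psi}$ with $\Psi\gtrsim\varepsilon_0/t$, so this term is $O(e^{-c/t})E$.

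Combining these, each time integral gives a factor $(t+\delta)^{1/2}$ or better. This yields
\[
E_{vp}\lesssim E^2+(t+\delta)^{1/2}(1+E+E^2)\le CE^2+C(t+\delta)^{1/4},
\]
where the linear and quadratic terms carrying a factor of $t$ are absorbed by Young's inequality, e.g. $(t+\delta)^{1/2}E\le(t+\delta)^{1/4}+(t+\delta)^{3/4}E^2$.

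The main obstacle is this absorption step combined with the long time interval. I must check that every linear-in-$E$ term really carries a positive power of $t+\delta$. In particular, the $F_6$ and $F_4$ terms need the sharper $O(1)$ bound on the boundary velocity rather than the $t^{-1/2}$ bound. I must also check that the cutoff $\chi_{vp}$ does not destroy the zero-mean and decay structure of $T_\alpha$. If the mean is not controlled, I will split $\mathcal W_R$ into its mean times $G$ plus a zero-mean part, and estimate the mean directly from the integrated equation, where only the boundary terms $F_6$ and $F_7$ contribute at size $(t+\delta)^{1/2}$.
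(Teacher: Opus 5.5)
Your proposal is correct and follows essentially the same route as the paper. Both arguments use the Duhamel formula with respect to $T_\alpha$ and then bound $F_1$--$F_7$ term by term: the $e^{\tau'/2}$ gain is integrated over $[\tau_0,\tau]$, the divergence-form decay estimate \eqref{semigroup est 3} (which is exactly your zero-mean decay) handles the quadratic term $F_5$, the $O(1)$ bound on the boundary-induced velocity from Lemma \ref{velocity estimates 2} handles $F_6$, and the weight $e^{\Psi}$ handles $F_7$.

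Two details are left implicit. First, the gradient half of $E_{vp}$ needs, for $F_5$, the non-divergence form combined with smoothing plus zero-mean decay; the paper instead splits the time integral at $\tau-\log 2$. Second, $E_m$ controls $\nabla\omega$ on the annulus only in $L^2_t$, so the $F_7$ bound comes after Cauchy--Schwarz in time rather than pointwise in time.
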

	
\begin{proposition}\label{prop: uniform estimates for Em(t)}
	There exist positive constants $C, T_0>0$ independent of $\delta$,  such that for $0<t<T_0$, it holds that
	\begin{align*}
			E_m^\delta(t) \leq C t^{1/4}\big( E^\delta(t)+1\big)^8.
		\end{align*}
\end{proposition}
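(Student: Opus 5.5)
We will estimate the three pieces of $E_m^\delta(t)$ separately by weighted energy estimates on the vorticity equation \eqref{eq: NS vorticity}, localized with $\chi_m$. The smallness $t^{1/4}$ will come from the Gaussian-type weight $e^\Psi$. On the support of $\chi_m$ we have $\theta\le 1$, and on the support of $\nabla\chi_m$ or of the cut-off error terms we have $\theta=1$. There $(1-\gamma t-\theta)_+=0$, so $\Psi=0$, and the forcing consists only of $\omega^\delta$ near $\{y\approx 1/4\text{–}3/8\}$ or near the annulus $3\le|X-X_0|\le4$. On these regions the bounds from $E_b^\delta$ (via the Gaussian factor $e^{\eps_0(1+\mu)y^2/t}$) and from $E_{vp}^\delta$ (via the polynomial weight $\langle\eta\rangle^m$ with $|\eta|\sim (t+\delta)^{-1/2}$) give decay of order $t^{m/2}$ or $e^{-c/t}$. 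Hence the data are small, and the initial value is zero since $\chi_m\omega_0^\delta=0$.

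\emph{Step 1 (weighted $L^2$ and $L^4$).} Set $f=\chi_m\omega^\delta$. It satisfies $\pa_t f+U\cdot\nabla f-\Delta f=U\cdot\nabla\chi_m\,\omega^\delta-2\nabla\chi_m\cdot\nabla\omega^\delta-\Delta\chi_m\,\omega^\delta$. Multiply by $e^{2\Psi}\psi^2 f$ (resp. $e^{4\Psi}\psi^4|f|^2f$) and integrate. The key identity is that $\pa_t\Psi+2|\nabla\Psi|^2\le0$ after choosing the constant $20\eps_0$ and $\gamma$ large. Indeed, $\pa_t\Psi=-\frac{\Psi}{t}-\frac{40\eps_0\gamma}{t}(1-\gamma t-\theta)_+$ while $|\nabla\Psi|^2\lesssim\frac{\eps_0^2}{t^2}(1-\gamma t-\theta)_+^2|\nabla\theta|^2$, and $\eps_0$ is small. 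Thus the time derivative of the weight absorbs the commutator from the Laplacian. The drift term produces $\int U\cdot\nabla(\Psi+\log\psi)e^{2\Psi}\psi^2 f^2$. Since $\nabla\Psi$ is supported where $\theta<1$, i.e. away from the vortex and the boundary, we bound $U$ there in $L^\infty$ uniformly, as explained in the introduction: split $U=BS_{\mathbb R^2_+}[\chi_b\omega]+BS_{\mathbb R^2_+}[\chi_{vp}\omega]+BS[\chi_m\omega]$. The first term uses \eqref{est of Ub intro} and the $Y$-norm, i.e. $E_b^\delta$. The second uses the positive distance to the support of $\chi_{vp}$ and the $L^1$ bound. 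The third uses $L^2\cap L^4$ of $f$ itself (weight $\psi\ge c$ on $\operatorname{supp}\chi_m$) by Hardy–Littlewood–Sobolev. This gives $|U|\,|\nabla\Psi|\lesssim (E+1)\,\eps_0\gamma^{0}t^{-1}(1-\gamma t-\theta)_+$, which is dominated by the $\gamma$-term in $-\pa_t\Psi$ once $\gamma\gg C(E+1)$. The remaining terms, including $\nabla\psi/\psi$, are bounded by $C(E+1)^k$ times the energy, and Gronwall over $[0,t]$ gives an $O(t^{1/2})$ factor.

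\emph{Step 2 ($H^4$ away from the boundary).} On the strip $7/8\le y\le4$ we have $\theta=1/4$ outside the vortex ball, so $\Psi\ge \frac{20\eps_0}{t}(3/4-\gamma t)^2\ge \frac{10\eps_0}{t}$ for $t\le T_0$. Step 1 then gives $\|\omega^\delta\|_{L^2}\lesssim e^{-10\eps_0/t}(E+1)^k$ there. Parabolic interior regularity on nested slightly larger strips (still inside $\theta=1/4$), bootstrapped four times using the $L^\infty$ bound on $U$ and its derivatives from the same Biot–Savart splitting, upgrades this to $H^4$ with the factor $e^{-9\eps_0/t}$. This beats $e^{5\eps_0/t}$ with room $e^{-4\eps_0/t}\le t^{1/4}$. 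The weight $(1,x)$ is handled because $\psi$ contains $(1+|x|)$.

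\emph{Expected obstacle.} The main difficulty is the drift term with the merely critical velocity. One must verify that the Biot–Savart pieces are genuinely $L^\infty$-bounded, uniformly in $\delta$, on $\operatorname{supp}\nabla\Psi$, including the nonlocal contribution of $\chi_m\omega$ itself, which is only controlled through $E_m$. This is where the power $(E+1)^8$ accumulates: products arise from $L^4$ interpolation and from the repeated $H^4$ bootstrap. The choice $\gamma>\gamma_0(E)$ is needed to close the argument.
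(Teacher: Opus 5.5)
Your overall scheme is the paper's. You run a weighted energy estimate for $\chi_m\omega^\delta$ with multiplier $e^{2\Psi}\psi^2\chi_m\omega^\delta$ (plus its $L^4$ analogue). The Laplacian commutator is absorbed by $-\pa_t\Psi$ for small $\eps_0$, and the cut-off commutators live where $\Psi=0$ and are bounded through $E_b$ and $E_{vp}$. Smallness comes from integrating over $[0,t]$. The $H^4$ piece follows from $\Psi\ge 10\eps_0/t$ on $\{7/8\le y\le 4\}$ plus local parabolic regularity, which the paper delegates to \cite{HWYZ}.

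However, the step you single out as the crux, the uniform $L^\infty$ bound on $U$, is justified incorrectly. We have $\operatorname{supp}\nabla\Psi\subseteq\{4\le|X-X_0|\le5\}\cup\{3/8\le y\le1/2\}$ and $\operatorname{supp}\chi_m\supseteq\{|X-X_0|\ge4,\ y\ge3/8\}$. Both sets meet $\{4\le|X-X_0|\le6\}$, where $\chi_{vp}\neq0$; in fact $\chi_{vp}=1$ on the whole annulus $4\le|X-X_0|\le5$. So there is no positive distance to $\operatorname{supp}\chi_{vp}$, and $\|\chi_{vp}\omega^\delta\|_{L^1}$ does not control $BS_{\mathbb R^2_+}[\chi_{vp}\omega^\delta]$ there. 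You need the finer splitting of Lemma~\ref{velocity estimates 3}:
\begin{itemize}
\item subtract the Lamb--Oseen profile $\chi_{vp}\frac{\alpha}{t+\delta}G(\eta)$, whose velocity is explicit and bounded away from $X_0$;
\item localize the remainder to a ball of radius less than $3$, which lies at positive distance from $\operatorname{supp}\chi_m$, so $L^1$ suffices;
\item bound the remainder on the outer annulus by $\|U\|_{L^\infty}\le C_0\|\cdot\|_{L^{4/3}}^{1/2}\|\cdot\|_{L^4}^{1/2}$, using $\langle\eta\rangle^m\gtrsim(t+\delta)^{-m/2}$ there, so $E_{vp}$ enters with the factor $(t+\delta)^{m/2-5/8}$.
\end{itemize}
Two further corrections in the same splitting. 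First, $\psi\ge c$ only gives $L^2\cap L^4$ for the middle piece, which does not bound a two-dimensional Biot--Savart integral in $L^\infty$. It is the growth of $\psi$, i.e.\ $\psi^{-1}\in L^4(\operatorname{supp}\chi_m)$, that yields the needed $L^{4/3}$ bound. Second, the boundary piece must be $\zeta_1\omega^\delta$ with $\operatorname{supp}\zeta_1\subseteq\{y\le 1/2\}$, not $\chi_b\omega^\delta$, because $Y_1(t)$ only sees $y<1+\mu$. Also, $\chi_b,\chi_{vp},\chi_m$ do not form a partition of unity.

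Once $\|U\|_{L^\infty(\operatorname{supp}\chi_m)}\le C_0(E+1)$ is available, the paper does not integrate the transport term by parts. It bounds the term by $\|U\|_{L^\infty(\operatorname{supp}\chi_m)}\|e^\Psi\psi\nabla(\chi_m\omega)\|_{L^2}\|e^\Psi\psi\chi_m\omega\|_{L^2}$ and absorbs the gradient into the dissipation. This leaves $C_0(E+1)^4$ with no condition on $\gamma$. Your route requires $\gamma\gtrsim E+1$, which is harmless inside the continuity argument but makes the estimate conditional. Moreover, the $\nabla\log\psi$ term forces you to control $U$ on all of $\operatorname{supp}\chi_m$ anyway, so localizing to $\operatorname{supp}\nabla\Psi$ gains nothing. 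The extra decay you attribute to the cut-off forcing is not needed; boundedness by $C_0(E+1)^3$ suffices, as in the paper.

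Finally, your assertion $\chi_m\omega_0^\delta=0$ is false with \eqref{initial data mollify}. The datum is truncated only at $|X-X_0|\le 6$, while $\chi_m=1$ on $\{|X-X_0|\ge4,\ y\ge3/8\}$. On $\{|X-X_0|\ge5\}$ one has $\Psi(s)\to\infty$ as $s\to0$, so $\|e^{\Psi(s)}\psi\chi_m\omega^\delta(s)\|_{L^2}$ blows up rather than vanishes as $s\to0$. The paper's proof (``integrating from $0$ to $t$ and letting $t_0\to0$'') passes over the same point. One repair is to cut off the mollified datum smoothly inside $\{|X-X_0|<4\}$. Gaussian tail bounds for the regularized solution then give $e^{\Psi(s)}\psi\chi_m\omega^\delta(s)\to0$ as $s\to0$ once $\eps_0$ is small.
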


\begin{proposition}\label{prop: uniform estimates for Eb(t)}
			There exist positive constants $C, T_0>0$ independent of $\delta$,  such that for $0<t<T_0$, it holds that
		\begin{align}\label{est: Eb(t)}
			E_b^\delta(t)\leq C(\gamma^{-1/2}+t^{1/2})    \big( E^\delta(t)+1\big)^2.
		\end{align}
\end{proposition}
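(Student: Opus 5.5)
We prove Proposition \ref{prop: uniform estimates for Eb(t)} from the integral representation \eqref{integral eq of omega-omega c-1}. Multiplying by $(1,x)$ means differentiating in $\xi$ on the Fourier side, which produces only harmless commutators with the kernels. We estimate each of its three pieces in the norm $Y_1(t)\cap Y_2(t)$.

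\textbf{The initial datum term.} By Lemma \ref{est of HR b}, the term containing $b^\delta$ is bounded by $Ce^{-C'/t}$. This is admissible once $\eps_0(1+\mu_0)\le C'$ and $\eps_0(1+\mu_0)\le C'$ in the $\xi$-weight. It is therefore $\le Ct^{1/2}$.

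\textbf{The forcing term $N^\delta$.} We split $N^\delta$ into pieces.
\begin{itemize}
\item[(a)] The transport term $-\chi_bU\cdot\nabla(\omega^\delta-\omega^\delta_c)$.
\item[(b)] The terms containing $\omega_c^\delta$, namely $\chi_bU\cdot\nabla\omega_c^\delta$, $\chi_b\pa_x^2\omega^\delta_c$ and the $\chi_b'$-commutators with $\omega_c^\delta$.
\item[(c)] The commutators $\chi_b''\omega^\delta+2\chi_b'\pa_y\omega^\delta$.
\end{itemize}

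Since $\chi_b'$ and $\chi_b''$ are supported in $2\le y\le3$, terms (c) are controlled by the $H^4(\frac78\le y\le4)$ part of $E_m^\delta$. That part carries the factor $e^{5\eps_0/t}$, which dominates both the Gaussian weight $e^{\eps_0(1+\mu)y^2/t}$ and the kernel growth from $z\in[2,3]$ to $y\le 1+\mu$. So (c) is bounded by $Ct\,E^\delta(t)$.

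For (a) and (b), the key input is the velocity bound $\|U(s)\|_{L^\infty(y\le3)}\le C(1+E^\delta(s))$. To obtain it, we decompose $U=BS[\chi_b\omega]+BS[\chi_{vp}\omega]+BS[\chi_m\omega]+\dots$. We bound the first piece by Lemma \ref{velocity estimates 2} together with $\|\chi_b\omega\|_{Y}\le E_b^\delta+\|\omega_c^\delta\|_Y$, using Lemma \ref{est of omega c} for $\omega_c^\delta$. The second piece is bounded by separation of supports, and the third by $E_m^\delta$. We also need analyticity of $U$ in $x$ with the same weight $e^{\eps_0(1+\mu-y)_+|\xi|}$. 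This follows because the contributions away from the boundary are harmonic there, hence analytic.

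The Gaussian kernel $H_\xi$ preserves the weights $e^{\eps_0(1+\mu)y^2/t}$ up to shrinking $\mu$. The $R_\xi$ part has the pointwise bounds of Lemma \ref{prop of R 1}, including an $\langle\xi\rangle e^{-\theta_0\langle\xi\rangle(y+z)}$ piece that loses one derivative. The derivatives $\pa_x$ and $y\pa_y$ falling on $\omega-\omega_c$ in (a) are absorbed by Lemma \ref{analytic recovery}, at the cost of a factor $(\widetilde\mu-\mu)^{-1}$ with $\widetilde\mu=\mu_0-\gamma s$ after a Cauchy–Kovalevskaya-type choice of $\widetilde\mu$. Using the weight $(\mu_0-\mu-\gamma t)^\beta$ on the top-order derivatives, the time integral yields
\[
\int_0^t\frac{ds}{(\mu_0-\mu-\gamma s)^{1+\beta}}\lesssim \frac{1}{\gamma(\mu_0-\mu-\gamma t)^\beta},
\]
and with $\beta<1$ this gives a factor $\gamma^{-1}$ or $\gamma^{-1/2}$. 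This is the source of $\gamma^{-1/2}(E^\delta+1)^2$.

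The $\omega_c^\delta$-terms in (b) are explicit. Lemma \ref{est of omega c} bounds them by $s^{-k/2}$, and integrating against the heat kernel gives $t^{1/2}(1+E^\delta)$.

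\textbf{The boundary term $B^\delta$.} The piece $(\pa_y+|D_x|)\omega_c^\delta|_{y=0}$ is bounded by $Cs^{-1/2}$ via \eqref{boundedness of omega c 2}. Since $\int_0^t s^{-1/2}(t-s)^{-1/2}\,ds$ is bounded and the kernel $H_\xi(t-s,y,0)$ has $L^1_y$ norm $O(1)$, this piece needs care. We exploit the fact that $\omega_c^\delta$ was designed to have exactly matching boundary flux, so the singular parts cancel and leave a remainder bounded by $t^{1/2}$.

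The piece $\pa_y\Delta_D^{-1}(U\cdot\nabla\omega)|_{y=0}$ is written as $\int_0^\infty e^{-|\xi|z}(U\cdot\nabla\omega)_\xi(z)\,dz$. Near the boundary it is handled like (a). For $z$ large, the factor $e^{-|\xi|z}$ supplies the analyticity, and the integrand is controlled by $E_{vp}^\delta$ and $E_m^\delta$.

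Collecting the three contributions gives \eqref{est: Eb(t)}. The \textbf{main obstacle} is the derivative loss in (a), and in the $|D_x|$ part of $R_\xi$ acting on the nonlinear term. We would first try to close it with the shrinking analytic radius $\mu<\mu_0-\gamma t$ and the $\beta$-weighted top norm as above.
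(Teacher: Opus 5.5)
\textbf{Main gap: the vortex contribution to the boundary flux.} Your overall scheme matches the paper's: the Fourier-side Duhamel formula, the shrinking analytic radius, the $\beta$-weighted top-order norm, and Lemma \ref{integral computation}. The problem is the far-field part of $B^\delta$, namely $\int_{1+\mu}^{\infty}e^{-|\xi|z}(U\cdot\nabla\omega)_\xi\,dz$. You say its integrand is controlled by $E^\delta_{vp}$ and $E^\delta_m$, but this region contains the vortex. In self-similar variables,
\[
\|U\cdot\nabla\omega\|_{L^1(\operatorname{supp}\chi_{vp})}\approx\frac{\alpha^2}{s+\delta}\,\big\|(\mathcal V^G+\mathcal V_R+\cdots)\cdot\nabla_\eta(G+\mathcal W_R)\big\|_{L^1_\eta}.
\]
After the exact cancellation $\mathcal V^G\cdot\nabla_\eta G=0$, terms such as $\mathcal V^G\cdot\nabla_\eta\mathcal W_R$ and $\mathcal V_R\cdot\nabla_\eta G$ remain. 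These are only $O((s+\delta)^{-1}E_{vp})$, and the bootstrap gives no decay rate for $E_{vp}$.

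The boundary kernel carries no time decay. Lemma \ref{est of HB} bounds this contribution by $\int_0^t\|B(s)\|\,ds$, and at top order by $(\mu_0-\mu-\gamma t)^\beta\int_0^t(\mu_0-\mu-\gamma s)^{-1}\|B(s)\|\,ds$. An $(s+\delta)^{-1}$ bound therefore produces $\log(1+t/\delta)$, which is not uniform in $\delta$.

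The missing idea is the divergence structure used in Lemma \ref{lem: estimate of B1}:
\begin{itemize}
\item Write $U\cdot\nabla\omega=\operatorname{div}(U\omega)$ and integrate by parts in $z$, with $\pa_x$ becoming a factor of $\xi$. The derivative then lands on $e^{-|\xi|z}$, which is harmless for $z\ge1$.
\item Only $\|U\omega\|_{L^1(y\ge1)}\le\|U\|_{L^\infty(y\ge1)}\|\omega\|_{L^1(y\ge1)}\le Cs^{-1/2}(E+1)^2$ survives, by Lemma \ref{velocity estimates 3}.
\item The boundary term at $z=1+\mu$ is rewritten as $\int_0^{1+\mu}\pa_z(v\omega)_\xi\,dz$, using $v|_{y=0}=0$ and $\pa_yv=-\pa_xu$, and handled by the near-boundary product estimates.
\end{itemize}
This $s^{-1/2}$ is what produces $t^{1/2}$, and, through the last inequality of Lemma \ref{integral computation}, $\gamma^{-1/2}$.

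\textbf{Secondary issue: the $\omega_c$ flux term.} Your treatment of $(\pa_y+|D_x|)\omega^\delta_c|_{y=0}$ relies on a cancellation that does not exist, although the bound you want holds for a simpler reason. Since $\pa_tu^\delta_c=\pa_y^2u^\delta_c$ and $u^\delta_c|_{y=0}=0$, you have $\pa_y\omega^\delta_c|_{y=0}=-\pa_tu^\delta_c|_{y=0}=0$. So the term equals $|\xi|(\omega^\delta_c)_\xi|_{y=0}\approx-|\xi|(u^\delta_0)_\xi(\pi s)^{-1/2}$, which is genuinely of size $s^{-1/2}$. The corrector removes the initial slip; it does not match the flux.

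No cancellation is needed. With the weight $e^{\eps_0(1+\mu)y^2/t}$, $t-s\le t$ and $\eps_0$ small, the kernels $(y\pa_y)^j(H_\xi+R_\xi)(t-s,\cdot,0)$ are bounded in $L^1_y$ uniformly in $t-s$; for $R_\xi$ use Lemma \ref{prop of R 1}(3). There is no factor $(t-s)^{-1/2}$, so the integral $\int_0^ts^{-1/2}(t-s)^{-1/2}ds$ you worry about never appears. By \eqref{boundedness of omega c 2} the contribution is directly $\lesssim\int_0^ts^{-1/2}ds\lesssim t^{1/2}$.

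\textbf{Minor point.} In the Cauchy--Kovalevskaya step the intermediate radius must be interior, e.g.\ $\mu_1=\mu+\frac12(\mu_0-\mu-\gamma s)$ as in the paper. At $\widetilde\mu=\mu_0-\gamma s$ the top-order part of $Y_k(s)$ gives no control.
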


We postpone the proofs of Proposition \ref{prop: uniform estimates for Evp(t)} $\sim$ Proposition \ref{prop: uniform estimates for Eb(t)} to later sections.

\medskip

\noindent\underline{\bf Proof of Proposition \ref{prop: uniform estimates for E(t)}.}  Armed with Proposition \ref{prop: uniform estimates for Evp(t)} $\sim$ Proposition \ref{prop: uniform estimates for Eb(t)}, we turn to prove Proposition \ref{prop: uniform estimates for E(t)}. Collecting the estimates above together, we have
\begin{align*}
	E^\delta(t)
	&\leq CE^\delta(t)^2
		+C(t+\delta)^{1/4}
		+Ct^{1/4}\big( E^\delta(t)+1\big)^8
		+C(\gamma^{-1/2}+t^{1/2})    \big( E^\delta(t)+1\big)^2,
\end{align*}
where $C$ is independent of $\delta$.  Choosing $t, \delta$ small enough and $\gamma$ large enough, we deduce that for $T_0$ small enough, Proposition \ref{prop: uniform estimates for E(t)} holds by a continuous argument.
\ef

Based on Proposition \ref{prop: uniform estimates for E(t)}, we have following two corollaries, which derive uniform estimates for the vorticity and are utilized to establish the convergence results in Section 4. 
\begin{corollary}\label{cor:E}
	There exist positive constants $T_0, C$ independent of $\delta$,  such that for $0\leq t\leq T_0$, $k=1,2$, it holds that
	\begin{align*}
		&\sum_{i+j\leq2}\left\|\langle\xi\rangle^{1+i}(y\pa_y)^j(\chi_b\omega^\delta-\chi_b\omega^\delta_c)_\xi(t)\right\|_{L^k_\xi L^1_y}
		+\left\|\langle\xi\rangle^{1+i}(y\pa_y)^j \pa_\xi(\chi_b\omega^\delta-\chi_b\omega^\delta_c)_\xi(t)\right\|_{L^k_\xi L^1_y}
		\leq C,\\
		&\qquad\qquad\qquad\sum_{i+j\leq2}\left\|\langle\xi\rangle^{1+i}(y\pa_y)^j \pa_y\big((1,x)(\chi_b\omega^\delta-\chi_b\omega^\delta_c)\big)_\xi(t)\right\|_{L^k_\xi L^1_y}
		\leq Ct^{-1/2} ,
	\end{align*}
	and
		\begin{align*}
		&\left\|\langle x\rangle^{1/4}\langle y\rangle\omega^\delta(t)\right\|_{L^1(y\geq1/2)}
		+t^{1/2}\left\|\langle x\rangle^{1/4}\langle y\rangle\nabla\omega^\delta(t)\right\|_{L^1(y\geq1/2)}\\
		&\qquad+t^{1/4}\left\|\langle x\rangle^{1/4}\langle y\rangle\omega^\delta(t)\right\|_{L^{4/3}(y\geq1/2)}
		+t^{3/4}\left\|\langle x\rangle^{1/4}\langle y\rangle\nabla\omega^\delta(t)\right\|_{L^{4/3}(y\geq1/2)}
		\leq C.
	\end{align*}
\end{corollary}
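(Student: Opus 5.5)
The corollary follows from Proposition \ref{prop: uniform estimates for E(t)}: for $0<t\le T_0$, $E^\delta(t)\le C$ uniformly in $\delta$. I would read off $E_b^\delta$ for the boundary bounds and $E_m^\delta$, together with the $E_{vp}^\delta$ control near the vortex, for the interior bounds.

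\emph{Boundary part.} Fix $\mu=\mu_0/2$, which is admissible for $t\le T_0$ with $\gamma T_0<\mu_0/2$. On $0\le y\le 1$ we have $(1+\mu-y)_+\ge\mu$, so the weight $e^{\eps_0(1+\mu-y)_+|\xi|}$ is at least $e^{\eps_0\mu|\xi|}$, which dominates $\langle\xi\rangle^{1+i}$ up to a constant depending on $\eps_0,\mu$. Hence the $Y_k(t)$ norms of $\pa_x^i(y\pa_y)^j(1,x)(\omega^\delta-\omega_c^\delta)$ control the $L^k_\xi L^1_y(0,1)$ norms of $\langle\xi\rangle^{1+i}(y\pa_y)^j(\cdot)_\xi$; the Gaussian weight $e^{\eps_0 y^2/t}$ is simply dropped. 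Since $(xf)_\xi=\frac{i}{2\pi}\pa_\xi f_\xi$, the weight $x$ produces the $\pa_\xi$ terms.

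For $1\le y\le 3$, where $\chi_b$ is supported, I would use the $H^4(\frac78\le y\le 4)$ part of $E_m^\delta$, which carries the factor $e^{5\eps_0/t}$. This controls $\langle\xi\rangle^{3}$ times $(1,x)\omega^\delta$ in $L^2_\xi$ and, via the weight in $x$ and Cauchy–Schwarz, also in $L^1_\xi$. The $\omega_c^\delta$ contribution is handled by Lemma \ref{est of omega c}, and $\chi_b$ only contributes harmless commutators.

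For the $\pa_y$ estimate I would write $\pa_y=y^{-1}(y\pa_y)$ away from $y=0$. Near $y=0$ I expect the need to use the heat-kernel representation \eqref{integral eq of omega-omega c-1} with the $y\pa_y$, $\pa_y$ kernel bounds of Lemma \ref{prop of R 1}, which give the factor $t^{-1/2}$. The difficulty is that $Y(t)$ contains only $y\pa_y$ derivatives, so a bare $\pa_y$ bound near the wall has to be recovered from the Duhamel formula, using $|\pa_y H_\xi|\lesssim t^{-1/2}$ times a Gaussian together with the already established bounds on $N^\delta$ and $B^\delta$. This is the step I expect to be the main obstacle.

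\emph{Interior part.} For $y\ge 1/2$ I split into three regions. Where $\chi_m=1$ I use $E_m^\delta$: since $\theta=1/4$ there, $e^\Psi\ge e^{c\eps_0/t}\ge 1$ for small $t$, and $\psi\ge c(1+|x|)$. With Hölder's inequality, $L^2$ control with weight $\psi$ gives $L^1$ and $L^{4/3}$ control with weight $\langle x\rangle^{1/4}\langle y\rangle$, because $\langle x\rangle^{1/4}\langle y\rangle\psi^{-1}\in L^2\cap L^4$ on that region; the point is that $\langle x\rangle^{-3/4}y^{-1}$ is square integrable in $x$ and integrable in $y$ for $y\ge1/2$.

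Near the vortex I rescale: $\omega^\delta=\frac{\alpha}{t+\delta}(\chi_{vp}G+\mathcal W_R)$ with $\|\mathcal W_R\|_{L^2(m)}$ bounded. Then $\|\omega\|_{L^1}\le C\|\mathcal W\|_{L^1}\le C\|\mathcal W\|_{L^2(m)}$ for $m>1$, and $\|\omega\|_{L^{4/3}}=C(t+\delta)^{-1/4}\|\mathcal W\|_{L^{4/3}}$. The gradient brings an extra factor $(t+\delta)^{-1/2}$, exactly matching the powers $t^{1/4}$, $t^{1/2}$ and $t^{3/4}$ in the statement (using $t+\delta\ge t$).

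The strip $1/2\le y\le 7/8$ is covered by the boundary bounds just proven, using Sobolev embedding in $x$ via the $\langle\xi\rangle$ weights. For the gradient in the $\chi_m$ region, I would take the $L^2_tL^2$ gradient bound together with a local parabolic smoothing step to get pointwise-in-time bounds, or alternatively interpolate through the $H^4$ bound where it applies.
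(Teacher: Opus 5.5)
Your proposal is correct and follows essentially the paper's route: the weighted Fourier bounds are read off from $E_b^\delta$, and the $t^{-1/2}$ bound on $\pa_y$ comes from the Duhamel formula \eqref{integral eq of omega-omega c-1} together with the kernel bounds and Propositions \ref{prop: estimate of N in new norm} and \ref{prop: estimate of B}. The interior bounds then come, as in the paper, from splitting into the $\chi_m$ region (weighted $L^2$ plus H\"older) and the self-similar vortex region. Your use of the $H^4(\frac78\le y\le 4)$ part of $E_m^\delta$ on $1+\mu\le y\le 3$, which the $Y_k$ norms do not reach, makes explicit what the paper leaves implicit.

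The step you leave vague is the pointwise-in-time gradient bound where $\chi_m=1$. The paper does this with a direct $\psi^2$-weighted energy estimate on the $\pa_x$- (and $\pa_y$-) differentiated equation for $\chi_m\omega^\delta$. It uses Lemma \ref{velocity estimates 3} for the transport term, and the $Y_2$ and vortex bounds for the commutators supported on $\operatorname{supp}\nabla\chi_m$. Inserting a factor $t$ into that estimate, as your ``parabolic smoothing'' suggests, works just as well, gives exactly the required $t^{1/2}$ weight, and avoids any use of the initial gradient.
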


\begin{corollary}\label{prop: Sobolev estimate}
	There exist positive constants $T_0, C$ independent of $\delta$,   such that 
	\begin{align*}
		&e^{\frac{5\eps_0}{t}}\sup_{[0,t]}\|(1,x)\omega^\delta(s)\|_{H^4(\frac{7}{8}\leq y\leq 4)}
		\leq C  t^{\frac{1}{2}} , \qquad \|U^\delta(t)\|_{L^\infty( y\geq 2)}
		\leq C  t^{-\f12}, 
	\end{align*}	
	when $t\in (0, T_0]$.
\end{corollary}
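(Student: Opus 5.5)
The first bound is read off from the $E_m^\delta$ part of Proposition \ref{prop: uniform estimates for E(t)}; the second comes from the Biot--Savart law. For the Sobolev bound, the definition \eqref{def: Em(t)} gives
\[
e^{\frac{5\eps_0}{t}}\sup_{0<s<t}\|(1,x)\omega^\delta(s)\|_{H^4(\frac78\le y\le 4)}\le E_m^\delta(t).
\]
Proposition \ref{prop: uniform estimates for Em(t)} then yields $E_m^\delta(t)\le Ct^{1/4}(E^\delta(t)+1)^8$. Since Proposition \ref{prop: uniform estimates for E(t)} gives $\sup_\delta E^\delta(t)\le C$ on $(0,T_0]$, this already produces $Ct^{1/4}$ uniformly in $\delta$. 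To reach $t^{1/2}$, I would go back into the proof of Proposition \ref{prop: uniform estimates for Em(t)}. The $H^4$ part lives in the region $\frac78\le y\le 4$, away from both the vortex core and the boundary. There one can run a localized parabolic energy estimate for $\partial^\alpha\omega^\delta$, $|\alpha|\le4$, with the weight $e^{5\eps_0/t}$. The weight is dominated by $e^{\Psi}$, because $\theta=1/4$ there and so $\Psi\ge \frac{20\eps_0}{t}(\frac34-\gamma t)^2>\frac{5\eps_0}{t}$ for small $t$. All forcing terms then carry the exponentially small factor coming from the vanishing of the data there together with $\|e^\Psi\psi\chi_m\omega\|$, and the extra powers of $t$ come for free from the gap in the Gaussian weights. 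If this refinement resists, I would settle for shrinking $T_0$ and using the gap $e^{(\Psi-5\eps_0/t)}\ge e^{c/t}$ to absorb any power of $t$.

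For the velocity bound on $\{y\ge2\}$, split $\omega^\delta=\chi_{vp}\omega^\delta+\chi_b\omega^\delta+(1-\chi_{vp}-\chi_b)\omega^\delta$ and apply $BS_{\mathbb R^2_+}$ to each piece.

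The vortex piece. By \eqref{velocity self-similar} it equals $\frac{\alpha}{(t+\delta)^{1/2}}\{\mathcal V^\delta(\eta)-\widetilde{\mathcal V^\delta}(\eta+\cdot)\}$. Here $\mathcal V^\delta=BS_{\mathbb R^2}[\chi_{vp}G+\mathcal W_R]$. The standard interpolation bound $\|BS_{\mathbb R^2}f\|_{L^\infty}\le C\|f\|_{L^1}^{1/2}\|f\|_{L^\infty}^{1/2}$ applies, and $L^2(m)$ with $m>1$ embeds in $L^1$. The $L^\infty$ norm of $\mathcal W_R$ is controlled by the $H^1(m)$ norm, which $E_{vp}^\delta\le C$ bounds (possibly via $W^{1,p}$ interpolation; I would instead bound $\|\mathcal V_R\|_{L^\infty}\le C\|\mathcal W_R\|_{L^{4/3}\cap L^4}$ by Hardy--Littlewood--Sobolev, both norms being controlled by $H^1(m)$). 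Hence this piece is $\le C(t+\delta)^{-1/2}\le Ct^{-1/2}$.

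The boundary piece. Lemma \ref{velocity estimates 2} gives the bound $C\|\chi_b\omega^\delta\|_{Y(t)}$. Writing $\chi_b\omega^\delta=\chi_b(\omega^\delta-\omega^\delta_c)+\chi_b\omega^\delta_c$, the first term is bounded by $E_b^\delta$ and the second by Lemma \ref{est of omega c}, so this piece is $\le C$.

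The middle piece is supported where $\chi_m$-type weights control it. I would bound its velocity by $C\|f\|_{L^1}^{1/2}\|f\|_{L^\infty}^{1/2}$, or by the $L^{4/3}$--$L^4$ bound, using the $L^2\cap L^4$ control from $E_m^\delta$. Near $y\in[\frac14,\frac38]$ and on the annulus $3\le|X-X_0|\le 6$, where $\psi$ or $\chi_m$ degenerate, I would instead use Corollary \ref{cor:E}'s bounds $t^{1/4}\|\omega\|_{L^{4/3}}+\|\omega\|_{L^1}\le C$ and $t^{3/4}\|\nabla\omega\|_{L^{4/3}}\le C$. The Gagliardo--Nirenberg/HLS estimate $\|BS f\|_{L^\infty}\le C\|f\|_{L^{4/3}}^{1/2}\|\nabla f\|_{L^{4/3}}^{1/2}$ then gives $t^{-1/2}$.

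The main obstacle is this intermediate region, where neither the self-similar nor the boundary norm applies and the cutoffs overlap. It must be checked that the available weighted bounds produce exactly $t^{-1/2}$ and not a worse power. I would handle it first, using the $L^{4/3}$ bounds of Corollary \ref{cor:E}.
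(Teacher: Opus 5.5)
Your treatment of the Sobolev bound follows the paper. The paper also obtains it by rerunning the localized higher-order energy estimate of Proposition 3.6 in \cite{HWYZ}, which gives $e^{5\eps_0/t}\sup_{s<t}\|(1,x)\omega\|_{H^4(\frac78\le y\le4)}\le Ct^{1/2}(1+E(t))^8$. Your observation is exactly the mechanism: on the strip $\theta=\frac14$, so $\Psi\ge\frac{20\eps_0}{t}(\frac34-\gamma t)^2$, which beats $\frac{5\eps_0}{t}$ by a factor $e^{c\eps_0/t}$ that absorbs every power of $t$. The ingredient you leave implicit is control of the drift and commutator terms $\partial^\beta U\cdot\nabla\partial^{\alpha-\beta}\omega$ on the strip. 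This is the second estimate of Lemma \ref{velocity estimates 3}, which bounds $\|\partial_x^i\partial_y^jU\|_{L^\infty(a\le y\le b)}$ by $E+1$ plus $\|\omega\|_{H^{i+1}}$ on a slightly wider strip. The localized estimates therefore have to be run inductively on nested strips, which is why the paper invokes that lemma.

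The velocity bound has a concrete error in the boundary piece. Lemma \ref{velocity estimates 2} is stated for $BS_{\mathbb R^2_+}[\zeta_1\omega^\delta]$, with $\zeta_1$ supported in $y\le\frac12$, not for $\chi_b\omega^\delta$. Moreover, the norms $Y_k$, and hence $E_b^\delta$, only see $0<y<1+\mu<\frac{11}{10}$. On the strip $1+\mu<y\le 3$, where $\chi_b\omega^\delta$ is still present, neither $E_b^\delta$ nor $\|\chi_b\omega^\delta\|_{Y(t)}$ controls $\chi_b(\omega^\delta-\omega^\delta_c)$. So your bound on $BS_{\mathbb R^2_+}[\chi_b\omega^\delta]$ is not justified as written.

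The repair is to split $\chi_b=\zeta_1+(\chi_b-\zeta_1)$. On $\{\frac38\le y\le 3\}$ one has $\chi_m\equiv1$, $\psi\ge\frac{9}{64}$, and $\|\psi^{-1}\|_{L^4(\frac38\le y\le3)}<\infty$. Hence $\|(\chi_b-\zeta_1)\omega^\delta\|_{L^{4/3}\cap L^4}\le CE_m^\delta$, and Lemma \ref{lem: velocity weighted est} gives an $O(1)$ velocity bound.

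Conversely, the difficulty you single out as the main obstacle does not arise in your decomposition. The piece $(1-\chi_{vp}-\chi_b)\omega^\delta$ lives in $\{y\ge2,\ |X-X_0|\ge5\}$, where $\chi_m\equiv1$ and $\psi\ge4$. So $E_m^\delta$ controls it directly, and no detour through Corollary \ref{cor:E} is needed. Also, $H^1(m)$ does not embed in $L^\infty$ in two dimensions; your fallback via $\|\mathcal W_R\|_{L^{4/3}\cap L^4}$ is the correct one.

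With these fixes you simply reprove the first display of Lemma \ref{velocity estimates 3}, namely $s^{1/2}\|U^\delta(s)\|_{L^\infty(y\ge1)}\le C_0(E^\delta(s)+1)$. The paper proves the second bound in one line: that lemma together with $\sup_\delta E^\delta\le C$ from Proposition \ref{prop: uniform estimates for E(t)}.
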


Since the proof of the above two corollaries rely on the velocity estimates, we postpone them to the end of Section 3.

\section{The estimates of the velocity via Biot-Savart law}
 In this section, we derive several estimates of the velocity by means of the Biot-Savart law. 
To begin with, we present the Biot-Savart law in terms of the Fourier transform, as established in \cite{Maekawa}.

\begin{lemma}\label{lem: velocity formula}
Let $U=BS_{\mathbb R_+^2}[\omega]$. It holds that
	\begin{align*}
		&u_\xi(y)=\frac{1}{2}\Big(-\int_0^y e^{-|\xi|(y-z)}\big(1-e^{-2|\xi|z}\big)\omega_\xi(z)dz
		+\int_y^{+\infty}e^{-|\xi|(z-y)}\big(1+e^{-2|\xi|y}\big)\omega_\xi(z)dz\Big),\\
		&v_\xi(y)=-\frac{i\xi}{2|\xi|}\Big(\int_0^y e^{-|\xi|(y-z)}\big(1-e^{-2|\xi|z}\big)\omega_\xi(z)dz
		+\int_y^{+\infty}e^{-|\xi|(z-y)}\big(1-e^{-2|\xi|y}\big)\omega_\xi(z)dz\Big).
	\end{align*}
\end{lemma}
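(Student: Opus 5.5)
We derive the formulas by taking the Fourier transform in $x$ of the stream function problem and then solving a one-dimensional boundary value problem in $y$. Since $U=BS_{\mathbb R^2_+}[\omega]=\nabla^\perp\Delta_D^{-1}\omega$, we set $\phi=\Delta_D^{-1}\omega$. Then $\Delta\phi=\omega$ in $\mathbb R^2_+$, $\phi|_{y=0}=0$, and $\phi$ decays at infinity, with $U=\nabla^\perp\phi=(-\pa_y\phi,\pa_x\phi)$. This sign convention is the one consistent with $\curl U=\pa_xv-\pa_yu=\Delta\phi=\omega$. We take the Fourier transform in $x$ with the convention implicit in the paper. For the formulas as stated, where $\pa_x$ corresponds to $i\xi$ and the decay rate is $|\xi|$, we rescale the frequency so that the ODE reads
\begin{align*}
(\pa_y^2-\xi^2)\phi_\xi(y)=\omega_\xi(y),\qquad \phi_\xi(0)=0,\qquad \phi_\xi(y)\to0\ \text{as}\ y\to\infty .
\end{align*}

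Next we write down the Green's function of $\pa_y^2-\xi^2$ on $(0,\infty)$ with Dirichlet condition at $0$, obtained by the method of images. For $\xi\neq0$ it is
\begin{align*}
K_\xi(y,z)=-\frac{1}{2|\xi|}\big(e^{-|\xi||y-z|}-e^{-|\xi|(y+z)}\big).
\end{align*}
Indeed, $K_\xi$ vanishes at $y=0$, decays as $y\to\infty$, and its $y$-derivative jumps by $1$ across $y=z$. Hence $\phi_\xi(y)=\int_0^\infty K_\xi(y,z)\omega_\xi(z)\,dz$. We split this integral into the regions $z<y$ and $z>y$:
\begin{align*}
\phi_\xi(y)=-\frac{1}{2|\xi|}\Big(\int_0^y e^{-|\xi|(y-z)}\big(1-e^{-2|\xi|z}\big)\omega_\xi\,dz+\int_y^\infty e^{-|\xi|(z-y)}\big(1-e^{-2|\xi|y}\big)\omega_\xi\,dz\Big).
\end{align*}

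We then differentiate. For $v_\xi=i\xi\phi_\xi$ the formula follows immediately, since $i\xi\cdot\big(-\tfrac{1}{2|\xi|}\big)=-\tfrac{i\xi}{2|\xi|}$. For $u_\xi=-\pa_y\phi_\xi$, the boundary terms produced by differentiating the limits of integration cancel, because the integrand is continuous at $z=y$. In the first integral, $\pa_y e^{-|\xi|(y-z)}=-|\xi|e^{-|\xi|(y-z)}$. In the second integral,
\begin{align*}
\pa_y\Big(e^{-|\xi|(z-y)}(1-e^{-2|\xi|y})\Big)=|\xi|e^{-|\xi|(z-y)}\big(1+e^{-2|\xi|y}\big).
\end{align*}
Multiplying by $-\big(-\tfrac{1}{2|\xi|}\big)=\tfrac{1}{2|\xi|}$ gives exactly
\begin{align*}
u_\xi=\tfrac12\Big(-\int_0^y e^{-|\xi|(y-z)}(1-e^{-2|\xi|z})\omega_\xi\,dz+\int_y^\infty e^{-|\xi|(z-y)}(1+e^{-2|\xi|y})\omega_\xi\,dz\Big).
\end{align*}

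The main point requiring care is the bookkeeping of conventions: the $2\pi$ normalization of the Fourier transform, the sign of $\nabla^\perp$, and the justification that this $\phi$ coincides with the kernel representation \eqref{BS law formulation 2}. We would settle the last point by uniqueness of decaying solutions of the Dirichlet problem, or equivalently by checking that the partial Fourier transform of the image kernel reproduces $K_\xi$. At $\xi=0$, the formulas are understood as limits of the expressions above, which remain finite. For $\omega$ in the classes considered, for instance with $\omega_\xi\in L^1_y$, all integrals converge absolutely, which justifies differentiating under the integral sign.
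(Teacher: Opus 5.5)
Your derivation is correct: the image Green's function $K_\xi$ for $\partial_y^2-\xi^2$ on the half-line, split at $z=y$ and then multiplied by $i\xi$ or differentiated in $y$ (the boundary terms cancel), reproduces both formulas exactly under the convention $\partial_x\leftrightarrow i\xi$. The paper gives no proof of its own and simply quotes the formula from \cite{Maekawa}; your stream-function computation is the standard derivation behind that citation, so nothing is missing beyond the convention bookkeeping you already flagged.
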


 Subsequently, we derive some estimates for $U^\delta$ near the boundary.
	\begin{lemma}\label{velocity estimates 1}
For $u^\delta$, we have
	\begin{align*}
		\Big\|\sup_{0<y<1+\mu}e^{\eps_0(1+\mu-y)_+|\xi|}|(\pa_x^i u^\delta)_\xi(s)|\Big\|_{L^1_\xi}
		\leq C_0\big( E^\delta(s)+1\big),\quad i=0,1, 2.
	\end{align*}
For $v^\delta$, we have
	\begin{align*}
		\Big\|\sup_{0<y<1+\mu}e^{\eps_0(1+\mu-y)_+|\xi|}\big|\frac{(\pa_x^i v^\delta)_\xi(s)}{y}\big|\Big\|_{L^1_\xi}
		\leq C_0\big( E^\delta(s)+1\big),\quad i=0,1.
	\end{align*}
	and
	\begin{align*}
		\Big\|\sup_{0<y<1+\mu}e^{\eps_0(1+\mu-y)_+|\xi|}\big|\frac{(\pa_x^2 v^\delta)_\xi(s)}{y}\big|\Big\|_{L^1_\xi}
		\leq C_0\Big((\mu_0-\mu-\gamma s)^{-\beta} E^\delta(s)+1\Big).
	\end{align*}
Moreover, we have 	
\begin{align*}
		\Big\|\sup_{0<y<1+\mu}e^{\eps_0(1+\mu-y)_+|\xi|}\big|\pa_x^i(y\pa_y) \big(u^\delta_\xi(s),\frac{v^\delta_\xi(s)}{y} \big)\big|\Big\|_{L^1_\xi}
		\leq C_0\big( E^\delta(s)+1\big),\quad i=0, 1. 
	\end{align*}
%
%
%
%
%
	 

\end{lemma}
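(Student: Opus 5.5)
We start from the Fourier representation of the Biot--Savart law in Lemma \ref{lem: velocity formula}. Since the kernels are explicit, $\pa_x$ becomes multiplication by $i\xi$ and $y\pa_y$ can be moved onto $\omega_\xi$ after an integration by parts in $z$. We split $\omega^\delta=\chi_b\omega^\delta+(1-\chi_b)\omega^\delta$, and further $\chi_b\omega^\delta=\chi_b(\omega^\delta-\omega^\delta_c)+\chi_b\omega^\delta_c$.

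\emph{Contribution of the region $y\ge2$.} For $0<y<1+\mu\le 1+\mu_0$ and $z\ge 2$, every kernel carries a factor $e^{-|\xi|(z-y)}$ with $z-y\ge 1-\mu_0\ge \frac{9}{10}$. This beats the weight $e^{\eps_0(1+\mu-y)_+|\xi|}\le e^{\eps_0(1+\mu_0)|\xi|}$ once $\eps_0$ is small, and leaves a decay $e^{-c|\xi|}$. Moreover $(1\pm e^{-2|\xi|y})$ is bounded and $(1-e^{-2|\xi|y})/y\le 2|\xi|$. Hence each of these terms is bounded by
\[
\int e^{-c|\xi|}\langle\xi\rangle^{3}\int_2^\infty|\omega^\delta_\xi(z)|\,dz\,d\xi\le C\|\omega^\delta\|_{L^1(y\ge2)}.
\]
This is controlled by the uniform $L^1$ bound on $\omega^\delta$ (in fact by the point-vortex piece through $E_{vp}$ plus $\alpha$, and the middle region through $E_m$). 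Altogether this gives $C_0(E^\delta+1)$. The same applies to $v/y$, using $|1-e^{-2|\xi|\min(y,z)}|\le 2|\xi|y$, and to the $i=2$ derivatives, since the factor $\xi^2$ is absorbed by the exponential decay.

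\emph{Contribution of the boundary region $y\le 3$.} For the corrector $\chi_b\omega_c^\delta$ we use Lemma \ref{est of omega c}. There the weight $e^{C'|\xi|}$ dominates $e^{\eps_0(1+\mu)|\xi|}$ if $\eps_0\le C'/2$, giving a constant bound $C_0$. For $f=\chi_b(\omega^\delta-\omega^\delta_c)$ we have to estimate terms of the form $\sup_y e^{\eps_0(1+\mu-y)_+|\xi|}\int_0^\infty K(y,z)|f_\xi(z)|dz$ with $|K|\le1$.
\begin{itemize}
\item For $z<1+\mu$, we use $(1+\mu-y)_+\le(1+\mu-z)_++(z-y)_+$ when $z\ge y$, and $e^{-|\xi|(y-z)}$ with $\eps_0<1$ when $z<y$. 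The weight then transfers to $z$, giving $\le\int_0^{1+\mu}e^{\eps_0(1+\mu-z)_+|\xi|}|f_\xi(z)|dz\le\|e^{\eps_0(\cdot)}f_\xi\|_{\mu,t}$.
\item For $1+\mu\le z\le3$, the kernel factor $e^{-|\xi|(z-y)}$ absorbs the weight directly. The remaining $L^1_y$ norm of $f_\xi$ on $[1+\mu,3]$ has no $Y$-norm control, so we bound it via the $H^4(\frac78\le y\le4)$ part of $E_m$ (together with $\omega_c$). This is the delicate overlap.
\end{itemize}
Integrating in $\xi$ yields $\|f\|_{Y^1_{\mu,t}}\le E_b$.

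\emph{Derivatives, and the main obstacle.} For $\pa_x^i$ we move $\xi^i$ onto $f_\xi$, which is allowed up to $i=2$ in $Y_1$ without loss, so $u$ and $\pa_xu$, $\pa_x^2u$ are fine. For $v/y$, however, the bound $(1-e^{-2|\xi|y})/y\le2|\xi|$ costs one extra $\xi$. Thus $\pa_x^2v/y$ requires three $x$-derivatives of $f$, which carry the weight $(\mu_0-\mu-\gamma s)^{-\beta}$ in $Y_1$. This produces exactly the stated loss, and it is the point where the statement's form is dictated. For $y\pa_y$ we differentiate the explicit formula: the boundary terms at $z=y$ cancel or combine, and $y\pa_y$ of the exponentials gives $|\xi|y e^{-|\xi||y-z|}$. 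We handle this by splitting at $z\sim y/2$ or by integrating by parts so that it becomes $z\pa_z$ acting on $f$, which is controlled by the $(y\pa_y)^j$ part of $Y_1$ with $i+j\le2$. The main obstacle I expect is the bookkeeping for $y\pa_y(v/y)$ near $y=0$, where $1/y$ singularities must cancel. I would first Taylor expand $1-e^{-2|\xi|y}$ to show the combination is bounded by $|\xi|\int e^{-|\xi||y-z|}|f_\xi|\,dz$ plus a term with $|z\pa_zf_\xi|$.
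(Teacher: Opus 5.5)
Your proposal is correct. For most of the lemma it is the paper's argument: the Fourier form of the Biot--Savart law, and the weight transfer $e^{\varepsilon_0(1+\mu-y)_+|\xi|}e^{-|\xi||y-z|}\le e^{\varepsilon_0(1+\mu-z)_+|\xi|}$ on $z<1+\mu$, which gives $\|\omega\|_{Y^1_{\mu,s}}\le E_b+C$ after subtracting $\omega_c$. The paper also uses the Sobolev part of $E_m$ on the strip just above $y=1+\mu$ (via $\|\omega\|_{H^1(1\le y\le2)}$ and $\langle\xi\rangle^{-1}\in L^2_\xi$), and exponential decay plus $\|\omega\|_{L^1(y\ge2)}\lesssim E_m+E_{vp}+1$ for the far field. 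The $(\mu_0-\mu-\gamma s)^{-\beta}$ loss for $\partial_x^2v/y$ arises exactly as you say: the factor $|1-e^{-2|\xi|\min(y,z)}|\le 2|\xi|y$ forces three $x$-derivatives of $\omega$. The paper splits the $z$-integral directly at $1+\mu$ and $2$ rather than through $\chi_b$, which is cosmetic.

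The conormal part is where you diverge, and the paper's route is simpler.

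\emph{The term $y\partial_y(v/y)$ needs no bookkeeping.} By incompressibility,
\[
y\partial_y(v_\xi/y)=\partial_yv_\xi-v_\xi/y=-(\partial_xu)_\xi-v_\xi/y .
\]
So this term and its $\partial_x$ reduce to estimates already proven. Your Taylor-expansion plan would in effect recompute this identity; the local terms at $z=y$ do cancel there.

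\emph{The term $y\partial_y u$ has a local term.} The boundary terms at $z=y$ do not cancel. They combine into $-y\,\omega_\xi(y)$, reflecting $\partial_yu=-\omega+\partial_xv$. This term needs a bound in $\sup_y$, whereas $Y^1_{\mu,s}$ is an $L^1_y$ norm, and your sketch does not address it. The paper writes $y\omega_\xi(y)=\int_0^y\partial_z(z\omega_\xi)\,dz$ and uses that the weight is monotone in $y$, which costs $\|\omega\|_{Y^1_{\mu,s}}+\|y\partial_y\omega\|_{Y^1_{\mu,s}}$. The remaining terms carry $y|\xi|\le 2|\xi|$ and are treated like $v/y$, so no splitting at $z\sim y/2$ is needed.

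Your integration-by-parts alternative also works. Since $y\partial_y+z\partial_z$ is tangent to the diagonal, it produces no local term and leaves $\int K\,\partial_z(z\omega_\xi)\,dz$ plus a harmless kernel bounded by $e^{-|\xi||y-z|/2}$. However, it must be applied only to the near-boundary piece, as your $\chi_b$ splitting allows. On the vortex core $\|\nabla\omega\|_{L^1}\sim s^{-1/2}$, so integrating by parts there would destroy the bound $C_0(E+1)$.
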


\begin{proof}  

\underline{Estimates of $u^\delta$.} First, by means of Lemma \ref{lem: velocity formula}, we notice that
	\begin{align*}
		u^\delta_\xi(s,y)=&-\frac{1}{2}\int_0^y e^{-|\xi|(y-z)}\big(1-e^{-2|\xi|z}\big)\omega^\delta_\xi(s,z)dz\\
		&+\frac{1}{2}\big(\int_y^{1+\mu}+\int_{1+\mu}^{+\infty}\big)e^{-|\xi|(z-y)}\big(1+e^{-2|\xi|y}\big)\omega^\delta_\xi(s,z)dz:=I_1+I_2+I_3.
	\end{align*}
	Based on the following relation
	\begin{align}\label{weight transform 3}
		e^{\eps_0(1+\mu-y)_+|\xi|}e^{-|\xi||y-z|}
		\leq e^{\eps_0(1+\mu-z)_+|\xi|},
	\end{align}
we have
	\begin{align*}
		e^{\eps_0(1+\mu-y)_+|\xi|}\big(|I_1|+|I_2|\big)
		\leq C_0\int_0^{1+\mu}e^{\eps_0(1+\mu-z)_+|\xi|}|\omega^\delta_\xi(s,z)|dz,
	\end{align*}
	which implies
	\begin{align*}
		&\big\|\sup_{0<y<1+\mu}e^{\eps_0(1+\mu-y)_+|\xi|}\big(|I_1|+|I_2|\big)\big\|_{L^1_\xi}\\
		&\leq C_0\|\omega^\delta(s)\|_{Y^1_{\mu,s}}\\
		&\leq C_0\|\omega^\delta(s)-\omega^\delta_c(s)\|_{Y^1_{\mu,s}}
		+C_0\|\omega^\delta_c(s)\|_{Y^1_{\mu,s}}
		\leq C_0\big(E^\delta_b(s)+1\big).
	\end{align*}
	
	A direct computation yields
	\begin{align*}
		e^{\eps_0(1+\mu-y)_+|\xi|}|I_3|
		\leq C_0\int_{1+\mu}^2 |\omega^\delta_\xi(s,z)|dz
		+C_0\int_2^{+\infty}e^{-|\xi|/2}|\omega^\delta_\xi(s,z)|dz,
	\end{align*}
	which implies
	\begin{align*}
		&\big\|\sup_{0<y<1+\mu}e^{\eps_0(1+\mu-y)_+|\xi|}|I_3|\big\|_{L^1_\xi}
		\leq C_0\big\|\int_{1+\mu}^2 |\omega^\delta_\xi(s,z)|dz\big\|_{L^1_\xi}
		+C_0\big\|\int_2^{+\infty}e^{-|\xi|/2}|\omega^\delta_\xi(s,z)|dz\big\|_{L^1_\xi}\\
		&\leq C_0\int_1^2\big\|(1+|\xi|^2)^{-1/2}\big\|_{L^2_\xi}\big\|(1+|\xi|^2)^{1/2}\omega^\delta_\xi(s,z)\big\|_{L^2_\xi}dz
		+C_0\|\omega^\delta(s)\|_{L^1(y\geq2)}\\
		&\leq C_0\|\omega^\delta\|_{H^1(1\leq y\leq2)}
		+C_0\big\|e^\Psi\chi_m\psi\omega^\delta\big\|_{L^2}
		+C_0\|G+\mathcal W_R\|_{L^1}\\
		&\leq C_0\big( E^\delta(s)+1\big).
	\end{align*}
	
	Collecting these estimates, we obtain the desired result. The case $i=1,2$ follow by similar arguments, with $\omega^\delta$ replaced by $\pa_x\omega^\delta$, and we omit the details.
	
	\medskip
	
	\underline{Estimates of $v^\delta$.}
Next, we consider the $v^\delta$. 	By Lemma \ref{lem: velocity formula}, we obtain
	\beno
	v^\delta_\xi(y)=-\frac{i\xi}{2|\xi|}\big(\int_0^y e^{-|\xi|(y-z)}\big(1-e^{-2|\xi|z}\big)\omega^\delta_\xi(z)dz
		+\int_y^{+\infty}e^{-|\xi|(z-y)}\big(1-e^{-2|\xi|y}\big)\omega^\delta_\xi(z)dz\big),
		\eeno
which gives
	\begin{align*}
		\big|\frac{v^\delta_\xi(s,y)}{y}\big|\leq& \frac{1}{2y}\int_0^y e^{-|\xi|(y-z)}\big(1-e^{-2|\xi|z}\big)|\omega^\delta_\xi(s,z)|dz\\
		&+\frac{1}{2y}\big(\int_y^{1+\mu}+\int_{1+\mu}^{+\infty}\big)e^{-|\xi|(z-y)}\big(1-e^{-2|\xi|y}\big)|\omega^\delta_\xi(s,z)|dz\\
		&:=J_1+J_2+J_3.
	\end{align*}
	
	We note that
	\begin{align*}
		\left|1-e^{-2|\xi|z}\right|\leq 2|\xi|z\leq2|\xi|y,
		\quad
		\left|1-e^{-2|\xi|y}\right|\leq 2|\xi|y,
		\quad
		\text{for}
		\quad
		z\leq y,
	\end{align*}
	which together with \eqref{weight transform 3} imply
	\begin{align*}
		e^{\eps_0(1+\mu-y)_+|\xi|}\big(|J_1|+|J_2|\big)
		\leq C_0\int_0^{1+\mu}e^{\eps_0(1+\mu-z)_+|\xi|}|\xi||\omega^\delta_\xi(s,z)|dz,
	\end{align*}
	which leads to
	\begin{align*}
		\big\|\sup_{0<y<1+\mu}e^{\eps_0(1+\mu-y)_+|\xi|}\big(|J_1|+|J_2|\big)\big\|_{L^1_\xi}
		\leq C_0\|\pa_x\omega^\delta(s)\|_{Y^1_{\mu,s}}
		\leq C_0\big(E^\delta_b(s)+1\big).
	\end{align*}
	
	The term $J_3$ is treated in the same way as $I_3$ in the proof of (1), so that
	\begin{align*}
		\big\|\sup_{0<y<1+\mu}e^{\eps_0(1+\mu-y)_+|\xi|}|J_3|\big\|_{L^1_\xi}
		\leq C_0\big( E^\delta(s)+1\big).
	\end{align*}
	
	For $i=1, 2$, the proof is similar by replacing $\omega^\delta$ with $\pa^i_x\omega^\delta$.

\medskip

	\underline{Estimates involving conormal derivatives.}
We now turn to deriving estimates involving conormal derivatives. A direct computation yields
	\begin{align*}
		y\pa_y u^\delta_\xi(s,y)
		=&\frac{y}{2}\Big(\int_0^y e^{-|\xi|(y-z)}\big(1-e^{-2|\xi|z}\big)|\xi|\omega^\delta_\xi(s,z)dz\\
		&+\int_y^{+\infty}e^{-|\xi|(z-y)}\big(1+e^{-2|\xi|y}\big)|\xi|\omega^\delta_\xi(s,z)dz\\
		&-2\int_y^{+\infty}e^{-|\xi|(z-y)}e^{-2|\xi|y}|\xi|\omega^\delta_\xi(s,z)dz\Big)
		-y\omega^\delta_\xi(s,y).
	\end{align*}
	The first three terms are treated as (1) and (2). For the last term, the fundamental theorem of calculus gives rise to
	\begin{align*}
		\left\|\sup_{0<y<1+\mu}e^{\eps_0(1+\mu-y)_+|\xi|}|y\omega^\delta_\xi(s,y)|\right\|_{L^1_\xi}
		\leq C\|\omega^\delta(s)\|_{Y^1_{\mu,s}}
		+C\|y\pa_y\omega^\delta(s)\|_{Y^1_{\mu,s}}
		+C\|\pa_x\omega^\delta(s)\|_{Y^1_{\mu,s}}.
	\end{align*}
Thus, we obtain the inequality for $y\pa_y u^\delta_\xi$. The case $y\pa_y\big(\frac{v^\delta_\xi(s)}{y}\big)$ is derived from the relation
	\begin{align*}
		y\pa_y\big(\frac{v^\delta_\xi(s)}{y}\big)
		=\pa_y v^\delta_\xi(s)-\frac{v^\delta_\xi(s)}{y}
		=-(\pa_x u^\delta)_\xi(s)-\frac{v^\delta_\xi(s)}{y}.
	\end{align*}
	
	This completes the proof of the lemma.
	\end{proof}

The following lemma provides several estimates for $\chi_b \omega^\delta$.

	\begin{lemma}\label{velocity estimates 2}
	It holds that
\begin{align*}
		\big\|BS_{\mathbb R^2_+}[\zeta_1\omega^\delta(s)]\big\|_{L^\infty}
		\leq C_0\big\|\omega^\delta(s) \big\|_{Y_1(s)},
	\end{align*}
	and
\begin{align*}
		\big\|  BS_{\mathbb R^2_+}[\chi_b\omega^\delta](s)\big\|_{H^1(0,1+\mu)}
		&\leq C_0\big( E^\delta(s)+1\big),
	\end{align*}		
where the cut-off function $\zeta_1$ is defined by
	\begin{align}\label{def zeta1}
	\zeta_1=
	\left\{
	\begin{aligned}
		&1,\quad y\leq3/8,\\
		&0,\quad y\geq1/2.
	\end{aligned}
	\right.
\end{align}
\end{lemma}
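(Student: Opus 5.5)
For the first estimate I will use the Fourier representation of the Biot--Savart law in Lemma~\ref{lem: velocity formula}, applied to $f=\zeta_1\omega^\delta(s)$. Let $U=(u,v)=BS_{\mathbb R^2_+}[f]$. For every $y\ge0$ the kernels satisfy
\[
e^{-|\xi||y-z|}\big(1\pm e^{-2|\xi|\min(y,z)}\big)\le 2 .
\]
Hence
\[
|u_\xi(y)|+|v_\xi(y)|\le C_0\int_0^{1/2}|f_\xi(z)|\,dz .
\]
Combined with $\|u\|_{L^\infty}\le\|u_\xi\|_{L^\infty_y L^1_\xi}$, this gives
\[
\|U\|_{L^\infty}\le C_0\int_{\mathbb R}\int_0^{1/2}|f_\xi(z)|\,dz\,d\xi .
\]

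The next step is to compare with the $Y^1_{\mu,s}$ norm. On $0<z<1/2<1+\mu$, which holds since $\mu$ can be taken near $0$ or in any admissible range with $|\mu|\le\mu_0$, both weights $e^{\eps_0(1+\mu-z)_+|\xi|}$ and $e^{\eps_0(1+\mu)z^2/s}$ are at least $1$. So
\[
\int_0^{1/2}|f_\xi|\,dz\le\|f_\xi\|_{\mu,s}.
\]
The cutoff $\zeta_1$ depends only on $y$, so $(\zeta_1\omega^\delta)_\xi=\zeta_1\omega^\delta_\xi$ and $|\zeta_1|\le1$. This gives
\[
\|U\|_{L^\infty}\le C_0\|\omega^\delta\|_{Y^1_{\mu,s}}\le C_0\|\omega^\delta\|_{Y_1(s)},
\]
taking any admissible $\mu<\mu_0-\gamma s$, for instance $\mu=0$ for small $s$, or taking the supremum.

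For the second estimate I split $\chi_b\omega^\delta=\zeta_1\omega^\delta+(\chi_b-\zeta_1)\omega^\delta$. The $L^2(0,1+\mu)$ part of the $\zeta_1$ piece, together with its $\pa_x$ derivative, will be handled as in Lemma~\ref{velocity estimates 1}, with $L^1_\xi$ replaced by $L^2_\xi$ and controlled by the $Y^2$ norm. The $y$-derivative is controlled through $\pa_y u=\pa_x v-\omega$ and $\pa_y v=-\pa_x u$. The vorticity $\zeta_1\omega^\delta$ in $L^2$ is, however, not directly in $Y$, since $Y$ is $L^1_y$.

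I expect this to be the main obstacle, and I would handle it by a one-dimensional Sobolev bound in the conormal direction. Away from $y=0$ the bound $\|g\|_{L^\infty_y}\lesssim\|g\|_{L^1_y}+\|\pa_y g\|_{L^1_y}$ applies. Near $y=0$ I would instead write $\omega^\delta=(\omega^\delta-\omega^\delta_c)+\omega^\delta_c$. The $\omega^\delta_c$ part is bounded by Lemma~\ref{est of omega c}, using the Gaussian weight $e^{C'y^2/t}$ with $k=0$ and the interpolation with $k=1$, which gives $t^{-1/4}$ in $L^2$. That is not uniformly bounded, so I would exploit the fact that only the velocity is required in $H^1$. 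In the Fourier formula, $\pa_y u_\xi$ contains $-\omega_\xi(y)$, but $\omega_c$ corresponds to $-\pa_y u_c$ and the corrector velocity $u_c$ is bounded by $\|u^\delta_0\|$. So I would estimate $BS[\zeta_1\omega_c]$ by comparing it with $(u_c,0)$ plus smooth remainders.

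For $\omega^\delta-\omega^\delta_c$, the bound $\|g\|_{L^\infty_y}\le\|\pa_y g\|_{L^1_y}$ can be replaced by the $Y$ norm of $y\pa_y g$ together with $g$, via $\sup|yg|$ as in Lemma~\ref{velocity estimates 1}. Failing that, the weighted $L^1_y$ bound with $\xi$-decay from $\pa_x^2$ gives $L^2$ after Hölder, using $\langle\xi\rangle$ factors.

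The piece $(\chi_b-\zeta_1)\omega^\delta$ is supported in $3/8\le y\le3$. It is controlled by $E_m^\delta$: the $L^2$ bound of $e^\Psi\psi\chi_m\omega^\delta$, where $\psi\gtrsim 1$ there, together with the $H^4$ bound on $7/8\le y\le4$ and the boundary-layer norm for $y<1+\mu$. Its Biot--Savart velocity is bounded in $H^1$ by $L^2$ elliptic estimates, using the $L^1$ bound of $\omega$ for the low frequencies. This yields $C_0(E^\delta(s)+1)$.
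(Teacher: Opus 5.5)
Your proof of the $L^\infty$ bound is the paper's argument: bound the kernels of Lemma~\ref{lem: velocity formula} by constants, use $\|U\|_{L^\infty}\le\|\sup_y|U_\xi(y)|\|_{L^1_\xi}$, and note that both weights in $\|\cdot\|_{\mu,s}$ are at least $1$ on $\operatorname{supp}\zeta_1\subset\{y\le 1/2\}$. For the second estimate, your treatment of the $L^2(0,1+\mu)$ norm, of $\pa_x U$ and of $\pa_y v=-\pa_x u$ also matches the paper. The part $z<1+\mu$ of the $z$-integral is bounded pointwise in $y$ by $\int_0^{1+\mu}|\omega^\delta_\xi|\,dz$, hence by the $Y^2_{\mu,s}$ norm, i.e.\ by $E^\delta_b$ and Lemma~\ref{est of omega c}. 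The part $1+\mu\le z\le 3$ is bounded by $\|\omega^\delta\|_{L^2(1+\mu\le y\le 3)}$ (resp.\ its $H^1$ norm), which $E^\delta_m$ controls.

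The gap is the remaining component $\pa_y u=\pa_x v-\chi_b\omega^\delta$. It requires $\|\chi_b\omega^\delta(s)\|_{L^2(0<y<1+\mu)}$ to be bounded uniformly in $s$, and none of your fixes achieves this.
\begin{itemize}
\item Comparing $BS_{\mathbb R^2_+}[\zeta_1\omega^\delta_c]$ with $(u^\delta_c,0)$ only relocates the problem. Indeed $\pa_y u^\delta_c=-\omega^\delta_c$, and $\|\omega^\delta_c(s)\|_{L^2(y<1)}\sim s^{-1/4}\|u^\delta_0\|_{L^2}$.
\item For $\omega^\delta-\omega^\delta_c$, the $Y$-norms control $g$ and $y\pa_y g$ in weighted $L^1_y$. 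This bounds $yg$ but not $g$ near $y=0$; think of $g=y^{-3/4}$. Extra decay in $\xi$ cannot convert $L^1_y$ control into $L^2_y$ control.
\end{itemize}

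In fact no argument can close this step. By Lemma~\ref{lem: eq of omega-omega c} together with $\omega^\delta_c\rightharpoonup -u^\delta_0\delta_{\pa\mathbb R^2_+}-u^\delta_0\chi_b'$, one gets $\chi_b\omega^\delta(s)\rightharpoonup -u^\delta_0\delta_{\pa\mathbb R^2_+}$ as $s\to0^+$, with $u^\delta_0\not\equiv 0$. Hence $\|\chi_b\omega^\delta(s)\|_{L^2(0<y<1)}\to\infty$. Meanwhile $\|\pa_x v\|_{L^2(0,1+\mu)}$ stays bounded by the tangential argument above, and $E^\delta(s)$ stays bounded by Proposition~\ref{prop: uniform estimates for E(t)}. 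So $\|\pa_y u\|_{L^2(0,1+\mu)}$ is not bounded by $C_0(E^\delta(s)+1)$ uniformly in $s$.

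The paper's proof disposes of the gradient with ``by the same argument as above''. That is valid only for the tangential derivatives, so the paper does not close this step either. What does hold uniformly is the $L^2$ bound together with $\pa_x U$ and $\pa_y v$. Only the first ($L^\infty$) estimate of the lemma is used later in the paper.
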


\begin{proof}
Based on the definition $BS_{\mathbb R^2_+}[f]$ and Lemma \ref{lem: velocity formula}, we have
	\begin{align*}
		(BS_{\mathbb R^2_+}[\zeta_1\omega^\delta(s)])_\xi (s,y)=&-\frac{1}{2}\int_0^y e^{-|\xi|(y-z)}\big(1-e^{-2|\xi|z}\big)(\zeta_1\omega^\delta)_\xi dz\\
		&+\frac{1}{2} \int_y^{\infty} e^{-|\xi|(z-y)}\big(1+e^{-2|\xi|y}\big)(\zeta_1\omega^\delta)_\xi(s,z)dz
	\end{align*}
which implies the first desired result.

For the $H^1$ norm, we use Lemma \ref{lem: velocity formula} to obtain
	\begin{align*}
		\left\|BS_{\mathbb R^2_+}[\chi_b\omega^\delta(s)]\right\|_{L^2(0,1+\mu)}&=\left\|\left\|\big(BS_{\mathbb R^2_+}[\chi_b\omega^\delta(s)]\big)_\xi\right\|_{L^2_y(0,1+\mu)}\right\|_{L^2_\xi}\\
		&\leq C_0\left\|\left\|\int_0^y e^{-|\xi|(y-z)}\big(1-e^{-2|\xi|z}\big)(\chi_b\omega^\delta)_\xi dz\right\|_{L^2_y(0,1+\mu)}\right\|_{L^2_\xi}\\
		&\quad+\left\|\left\|\int_y^\infty e^{-|\xi|(y-z)}\big(1+e^{-2|\xi|z}\big)(\chi_b\omega^\delta)_\xi dz\right\|_{L^2_y(0,1+\mu)}\right\|_{L^2_\xi}\\
		&\leq C_0\|\omega^\delta(s)\|_{Y^1_{\mu,s}}
		+C_0\|\omega^\delta(s)\|_{L^2(1+\mu\leq y\leq3)}\\
		&\leq C_0\|\omega^\delta(s)\|_{Y_1(s)}
		+C_0\|e^\Psi\psi\chi_m\omega^\delta(s)\|_{L^2}\\
		&\leq C_0\big( E^\delta(s)+1\big).
	\end{align*}
By the same argument as above, we obtain
	\begin{align*}
		&\left\|\nabla BS_{\mathbb R^2_+}[\chi_b\omega^\delta(s)]\right\|_{L^2(0,1+\mu)}  
		\leq C_0\big( E^\delta(s)+1\big).
	\end{align*}

This completes the proof of the lemma.
\end{proof}

\medskip

Next, we derive some estimates for $U^\delta$ away the boundary.
	\begin{lemma}\label{velocity estimates 3}
	It holds that
  \begin{align*}
		\|U^\delta(s)\|_{L^\infty(\operatorname{supp}\chi_m)}+s^{\f12}\|U^\delta(s)\|_{L^\infty(y\geq1)}
		\leq C_0\big(E^\delta(s)+1\big),
	\end{align*}
and	
	\begin{align*}
		\left\|\pa_x^i\pa_y^j U^\delta(s)\right\|_{L^\infty(a\leq y\leq b)}
		\leq C_0\big( E^\delta(s)+1\big)
		+C_0\left\|\omega^\delta(s)\right\|_{H^{i+1}(a-\frac{1}{100}\leq y\leq{b+\frac{1}{100}})},
	\end{align*}
	where  $i,j\geq0$ and $\frac{1}{2}\leq a<b\leq3$.

\end{lemma}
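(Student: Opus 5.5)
The plan is to decompose the vorticity according to the regions where $E^\delta$ gives control, and to bound the velocity of each piece separately through the half-plane Biot--Savart law. I would write
\[
\omega^\delta=\chi_{vp}\omega^\delta+\zeta_1\omega^\delta+(1-\chi_{vp}-\zeta_1)\omega^\delta .
\]
Then $U^\delta$ splits into three velocities: $U^{vp}$, $U^{b}$ and $U^{m}$.

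\emph{The boundary piece $U^b$.} By the first estimate of Lemma \ref{velocity estimates 2}, $\|U^b\|_{L^\infty}\le C_0\|\omega^\delta\|_{Y_1(s)}$. I would split this as
\[
\|\omega^\delta\|_{Y_1(s)}\le \|\omega^\delta-\omega_c^\delta\|_{Y_1(s)}+\|\omega_c^\delta\|_{Y_1(s)}\le E_b^\delta+C_0 ,
\]
where the bound on $\omega_c^\delta$ uses Lemma \ref{est of omega c}. The point is that the Gaussian weight $e^{C'y^2/t}$ dominates $e^{\eps_0(1+\mu)y^2/t}$ once $\eps_0\le C'/2$.

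\emph{The middle piece $U^m$.} Its vorticity is supported in $\{y\ge 3/8\}$ and away from the disc $|(x,y)-(0,20)|\le 5$. On $\operatorname{supp}\chi_m$ plus a small collar, the weight satisfies $e^\Psi\psi\ge c>0$, since $\psi\ge y^2\ge c$ there. So the $L^2\cap L^4$ part of $E_m^\delta$ controls this vorticity in $L^2\cap L^4$ with a $(1+|x|)$ weight. In the thin region $3/8\le y\le 1/2$ and near $y\in[1/4,3/8]$, I would instead use the $H^4(\tfrac78\le y\le4)$ bound together with $Y_1$. The kernel $|X-Y|^{-1}$ lies in $L^{4/3}_{loc}+L^\infty$, so H\"older gives an $L^\infty$ bound. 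The far-field part is handled by the $L^1$ mass, $\|\omega^\delta\|_{L^1}\le C$, which follows from the $L^1$ control of each piece.

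\emph{The vortex piece $U^{vp}$.} Near the vortex I would use \eqref{velocity self-similar}:
\[
U^{vp}=\frac{\alpha}{(s+\delta)^{1/2}}\bigl(\mathcal V^\delta(\eta)-\widetilde{\mathcal V^\delta}(\cdots)\bigr),
\qquad \mathcal V^\delta=\mathcal V^G+\mathcal V_R+\cdots .
\]
Here $\|\mathcal V^G\|_{L^\infty}\le C$, and $\|\mathcal V_R\|_{L^\infty}\le C\|\mathcal W_R\|_{L^{2}(m)\cap L^4}$, which is bounded by $E_{vp}$ via $H^1\subset L^4$. This yields $s^{1/2}\|U^{vp}\|_{L^\infty}\le C(E+1)$, and this scaling covers the $y\ge1$ statement. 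On $\operatorname{supp}\chi_m$ one must do better. The distance from $\operatorname{supp}\chi_m$ to $\operatorname{supp}(\chi_{vp}\omega)$ is not positive, since $\chi_m$ starts at radius $3$ while $\chi_{vp}$ extends to radius $6$. So I would instead decompose $\omega=\chi_{vp}' \omega+\ldots$ with an inner cutoff supported in radius $2$. For this inner piece the kernel is smooth with bounded derivatives on $\operatorname{supp}\chi_m$, so $|U|\le C\|\omega\|_{L^1}$. The annulus $2\le r\le 4$ is treated by the self-similar decay: since $\langle\eta\rangle^m$ with $|\eta|\sim (s+\delta)^{-1/2}$ gives extra smallness, the $L^2(m)$ norm with $m>1$ makes the annulus contribution bounded, and in fact small.

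\emph{The derivative estimate.} For the second inequality I would localize with a cutoff $\phi$ equal to $1$ on $[a-\tfrac1{200},b+\tfrac1{200}]$ and supported in $[a-\tfrac1{100},b+\tfrac1{100}]$. The field $BS[\phi\omega]$ has derivatives bounded via Sobolev embedding and Calder\'on--Zygmund: $\|\nabla^{i+j}BS[\phi\omega]\|_{L^\infty}\lesssim\|\phi\omega\|_{H^{i+1}}$ in 2D (up to an $H^{i+j-1+1+\epsilon}$ adjustment; one writes $\partial_y$ in terms of $\partial_x$ and $\omega$ via div and curl). The remainder $BS[(1-\phi)\omega]$ is harmonic in the strip, so its derivatives there are bounded by its sup on a slightly larger strip, which is bounded by the first part of the lemma (times $s^{-1/2}$ for $y\ge1$ — I would try to check whether vortex separation removes this factor; since $b\le3$ is far from the vortex, the kernel is smooth there and $L^1$ suffices).

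The main obstacle is the vortex-to-middle interaction in the annulus, where the supports overlap. There the weighted self-similar norm must supply decay, and I expect the bound to rely on $m$ being large enough.
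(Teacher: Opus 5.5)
Your proof is correct. For the first estimate it is essentially the paper's argument. You use the same split $\omega^\delta=\zeta_1\omega^\delta+\chi_{vp}\omega^\delta+(1-\chi_{vp}-\zeta_1)\omega^\delta$. The boundary piece is handled by Lemma~\ref{velocity estimates 2} together with Lemma~\ref{est of omega c}, and the middle piece by the weighted $L^2\cap L^4$ part of $E_m^\delta$. For the vortex piece you use the global $(s+\delta)^{-1/2}$ bound on $\{y\ge1\}$, and on $\operatorname{supp}\chi_m$ an inner cutoff at positive distance plus the smallness of $\langle\eta\rangle^{-m}$ on the outer annulus (the paper quantifies this as the factor $(s+\delta)^{\frac m2-\frac58}$).

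The paper additionally subtracts $\frac{\alpha}{s+\delta}\chi_{vp}G$ first and bounds the Lamb--Oseen velocity explicitly. You do not need this: on the inner piece only $\|G+\mathcal W_R\|_{L^1}$ enters, and on the annulus $G=O(e^{-c/(s+\delta)})$, which you should state. Your radius-$2$ inner cutoff is actually slightly cleaner than the paper's $\zeta_2$. That function only vanishes for radius $\ge3$, so its support touches $\operatorname{supp}\chi_m$. There are two harmless slips. First, the outer annulus extends to radius $6$, not $4$. Second, the middle piece lies entirely in $\{\chi_m=1\}$, so the detour through $H^4(\frac78\le y\le4)$ and $Y_1$ near $y\in[\frac14,\frac12]$ is unnecessary; that $H^4$ norm does not reach $y<\frac78$ anyway.

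For the derivative estimate your route is genuinely different. The paper uses the Fourier-in-$x$ formula of Lemma~\ref{lem: velocity formula} and splits the $z$-integral at $a-\frac1{100}$ and $b+\frac1{100}$. The outer ranges carry $e^{-|\xi|/100}$, which absorbs $|\xi|^i$ and leaves quantities already controlled by $E^\delta$. The middle range is bounded by $\|\langle\xi\rangle^{-1}\|_{L^2_\xi}$ times the $H^{i+1}$ norm. Because that kernel is bounded uniformly in $\xi$, the unbounded strip causes no low-frequency trouble. The paper only writes out $\partial_x^i u^\delta$.

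Your cutoff $\phi(y)$ with interior estimates for the harmonic field $BS_{\mathbb R^2_+}[(1-\phi)\omega^\delta]$ handles all derivatives of the far part at once. Its sup on the collar strip comes from your first estimate together with $\|BS_{\mathbb R^2_+}[\phi\omega^\delta]\|_{L^\infty}\lesssim E_m^\delta$. The collar lies in $\{\chi_m=1\}$, so, as you suspected, no $s^{-1/2}$ appears. The price is in the near part. There Sobolev/Calder\'on--Zygmund gives $\|\partial_x^i\partial_y^jBS_{\mathbb R^2_+}[\phi\omega^\delta]\|_{L^\infty}\lesssim\|\omega^\delta\|_{H^{i+j+\epsilon}}$, so for $j\ge1$ you get the bound with $H^{i+j+1}$ in place of $H^{i+1}$. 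That is exactly the form invoked later in Lemma~\ref{est of N2}, where $\|\omega\|_{H^{1+k}}$ appears with $k=i+j$. So nothing needed downstream is lost.
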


\begin{proof} 
First, we have
	\begin{align*}
		\|U^\delta(s)\|_{L^\infty(\operatorname{supp}\chi_m)}
		\leq& \|BS_{\mathbb R^2_+}[\zeta_1\omega^\delta(s)]\|_{L^\infty}
		+\|BS_{\mathbb R^2_+}[\chi_{vp}\omega^\delta(s)]\|_{L^\infty(\operatorname{supp}\chi_m)}\\
		&+\|BS_{\mathbb R^2_+}[(1-\chi_{vp}-\zeta_1)\omega^\delta(s)]\|_{L^\infty}\\
		:=&T_1+T_2+T_3.
	\end{align*}
	
For $T_1$, it has been estimated by Lemma \ref{velocity estimates 2}. For $T_2$, we define $\zeta_2$ as
	\begin{align}\label{def zeta2}
		\zeta_2(x,y)=
		\left\{
		\begin{aligned}
			&1,\quad |(x,y)-(0,20)|\leq 2,\\
			&0,\quad |(x,y)-(0,20)|\geq 3.
		\end{aligned}
		\right.
	\end{align}
We utilize Lemma \ref{lem: velocity weighted est} to obtain
	\begin{align*}
		T_2\leq& \left\|BS_{\mathbb R^2_+}[\chi_{vp}\cdot\frac{\alpha}{s+\delta}G(\eta,\tau')]\right\|_{L^\infty(\operatorname{supp}\chi_m)}
		+\left\|BS_{\mathbb R^2_+}[\chi_{vp}\zeta_2 \big(\omega^\delta-\frac{\alpha}{s+\delta}G(\eta,\tau') \big)]\right\|_{L^\infty(\operatorname{supp}\chi_m)}\\
		&+\left\|BS_{\mathbb R^2_+}[\chi_{vp}(1-\zeta_2) \big(\omega^\delta-\frac{\alpha}{s+\delta}G(\eta,\tau') \big)]\right\|_{L^\infty}\\
		\leq& C_0
		+C_0\left\|\omega^\delta-\frac{\alpha}{s+\delta}G(\eta,\tau')\right\|_{L^1}\\
		&+C_0\left\|\chi_{vp}(1-\zeta_2) \big(\omega^\delta-\frac{\alpha}{s+\delta}G(\eta,\tau') \big)\right\|_{L^{4/3}}^{1/2}
		\left\|\chi_{vp}(1-\zeta_2) \big(\omega^\delta-\frac{\alpha}{s+\delta}G(\eta,\tau') \big)\right\|_{L^4}^{1/2}\\
		\leq& C_0+C_0\|\mathcal W_R\|_{L^1}
		+\frac{C_0\alpha}{(s+\delta)^{1/2}}\|\mathcal W_R\|_{L^{4/3}(\frac{3}{\sqrt{s+\delta}}\leq |\eta|\leq \frac{6}{\sqrt{s+\delta}})}^{1/2} \|\mathcal W_R\|_{L^4(\frac{3}{\sqrt{s+\delta}}\leq |\eta|\leq \frac{6}{\sqrt{s+\delta}})}^{1/2}\\
		\leq& C_0\big(E^\delta_{vp}(s)+1\big)
		+C_0 (s+\delta)^{\frac{m}{2}-\frac{5}{8}}\|\mathcal W_R\|_{L^2(m)}^{\f34}\|\nabla\mathcal W_R\|_{L^2(m)}^{\f14}\\
		\leq& C_0\big(E^\delta_{vp}(s)+1\big),
	\end{align*}
where we use the self-similar transformation \eqref{def: self-similar} and $\mathcal W^\delta=\chi_{vp}G+\mathcal W_R$.

For $T_3$,  we take advantage of Lemma \ref{lem: velocity weighted est} to deduce
	\begin{align*}
		T_3 &\leq C_0\|(1-\chi_{vp}-\zeta_1)\omega^\delta(s)\|_{L^{4/3}}^{1/2}\|(1-\chi_{vp}-\zeta_1)\omega^\delta(s)\|_{L^4}^{1/2}\\
		&\leq C_0\left\|e^{\Psi}\psi\chi_m\omega^\delta(s)\right\|_{L^2}^{1/2} \left\|e^{\Psi}\psi\chi_m\omega^\delta(s)\right\|_{L^4}^{1/2}\\
		&\leq C_0 E^\delta_m(s).
	\end{align*}
	
	Collecting estimates of $T_1\sim T_3$, we obtain
	\begin{align*}
		\|U^\delta(s)\|_{L^\infty(\operatorname{supp}\chi_m)}
		\leq C_0\big(E^\delta(s)+1\big).
	\end{align*}

Using Lemma \ref{velocity estimates 2} and Lemma \ref{lem: velocity weighted est}, we deduce that
	\begin{align*}
		&\quad\|U^\delta(s)\|_{L^\infty(y\geq1)}\\
		&\leq \left\|BS_{\mathbb R^2_+}[\zeta_1\omega^\delta]\right\|_{L^\infty}
		+\left\|BS_{\mathbb R^2_+}[(1-\zeta_1-\chi_{vp})\omega^\delta]\right\|_{L^\infty}
		+\left\|BS_{\mathbb R^2_+}[\chi_{vp}\omega^\delta]\right\|_{L^\infty}\\
		&\leq C_0\|\omega^\delta(s)\|_{Y_1(s)}
		+C_0\|(1-\zeta_1-\chi_{vp})\omega^\delta(s)\|_{L^{4/3}\cap L^4}
		+C_0\|\chi_{vp}\omega^\delta(s)\|_{L^{4/3}}^{1/2}\| \chi_{vp}\omega^\delta(s)\|_{L^4}^{1/2}\\
		\nonumber
		&\leq C_0\|\omega^\delta(s)\|_{Y_1(s)}
		+C_0\|e^\Psi \psi\chi_m\omega^\delta(s)\|_{L^2\cap L^4}
		+C_0 (s+\delta)^{-1/2}\|G+\mathcal W_R\|_{L^{\f43}}^{\f12} \|\nabla_\eta(G+\mathcal W_R)\|^{\f12}_{L^{\f43}}
		\\
		\nonumber
		&\leq C_0\big( E^\delta(s)+1\big)
		+C_0 (s+\delta)^{-1/2}\big(1+\|\mathcal W_R\|_{L^2(m)}^{\f12}\|\nabla_\eta\mathcal W_R\|_{L^2(m)}^{\f12}\big)\\
		&\leq C_0 s^{-\f12}\big( E^\delta(s)+1\big).
	\end{align*}

All that remains is to derive estimates for $U^\delta$ between the point vortex and the boundary. Here, we only consider the case  $\pa_x^i u^\delta$, since the other cases can be treated in a similar way. First, we observe that
	\begin{align*}
		 |(\pa_x^i u^\delta)_\xi(s,y)|
		\leq &\int_0^{a-\frac{1}{100}}e^{-\frac{|\xi|}{100}}|\xi|^i|\omega^\delta_\xi(s,z)|dz
		+\int_{a-\frac{1}{100}}^{b+\frac{1}{100}} |\xi|^i|\omega^\delta_\xi(s,z)|dz
		\\
		&\quad+\int_{b+\frac{1}{100}}^{+\infty}e^{-|\xi|}|\xi|^i|\omega^\delta_\xi(s,z)|dz\\
		&\leq C_0\int_0^{a-\frac{1}{100}}|\omega^\delta_\xi(s,z)|dz
		+C_0\|(\pa_x^i\omega^\delta)_\xi(s,z)\|_{L^2_z(a-\frac{1}{100}\leq z\leq{b+\frac{1}{100}})}\\
		&\quad+C_0 \int_{b+\frac{1}{100}}^{+\infty}e^{-|\xi|/2}|\omega^\delta_\xi(s,z)|dz,
	\end{align*}
	which yields
	\begin{align*}
		 &\quad\|\pa_x^i u^\delta(s)\|_{L^\infty(a\leq y\leq b)} \\ 
		&\leq C_0\|\omega^\delta(s)\|_{Y_1(s)}+C_0\|\omega^\delta(s,z)\|_{L^1(z\geq3)}
		+C_0 \left\|\langle\xi\rangle^{-1}\right\|_{L^2_\xi}\left\|\left\|\langle\xi\rangle(\pa_x^i\omega^\delta)_\xi(s,z)\right\|_{L^2_z(a-\frac{1}{100}\leq z\leq{b+\frac{1}{100}})}\right\|_{L^2_\xi}
		\\
		&\leq C_0\big(E^\delta_b(s)+1\big)
		+C_0\|\omega^\delta(s)\|_{H^{i+1}(a-\frac{1}{100}\leq z\leq{b+\frac{1}{100}})}
		+C_0\|e^\Psi \psi\chi_m\omega^\delta(s)\|_{L^2}
		+C_0\|G+\mathcal W_R\|_{L^1}\\
		&\leq C_0\big( E^\delta(s)+1\big)
		+C_0\|\omega^\delta(s)\|_{H^{i+1}(a-\frac{1}{100}\leq z\leq{b+\frac{1}{100}})}.
	\end{align*}
	
	By now, we obtain all desired results.
\end{proof}

\begin{remark}
From the above two lemmas, we obtain that the velocity given by $BS_{\mathbb R^2_+}[\zeta_1\omega^\delta(s)]$ admits a uniform upper bound. This indicates that the boundary exerts only a weak influence on the point vortex. This is the key point that allows us to remove the smallness condition on the initial data.
 \end{remark}

To close this section, we now present the proof of the two corollaries stated in Section 2.4.\smallskip

\noindent\textbf{Proof of Corollary \ref{cor:E}}. The first inequality is obtained by the definition of $E_{b}^\delta$ and the uniform estimates in Proposition \ref{prop: uniform estimates for E(t)}. 

 For the second inequality, recalling \eqref{integral eq of omega-omega c-1}, we have
	\begin{align}\label{est: cotangential derivative est of omega}
		&\left\|\langle\xi\rangle^{1+i}(y\pa_y)^j \pa_y(\chi_b\omega^\delta-\chi_b\omega^\delta_c)_\xi\right\|_{L^k_\xi L^1_y} \\
		\nonumber
		&\quad\leq \big\|\int_0^{+\infty}\left\|(y\pa_y)^j\pa_y\big(H_\xi(t,y,z)+R_\xi(t,y,z)\big)\big\|_{L^1_y}
		\big|\langle\xi\rangle^{1+i}b_\xi^\delta(z)\right|dz\big\|_{L^k_\xi}\\
		\nonumber
		&\qquad+\big\|\int_0^t\int_0^{+\infty}\big\|(y\pa_y)^j\pa_y\big(H_\xi(t-s,y,z)+R_\xi(t-s,y,z)\big)\big\|_{L^1_y}
		\big|\langle\xi\rangle^{1+i}N_\xi^\delta(s,z)\big|dzds \big\|_{L^k_\xi}\\
		\nonumber
		&\qquad+\big\|\int_0^t\big\|(y\pa_y)^j\pa_y\big(H_\xi(t-s,y,0)+R_\xi(t-s,y,0)\big)\big\|_{L^1_y}
		\left|\langle\xi\rangle^{1+i}B_\xi^\delta(s)\right|ds \big\|_{L^k_\xi}\\
		\nonumber
		&\quad\leq C\big\|\int_0^{+\infty}(t^{-1/2}+\langle\xi\rangle)\big|\langle\xi\rangle^{1+i}b_\xi^\delta(z)\big|dz\big\|_{L^k_\xi}
		+\big\|\int_0^t \big((t-s)^{-1/2}+\langle\xi\rangle\big)\big|\langle\xi\rangle^{1+i}B_\xi^\delta(s)\big|ds \big\|_{L^k_\xi} \\
		\nonumber
		&\qquad+C\big\|\int_0^t\int_0^{+\infty}\big((t-s)^{-1/2}+\langle\xi\rangle\big)\big|\langle\xi\rangle^{1+i}N_\xi^\delta(s,z)\big|dzds \big\|_{L^k_\xi}\nonumber\\
		&\quad\leq Ct^{-1/2},\nonumber
	\end{align}
	where we use Lemma \ref{prop of R 1}, Lemma \ref{est of HR b}, Proposition \ref{prop: estimate of N in new norm} and Proposition \ref{prop: estimate of B}. 

The proof of $(x\chi_b\omega^\delta-x\chi_b\omega^\delta_c)_\xi$ can be established using the same argument, and thus we omit the details.
Now, we prove the last inequality and  focus on the first two terms, since the remaining terms are handled similarly.
By Proposition \ref{prop: uniform estimates for E(t)}, we have
	\begin{align*}
		&\|\langle x\rangle^{1/4}\langle y\rangle\omega^\delta(t)\|_{L^1(y\geq1/2)}\\
		&\leq \|\langle x\rangle^{1/4}\langle y\rangle(1-\chi_{vp})\omega^\delta(t)\|_{L^1(y\geq1/2)}
		+C\|\chi_{vp}\omega^\delta(t)\|_{L^1}\\
		&\leq C\|\psi\chi_m\omega^\delta(t)\|_{L^2}
		+C\|\mathcal W_R(t)+G\|_{L^1_\eta}\\
&		\leq C.
	\end{align*}
	
For the second term, we similarly have
	\begin{align}\label{ineq:111}
		\left\|\langle x\rangle^{1/4}\langle y\rangle\nabla\omega^\delta(t)\right\|_{L^1(y\geq1/2)}
		&\leq C\left\|\psi\nabla\big(\chi_m\omega^\delta(t)\big)\right\|_{L^2}
		+Ct^{-1/2}\|\nabla_\eta(\mathcal W_R+G)\|_{L^1_\eta}\\
		&\leq C\left\|\psi\nabla\big(\chi_m\omega^\delta(t)\big)\right\|_{L^2}
		+Ct^{-1/2}E^\delta(t).\nonumber
	\end{align}
Next, we estimate the first term on the right-hand side of the above inequality.  Multiplying $\chi_m$ and applying $\pa_x$ on \eqref{eq: NS vorticity}, we obtain
	\begin{align*}
		&\pa_t\pa_x(\chi_m\omega^\delta)
		+\pa_x\big(U\cdot\nabla(\chi_m\omega^\delta)\big)
		-\Delta\pa_x(\chi_m\omega^\delta)\\
		&\qquad=\pa_x\big(U^\delta\cdot\nabla\chi_m\omega^\delta\big)
		-2\pa_x(\nabla\chi_m\cdot\nabla\omega^\delta)
		-\pa_x(\Delta\chi_m\omega^\delta).
	\end{align*}
	
	Taking $L^2$ inner product with $\psi^2\pa_x(\chi_m\omega^\delta)$ yields
	\begin{align*}
		&\frac{1}{2}\frac{d}{dt}\|\psi\pa_x(\chi_m\omega^\delta)\|_{L^2}^2
		+\langle \pa_x\big(U^\delta\cdot\nabla(\chi_m\omega^\delta)\big), \psi^2\pa_x(\chi_m\omega^\delta)\rangle
		-\langle\Delta\pa_x(\chi_m\omega^\delta),\psi^2\pa_x(\chi_m\omega^\delta)\rangle\\
		&
		=\langle \pa_x\big(U^\delta\cdot\nabla\chi_m\omega^\delta\big)
		-2\pa_x(\nabla\chi_m\cdot\nabla\omega^\delta)-\pa_x(\Delta\chi_m\omega^\delta) ,\psi^2\pa_x(\chi_m\omega^\delta)\rangle	\end{align*}
By integration by parts, the dissipation term becomes
	\begin{align*}
		&-\langle\Delta\pa_x(\chi_m\omega^\delta),\psi^2\pa_x(\chi_m\omega^\delta)\rangle\\
		&=\|\psi\pa_x\nabla(\chi_m\omega^\delta)\|^2_{L^2}
		+\langle\pa_x\nabla(\chi_m\omega^\delta),\nabla(\psi^2)\pa_x(\chi_m\omega^\delta)\rangle\\
		&=\|\psi\pa_x\nabla(\chi_m\omega^\delta)\|^2_{L^2}
		-\frac{1}{2}\langle\Delta(\psi^2)\pa_x(\chi_m\omega^\delta),\pa_x(\chi_m\omega^\delta)\rangle\\
		&\geq \|\psi\pa_x\nabla(\chi_m\omega^\delta)\|^2_{L^2}
		-C\|\psi\pa_x(\chi_m\omega^\delta)\|_{L^2}^2.
	\end{align*}
For the nonlinear terms,	using integration by parts, Lemma \ref{velocity estimates 3} and Proposition \ref{prop: uniform estimates for E(t)}, we deduce
	\begin{align*}
		&|\langle \pa_x\big(U^\delta\cdot\nabla(\chi_m\omega^\delta)\big), \psi^2\pa_x(\chi_m\omega^\delta)\rangle|\\
		&\leq |\langle U^\delta\cdot\nabla(\chi_m\omega^\delta),\psi^2\pa_x^2(\chi_m\omega^\delta)\rangle|
		+|\langle U^\delta\cdot\nabla(\chi_m\omega^\delta),
		\pa_x(\psi^2)\pa_x(\chi_m\omega^\delta)\rangle|\\
		&\leq C\|U^\delta\|_{L^\infty(\operatorname{supp}\chi_m)}
		\big(\|\psi\nabla(\chi_m\omega^\delta)\|_{L^2}
		\|\psi\pa_x^2(\chi_m\omega^\delta)\|_{L^2}
		+\|\psi\nabla(\chi_m\omega^\delta)\|_{L^2}
		\|\psi\pa_x(\chi_m\omega^\delta)\|_{L^2}\big)\\
		&\leq C\big(1+ E^\delta(t)\big)^2
		\|\psi\nabla(\chi_m\omega^\delta)\|_{L^2}^2
		+\frac{1}{10}\|\psi\pa_x^2(\chi_m\omega^\delta)\|^2_{L^2}\\
		&\leq C\|\psi\nabla(\chi_m\omega^\delta)\|_{L^2}^2
		+\frac{1}{10}\|\psi\pa_x^2(\chi_m\omega^\delta)\|^2_{L^2},
	\end{align*}
and
	\begin{align*}
		&|\langle \pa_x\big(U^\delta\cdot\nabla\chi_m\omega^\delta\big)
		-2\pa_x(\nabla\chi_m\cdot\nabla\omega^\delta)-\pa_x(\Delta\chi_m\omega^\delta) ,\psi^2\pa_x(\chi_m\omega^\delta)\rangle|\\
		&\leq C\big(\|U^\delta\|_{L^\infty(\operatorname{supp}\chi_m)}+1\big)\|\psi\pa_x^2(\chi_m\omega^\delta)\|_{L^2}
		\\
		&\quad\cdot\big(\|(1+|x|)(\omega^\delta,\nabla\omega^\delta)\|_{L^2(\frac{1}{4}\leq y\leq \frac{3}{8})}
		+\|(\omega^\delta,\nabla\omega^\delta)\|_{L^2(3\leq|(x,y)-(0,20)|\leq4)}\big)\\
		&\leq C\big( E^\delta(t)+1\big)\big(\|(1,x)\omega^\delta(t)\|_{Y_2(t)}
		+(t+\delta)^{\frac{m-1}{2}}\|\nabla(G+\mathcal W_R)\|_{L^2(m)}\big)
		\|\psi\pa_x^2(\chi_m\omega^\delta)\|_{L^2}\\
		&\leq C E^\delta(t)\big( E^\delta(t)+1\big)
		\|\psi\pa_x^2(\chi_m\omega^\delta)\|_{L^2}\\
		&\leq C+\frac{1}{10}\|\psi\pa_x^2(\chi_m\omega^\delta)\|_{L^2}^2.
	\end{align*}
Combining all above estimates, we obtain
\begin{align*}
		\sup_{0\leq t\leq T}\|\psi\pa_x\big(\chi_m\omega^\delta(t)\big)\|_{L^2}
		\leq C.
	\end{align*}		
In the same way, we obtain the estimate $\|\psi\pa_y(\chi_m\omega^\delta)\|_{L^2} \leq C$. 

Substituting the two estimates into \eqref{ineq:111}, we obtain the desired result.
\ef

\medskip

\noindent\textbf{Proof of Corollary \ref{prop: Sobolev estimate}}. The first inequality is proved similarly as Proposition 3.6 in \cite{HWYZ} and we have to use Lemma \ref{velocity estimates 3} in this paper. The second one is obtained by Lemma \ref{velocity estimates 3} and Proposition \ref{prop: uniform estimates for E(t)}.
\ef

\section{Proof of Theorem \ref{main theorem}: existence part}

This section is devoted to proving the existence part of Theorem \ref{main theorem}. More precisely, the main result is stated as follows, with its proof relying on finding a convergent subsequence of the sequence $(\omega^\delta,U^\delta).$

  \begin{proposition}\label{prop: existence part}
	With the initial vorticity $\omega_0$ given by  \eqref{initial data}, the Navier-Stokes equations \eqref{eq: NS vorticity} have a global solution $(U, \omega)\in L^{2,\infty}(\mathbb R^2_+)\times L^1(\mathbb R^2_+)$, where the vorticity $\omega$ satisfies \eqref{initial limit}. 
	\end{proposition}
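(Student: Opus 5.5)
We construct the solution as a limit of the regularized solutions $(\omega^\delta,U^\delta)$ from Section 2.1, extracting a convergent subsequence by compactness. The argument has four steps: uniform bounds, compactness, passage to the limit, and identification of the initial trace.

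\textbf{Step 1: uniform bounds.} Proposition \ref{prop: uniform estimates for E(t)} gives $\sup_\delta E^\delta(t)\le C$ on $(0,T_0]$. Combined with Corollaries \ref{cor:E} and \ref{prop: Sobolev estimate}, this yields the following, uniformly in $\delta$:
\begin{itemize}
\item[(i)] $\|\omega^\delta(t)\|_{L^1}\le C$. This follows by splitting $\omega^\delta=\chi_{vp}\omega^\delta+\chi_b\omega^\delta+\text{rest}$, where $\|\chi_{vp}\omega^\delta\|_{L^1}=|\alpha|\,\|\chi_{vp}G+\mathcal W_R\|_{L^1_\eta}$, the boundary piece is controlled by $Y_1$ and Lemma \ref{est of omega c}, and the middle piece by $E_m$.
\item[(ii)] Local $H^k$ bounds for $\omega^\delta$ on compact subsets of $(0,T_0]\times\mathbb R^2_+$. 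These come from the self-similar $H^1(m)$ bound near the vortex (valid for $t\ge t_1>0$), from $E_m$ and Corollary \ref{prop: Sobolev estimate} in the middle, and from the conormal/analytic $Y$ norms together with the bound $t^{-1/2}$ on $\pa_y$ in Corollary \ref{cor:E} near the boundary.
\item[(iii)] Velocity bounds $\|U^\delta(t)\|_{L^{2,\infty}}\le C\|\omega^\delta(t)\|_{L^1}$ (Biot--Savart in $\mathbb R^2_+$) and $t^{1/2}\|U^\delta\|_{L^\infty}\le C$, from Lemmas \ref{velocity estimates 2}--\ref{velocity estimates 3}.
\end{itemize}
Beyond $T_0$ the solution is smooth at time $T_0$, so global existence follows by applying Theorem \ref{thm: Ken} (or standard theory for $L^1\cap L^\infty$ data) from $t=T_0$. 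Uniqueness of that continuation then glues it to the constructed piece.

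\textbf{Step 2: compactness.} The vorticity equation gives equicontinuity in time of $\omega^\delta$ in a negative Sobolev space, locally in $t>0$. By Aubin--Lions and a diagonal argument, a subsequence converges: $\omega^\delta\to\omega$ strongly in $L^2_{loc}((0,T_0]\times\overline{\mathbb R^2_+})$ and weakly-$*$ in $L^\infty_t\mathcal M$. The weighted bounds of Corollary \ref{cor:E}, namely $\langle x\rangle^{1/4}\langle y\rangle$ decay away from the boundary and the $(1,x)$ weights near it, give tightness, so the strong convergence upgrades to $L^1$ convergence for each $t>0$. Then $U^\delta=BS_{\mathbb R^2_+}[\omega^\delta]\to U=BS_{\mathbb R^2_+}[\omega]$ locally uniformly. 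Passing to the limit in the weak formulation $\int\omega\pa_t\varphi+\omega U\cdot\na\varphi+\omega\Delta\varphi=0$ for test functions supported in $t>0$ is then routine; the nonlinear term converges by strong$\times$strong convergence. Since $\operatorname{div}U=0$ and $U$ has the no-slip trace by the Biot--Savart construction, $\omega$ inherits the vorticity boundary condition \eqref{eq: BC of NS} via the integral representation \eqref{integral eq of omega-omega c-1}, whose terms converge as $\delta\to0$.

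\textbf{Step 3: initial trace \eqref{initial limit}; this is the main obstacle.} The limits $\delta\to0$ and $t\to0$ must be interchanged, and for that the estimates must be uniform in $\delta$ and vanish as $t\to0$. Take $\varphi\in C_c(\overline{\mathbb R^2_+})$ and split
\begin{align*}
\langle\omega(t),\varphi\rangle=\langle\chi_{vp}\omega,\varphi\rangle+\langle\chi_b(\omega-\omega_c),\varphi\rangle+\langle\chi_b\omega_c,\varphi\rangle+\langle(1-\chi_{vp}-\chi_b)\omega,\varphi\rangle.
\end{align*}
\begin{itemize}
\item[(a)] In self-similar variables, $\langle\chi_{vp}\omega^\delta,\varphi\rangle=\alpha\int(\chi_{vp}G+\mathcal W_R)(\eta)\,\varphi(X_0+\sqrt{t+\delta}\,\eta)\,d\eta$. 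The second statement of Proposition \ref{prop: uniform estimates for E(t)} gives $\|\mathcal W_R\|_{L^1}\lesssim E_{vp}\to0$ as $t,\delta\to0$, so this term tends to $\alpha\varphi(X_0)$.
\item[(b)] The boundary remainder $\chi_b(\omega-\omega_c)$ satisfies \eqref{omega-omega c}, with initial datum $b^\delta=u_0^\delta\chi_b\chi_b'$.
\item[(c)] By direct computation from \eqref{omega c}, $\omega_c^\delta(t)\rightharpoonup-u_0^\delta\delta_{\pa\mathbb R^2_+}-u_0^\delta\chi_b'$, and $u_0^\delta\to u_0$ locally uniformly.
\item[(d)] The middle term is bounded by $E_m\le Ct^{1/4}$, which tends to $0$.
\end{itemize}
The delicate part is (b). I would bound $\|\chi_b(\omega-\omega_c)(t)-b^\delta\|$ in $Y_1$ by the Duhamel terms of \eqref{integral eq of omega-omega c-1}. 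Using Lemma \ref{est of HR b}, together with the smallness $E_b\lesssim\gamma^{-1/2}+t^{1/2}$ and the $N$- and $B$-estimates cited in the proof of Corollary \ref{cor:E}, these Duhamel terms are bounded by $C(t^{1/2}+\gamma^{-1/2})$ uniformly in $\delta$. The $\gamma^{-1/2}$ term is harmless because $\gamma$ can be taken arbitrarily large for fixed small $T_0$. The $b^\delta$ contributions then cancel against the $-u_0\chi_b'$ part of (c). Adding (a)--(d) gives $\omega(t)\rightharpoonup\alpha\delta_{X_0}-u_0\delta_{\pa\mathbb R^2_+}$ vaguely, which is \eqref{initial limit}.
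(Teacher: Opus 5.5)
Your outline is sound, and the compactness part is the paper's: uniform bounds from Proposition \ref{prop: uniform estimates for E(t)} and Corollary \ref{cor:E}, Aubin--Lions, and continuation by Theorem \ref{thm: Ken}. You differ in how you identify the limit and obtain the initial trace.

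The paper passes $\delta\to0$ in two Duhamel representations: \eqref{integral eq of omega-omega c-1} for $\chi_b(\omega-\omega_c)$, and a whole-space heat formula for $(1-\chi_b)\omega$. This is why it needs Lemmas \ref{lem: convergence of product terms in space-time}, \ref{lemma: u-om} and \ref{lem: boundary convergence overall}, which give convergence of $N^\delta$, $U^\delta\omega^\delta$ and the nonlocal $B^\delta$ down to $s=0$ by dominated convergence with majorant $C(t-s)^{-1/2}s^{-1/2}$. It then re-derives the PDE from these formulas and obtains \eqref{initial limit} by letting $t\to0$ in them.

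You use the distributional form in the interior and the integral representation only for the boundary condition. You therefore still need the same convergence of $N^\delta,B^\delta$ near $s=0$, which you call routine. You then get the trace piece by piece from the smallness of $E_{vp},E_m,E_b$ uniformly in $\delta$, plus the explicit corrector. This is more quantitative: every piece is controlled in an $L^1$-type norm, so the argument works directly for $\varphi\in C_c(\overline{\mathbb R^2_+})$. By contrast, with the bounds used in the paper ($\|U\|_{L^\infty(y\ge2)}\lesssim s^{-1/2}$, $\|\omega\|_{L^1(y\ge2)}\lesssim1$), the Duhamel term for $(1-\chi_b)\omega$ is only $O(1)$ in $L^1$, since $\int_0^t(t-s)^{-1/2}s^{-1/2}ds=\pi$. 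It vanishes only after pairing with $\varphi\in C^1_c$.

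Your uniform bound $\|\omega^\delta(t)\|_{L^1}\le C$, obtained by splitting off $\omega^\delta_c$, is also sharper than the paper's $Ct^{-1/4}$. It comes from the $x$-weighted $Y_2$ part of $E_b$, via $\|f\|_{L^1_x}\lesssim\|(1,x)f\|_{L^2_x}$, not from $Y_1$. An $L^1_\xi$ norm only controls $\sup_x$.

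Three statements need correcting.

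\emph{No-slip.} $BS_{\mathbb R^2_+}$ only yields $v|_{y=0}=0$. The tangential trace $u_\xi(0)=\int_0^\infty e^{-|\xi|z}\omega_\xi(z)\,dz$ is a constraint on $\omega$, and $U_0$ violates it; that is the whole initial-layer issue. No-slip for the limit must come from passing to the limit in $u^\delta|_{y=0}=0$, or from the vorticity boundary condition.

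\emph{The norm in step (b).} The $Y_1$ norm only sees $y<1+\mu_0$, while $b^\delta$ is supported in $2\le y\le3$. A $Y_1$ bound therefore says nothing about the limit on the support of $b$. Estimate the Duhamel terms instead in $L^k_\xi L^1_y$ over all $y\ge0$, as in the proof of Corollary \ref{cor:E}. Using $\|(H+R)_\xi(t-s,\cdot,z)\|_{L^1_y}\le C$ together with Propositions \ref{prop: estimate of N in new norm} and \ref{prop: estimate of B}, this gives $O(t^{1/2})$ with no $\gamma^{-1/2}$ term. Alternatively, use $E_m$ on $y\ge 3/8$ and the explicit form of $\omega_c$ there.

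\emph{The role of $\gamma$.} $\gamma$ cannot be enlarged for fixed $T_0$, because the $Y_k(t)$ norms require $\gamma t<\mu_0$. What is legitimate is to take $\gamma$ large first and then $t$ small. This is harmless because $\langle\omega(t),\varphi\rangle$ does not depend on $\gamma$. Your step (a) needs the same ordering, since Proposition \ref{prop: uniform estimates for E(t)} only gives $\lim_{t\to0}E^\delta(t)\lesssim\delta^{1/2}+\gamma^{-1/2}$.
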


 \subsection{Convergence of the solution sequence}
 
 Before treating $\omega^{\delta}$, we consider the convergence of the corrector $\omega^{\delta}_c$. First, we define the limit functions $\omega_c, b$ by  replacing $u_0^\delta$ by $u_0$ in \eqref{omega c} and \eqref{omega-omega c}. More precisely, we set $b=u_0\chi_b\chi_b'$,
\begin{align}\label{def: omega c without delta}
\omega_c(t,x,y)	=&-2u_0(x)\cdot\frac{1}{(4\pi t)^{1/2}}e^{-\frac{y^2}{4t}}\\
	&-u_0(x)\int_0^{+\infty}\frac{1}{(4\pi t)^{1/2}}\big(e^{-\frac{(y-z)^2}{4t}}+e^{-\frac{(y+z)^2}{4t}}\big)\chi'_b(z) dz,\nonumber
\end{align}
and 
\begin{align*}
	u_0(x)=\frac{\alpha}{\pi}\cdot\frac{20}{x^2+20^2}.
\end{align*}
By a direct computation, we have for $k=1,2$, $0< t\leq T$ and some contant $C'$
\begin{align}\label{convergence of omega c delta}
	\lim_{\delta\rightarrow0}
	\Big(\sum_{i+j\leq2}\left\|e^{C'|\xi|}\big(\pa_x^i(y\pa_y)^j(1,x)(\omega_c^\delta-\omega_c)\big)_\xi \right\|_{L^k_\xi L^1_y}
	+\left\|(1,x)(\omega_c^\delta-\omega_c)\right\|_{H^4(y\geq1)} \Big)=0,
\end{align}
and
\begin{align}\label{convergence of b delta}
	\lim_{\delta\rightarrow0}\left\|\|e^{C'|\xi|}(b_\xi^\delta-b_\xi)(y)\|_{L^1_y}\right\|_{L^k_\xi}=0.
\end{align}
 
 Based on the uniform estimates for $E^\delta$, we obtain the following convergence results for the vorticity.

 \begin{lemma}\label{lem: convergence in space-time norms}
	There exist a subsequence (still denoted by $\omega^\delta$) and limit functions $(\omega_1,\omega_2,\omega_3)\in L^{2}(0,T_0; L^1)$, such that for $0\leq t\leq T_0, k=1,2$, it holds that
	\begin{align*}	 &\lim_{\delta\rightarrow0} \sum_{i+j\leq1}\big\| \|\big(\pa_x^i(y\pa_y)^j(\chi_b\omega^\delta-\omega_1)\big)_\xi(s,y) \|_{L^k_\xi L^1_y}\big\|_{L^{3}_{(0,t)}} =0,\\
	&\lim_{\delta\rightarrow0}\big\|\|( \omega^\delta-\omega_2)(s)\|_{L^1\cap L^{\f43}(y\geq2)}\big\|_{L^{3}_{(0,t)}}=0.
		\end{align*}
Moreover, on the domain $1\leq y\leq 4$, we have
		\begin{align*}	 
	\lim_{\delta\rightarrow0} \left\|\|(\omega^\delta-\omega_3)(s)\|_{H^3(1\leq y\leq4)}\right\|_{L^{3}_{(0,t)}}=0.
	\end{align*}
\end{lemma}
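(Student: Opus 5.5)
The plan is an Aubin--Lions type compactness argument, run separately in each of the three regions. The uniform bounds come from Corollary \ref{cor:E} and Corollary \ref{prop: Sobolev estimate}. Time equicontinuity comes from the vorticity equation, with the nonlinear terms controlled through the velocity bounds of Lemmas \ref{velocity estimates 1}--\ref{velocity estimates 3}. The $L^3_{(0,t)}$ convergence is then obtained by dominated convergence, using integrable-in-time majorants.

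\textbf{Boundary region.} Set $f^\delta=\chi_b\omega^\delta$. By Corollary \ref{cor:E} and Lemma \ref{est of omega c}, for each $s\in(0,T_0]$ the quantities
\[
\langle\xi\rangle^{1+i}(y\pa_y)^j f^\delta_\xi,\qquad \pa_\xi f^\delta_\xi,\qquad s^{1/2}\pa_y f^\delta_\xi
\]
are bounded in $L^k_\xi L^1_y$, uniformly in $\delta$. The $\omega_c^\delta$ part is bounded by $s^{-1/2}$ in the $\pa_y$ norm only, and by $C$ otherwise. The bound on $\pa_\xi f^\delta_\xi$ gives $x$-decay, and the Gaussian weight $e^{C'y^2/s}$ in Lemma \ref{est of omega c}, together with the bound on $f^\delta$ itself (compact support $y\le3$), gives tightness in $y$. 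The extra powers $\langle\xi\rangle$ and $\pa_\xi$ give tightness and equicontinuity in $\xi$. Together these yield precompactness in $L^k_\xi L^1_y$ of $\{\pa_x^i(y\pa_y)^j f^\delta(s)\}$ for $i+j\le1$, by Kolmogorov--Riesz in the variables $(\xi,y)$ at each fixed $s>0$.

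For time equicontinuity, I will use the integral formula \eqref{integral eq of omega-omega c-1}, or directly $\pa_t f^\delta=\Delta f^\delta+N^\delta$. This gives $\|\pa_t f^\delta\|$ in a negative-regularity norm, say $\langle\xi\rangle^{-2}$-weighted $L^k_\xi W^{-1,1}_y$, bounded by $C s^{-1/2}$, which is integrable on $(0,T_0)$. Interpolating with the higher-order bounds gives equicontinuity on $[\epsilon,T_0]$ for each $\epsilon>0$. A diagonal argument over $\epsilon=1/n$ then produces $\omega_1$ with pointwise-in-time convergence for a.e. $s$. Since the norms are bounded uniformly in $s$ (for $i+j\le1$ no $\pa_y$ without a $y$ weight appears), dominated convergence gives the $L^3_{(0,t)}$ statement.

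\textbf{Interior region $y\ge2$.} I will use Corollary \ref{cor:E}:
\[
\langle x\rangle^{1/4}\langle y\rangle\omega^\delta\in L^1,\qquad t^{1/2}\nabla\omega^\delta\in L^1,\qquad t^{1/4}\,\omega^\delta,\ t^{3/4}\nabla\omega^\delta\in L^{4/3}.
\]
The weights give tightness in space, and the gradient bounds give translation equicontinuity, so at fixed $s>0$ the family is precompact in $L^1\cap L^{4/3}$. Time derivatives are controlled in $W^{-1,1}$ by $\|U^\delta\omega^\delta\|_{L^1}+\|\nabla\omega^\delta\|_{L^1}\lesssim s^{-1/2}$, using $\|U^\delta\|_{L^\infty(y\ge2)}\le Cs^{-1/2}$ from Corollary \ref{prop: Sobolev estimate}. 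As before, Aubin--Lions on $[\epsilon,T_0]$ plus a diagonal argument gives $\omega_2$. The majorant is $\|\omega^\delta\|_{L^1\cap L^{4/3}}\le Cs^{-1/4}$, and $s^{-3/4}$ is integrable on $(0,t)$, so dominated convergence yields the $L^3$ limit.

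\textbf{Strip $1\le y\le4$.} Corollary \ref{prop: Sobolev estimate} gives $\|(1,x)\omega^\delta\|_{H^4(7/8\le y\le4)}\le Ct^{1/2}e^{-5\eps_0/t}$. On a slightly thinner strip, $H^4$ with $x$-weight embeds compactly into $H^3$ (Rellich plus tightness from the $x$ weight). The time derivative $\pa_t\omega^\delta=\Delta\omega^\delta-U^\delta\cdot\nabla\omega^\delta$ is bounded in $H^2$ there by Lemma \ref{velocity estimates 3}. Aubin--Lions gives $\omega_3$, and the uniform bound makes the $L^3$ convergence immediate. Finally, I will pass to a common subsequence across the three extractions.

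\textbf{Main obstacle.} The hard step is the boundary-region compactness. The $\pa_y$ bound degenerates like $s^{-1/2}$, and the initial layer makes $\omega_c^\delta$ concentrate at $y=0$ as $s\to0$. I would handle this by working with $\chi_b(\omega^\delta-\omega^\delta_c)$, which is controlled by $E_b^\delta$ in the conormal norms $(y\pa_y)^j$, and treating $\chi_b\omega_c^\delta$ separately via its explicit convergence \eqref{convergence of omega c delta}. Compactness in $y$ near $0$ would then come from the $y\pa_y$ bound combined with the $L^1_y$ integrability. The one point I am not sure of is whether $y\pa_y$ control suffices for $L^1$-translation compactness near $y=0$. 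If it does not, I would fall back on the $s^{1/2}\pa_y$ bound at each fixed $s>0$, which is enough for compactness there.
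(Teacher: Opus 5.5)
Your proposal is correct and is essentially the paper's argument. Both use uniform bounds from Corollary~\ref{cor:E} and Corollary~\ref{prop: Sobolev estimate}, a negative-norm bound on $\pa_t(\chi_b\omega^\delta)$ from the vorticity equation, Aubin--Lions on $\chi_b(\omega^\delta-\omega^\delta_c)$, and \eqref{convergence of omega c delta} for the corrector. The only difference is how the $s^{-1/2}$ degeneration of the $\pa_y$ bound is absorbed: the paper notes that this bound lies in $L^p(0,T)$ for $p<2$ and applies Aubin--Lions on the whole interval, while you work on $[\epsilon,T_0]$, extract diagonally and finish with dominated convergence, which is equally valid.

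On the doubt you flagged: the conormal bound alone does not give $L^1_y$-compactness near $y=0$. Take $n\phi(ny)$ with $\phi\in C_c^\infty((1,2))$; it and its $y\pa_y$ stay bounded in $L^1_y$, yet it concentrates to a Dirac mass. So the fixed-$s$ bound on $s^{1/2}\pa_y$, which you already use, is genuinely needed, just as the paper uses its $\pa_y$ bound.
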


\begin{proof}
To begin with, using Corollary \ref{cor:E}, we obtain
\beno
&&\Big\|\sum_{i+j\leq1}\left\|\langle\xi\rangle^{1+i}(y\pa_y)^j(\chi_b\omega^\delta-\chi_b\omega^\delta_c)_\xi(t)\right\|_{L^k_\xi L^1_y} \Big\|_{L^\infty(0, T)}\\
&&+\Big\|\sum_{i+j\leq1}\left\|(\pa_\xi,\pa_y)\Big( \langle\xi\rangle^{1+i}(y\pa_y)^j(\chi_b\omega^\delta-\chi_b\omega^\delta_c)_\xi(t) \Big) \right\|_{L^k_\xi L^1_y} \Big\|_{L^p(0, T)}\leq C, \qquad for \ 1\leq p<2.
\eeno
Next, we estimate  $\pa_t \omega^\delta$.  To achieve that,  multiplying $\chi_b$ on \eqref{eq: NS vorticity} gives
	\begin{align*}
		\pa_t(\chi_b\omega^\delta)
		=\Delta(\chi_b\omega^\delta)
		-U^\delta\cdot\nabla(\chi_b\omega^\delta)
		-\chi_b'u^\delta\omega^\delta
		-2\chi_b'\pa_y\omega^\delta
		-\chi_b''\omega^\delta,
	\end{align*}
	which together with Corollary \ref{cor:E} implies
	\begin{align*}
		&\left\|\langle\xi\rangle^{-4}\big(\pa_x^i(y\pa_y)^j\pa_t(\chi_b\omega^\delta)\big)_\xi(s,y)\right\|_{L^k_\xi W_y^{-2,1}}\\
		&\leq \left\|(y\pa_y)^j(\chi_b\omega^\delta)_\xi(s,y)\right\|_{L^k_\xi L^1_y}
		+\|\chi_b' u^\delta\omega^\delta\|_{L^2} +\|\chi_b''(1,x)\omega^\delta\|_{L^2}
		+2\|\chi_b'\pa_y\omega^\delta\|_{L^2}\\
		&\quad+\big\|(y\pa_y)^j\big(u^\delta\pa_x(\chi_b\omega^\delta)+\frac{v^\delta}{y}y\pa_y(\chi_b\omega^\delta)\big)_\xi(s,y)\big\|_{L^k_\xi L^1_y}
\\		
		&\leq C.
	\end{align*}
 
Then, by Aubin-Lions Lemma and \eqref{convergence of omega c delta}, we deduce that there exists  $\om_1$ such that
\beno
\lim_{\delta\rightarrow0}\sum_{i+j\leq1}\left\|\left\|\big(\pa_x^i(y\pa_y)^j(\chi_b\omega^\delta-\omega_1)\big)_\xi(s,y)\right\|_{L^k_\xi L^1_y}\right\|_{L^{3}_{(0,t)}}=0.
 \eeno

he remaining conclusions of the lemma follow by analogous arguments, and we omit the details.
\end{proof}

Now, we define the limit functions $\omega,U=(u,v)$ as follows
\begin{align}\label{def: limit functions omega U}
	\omega:=\omega_1+(1-\chi_b)\omega_2,\qquad U:=BS_{\mathbb R^2_+}[\omega], \quad for \ t\in(0,T).
\end{align}

\begin{remark}
We emphasize here that the limit function $\omega$ still satisfies the estimates in Corollary \ref{cor:E}, with the index $\delta$ removed.
\end{remark}

We now establish the convergence of the velocity, as stated below.
\begin{lemma}\label{lem: convergence in space-time norms of velocity}
	There exists a subsequence (still denoted by $U^\delta$), such that for $0\leq t\leq T_0$,
	\begin{align*}
		\lim_{\delta\rightarrow0}\left\|\left\|\left\|\big(u^\delta-u,\frac{v^\delta-v}{y}\big)_\xi(s)\right\|_{ L^\infty_y(y\leq3)}  \right\|_{L^1_\xi }\right\| _{L^{3}_{(0,t)}}  =0.
	\end{align*}
\end{lemma}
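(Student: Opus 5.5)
The proof runs through the Fourier representation of the Biot--Savart law in Lemma \ref{lem: velocity formula}, which is linear in $\omega$. Thus $u^\delta-u$ and $v^\delta-v$ are given by the same integral formulas with $\omega^\delta_\xi$ replaced by $(\omega^\delta-\omega)_\xi$. For each fixed $s$ and $y\le 3$, split the $z$-integral into a near-boundary part and a far part:
\[
\omega^\delta-\omega=\chi_b\omega^\delta-\omega_1+(1-\chi_b)(\omega^\delta-\omega_2).
\]
This uses $\omega=\omega_1+(1-\chi_b)\omega_2$ from \eqref{def: limit functions omega U}. I would estimate the two contributions separately and show that each tends to zero in $L^3_{(0,t)}L^1_\xi L^\infty_y(y\le 3)$, using the strong convergences in Lemma \ref{lem: convergence in space-time norms}.

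\textbf{Near-boundary part.} For $f=\chi_b\omega^\delta-\omega_1$, the kernels in Lemma \ref{lem: velocity formula} are bounded by $1$. Hence
\[
\sup_y|(BS[f])_{1,\xi}(y)|\le C\|f_\xi\|_{L^1_y}.
\]
For $v$, as in the proof of Lemma \ref{velocity estimates 1}, the factors $1-e^{-2|\xi|z}$ and $1-e^{-2|\xi|y}$ are bounded by $2|\xi|y$ when $z\le y$. This gives
\[
\sup_y\Big|\frac{(BS[f])_{2,\xi}(y)}{y}\Big|\le C\|\xi f_\xi\|_{L^1_y}+(\text{terms with }z>y).
\]
For the terms with $z>y$, I would bound $(1-e^{-2|\xi|y})/y\le 2|\xi|$, so they also reduce to $\|\xi f_\xi\|_{L^1_y}$. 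Taking the $L^1_\xi$ norm then reduces this part to
\[
\sum_{i\le1}\big\|(\pa_x^i f)_\xi\big\|_{L^1_\xi L^1_y},
\]
which tends to zero in $L^3_{(0,t)}$ by the first statement of Lemma \ref{lem: convergence in space-time norms} with $k=1$.

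\textbf{Far part.} For $g=(1-\chi_b)(\omega^\delta-\omega_2)$, supported in $y\ge 2$, the variable $z$ satisfies $z-y\ge z-3$. The kernels therefore carry a factor $e^{-|\xi|(z-y)}$, but near $z\approx 2$ and $y\approx 3$ there is no decay in $\xi$. To obtain $L^1_\xi$ integrability I would split further:
\begin{itemize}
\item For $z\ge 4$, use $e^{-|\xi|(z-y)}\le e^{-|\xi|}$. This contributes at most $C\|g\|_{L^1}$, since $|g_\xi(z)|\le\|g(\cdot,z)\|_{L^1_x}$.
\item For $2\le z\le 4$, use Cauchy--Schwarz in $\xi$ with the weight $\langle\xi\rangle^{-1}$, exactly as in the $I_3$ estimate of Lemma \ref{velocity estimates 1}. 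This is bounded by $\|\omega^\delta-\omega_3\|_{H^1(1\le y\le4)}$, after checking that $\omega_2=\omega_3$ on $2\le y\le4$.
\end{itemize}
For the dangerous factor $1/y$ in the $v$ component, note that $(1-e^{-2|\xi|y})/y\le 2|\xi|$ for $y$ small. Combined with $e^{-|\xi|(z-y)}$ and $z-y\ge 1$ when $y\le1$, this factor is harmless; for $1\le y\le 3$ it is bounded. Both contributions then tend to zero in $L^3_{(0,t)}$ by the second and third statements of Lemma \ref{lem: convergence in space-time norms}.

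\textbf{Main obstacle.} The main difficulty is the identification of the limits across the overlap regions. One needs $\omega_1=\chi_b\omega_2$ on $y\ge2$ and $\omega_2=\omega_3$ on $[2,4]$, so that the decomposition is consistent and $\omega^\delta-\omega$ really splits as above. This follows from uniqueness of distributional limits, since all the sequences converge to the same $\lim\omega^\delta$ locally. Alternatively, I would avoid the issue by writing $\omega^\delta-\omega$ directly in terms of $\omega^\delta-\omega_3$ on the strip $1\le y\le4$ and using the $H^3$ convergence there. A possible subsequence extraction, to pass from $L^3$ convergence of norms to the stated form, is routine.
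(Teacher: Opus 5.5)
Your proposal is correct and follows essentially the paper's argument. The paper likewise splits the $z$-integral of the Fourier Biot--Savart formula into three pieces: $y\le 2$, controlled in $L^1_\xi L^1_y$; the strip $2\le y\le 4$, handled by Cauchy--Schwarz in $\xi$ with weight $\langle\xi\rangle^{-1}$ and the $H^1$ convergence; and $y\ge 4$, using $e^{-|\xi|}$ together with $L^1$ convergence. It then invokes the space-time convergence lemma.

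Your explicit treatment of $\frac{v^\delta-v}{y}$ fills in what the paper dismisses as ``the same argument''. It uses $(1-e^{-2|\xi|y})/y\le 2|\xi|$ together with the $\partial_x$-convergence near the boundary. Also, your split $\omega^\delta-\omega=(\chi_b\omega^\delta-\omega_1)+(1-\chi_b)(\omega^\delta-\omega_2)$ is an exact identity by the definition of $\omega$. So the only limit identification you actually need is $\omega_2=\omega_3$ on $2\le y\le 4$.
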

\begin{proof}
	
Here, the proofs of $u^\delta-u$ and $\frac{v^\delta-v}{y}$ follow by the same argument. We therefore only present the proof for $u^\delta-u$. First, by Lemma \ref{lem: velocity formula}, we have
	\begin{align*}
		|(u^\delta-u)_\xi(t,y)|
		\leq \int_0^4|(\omega^\delta-\omega)_\xi(t,z)|dz
		+\int_4^{+\infty}e^{-|\xi|}|(\omega^\delta-\omega)_\xi(t,z)|dz,
	\end{align*}
	which implies
	\begin{align*}
		 &\big\|\|(u^\delta-u)_\xi(t)\|_{L^\infty_y (y\leq3)}\big\|_{L^1_\xi }\\
		&\leq \|(\omega^\delta-\omega)_\xi(t)\|_{L^1_\xi L^1_y(y\leq2)}
		+\|\langle\xi\rangle^{-1}\|_{L^2_\xi}\|\langle\xi\rangle(\omega^\delta-\omega)_\xi\|_{L^2_\xi L^2_y(2\leq y\leq4)} \\
		&\qquad +\int_4^{+\infty}\|e^{-|\xi|}\|_{L^1_\xi}\|(\omega^\delta-\omega)_\xi(t,z)\|_{L^\infty_\xi}dz\\
	 &\leq \|(\omega^\delta-\omega)_\xi(t)\|_{L^1_\xi L^1_y(y\leq2)}
		+\|(\omega^\delta-\omega)(t)\|_{H^1(2\leq y\leq4)}
		+\|(\omega^\delta-\omega)(t)\|_{L^1(y\geq4)}.
	\end{align*}
	Thus, by Lemma \ref{lem: convergence in space-time norms}, we obtain the desired result.
\end{proof}

Next, we analyze the convergence of the nonlinear term $N^\delta$, which is supported in a neighborhood of the boundary.
\begin{lemma}\label{lem: convergence of product terms in space-time}
	For $0\leq t\leq T_0$, it holds that
	\begin{align*}
		&\lim_{\delta\rightarrow0}\int_0^t\left\|N_\xi^\delta(s,y)-N_\xi(s,y)\right\|_{L^k_\xi L^1_y}ds =0,\quad for \quad k=1,2,
	\end{align*}
	where $N_\xi=-\chi_b (U\cdot\nabla\omega)_\xi-\chi_b''\omega_\xi-2\chi_b'\pa_y\omega_\xi +\chi_b(\pa_x^2\omega_c)_\xi+\chi_b''(\omega_c)_\xi+2\chi_b'\pa_y(\omega_c)_\xi$.
\end{lemma}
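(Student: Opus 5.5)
We prove the convergence of $N^\delta$ to $N$ one term at a time, using the decomposition of $N^\delta$ from Lemma \ref{lem: eq of omega-omega c}. Since $\chi_b(U\cdot\nabla\omega_c)$ appears in $N^\delta$ with opposite signs and cancels in $N^\delta$ itself, we write
\begin{align*}
N^\delta=-\chi_b U^\delta\cdot\nabla\omega^\delta+\chi_b\pa_x^2\omega^\delta_c-(\chi_b''\omega^\delta+2\chi_b'\pa_y\omega^\delta)+(\chi_b''\omega^\delta_c+2\chi_b'\pa_y\omega^\delta_c).
\end{align*}
This leaves a linear part and a single nonlinear product.

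\textbf{Corrector terms.} For $\chi_b\pa_x^2\omega^\delta_c$, $\chi_b''\omega^\delta_c$ and $\chi_b'\pa_y\omega^\delta_c$ we use \eqref{convergence of omega c delta}. The factors $\chi_b',\chi_b''$ are supported in $2\le y\le3$, where $\omega_c^\delta$ is smooth. Multiplication by a function of $y$ alone commutes with the Fourier transform in $x$. Hence these terms converge in $L^k_\xi L^1_y$ for every fixed $s>0$, and they are dominated by $C_0s^{-1/2}$ through Lemma \ref{est of omega c}. Dominated convergence in $s$ then gives convergence of the time integral.

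\textbf{Linear $\omega^\delta$ terms.} The terms $\chi_b''\omega^\delta$ and $\chi_b'\pa_y\omega^\delta$ are supported in $2\le y\le 3$. There Lemma \ref{lem: convergence in space-time norms} gives $H^3$ convergence to $\omega_3=\omega$ in $L^3_t$. We pass from $H^1$ to $L^k_\xi L^1_y$ with $\|\langle\xi\rangle^{-1}\|_{L^2_\xi}$ and Cauchy--Schwarz on the bounded $y$-interval. Finally $L^3_{(0,t)}\subset L^1_{(0,t)}$, which finishes these terms.

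\textbf{Nonlinear term (main obstacle).} We write
\begin{align*}
\chi_b(U^\delta\cdot\nabla\omega^\delta-U\cdot\nabla\omega)=\chi_b\big(u^\delta\pa_x\omega^\delta-u\pa_x\omega\big)+\chi_b\Big(\frac{v^\delta}{y}\,y\pa_y\omega^\delta-\frac{v}{y}\,y\pa_y\omega\Big).
\end{align*}
Each bracket is split as $(u^\delta-u)\pa_x\omega^\delta+u\,\pa_x(\omega^\delta-\omega)$, and similarly with $v/y$ and $y\pa_y$. On the Fourier side a product is a convolution in $\xi$. We use $\|f\ast g\|_{L^k_\xi}\le\|f\|_{L^1_\xi}\|g\|_{L^k_\xi}$ together with $\|\int fg\,dy\|\le \|f\|_{L^\infty_y}\|g\|_{L^1_y}$, which gives
\begin{align*}
\|\chi_b((u^\delta-u)\pa_x\omega^\delta)_\xi\|_{L^k_\xi L^1_y}\le \big\|\|(u^\delta-u)_\xi\|_{L^\infty_y(y\le3)}\big\|_{L^1_\xi}\,\|(\pa_x\chi_b\omega^\delta)_\xi\|_{L^k_\xi L^1_y}+\text{(cutoff terms)}.
\end{align*}
By Lemma \ref{lem: convergence in space-time norms of velocity}, the velocity factor tends to zero in $L^3_t$. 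The second factor is bounded uniformly in $\delta$ and $s$ by Corollary \ref{cor:E}, after adding back the $\omega_c^\delta$ part, which is controlled by Lemma \ref{est of omega c} up to $s^{-1/2}$ at worst. Hölder in time with exponents $3$ and $3/2$ closes this piece, since $s^{-1/2}\in L^{3/2}_{(0,t)}$.

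For the other piece we need $u$ and $v/y$ bounded in $L^1_\xi L^\infty_y$ uniformly in $s$. This is Lemma \ref{velocity estimates 1} applied to the limit, as the Remark after \eqref{def: limit functions omega U} allows. We also need $\pa_x(\chi_b\omega^\delta-\omega_1)$ and $y\pa_y(\chi_b\omega^\delta-\omega_1)$ to tend to zero in $L^3_tL^k_\xi L^1_y$, which Lemma \ref{lem: convergence in space-time norms} supplies for $i+j\le1$.

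\textbf{Remaining point.} The step needing care is the region $2\le y\le 3$, where $\chi_b\ne1$. There $\chi_b\pa_x\omega^\delta$ differs from $\pa_x(\chi_b\omega^\delta)$, and the factor $\pa_y\chi_b$ produces commutator terms. These are supported in $2\le y\le3$. We handle them with the $H^3$ convergence of $\omega_3$ and the uniform velocity bounds of Lemma \ref{velocity estimates 3} (for instance with $a=2$, $b=3$), via the same $\langle\xi\rangle^{-1}$ trick.
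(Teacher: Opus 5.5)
Your proposal is correct and follows essentially the paper's argument: the paper likewise bounds the velocity difference in $L^1_\xi L^\infty_y(y\le 3)$ against $(\pa_x,y\pa_y)(\chi_b\omega^\delta)$, and the limit velocity against $(\pa_x,y\pa_y)(\chi_b\omega^\delta-\chi_b\omega)$. It treats the commutator and linear terms on $2\le y\le 3$ via the local Sobolev convergence and the corrector terms via \eqref{convergence of omega c delta}; your H\"older-in-time and dominated-convergence steps only make explicit what the paper leaves implicit. One harmless slip: since $\chi_b=\chi_b(y)$, one has $\chi_b\pa_x\omega^\delta=\pa_x(\chi_b\omega^\delta)$ exactly, so the only commutator comes from $y\pa_y$, namely the term $v^\delta\chi_b'\omega^\delta$.
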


\begin{proof}
By Lemma \ref{product estimate} and a direct computation, we obtain 
	\begin{align*}
		&\int_0^t\left\|N_\xi^\delta(s,y)-N_\xi(s,y)\right\|_{L^k_\xi L^1_y}ds
		\leq \int_0^t\left\|\|(u^\delta-u,\frac{v^\delta-v}{y})_\xi\|_{L^\infty_y(y\leq3)}\right\|_{L^1_\xi}\|((\pa_x,y\pa_y)\chi_b\omega^\delta)_\xi\|_{L^k_\xi L^1_y}ds\\
		&\quad+\int_0^t \left\|\left\|(u,\frac{v}{y})_\xi\right\|_{L^\infty_y(y\leq3)}\right\|_{L^1_\xi}\|((\pa_x,y\pa_y)(\chi_b\omega^\delta-\chi_b\omega))_\xi\|_{L^k_\xi L^1_y}ds
		+\int_0^t \|\omega^\delta-\omega\|_{H^2(2\leq y\leq3)}ds\\
		&\quad+\int_0^t \left\|\|(u^\delta-u,\frac{v^\delta-v}{y})_\xi\|_{L^\infty_y(y\leq3)}\right\|_{L^1_\xi}\|\omega^\delta\|_{H^1(2\leq y\leq3)}ds\\
		&\quad+\int_0^t \left\|\|(u_\xi,\frac{v_\xi}{y})\|_{L^\infty_y(y\leq3)}\right\|_{L^1_\xi}\|\omega^\delta-\omega\|_{H^1(2\leq y\leq3)}ds\\
		&\quad+\int_0^t \left\|\chi_b(\pa_x^2\omega_c^\delta-\pa_x^2\omega_c)_\xi\right\|_{L^k_\xi L^1_y}
		+\left\|\chi_b'\pa_y(\omega_c^\delta-\omega_c)_\xi\right\|_{L^k_\xi L^1_y}
		+\left\|\chi_b''(\omega_c^\delta-\omega_c)_\xi\right\|_{L^k_\xi L^1_y} ds.
	\end{align*}
Combining the above inequality with Lemma \ref{lem: convergence in space-time norms}, Lemma \ref{lem: convergence in space-time norms of velocity} and \eqref{convergence of omega c delta} and Corollary \ref{prop: Sobolev estimate}, we obtain the desired results.
\end{proof}

In order to handle the term $B^\delta$, we now establish the convergence of $U^\delta \om^\delta$, which is supported away from the boundary.

\begin{lemma} \label{lemma: u-om}
	For $0\leq t\leq T_0$, it holds that
	\begin{align*}		&\lim_{\delta\rightarrow0}\int_0^t  (t-s)^{-1/2}\left\|(U^\delta\omega^\delta-U\omega)(s)\right\|_{L^1(y\geq2)}ds=0.
	\end{align*}
 
\end{lemma}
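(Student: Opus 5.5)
We split the product in the standard way,
\[
U^\delta\omega^\delta-U\omega=(U^\delta-U)\omega^\delta+U(\omega^\delta-\omega),
\]
and estimate each piece on $\{y\geq2\}$ in $L^1$. The region $y\geq2$ contains the point vortex, so two subregions behave differently: the vortex neighbourhood, where both $U$ and $\omega$ are singular like $s^{-1/2}$ and $s^{-1}$, and the rest, where everything is bounded. For the second piece we use H\"older with $L^4\times L^{4/3}$:
\[
\|U(\omega^\delta-\omega)\|_{L^1(y\ge2)}\le \|U(s)\|_{L^4(y\ge 2)}\,\|(\omega^\delta-\omega)(s)\|_{L^{4/3}(y\ge2)} .
\]
For the first factor we apply Lemma \ref{lem: velocity weighted est} (Hardy--Littlewood--Sobolev) to $U=BS_{\mathbb R^2_+}[\omega]$, which gives $\|U\|_{L^4}\le C\|\omega\|_{L^{4/3}}\le Cs^{-1/4}$. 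This bound follows from Corollary \ref{cor:E}, which persists for the limit, together with the boundary part $\chi_b\omega$ controlled by the $Y$ norms. The first piece is treated symmetrically:
\[
\|(U^\delta-U)\omega^\delta\|_{L^1(y\geq2)}\le \|U^\delta-U\|_{L^4}\,\|\omega^\delta\|_{L^{4/3}(y\ge2)}\le Cs^{-1/4}\,\|\omega^\delta-\omega\|_{L^{4/3}(\mathbb R^2_+)} ,
\]
again by Hardy--Littlewood--Sobolev.

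Both pieces therefore reduce to showing $\|(\omega^\delta-\omega)(s)\|_{L^{4/3}(\mathbb R^2_+)}\to0$ in a space-time norm strong enough to absorb the weights. On $\{y\geq2\}$ this is exactly the second statement of Lemma \ref{lem: convergence in space-time norms} (convergence in $L^3_s(L^1\cap L^{4/3})$). Near the boundary, $\|\chi_b(\omega^\delta-\omega)\|_{L^{4/3}}$ is handled by interpolating between the $L^1_\xi L^1_y$ and $L^2_\xi L^1_y$ convergence of Lemma \ref{lem: convergence in space-time norms} and the uniform bounds $\|\partial_y\chi_b(\omega^\delta-\omega_c^\delta)\|\le Cs^{-1/2}$ of Corollary \ref{cor:E}, using Sobolev in $y$. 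An alternative is to avoid the near-boundary part entirely: split $U^\delta-U=BS[\zeta(\omega^\delta-\omega)]+BS[(1-\zeta)(\omega^\delta-\omega)]$, where $\zeta$ cuts off $y\le 3/2$. The first term is bounded in $L^\infty(y\ge2)$ by the $L^1_\xi L^1_y$ norm, with the kernel smooth at distance at least $1/2$, and this norm converges by Lemma \ref{lem: convergence in space-time norms}.

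Collecting these estimates gives
\[
\int_0^t(t-s)^{-1/2}\|(U^\delta\omega^\delta-U\omega)(s)\|_{L^1(y\ge2)}ds\le C\int_0^t (t-s)^{-1/2}s^{-1/4}\,\epsilon_\delta(s)\,ds ,
\]
where $\epsilon_\delta\to0$ in $L^3(0,t)$. By H\"older with exponents $(3/2,3)$, the weight $(t-s)^{-1/2}s^{-1/4}$ must lie in $L^{3/2}(0,t)$. It does: the powers $3/4$ and $3/8$ are both $<1$, so the integral is finite and the whole expression tends to $0$.

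The main obstacle is the vortex core. The uniform bounds blow up there as $s\to0$, so the integrability bookkeeping near $s=0$ is delicate. If the $s^{-1/4}$ bound on $\|\omega\|_{L^{4/3}}$ fails to hold uniformly in $\delta$, we would first try to use instead the self-similar representation $\chi_{vp}\omega^\delta=\frac{\alpha}{s+\delta}(\chi_{vp}G+\mathcal W_R)$ with the $L^2(m)$ control of $E_{vp}$, which yields exactly $\|\chi_{vp}\omega^\delta\|_{L^{4/3}}\le C(s+\delta)^{-1/4}$.
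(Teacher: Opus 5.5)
Your proof is correct, and its spatial part is essentially the paper's. You use the same split $(U^\delta-U)\omega^\delta+U(\omega^\delta-\omega)$, the $L^4\times L^{4/3}$ H\"older bound with Hardy--Littlewood--Sobolev for the first piece, and control of the near-boundary Biot--Savart contribution through Fourier $L^1_y$ norms.

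You differ in the passage to the limit in time. The paper uses dominated convergence. It builds the majorant $C(t-s)^{-1/2}s^{-1/2}$ from $\|(U^\delta,U)(s)\|_{L^\infty(y\ge2)}\le Cs^{-1/2}$ and $\|(\omega^\delta,\omega)(s)\|_{L^1(y\ge2)}\le C$. It then extracts convergence for a.e.\ $s$ from the $L^3_s$ convergence of Lemma \ref{lem: convergence in space-time norms}, which strictly requires a further subsequence. You instead pair the weight directly with the $L^3_s$ convergence by H\"older. This is quantitative and needs neither a majorant nor pointwise convergence. The dominated-convergence route, in turn, only needs convergence pointwise in $s$, so it would survive weaker space-time convergence.

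For the second piece, your $\|U\|_{L^4}\le Cs^{-1/4}$ refinement is unnecessary. The paper's weight $(t-s)^{-1/2}s^{-1/2}$ already lies in $L^{3/2}(0,t)$. So you can keep $\|U\|_{L^\infty(y\ge2)}\|\omega^\delta-\omega\|_{L^1(y\ge2)}$ and skip justifying $\|\omega(s)\|_{L^{4/3}(\mathbb R^2_+)}\le Cs^{-1/4}$ near the boundary.

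Two remarks on the spatial step.

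\begin{enumerate}
\item Your first route, global $L^{4/3}$ convergence of $\chi_b(\omega^\delta-\omega)$ obtained by interpolating the $L^1_\xi L^1_y$ and $L^2_\xi L^1_y$ norms, is incomplete as stated. Fourier-side norms control $L^2_x$ and $L^\infty_x$, not $L^{4/3}_x$, which needs decay in $x$ (for instance the $(1,x)$-weights of Corollary \ref{cor:E}). Follow your alternative split instead.
\item In that split, no separation from the boundary is needed. The kernels in Lemma \ref{lem: velocity formula} are bounded by $1$ for all $y,z$, so $\|BS_{\mathbb R^2_+}[f]\|_{L^\infty}\le C\|f_\xi\|_{L^1_\xi L^1_y}$. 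Taking $\zeta=\chi_b$ leaves a remainder supported in $\{y\ge2\}$, where Lemma \ref{lem: convergence in space-time norms} gives $L^{4/3}$ convergence directly. A cut-off at $y\le 3/2$ (like the paper's $\mathbb I_{y\le3/4}$) instead leaves an $x$-unbounded strip below $y=2$. On that strip only unweighted $H^3$ convergence is stated, and this does not yield $L^{4/3}$.
\end{enumerate}
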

\begin{proof}
Here, we employ the Dominated Convergence Theorem to prove this lemma. 

First, we prove that the sequence has an uniform upper bound. By using  Corollary \ref{cor:E}, we obtain that $\{\omega^\delta(s)\}_{\delta>0}$ is uniform bounded in $L^1(y\geq2)$, which implies that $\{\omega^\delta(s)\}_{\delta>0}$  converges weakly to $\omega$ in $L^1(y\geq2)$. This gives that
\[
\|\omega(s)\|_{L^1(y\geq2)}\leq C.
\]
By the same argument and Lemma \ref{velocity estimates 3}, we also have
\[
\|U(s)\|_{L^\infty(y\geq2)}\leq Cs^{-1/2}.
\]
Thus, we obtain the upper bound of $(U^\delta\omega^\delta-U\omega)$ as following
\begin{align*}
		\left\|(U^\delta\omega^\delta-U\omega)(s)\right\|_{L^1(y\geq2)}
		\leq \|(U^\delta,U)(s)\|_{L^\infty(y\geq2)}\|(\omega^\delta,\omega)(s)\|_{L^1(y\geq2)}
		\leq Cs^{-1/2},
	\end{align*}
	which implies 
\beno
 (t-s)^{-1/2}\left\|(U^\delta\omega^\delta-U\omega)(s)\right\|_{L^1(y\geq2)}ds \leq C (t-s)^{-1/2}s^{-1/2} \in L^1(0, t).
\eeno

Next,  we present the proof of pointwise convergence. By Lemma \ref{velocity estimates 3} and Proposition \ref{prop: uniform estimates for E(t)}, we have
	\begin{align}\label{pointwise convergence for nonlinear term}
		&\left\|(U^\delta\omega^\delta-U\omega)(s)\right\|_{L^1(y\geq2)}\\
		\nonumber
		&\leq \|(U^\delta-U)(s)\|_{L^4(y\geq2)}\|\omega^\delta(s)\|_{L^{4/3}(y\geq2)}
		+\|U(s)\|_{L^\infty(y\geq2)}\|(\omega^\delta-\omega)(s)\|_{L^1(y\geq2)}\\
		\nonumber
		&\leq Cs^{-1/4}\|(U^\delta-U)(s)\|_{L^4(y\geq2)}
		+C\|U(s)\|_{L^\infty(y\geq2)} \|(\omega^\delta-\omega)(s)\|_{L^1(y\geq2)}\nonumber\\
		&\to_{\delta\to 0} Cs^{-1/4}\lim_{\delta\rightarrow0}\|(U^\delta-U)(s)\|_{L^4(y\geq2)}, \quad a.e. \ s\in(0,t). \nonumber
	\end{align}

For $\|(U^\delta-U)(s)\|_{L^4(y\geq2)}$, a direct computation gives
	\begin{align*}
		&\|(U^\delta-U)(s)\|_{L^4(y\geq2)}\\
		&\leq \|BS_{\mathbb R^2_+}[\mathbb I_{y\leq3/4}(\omega^\delta-\omega)(s)]\|_{L^4(y\geq2)}
		+\|BS_{\mathbb R^2_+}[\mathbb I_{y\geq3/4}(\omega^\delta-\omega)(s)]\|_{L^4}\\
		&\leq C\left\|\|(\omega^\delta-\omega)_\xi(s)\|_{L^1(y\leq3/4)}\right\|_{L^{4/3}_\xi}
		+C\|(\omega^\delta-\omega)(s)\|_{L^{4/3}(y\geq3/4)}\\
		&\leq C\left\|\|(\omega^\delta-\omega)_\xi(s)\|_{L^1(y\leq3/4)}\right\|_{L^{1}_\xi\cap L^2_\xi}
		+C\|(\omega^\delta-\omega)(s)\|_{L^{4/3}(y\geq3/4)}\\
		&\to_{\delta\to 0} 0, \quad a.e. \ s\in(0,t). \nonumber
	\end{align*}
Substituting the above argument into \eqref{pointwise convergence for nonlinear term}, we have
\begin{align*}
	\lim_{\delta\rightarrow0}
	\left\|(U^\delta\omega^\delta-U\omega)(s)\right\|_{L^1(y\geq2)}=0, \qquad a.e. \ s\in(0,t).
\end{align*}

Then, by means of the Dominated Convergence Theorem, we obtain the desired result.
\end{proof}

Now we are in a  position to deal with the convergence of  boundary term.

\begin{lemma}\label{lem: boundary convergence overall} 
	For $0< t\leq T_0$, it holds that
	\begin{align}\label{convergence of the boundary condition}
	\lim_{\delta\rightarrow0}
	\int_0^t\|B^\delta_\xi(s)-B_\xi(s)\|_{L^1_\xi\cap L^2_\xi}ds=0,
\end{align}
where $B=\pa_y\Delta_D^{-1}(U\cdot\nabla\omega)|_{y=0}-(\pa_y+|D_x|)\omega_c|_{y=0}$.
\end{lemma}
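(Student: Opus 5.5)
The plan is to split $B^\delta-B$ into the corrector part and the nonlinear part:
\begin{align*}
B^\delta-B=\Big(\pa_y\Delta_D^{-1}(U^\delta\cdot\nabla\omega^\delta)-\pa_y\Delta_D^{-1}(U\cdot\nabla\omega)\Big)\Big|_{y=0}-(\pa_y+|D_x|)(\omega^\delta_c-\omega_c)\big|_{y=0}.
\end{align*}
The corrector part is explicit. By \eqref{omega c} and \eqref{def: omega c without delta}, it depends on $\delta$ only through $u_0^\delta-u_0$. The weighted bound \eqref{boundedness of omega c 2}, combined with the pointwise convergence $(u_0^\delta)_\xi\to(u_0)_\xi$ with uniform domination $e^{-C|\xi|}$ (visible from the Fourier formula for $(u_0^\delta)_\xi$), gives $\|e^{C'|\xi|}(\pa_y+|\xi|)(\omega_c^\delta-\omega_c)_\xi|_{y=0}(s)\|_{L^1_\xi\cap L^2_\xi}\le o(1)\,s^{-1/2}$ as $\delta\to0$. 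This is integrable on $(0,t)$.

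For the nonlinear part I would use $\operatorname{div}U=0$ to write $U\cdot\nabla\omega=\nabla\cdot(U\omega)$. In Fourier variables,
\begin{align*}
\pa_y\Delta_D^{-1}f|_{y=0,\xi}=-\int_0^\infty e^{-|\xi|z}f_\xi(z)\,dz .
\end{align*}
I then split $f=\chi_b f+(1-\chi_b)f$, where $\chi_b$ is supported in $y\le3$.

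\textbf{Boundary region.} Here $\chi_b(U^\delta\cdot\nabla\omega^\delta-U\cdot\nabla\omega)$ is written as $u\pa_x+\frac{v}{y}\,y\pa_y$ acting on $\omega$. Its $L^k_\xi L^1_y$ norm, $k=1,2$, is controlled exactly as in the proof of Lemma \ref{lem: convergence of product terms in space-time}. That is, Lemma \ref{product estimate} is combined with Lemmas \ref{lem: convergence in space-time norms} and \ref{lem: convergence in space-time norms of velocity}, together with the uniform bounds of Lemma \ref{velocity estimates 1} and Corollary \ref{cor:E}. Since $|e^{-|\xi|z}|\le1$, the boundary trace is bounded by that norm, and its time integral tends to $0$.

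\textbf{Interior region.} For $(1-\chi_b)\nabla\cdot(U\omega)$, supported in $y\ge2$, I integrate by parts in $z$ to move the derivative onto $e^{-|\xi|z}(1-\chi_b)$. This produces factors $|\xi|e^{-|\xi|z}$ and $\chi_b'e^{-|\xi|z}$. Since $z\ge2$, these are bounded by $Ce^{-|\xi|}$, uniformly $L^1_\xi\cap L^2_\xi$-integrable, times $\|(U^\delta\omega^\delta-U\omega)(s)\|_{L^1(y\ge2)}$. Lemma \ref{lemma: u-om} gives the convergence of this time integral to $0$; it is even stated with the weight $(t-s)^{-1/2}\ge t^{-1/2}$, so the plain time integral follows.

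The main obstacle is the boundary region. There the weight $e^{-|\xi|z}$ gives no decay near $z=0$, and one must make sure that the $y\pa_y$-structure of the convergence in Lemma \ref{lem: convergence in space-time norms} suffices to handle $v\pa_y\omega$. I would first try to reuse the weight $v/y$, which is controlled by Lemma \ref{lem: convergence in space-time norms of velocity}, and obtain the $L^1_\xi$ bound via Young's inequality in $\xi$, using $L^1_\xi$ for the velocity factor and $L^k_\xi$ for the vorticity factor. The time integrability follows from Hölder's inequality with the $L^3_{(0,t)}$ convergences.
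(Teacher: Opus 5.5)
Your proposal is correct and follows essentially the same route as the paper. The corrector part uses the $s^{-1/2}$ bound \eqref{boundedness of omega c 2}; you exploit the explicit factor $u_0^\delta-u_0$, while the paper uses pointwise-in-$s$ convergence plus dominated convergence. The near-boundary part uses the $u\pa_x+\frac{v}{y}\,y\pa_y$ structure together with Lemmas \ref{lem: convergence in space-time norms} and \ref{lem: convergence in space-time norms of velocity}. The interior part moves the divergence onto the kernel $e^{-|\xi|z}$ and invokes Lemma \ref{lemma: u-om}. The only other difference is cosmetic: the paper splits $U\cdot\nabla\omega=U\cdot\nabla(\chi_b\omega)+U\cdot\nabla((1-\chi_b)\omega)$, keeping the cutoff inside the divergence so no $\chi_b'$ terms arise, whereas your split $\chi_b f+(1-\chi_b)f$ produces harmless commutator terms supported in $2\le y\le 3$, which the $H^3(1\le y\le 4)$ convergence of Lemma \ref{lem: convergence in space-time norms} covers.
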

\begin{proof}
We recall the definition of $B^\delta$:\beno
B^\delta=\pa_y\Delta_D^{-1}(U^\delta\cdot\nabla\omega^\delta)|_{y=0}-(\pa_y+|D_x|)\omega_c^\delta|_{y=0}.
\eeno
First, by the explicit formulation \eqref{omega c} and a direct computation, we obtain 
	\begin{align*}
		\lim_{\delta\rightarrow0}\|(\pa_y+|\xi|)(\omega_c^\delta)_\xi|_{y=0}-(\pa_y+|\xi|)(\omega_c)_\xi|_{y=0}\|_{L^1_\xi\cap L^2_\xi}=0,\quad 0<t\leq T_0.
	\end{align*}
	By the Dominated Convergence Theorem and \eqref{boundedness of omega c 2}, we have
	\begin{align*}
		\lim_{\delta\rightarrow0} \int_0^t \|(\pa_y+|\xi|)(\omega_c^\delta)_\xi|_{y=0}-(\pa_y+|\xi|)(\omega_c)_\xi|_{y=0}\|_{L^1_\xi\cap L^2_\xi}ds =0
	\end{align*}
	
Next, we  deal with the first term of $B^\delta$. Owing to the distinct regularity properties of $\omega^\delta$ in different domains, 
we introduce the following decomposition
	\begin{align}\label{eq:ow}
		\pa_y\Delta_D^{-1}(U^\delta\cdot\nabla\omega^\delta)|_{y=0}=&\pa_y\Delta_D^{-1}\big(U^\delta\cdot\nabla(\chi_b\omega^\delta)\big)\big|_{y=0}\\
		&\quad+\pa_y\Delta_D^{-1}\big(U^\delta\cdot\nabla((1-\chi_b)\omega^\delta)\big)\big|_{y=0}.\nonumber
	\end{align}
We rewrite the first term of \eqref{eq:ow} as
	\begin{align*}	\Big(\pa_y\Delta_D^{-1}\big(U^\delta\cdot\nabla(\chi_b\omega^\delta)\big)\Big)_\xi|_{y=0}
		= -\int_0^{+\infty} e^{-|\xi|z} \Big( \big(u^\delta\pa_x(\chi_b\omega^\delta)\big)_\xi(t,z) +\big(\frac{v^\delta}{y}y\pa_y(\chi_b\omega^\delta)\big)_\xi(t,z) \Big) dz.
	\end{align*}
Armed with Lemma \ref{lem: convergence in space-time norms} and Lemma \ref{lem: convergence in space-time norms of velocity}, we obtain  
	\begin{align*}
		\lim_{\delta\rightarrow0}\int_0^t\left\|\Big(\pa_y\Delta_D^{-1}\big(U^\delta\cdot\nabla(\chi_b\omega^\delta)\big)\Big)_\xi|_{y=0}-\Big(\pa_y\Delta_D^{-1}\big(U\cdot\nabla(\chi_b\omega)\big)\Big)_\xi|_{y=0} \right\|_{L^1_\xi\cap L^2_\xi}ds=0.
	\end{align*}	
	For the second term of \eqref{eq:ow}, taking Fourier transformation yields
	\begin{align*}
&\Big(\pa_y\Delta_D^{-1}\big(U^\delta\cdot\nabla((1-\chi_b)\omega^\delta)\big)\Big)_\xi|_{y=0}\\
		&=\int_{\mathbb R^2_+}e^{2\pi i x'\xi-2\pi y'|\xi|}\big(U^\delta\cdot\nabla((1-\chi_b)\omega^\delta)\big)(x',y')dx'dy'\\
		&=-\int_{\mathbb R^2_+} \nabla_{x',y'}\big(e^{2\pi i x'\xi-2\pi y'|\xi|}\big)
		\cdot \big((1-\chi_b)U^\delta\omega^\delta)\big)(x',y')dx'dy',
	\end{align*}
	which along with Lemma \ref{lemma: u-om} implies
	\begin{align*}
		\lim_{\delta\rightarrow0}\int_0^t\left\|\Big(\pa_y\Delta_D^{-1}\big(U^\delta\cdot\nabla((1-\chi_b)\omega^\delta)\big)\Big)_\xi|_{y=0}-\Big(\pa_y\Delta_D^{-1}\big(U\cdot\nabla((1-\chi_b)\omega)\big)\Big)_\xi|_{y=0} \right\|_{L^1_\xi\cap L^2_\xi}ds=0.
	\end{align*}
	Combining the above estimates yields the desired result.
\end{proof}

\subsection{Proof of Proposition \ref{prop: existence part}}\label{sec: Convergence of the integral equations}
In this subsection, we prove Proposition \ref{prop: existence part} by showing that the pair $(U, \omega)$ obtained in the previous subsection satisfies \eqref{eq: NS vorticity}.\smallskip

\underline{Near the boundary.}
 By Lemma \ref{lem: convergence in space-time norms}, 
 Lemma \ref{lem: convergence of product terms in space-time}, Lemma \ref{lem: boundary convergence overall} and \eqref{convergence of omega c delta}, \eqref{convergence of b delta},  letting $\delta\rightarrow 0^+$ in \eqref{integral eq of omega-omega c-1} yields that,  in the sense of distributions,  $(U,\omega)$ satisfies 
\begin{align}\label{eq: integral eq of omega delta-omega c without delta}
	(\chi_b\omega-\chi_b\omega_c)_\xi(t,y)
 	=&\int_0^{+\infty}\big(H_\xi(t,y,z)+R_\xi(t,y,z)\big)b_\xi(z)dz\\
 	\nonumber
 	&+\int_0^t\int_0^{+\infty}\big(H_\xi(t-s,y,z)+R_\xi(t-s,y,z)\big)N_\xi(s,z)dzds\\
 	\nonumber
 	&-\int_0^t\big(H_\xi(t-s,y,0)+R_\xi(t-s,y,0)\big)B_\xi(s)ds.
\end{align}
Due to the regularity of $\omega_\xi, N_\xi, B_\xi$, the equation \eqref{eq: integral eq of omega delta-omega c without delta} point-wise  holds.

\underline{Away from the boundary.} Since the mollified function $\omega^\delta$ satisfies \eqref{eq: NS vorticity}, we have 
\begin{align}\label{eq: (1-chib)omega with delta}
	(1-\chi_b)\omega^\delta(t)
	=&e^{t\Delta} \omega_0^\delta
	-\int_0^t e^{(t-s)\Delta}\dv\Big((1-\chi_b)U^\delta\omega^\delta\Big)(s)ds\\
	\nonumber
	&+\int_0^t e^{(t-s)\Delta}
	\Big(\chi_b''\omega^\delta
	+2\chi_b'\pa_y\omega^\delta
	-\chi_b' U^\delta\omega^\delta\Big)(s)ds.
\end{align}
By Lemma \ref{lem: convergence in space-time norms}, Lemma \ref{lemma: u-om}, letting $\delta\rightarrow 0^+$ in \eqref{eq: (1-chib)omega with delta}  yields the following equation

\begin{align}\label{eq: (1-chib)omega without delta}
	(1-\chi_b)\omega(t)
	=&e^{t\Delta} \omega_0
	-\int_0^t e^{(t-s)\Delta}\dv\Big((1-\chi_b)U\omega\Big)(s)ds\\
	\nonumber
	&+\int_0^t e^{(t-s)\Delta}
	\Big(\chi_b''\omega
	+2\chi_b'\pa_y\omega
	-\chi_b' U\omega\Big)(s)ds.
\end{align}
Rewriting \eqref{eq: integral eq of omega delta-omega c without delta} and \eqref{eq: (1-chib)omega without delta} back to differential equations and summing them up, and recalling that $\omega_c$ satisfies $\pa_t\omega_c=\pa_y^2\omega_c$ for $t>0$, we deduce that $(U,\omega)$ satisfies Navier-Stokes equations for $t\in(0, T_0)$.

Next, we prove the solution $(U, \omega)\in  L^{2,\infty}(\mathbb R^2_+)\times  L^1(\mathbb R^2_+)$ for $t\in (0, T_0]$. By Corollary \ref{cor:E}, we have
\begin{align*}
	\|\omega^\delta(t)\|_{L^1}
	&\leq \|\omega^\delta(t)\|_{L^1(y\geq1/2)}
	+\|\omega^\delta(t)\|_{L^1(y\leq1/2)}\\
	&\leq C+\|(1,x)\omega^\delta(t)\|_{L^2(y\leq1/2)}\\
	&\leq C+\left\|\big((1,x)\omega^\delta\big)_\xi(t)\right\|_{L^2_\xi L^2_y}\\
	&\leq C+\left\|\big((1,x)\omega^\delta\big)_\xi(t)\right\|_{L^2_\xi L^1_y}^{1/2}
	\left\|\pa_y\big((1,x)\omega^\delta\big)_\xi(t)\right\|_{L^2_\xi L^1_y}^{1/2}\\
	&\leq Ct^{-1/4}.
\end{align*} 
Therefore, for each fixed $t>0$, there exists a subsequence of $\omega^\delta$ converging to $\omega$ weakly in $L^1$ by uniqueness of the limit. Thus, we have $\omega\in L^1(\mathbb R^2_+)$ for $t>0$ and $U\in L^{2,\infty}(\mathbb R^2_+)$ holds by Lemma \ref{lem: velocity weighted est}. 

Taking $t=\frac{T_0}{2}$ as a new initial time, the data no longer contains the point vortex singularity. Therefore, by applying Theorem \ref{thm: Ken}, we can extend this local solution to a global one.

\medskip

\underline{Initial condition.}
Let $t\rightarrow0^+$ in \eqref{def: omega c without delta}. We derive
\begin{align}\label{convergence of omega c t to 0}
	\omega_c(t)\rightharpoonup -u_0\delta_{\partial \mathbb R^2_+}-u_0\chi_b'
	\qquad in \quad \mathcal{M}(\overline{\mathbb R^2_+}).
\end{align}
Let $t\rightarrow0^+$ in \eqref{eq: integral eq of omega delta-omega c without delta} and \eqref{eq: (1-chib)omega without delta} and recall $b=u_0\chi_b\chi_b'$. We obtain
\begin{align*}
	\omega(t)-\chi_b\omega_c(t)\rightharpoonup u_0\chi_b\chi_b'+\omega_0,
\end{align*}
which along  with \eqref{convergence of omega c t to 0} implies \eqref{initial limit}.
\ef

\medskip

  \section{Energy estimates near point vortex}\label{section-E_{vp}}
  
  In the following sections (from Section \ref{section-E_{vp}}--Section \ref{section-E_{b}}), we derive the uniform estimates for energy function $E^\delta(t)$ and prove Proposition \ref{prop: uniform estimates for E(t)}. To simplify notations, we drop superscript $\delta$ in the subsequent sections.

In this section, we present uniform estimates of $E_{vp}(t)$. We recall the definition of $E_{vp}(t)$:
  \begin{align*}
  	E_{vp}(t):=\sup_{\log\delta\leq\tau'<\log t}
  	\Big( \|\mathcal W_R(\tau')\|_{L^2(m)}
  	+\|\nabla_\eta\mathcal W_R(\tau')\|_{L^2(m)}\Big),
  \end{align*}
where $\mathcal W_R$ satisfies 
	\begin{align*} 
		\pa_\tau\mathcal W_R
  	+\alpha\mathcal V_R\cdot\nabla_\eta G
  	+\alpha\mathcal V^G\cdot\nabla_\eta\mathcal W_R
  	-\mathcal L \mathcal W_R
  	=\sum_{i=1}^7 F_i,\qquad \mathcal W_R|_{\tau=\tau_0}=0,
	\end{align*}
with $F_i$ defined in Lemma \ref{lem: eq of WR}. 

Here we note that this system of $\mathcal W_R$ is similar to that in the whole space. Thus, we employ some tools from \cite{Gallagher, Gallay 1} to deal with $\mathcal W_R$. First, we introduce a semigroup $T_\alpha(\tau)$ generated by
\begin{align*}
	\pa_\tau\mathcal W_R
  	+\alpha\mathcal V_R\cdot\nabla_\eta G
  	+\alpha\mathcal V^G\cdot\nabla_\eta\mathcal W_R
  	-\mathcal L \mathcal W_R=0.
\end{align*}
Thus,  for $\tau_0<\tau<\log(T_0)$, $\mathcal W_R$ is rewritten as follows
\begin{align}\label{eq: integral eq of W R}
	\mathcal W_R(\tau)
	=\sum_{i=1}^7 \int_{\tau_0}^\tau T_\alpha(\tau-\tau') F_i(\tau')d\tau'
	:=\sum_{i=1}^7 I_i.
\end{align}

\medskip
 
Subsequently, we state some properties of $T_\alpha(\tau)$.
 
\begin{proposition}\label{semigroup estimates of T alpha}
	Let $\alpha\in\mathbb R$, $m>2$ and $a(\tau)=1-e^{-\tau}.$ It holds that
\begin{enumerate}[label=(\arabic*)]	
	\item There exists $C_\alpha>0$ such that for all $w\in L^2(m)$,
	\begin{align}
	\label{semigroup est 1}
		\|T_\alpha(\tau)w\|_{L^2(m)}
		&\leq C_\alpha \|w\|_{L^2(m)},\quad \tau\geq 0,
\\
\label{semigroup est 2}
		a(\tau)^{1/2}\|\nabla T_\alpha(\tau)w\|_{L^2(m)}
		&\leq C_\alpha \|w\|_{L^2(m)},\quad \tau> 0.
	\end{align}
	
\item  For $1<p\leq2$, there exists $C_{\alpha,p}$ such that for all $w\in L^2(m)$,
	\begin{align}\label{semigroup est 3}
		\|T_\alpha(\tau)\nabla w\|_{L^2(m)}
		&\leq C_{\alpha,p}\frac{e^{-\tau/2}}{a(\tau)^{1/p}}\|w\|_{L^p(m)},\quad \tau> 0,\\
		\label{semigroup est 4}
		a(\tau)^{1/2}\|\nabla T_\alpha(\tau)\nabla w\|_{L^2(m)}
		&\leq C_{\alpha,p}\frac{e^{-\tau/2}}{a(\tau)^{1/p}}\|w\|_{L^p(m)},\quad \tau> 0.
	\end{align}

	\end{enumerate} 
	
\end{proposition}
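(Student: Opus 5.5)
The plan is to deduce the estimates from three ingredients: the explicit semigroup $e^{\tau\mathcal L}$, the Gaussian-weighted structure of the linearized operator, and a Duhamel perturbation argument. This follows the strategy of \cite{Gallay 1, Gallagher}.

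\textbf{Setup.} Write $\Lambda w:=\mathcal V^G\cdot\nabla_\eta w+BS_{\mathbb R^2}[w]\cdot\nabla_\eta G$, so that $T_\alpha(\tau)=e^{\tau(\mathcal L-\alpha\Lambda)}$. Since $e^{\tau\mathcal L}$ is explicit (a rescaled heat kernel of variance $a(\tau)$), a direct computation gives
\[
\|\nabla e^{\tau\mathcal L}w\|_{L^2(m)}\le Ca(\tau)^{-1/2}\|w\|_{L^2(m)},\qquad \|e^{\tau\mathcal L}\nabla w\|_{L^2(m)}\le C e^{-\tau/2}a(\tau)^{-1/p}\|w\|_{L^p(m)}.
\]
The second bound uses the commutation relation $e^{\tau\mathcal L}\nabla=e^{-\tau/2}\nabla e^{\tau\mathcal L}$ together with the $L^p(m)\to L^2(m)$ smoothing factor $a(\tau)^{-(1/p-1/2)}$. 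Two structural facts make $\Lambda$ relatively compact with respect to $\mathcal L$. First, $\operatorname{div}\mathcal V^G=0$, so $\mathcal V^G\cdot\nabla w=\nabla\cdot(\mathcal V^G w)$ with $\mathcal V^G$ bounded. Second, $\nabla G$ has Gaussian decay, and for $m>1/2$ we have $\|BS_{\mathbb R^2}[w]\|_{L^4}\le C\|w\|_{L^{4/3}}\le C\|w\|_{L^2(m)}$. Together these give $\|\Lambda w\|_{L^2(m)}\le C(\|w\|_{L^2(m)}+\|\nabla w\|_{L^2(m)})$. They also give the divergence form $\Lambda w=\nabla\cdot(\mathcal V^G w)+(\text{compact term})$.

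\textbf{Short times $0<\tau\le1$.} Use the Duhamel formula
\[
T_\alpha(\tau)w=e^{\tau\mathcal L}w-\alpha\int_0^\tau e^{(\tau-s)\mathcal L}\Lambda T_\alpha(s)w\,ds.
\]
Putting the $\nabla\cdot(\mathcal V^G\,\cdot)$ part through $e^{(\tau-s)\mathcal L}\nabla$ produces a kernel $a(\tau-s)^{-1/2}$, which is integrable. A fixed-point argument in the norm $\sup_{s\le1}\big(\|f(s)\|_{L^2(m)}+a(s)^{1/2}\|\nabla f(s)\|_{L^2(m)}\big)$, iterated over small time steps, then yields \eqref{semigroup est 1}--\eqref{semigroup est 2} for $\tau\le1$. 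The same scheme applied to $T_\alpha(\tau)\nabla w$ handles $\tau\le1$ in \eqref{semigroup est 3}--\eqref{semigroup est 4}. Here the initial term is controlled by the $e^{\tau\mathcal L}\nabla$ bound, and the Duhamel integrals $\int_0^\tau a(\tau-s)^{-1/2}a(s)^{-1/p}\,ds$ converge because $p>1$.

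\textbf{Large times $\tau\ge1$ (the main obstacle).} This step is a spectral analysis of $\mathcal L-\alpha\Lambda$ in $L^2(m)$.
\begin{enumerate}
\item The essential spectrum of $\mathcal L$ in $L^2(m)$ lies in $\{\operatorname{Re}\lambda\le(1-m)/2\}$. Since $\Lambda$ is relatively compact, the same holds for $\mathcal L-\alpha\Lambda$.
\item Any eigenfunction with $\operatorname{Re}\lambda>(1-m)/2$ has Gaussian decay, by bootstrap from the equation. It therefore lies in the space $L^2(G^{-1})$.
\item In $L^2(G^{-1})$, $\mathcal L$ is self-adjoint and nonpositive, while $\Lambda$ is skew-symmetric. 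Hence such eigenvalues satisfy $\operatorname{Re}\lambda\le0$, and the semigroup is bounded.
\end{enumerate}
Combining the Riesz projection onto these finitely many eigenvalues with the resolvent bound on the complementary region gives \eqref{semigroup est 1} for all $\tau$. Estimate \eqref{semigroup est 2} for $\tau\ge1$ then follows by writing $T_\alpha(\tau)=T_\alpha(1)T_\alpha(\tau-1)$.

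For \eqref{semigroup est 3}--\eqref{semigroup est 4}, observe that $\nabla w$ has zero mass and that zero mass is preserved by $T_\alpha$. On the zero-mass subspace with $m>2$, the essential spectrum sits in $\{\operatorname{Re}\lambda\le(1-m)/2<-1/2\}$. The remaining spectrum is $\le-1/2$, and the eigenvalue $-1/2$ is semisimple, with eigenfunctions $\partial_iG$; these satisfy $\Lambda\partial_iG=0$ because $\mathcal V^G$ is radial-tangential. This gives $\|T_\alpha(\tau)f\|_{L^2(m)}\le Ce^{-\tau/2}\|f\|_{L^2(m)}$ on that subspace. Composing $T_\alpha(\tau)\nabla w=T_\alpha(\tau-1)\big(T_\alpha(1)\nabla w\big)$ with the short-time bound completes the proof.

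I expect the hard part to be the semisimplicity and spectral-gap verification at $-1/2$ uniformly in $\alpha$. If needed, I would quote it directly from \cite{Gallay 1, Gallagher}.
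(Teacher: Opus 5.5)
Your outline is sound, and it partly follows a different route from the paper. One step fails as stated, though it is easy to fix.

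\textbf{Comparison of routes.} The paper does no spectral analysis: \eqref{semigroup est 1} and \eqref{semigroup est 3} are simply quoted from \cite{Gallagher}. Your large-time step reconstructs that cited work: the essential spectrum, Gaussian decay of eigenfunctions, dissipativity in $L^2(G^{-1})$, and the semisimple eigenvalue $-1/2$ with eigenfunctions $\partial_iG$ on the zero-mass subspace. This is where the real difficulty lies, since spectral bounds alone do not give growth bounds; citing it, as you suggest, is fine. No uniformity in $\alpha$ is needed, because the constants are allowed to be $C_\alpha$.

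The genuine difference is in \eqref{semigroup est 2}. The paper differentiates the linearized equation. It observes that $e^{-\tau/2}\partial_i w$ solves the same linear equation with the divergence-form source $-\alpha e^{-\tau/2}\operatorname{div}(\partial_i\mathcal V^G\,w+\mathcal V^w\,\partial_iG)$. It then closes with Duhamel in $T_\alpha$ itself together with \eqref{semigroup est 1} and \eqref{semigroup est 3} at $p=2$, with no fixed point.

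You instead perturb off the explicit $e^{\tau\mathcal L}$ for $\tau\le 1$ and then use $T_\alpha(\tau)=T_\alpha(1)T_\alpha(\tau-1)$. Your version needs only heat-kernel bounds and boundedness of $T_\alpha$. The paper's version is shorter because it recycles the quoted estimates. However, the correct Duhamel formula for $\partial_i w$ carries a factor $e^{(\tau-\tau')/2}$ inside the integral. So that computation is uniform only on bounded $\tau$-intervals, and it implicitly needs the same composition you make explicit.

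\textbf{The step that fails.} You claim the same Duhamel scheme gives \eqref{semigroup est 4} for $\tau\le 1$. For $g(s)=T_\alpha(s)\nabla w$ one expects $\|\nabla g(s)\|_{L^2(m)}\sim a(s)^{-1/2-1/p}$.
\begin{itemize}
\item The non-divergence form $\mathcal V^G\cdot\nabla g$ of the transport term produces $\int_0^\tau a(\tau-s)^{-1/2}a(s)^{-1/2-1/p}\,ds$. This diverges at $s=0$, because $\frac12+\frac1p\ge 1$ when $p\le 2$.
\item The divergence form produces $\int_0^\tau a(\tau-s)^{-1}a(s)^{-1/p}\,ds$. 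This diverges at $s=\tau$.
\end{itemize}
So you must split the time integral at $s=\tau/2$. The simpler fix is what the paper does. For $\tau\le 2$, write $\nabla T_\alpha(\tau)\nabla w=\nabla T_\alpha(\tau/2)\,T_\alpha(\tau/2)\nabla w$. For $\tau>2$, write it as $\nabla T_\alpha(1)\,T_\alpha(\tau-1)\nabla w$. In both cases, combine \eqref{semigroup est 2} with \eqref{semigroup est 3}.

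\textbf{A related point for \eqref{semigroup est 2}.} In the gradient part of your fixed point you must use the non-divergence form. The reason is that $\nabla e^{(\tau-s)\mathcal L}\nabla$ costs $a(\tau-s)^{-1}$, which is not integrable. With the non-divergence form the integral is $\int_0^\tau a(\tau-s)^{-1/2}a(s)^{-1/2}\,ds=O(1)$. The prefactor $a(\tau)^{1/2}$ then makes it small for small $\tau$.
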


\begin{proof}
The estimates \eqref{semigroup est 1}, \eqref{semigroup est 3} were established in \cite{Gallagher}. Here,  we focus on \eqref{semigroup est 2} and \eqref{semigroup est 4}.\smallskip
	
\underline{The proof of \eqref{semigroup est 2}.}  For any $w_0\in L^2(m)$, denote $w=T_\alpha(\tau)w_0$.  Thus,
	\begin{align*}
		\left\{
		\begin{aligned}
			&\pa_\tau w+\alpha\big(\mathcal V^G\cdot\nabla w+\mathcal V ^w\cdot\nabla G\big)-\mathcal L w=0,\\
			&w|_{\tau=0}=w_0,
		\end{aligned}
		\right.
	\end{align*}
	where $\mathcal V ^w$ denotes $BS_{\mathbb R^2}[w]$. For $i=1,2$, 
	\begin{align*}
		\left\{
		\begin{aligned}
			&\pa_\tau(\pa_i w)
			+\alpha\big(\mathcal V^G\cdot\pa_i w+\mathcal V^{\pa_i w}\cdot\nabla G\big)-\mathcal L(\pa_i w)
			+\alpha\big(\pa_i\mathcal V^G\cdot\nabla w
			+\mathcal V^w\cdot\nabla\pa_i G\big)
			-\frac{1}{2}\pa_i w=0,\\
			&\pa_i w|_{\tau=0}=\pa_i w_0.
		\end{aligned}
		\right.
	\end{align*}
	Thus,
	\begin{align*}
		&\pa_\tau(e^{-\tau/2}\pa_i w)
		+\alpha\big(\mathcal V^G\cdot\nabla(e^{-\tau/2}\pa_i w)+\mathcal V^{e^{-\tau/2}\pa_i w}\cdot\nabla G\big)\\
		&-\mathcal L(e^{-\tau/2}\pa_i w)
		+\alpha\big(\pa_i\mathcal V^G\cdot\nabla(e^{-\tau/2}w)+\mathcal V^{e^{-\tau/2}w}\cdot\nabla\pa_i G\big)=0,
	\end{align*}
	which implies
	\begin{align*}
		\pa_i w(\tau)
		=e^{\tau/2}T_\alpha(\tau)\pa_i w_0
		-\alpha\int_0^\tau T_\alpha(\tau-\tau')\big(\pa_i\mathcal V^G\cdot\nabla w
		+\mathcal V^w\cdot\nabla\pa_i G\big)(\tau')d\tau'.
	\end{align*}
	Based on \eqref{semigroup est 1}, \eqref{semigroup est 3}, Lemma \ref{lem: velocity weighted est} and the relation $w=T_\alpha(\tau)w_0$, we have
	\begin{align*}
		&\|\pa_i w(\tau)\|_{L^2(m)}
		\leq e^{\tau/2}\|T_\alpha(\tau)\pa_i w_0\|_{L^2(m)}
		+\alpha\int_0^\tau\left\|T_\alpha(\tau-\tau')\dv(\pa_i \mathcal V^G w+\mathcal V^w\pa_i G)\right\|_{L^2(m)}d\tau'\\
		&\leq C_\alpha a(\tau)^{-1/2}\|w_0\|_{L^2(m)}
		+C_\alpha \int_0^\tau \frac{e^{-\frac{\tau-\tau'}{2}}}{a(\tau-\tau')^{1/2}}\big(\|\pa_i\mathcal V^G\|_{L^\infty}\|w\|_{L^2(m)}
		+\|\mathcal V^w\|_{L^4}\|\pa_i G\|_{L^4(m)}\big)d\tau'\\
		&\leq C_\alpha a(\tau)^{-1/2}\|w_0\|_{L^2(m)}
		+C_\alpha \int_0^\tau \frac{e^{-\frac{\tau-\tau'}{2}}}{a(\tau-\tau')^{1/2}}d\tau'\cdot\|w_0\|_{L^2(m)}\\
		&\leq C_\alpha a(\tau)^{-1/2}\|w_0\|_{L^2(m)}.
	\end{align*}

	\underline{The proof of \eqref{semigroup est 4}.}  If $\tau\leq2$, it holds that
	\begin{align*}
		a(\tau)^{1/2}\|\nabla T_\alpha(\tau)\nabla w\|_{L^2(m)}
		&= a(\tau)^{1/2}\|\nabla T_\alpha(\tau/2)T_\alpha(\tau/2)\nabla w\|_{L^2(m)}\\
		&\leq C_\alpha\|T_\alpha(\tau/2)\nabla w\|_{L^2(m)}\\
		&\leq C_\alpha a(\tau)^{-1/p}\|w\|_{L^p(m)}.
	\end{align*}
For $\tau>2$, we have
	\begin{align*}
		 a(\tau)^{1/2}\|\nabla T_\alpha(\tau)\nabla w\|_{L^2(m)}
		&= a(\tau)^{1/2}\|\nabla T_\alpha(1)T_\alpha(\tau-1)\nabla w\|_{L^2(m)}\\
		&\leq C_\alpha\|T_\alpha(\tau-1)\nabla w\|_{L^2(m)}
		\leq C_\alpha e^{-\frac{\tau-1}{2}}\|w\|_{L^p(m)}\\
		&\leq C_\alpha e^{-\tau/2}\|w\|_{L^p(m)}.
	\end{align*}
	
	Combing all above estimates, we obtain the desired results.

\end{proof}

\medskip

We are now ready to prove Proposition \ref{prop: uniform estimates for Evp(t)} using Lemmas \ref{lem: estimate of Evp 1} and \ref{lem: estimate of Evp 2} below.

\begin{lemma}\label{lem: estimate of Evp 1}
	There exists $T_0>0$ such that for $0<t\leq T_0$, it holds that
	\begin{align*}
		\sup_{\log\delta\leq \tau'<\log t}\|\mathcal W_R(\tau')\|_{L^2(m)}
		\leq& C(t+\delta)^{1/4} +CE(t)^2.
	\end{align*}
\end{lemma}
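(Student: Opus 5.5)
We use the Duhamel representation \eqref{eq: integral eq of W R}, $\mathcal W_R(\tau)=\sum_{i=1}^7 I_i$, and bound each $I_i$ in $L^2(m)$ through Proposition \ref{semigroup estimates of T alpha}. Since $\tau-\tau_0\le \log(T_0+\delta)-\log\delta$ can be large, we need time-integrable kernels. We rely on two facts. First, \eqref{semigroup est 3} gives $e^{-(\tau-\tau')/2}a(\tau-\tau')^{-1/p}$, which is integrable for $p<2$. Second, the forcing terms carry explicit factors $e^{\tau'/2}=(t'+\delta)^{1/2}$, and these integrate to $(t+\delta)^{1/2}$ or better.

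\textbf{Step 1: the linear terms $F_1$ and $F_2$.} $F_2$ is supported where $\nabla\chi_{vp}\neq0$, that is, where $|\eta|\ge 5e^{-\tau/2}$. There $G$ is super-exponentially small, so $\|F_2\|_{L^2(m)}\le Ce^{-c e^{-\tau}}$. Using \eqref{semigroup est 1}, $I_2$ is then negligible.

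For $F_1$, we write it in divergence form, since every velocity involved is divergence free. The bracket in $F_1$ is the velocity induced at $\eta$ by $(1-\chi_{vp})G$ and by the reflected vortex placed at distance $40e^{-\tau/2}$. On the relevant region this is of size $Ce^{\tau/2}$ times a Lipschitz-bounded function, because the image vortex is far away in $\eta$ variables; the $(1-\chi_{vp})G$ parts are again super-exponentially small. So the bracket is $O(e^{\tau/2}(1+|\eta|e^{\tau/2}))$ on $\operatorname{supp}\chi_{vp}$.

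Multiplying by $\chi_{vp}G+\mathcal W_R$ and applying \eqref{semigroup est 3} with some $p\in(1,2)$, we get
\[
\|I_1\|\le C\int_{\tau_0}^\tau \frac{e^{-(\tau-\tau')/2}}{a(\tau-\tau')^{1/p}}\, e^{\tau'/2}\big(1+\|\mathcal W_R\|_{L^p(m)}\big)\,d\tau' \le C(t+\delta)^{1/2}(1+E).
\]
Here $\|\mathcal W_R\|_{L^p(m')}\lesssim\|\mathcal W_R\|_{L^2(m)}$ if we lower the weight slightly. Any linear-in-$E$ term that appears this way is absorbed, because it carries a small prefactor $(t+\delta)^{1/4}$; alternatively we write $(t+\delta)^{1/2}E\le (t+\delta)^{1/4}+E^2$.

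\textbf{Step 2: the cutoff terms $F_3$ and $F_4$, and the quadratic term $F_5$.} $F_3$ is localized where $1-\chi_{vp}$ or $\nabla\chi_{vp}$ is nonzero, i.e. at $|\eta|\gtrsim e^{-\tau/2}$, and there it multiplies $G$. Using $\|\mathcal V_R\|_{L^4}\lesssim\|\mathcal W_R\|_{L^{4/3}}$ (Lemma \ref{lem: velocity weighted est}), this gives a factor $e^{-ce^{-\tau}}E$.

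$F_4$ evaluates $\widetilde{\mathcal V_R}$ near the image point, at distance about $40e^{-\tau/2}$, against the localized profile $\chi_{vp}G$. Using the decay of the Biot–Savart kernel together with the moment bound from the weight $m>2$, we get $|\widetilde{\mathcal V_R}|\lesssim e^{\tau/2}\|\mathcal W_R\|_{L^1}$ on $\operatorname{supp}\chi_{vp}G$. Hence $\|I_4\|\lesssim (t+\delta)^{1/2}E$.

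$F_5$ we write as $\operatorname{div}(\alpha(\mathcal V_R-\widetilde{\mathcal V_R}(\cdot+\dots))\mathcal W_R)$ and apply \eqref{semigroup est 3} with $p=4/3$, using $\|\mathcal V_R\mathcal W_R\|_{L^{4/3}(m)}\le\|\mathcal V_R\|_{L^4}\|\mathcal W_R\|_{L^2(m)}\lesssim E^2$. The kernel is integrable, but the $\tau$-integral has length about $|\log\delta|$, which is large. We handle this by splitting at $\tau-1$: the part on $[\tau-1,\tau]$ contributes $CE^2$, and the part before it carries the exponential factor $e^{-(\tau-\tau')/2}$, so it is also bounded by $CE^2$. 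This yields $CE(t)^2$.

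\textbf{Step 3: the boundary-interaction terms $F_6$ and $F_7$ (main obstacle).} For $F_6$ we must not use the crude bound $\|U\|_\infty\lesssim t^{-1/2}$; that bound only gives $O(1)E$ and would not close. Instead we split $(1-\chi_{vp})\omega=\zeta_1\omega+(1-\chi_{vp}-\zeta_1)\omega$. By Lemma \ref{velocity estimates 2}, the first piece gives velocity at most $C\|\omega\|_{Y_1}\le C(E+1)$. The second is bounded via $E_m$ as in $T_3$ of Lemma \ref{velocity estimates 3}. So $\|BS_{\mathbb R^2_+}[(1-\chi_{vp})\omega]\|_{L^\infty}\le C(E+1)$ uniformly. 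Writing $F_6$ in divergence form, the factor $e^{\tau'/2}$ then gives $\|I_6\|\le C(t+\delta)^{1/2}(1+E)^2$.

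$F_7$ is supported in the annulus $5\le|X-X_0|\le6$, inside the region controlled by $E_m$. There the weight $e^{\Psi}$ gives exponential smallness, roughly $e^{-c\varepsilon_0/t}$ times $E$ (as for the $H^4$ part of $E_m$). Combined with the prefactor $(t+\delta)^2$, this makes $I_7$ negligible.

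Collecting all terms, using $(t+\delta)^{1/2}(1+E)^2\le C(t+\delta)^{1/4}+CE^2$, and taking the supremum over $\tau'<\log t$ gives the lemma. The delicate points are the uniform velocity bound for $F_6$ and keeping every coefficient of a linear-in-$E$ term small.
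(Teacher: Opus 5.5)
Your proposal follows essentially the paper's proof: the Duhamel formula \eqref{eq: integral eq of W R} with the bounds of Proposition~\ref{semigroup estimates of T alpha}, divergence form for $F_1,F_5,F_6$, the $\zeta_1$-splitting of $F_6$ via Lemmas~\ref{velocity estimates 2} and~\ref{velocity estimates 3}, and the exponential weight $e^{\Psi}$ for $F_7$. For $F_7$ the $\nabla\omega$ term is controlled by the $L^2_tL^2$ dissipation part of $E_m$, not by its $H^4$ part. Your pointwise kernel-decay bound $|\widetilde{\mathcal V_R}(\eta+(0,40)e^{-\tau/2})|\lesssim e^{\tau/2}\|\mathcal W_R\|_{L^1}$ for $F_4$ is a simpler substitute for the paper's weighted $L^q$ estimate, and it even gives $(t+\delta)^{1/2}E$ instead of $(t+\delta)^{1/4}E$. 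One small correction: in $I_1$, take $p=2$ in \eqref{semigroup est 3}, as the paper does. The kernel $a(\tau)^{-1/p}$ is integrable for every $p>1$, so $p<2$ is not needed, and \eqref{semigroup est 3} uses the same weight $m$ on both sides, so lowering the weight on $\|\mathcal W_R\|_{L^p}$ does not fit that estimate.
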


\begin{proof}

We recall the formulation of $\mathcal W_R=\sum_{i=1}^7 I_i$ with $I_i$ defined by  \eqref{eq: integral eq of W R}. Next, we present the estimates of $I_i$ term by term.
	
	For $I_1$, we get by \eqref{semigroup est 3} that
\begin{align*}
	\|I_1(\tau)\|_{L^2(m)}
	&\leq \int_{\tau_0}^\tau \Big\|T_\alpha(\tau-\tau')\dv \big\{\alpha\Big(BS_{\mathbb R^2}[(1-\chi_{vp})G](\eta)
  	-\widetilde{BS_{\mathbb R^2}[(1-\chi_{vp})G]}(\eta+\frac{(0,40)}{e^{\tau'/2}})\\
  	&\quad\quad+\mathcal V^G(\eta+\frac{(0,40)}{e^{\tau'/2}})\Big)(\chi_{vp} G+\mathcal W_R) \big\}\Big\|_{L^2(m)}d\tau'\\
  	&\leq C_0 \int_{\tau_0}^\tau \frac{e^{-\frac{\tau-\tau'}{2}}}{a(\tau-\tau')^{1/2}}\Big\|BS_{\mathbb R^2}[(1-\chi_{vp})G](\eta)
  	-\widetilde{BS_{\mathbb R^2}[(1-\chi_{vp})G]}(\eta+\frac{(0,40)}{e^{\tau'/2}})\\
  	&\quad\quad+\mathcal V^G(\eta+\frac{(0,40)}{e^{\tau'/2}})\Big\|_{L^\infty(|\eta|\leq6e^{-\tau'/2})}\|\chi_{vp} G+\mathcal W_R \|_{L^2(m)}d\tau'\\
  	&\leq C_0 \int_{\tau_0}^\tau \frac{e^{-\frac{\tau-\tau'}{2}}}{a(\tau-\tau')^{1/2}} e^{\tau'/2}(1+\|\mathcal W_R\|_{L^2(m)})d\tau'\\
  	&=C_0 \int_0^t (\frac{s+\delta}{t+\delta})^{1/2}(1-\frac{s+\delta}{t+\delta})^{-1/2}(s+\delta)^{-1/2} ds\cdot(1+E_{vp}(t))\\
  	&\leq C_0 t^{1/2}(1+E_{vp}(t)),
\end{align*}
	where we usd the following fact
\begin{align}\label{fact 1 in lemma 4.3}
	\|BS_{\mathbb R^2}[(1-\chi_{vp})G]\|_{L^\infty}
	\leq C_0\|(1-\chi_{vp})G\|_{L^{4/3}}\|(1-\chi_{vp})G\|_{L^4}\leq C_0e^{\tau'/2}, 
\end{align}
due to the fact $\operatorname{supp}(1-\chi_{vp})\subseteq\{|\eta|\geq 5e^{-\tau'/2}\}$.

For $I_2$, we get by  \eqref{semigroup est 1} that 
\begin{align*}
	&\|I_2(\tau)\|_{L^2(m)}
	\leq C_0\int_{\tau_0}^\tau \left\|\frac{1}{2}\eta\cdot\nabla_\eta( \chi_{vp})G+2\nabla_\eta(\chi_{vp})\cdot\nabla_\eta G
  	+\Delta_\eta(\chi_{vp}) G \right\|_{L^2(m)}d\tau'\\
  	&\leq C_0\int_{\tau_0}^\tau \left\|e^{-|\eta|^2/8}\chi_{(5e^{-\tau'/2}\leq|\eta|\leq6e^{-\tau'/2})}\right\|_{L^2(m)}d\tau'
  	\leq C_0 \int_{\tau_0}^\tau e^{\tau'/2}d\tau'\\
  	&\leq C_0 (t+\delta)^{1/2}.
\end{align*}

For $I_3$, we similarly have
\begin{align*}
	\|I_3(\tau)\|_{L^2(m)}
	&\leq C_0\int_{\tau_0}^\tau \left\|\alpha(1-\chi_{vp})\mathcal V_R\cdot\nabla_\eta G-\alpha\mathcal V_R\cdot\nabla	_\eta(\chi_{vp})G\right\|_{L^2(m)}d\tau'\\
	&\leq C_0\int_{\tau_0}^\tau \|\mathcal V_R\|_{L^4}\|e^{-|\eta|^2/8}\|_{L^4(|\eta|\geq5e^{-\tau'/2})}d\tau'\\
	&\leq C_0\int_{\tau_0}^\tau e^{\tau'/2}\|\mathcal W_R\|_{L^2(m)}d\tau'\\
	&\leq C_0 (t+\delta)^{1/2} E_{vp}(t),
\end{align*}
where we used $\|\mathcal V_R\|_{L^4} \leq C \|\mathcal W_R\|_{L^2(m)}$ by   Lemma \ref{lem: velocity weighted est}.

For $I_4$, by \eqref{semigroup est 3}, we have
\begin{align*}
	&\|I_4(\tau)\|_{L^2(m)}
	\leq \int_{\tau_0}^\tau
	\left\|T_\alpha(\tau-\tau')\dv\{\alpha\widetilde{\mathcal V_R}(\eta+\frac{(0,40)}{e^{\tau'/2}},\tau')(\chi_{vp} G)\}\right\|_{L^2(m)}d\tau'\\
	&\leq C_0\int_{\tau_0}^\tau \frac{e^{-\frac{\tau-\tau'}{2}}}{a(\tau-\tau')^{1/2}}\left\|\widetilde{\mathcal V_R}(\eta+\frac{(0,40)}{e^{\tau'/2}},\tau')\cdot\chi_{vp} G(\eta)\right\|_{L^2(m)}d\tau'\\
	&\leq C_0\int_{\tau_0}^\tau \frac{e^{-\frac{\tau-\tau'}{2}}}{a(\tau-\tau')^{1/2}}\left\|\langle\eta\rangle^{\gamma_0-\frac{2}{q}}\widetilde{\mathcal V_R}(\eta)\right\|_{L^q}
	\left\|\langle\eta+\frac{(0,40)}{e^{\tau'/2}}\rangle^{\frac{2}{q}-\gamma_0}\langle\eta\rangle(\chi_{vp} G)(\eta)\right\|_{L^{\frac{2q}{q-2}}}d\tau'\\
	&\leq C_0\int_{\tau_0}^\tau \frac{e^{-\frac{\tau-\tau'}{2}}}{a(\tau-\tau')^{1/2}}\|\mathcal W_R\|_{L^2(\gamma_0)}e^{\tau'(\frac{\gamma_0}{2}-\frac{1}{q})}d\tau'\\
	&\leq C_0 (t+\delta)^{\frac{\gamma_0}{2}-\frac{1}{q}}E_{vp}(t)
    =C_0 (t+\delta)^{1/4}E_{vp}(t),
\end{align*}
where we used Lemma \ref{lem: velocity weighted est} and choose some $\gamma_0\in(0,1), q>2$ such that $\frac{\gamma_0}{2}-\frac{1}{q}=\frac{1}{4}$.

For $I_5$, taking $p=4/3$ and by \eqref{semigroup est 3}, Lemma \ref{lem: velocity weighted est}, we find
\begin{align*}
	\|I_5(\tau)\|_{L^2(m)}
	&\leq \int_{\tau_0}^\tau
	\left\|T_\alpha(\tau-\tau')\dv \{-\alpha\mathcal V_R\mathcal W_R+\alpha\widetilde{\mathcal V_R}(\eta+\frac{(0,40)}{e^{\tau'/2}},\tau')\mathcal W_R\}\right\|_{L^2(m)} d\tau'\\
	&\leq C_0 \int_{\tau_0}^\tau \frac{e^{-\frac{\tau-\tau'}{2}}}{a(\tau-\tau')^{1/p}}\left\|\big(-\alpha\mathcal V_R\mathcal W_R+\alpha\widetilde{\mathcal V_R}(\eta+\frac{(0,40)}{e^{\tau'/2}},\tau')\big)\mathcal W_R\right\|_{L^p(m)}d\tau'\\
	&\leq C_0 \int_{\tau_0}^\tau \frac{e^{-\frac{\tau-\tau'}{2}}}{a(\tau-\tau')^{1/p}}\|\mathcal V_R\|_{L^{\frac{2p}{2-p}}}\|\mathcal W_R\|_{L^2(m)}d\tau'\\
	&\leq C_0 \int_{\tau_0}^\tau \frac{e^{-\frac{\tau-\tau'}{2}}}{a(\tau-\tau')^{1/p}}\|\mathcal W_R\|_{L^2(m)}^2d\tau'\\
	&\leq C_0 E_{vp}(t)^2.
\end{align*}

For $I_6$, recalling $\zeta_1$ defined in \eqref{def zeta1}, and using \eqref{semigroup est 3} with $p=4/3$, we apply Lemmas \ref{velocity estimates 2} and \ref{lem: velocity weighted est} to obtain
\begin{align*}
	\|I_6(\tau)&\|_{L^2(m)}
	\leq C_0 \int_{\tau_0}^\tau \frac{e^{-\frac{\tau-\tau'}{2}}}{a(\tau-\tau')^{1/2}}\cdot e^{\tau'/2}\left\|BS_{\mathbb R^2_+}[\zeta_1\omega]\right\|_{L^\infty}\big(\|G\|_{L^2(m)}+\|\mathcal W_R\|_{L^2(m)}\big)d\tau'\\
	&\quad+C_0 \int_{\tau_0}^\tau \frac{e^{-\frac{\tau-\tau'}{2}}}{a(\tau-\tau')^{1/p}}\cdot e^{\tau'/2}\left\|BS_{\mathbb R^2_+}[(1-\zeta_1-\chi_{vp})\omega]\right\|_{L^{\frac{2p}{2-p}}}\big(\|G\|_{L^2(m)}+\|\mathcal W_R\|_{L^2(m)}\big)d\tau'\\
	&\leq C_0 \int_{\tau_0}^\tau \frac{e^{-\frac{\tau-\tau'}{2}}}{a(\tau-\tau')^{1/2}}\cdot e^{\tau'/2}\|\omega(s)\|_{Y_1(s)}d\tau'\cdot(1+E_{vp}(t))\\
	&\quad+C_0 \int_{\tau_0}^\tau \frac{e^{-\frac{\tau-\tau'}{2}}}{a(\tau-\tau')^{1/p}}\cdot e^{\tau'/2}\|e^\Psi \psi\chi_m\omega(s)\|_{L^2}d\tau'\cdot(1+E_{vp}(t))\\
	&\leq C_0 (t+\delta)^{1/2}\big(1+E_b(t)+E_m(t)\big)(1+E_{vp}(t)).
\end{align*}

For $I_7$, by \eqref{semigroup est 3}, we have
\begin{align*}
	&\|I_7(\tau)\|_{L^2(m)}
	\leq C_0 \int_{\tau_0}^\tau \left\| (s+\delta)^2 U\cdot\nabla\chi_{vp}\omega
	-2(s+\delta)^2\nabla\chi_{vp}\cdot\nabla\omega
	-(s+\delta)^2\Delta\chi_{vp} \omega\right\|_{L^2(m)}d\tau'\\
	&\leq C_0 \int_{\tau_0}^\tau (s+\delta)^2\|\langle\eta\rangle^m(|U\omega|+|\nabla\omega|+|\omega|)\|_{L^2(\operatorname{supp}\nabla\chi_{vp})}d\tau'\\
	&\leq C_0 \int_{\tau_0}^\tau (s+\delta)^{2-\frac{m}{2}}e^{-\frac{\eps_1}{2s}}\big(\|U\|_{L^\infty(\operatorname{supp}\chi_m)}\|e^\Psi \psi\chi_m\omega\|_{L^2}
	+\|e^\Psi \psi\nabla(\chi_m\omega)\|_{L^2}
	+\|e^\Psi \psi\chi_m\omega\|_{L^2}\big)d\tau'\\
	&\leq C_0\int_0^t s^4
	\big(\|U\|_{L^\infty(\operatorname{supp}\chi_m)}\|e^\Psi \psi\chi_m\omega\|_{L^2}
	+\|e^\Psi \psi\nabla(\chi_m\omega)\|_{L^2}
	+\|e^\Psi \psi\chi_m\omega\|_{L^2}\big)ds\\
	&\leq C t^5 E_m(t)\big( E(t)+1\big)
	+C t^4E_m(t),
\end{align*}
where we used Lemma \ref{velocity estimates 3} in the last step.

Summing these estimates and taking a suitable $\gamma_0$ and $p$, we obtain the desired result.
\end{proof}

Next, we give the estimates  second part of $E_{vp}$.

\begin{lemma}\label{lem: estimate of Evp 2}
	There exists $T_0>0$ such that for $0<t\leq T_0$, it holds that
	\begin{align*}
		\sup_{\log\delta\leq \tau'<\log t}\|\nabla_\eta\mathcal W_R(\tau')\|_{L^2(m)}
		\leq& C(t+\delta)^{1/4} +CE(t)^2.
	\end{align*}
\end{lemma}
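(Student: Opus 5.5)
We follow the argument of Lemma \ref{lem: estimate of Evp 1}, now applying $\nabla_\eta$ to the Duhamel formula \eqref{eq: integral eq of W R}. This gives
\begin{align*}
\nabla_\eta\mathcal W_R(\tau)=\sum_{i=1}^7\int_{\tau_0}^\tau \nabla_\eta T_\alpha(\tau-\tau')F_i(\tau')\,d\tau'=:\sum_{i=1}^7 J_i .
\end{align*}
In place of \eqref{semigroup est 1} and \eqref{semigroup est 3} we use the gradient estimates \eqref{semigroup est 2} and \eqref{semigroup est 4}. The only change in each bound is an extra factor $a(\tau-\tau')^{-1/2}$ in the time kernel.

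\textbf{Terms with a divergence structure.} $F_1$, $F_4$, $F_5$ and the velocity part of $F_6$ are all of the form $\dv(\mathcal V\,w)$, because every velocity involved is divergence free. For these we apply \eqref{semigroup est 4} with $p=4/3$ (or with $p$ slightly above $1$ where needed), which produces the kernel $e^{-(\tau-\tau')/2}a(\tau-\tau')^{-1/2-1/p}$. The exponent $1/2+1/p$ is $\ge 1$, so this kernel is not integrable at $\tau'=\tau$, and this is the \emph{main obstacle}.

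My first remedy is to split the time integral. On $\tau'\in[\tau-1,\tau]$, I do not put the derivative on the semigroup at all. Instead I write $\nabla T_\alpha(\tau-\tau')\dv(\cdot)$ as $\nabla T_\alpha(\tau-\tau')(\mathcal V\cdot\nabla w)$ and use \eqref{semigroup est 2}. This gives the integrable kernel $a^{-1/2}$ multiplied by $\|\mathcal V\cdot\nabla_\eta w\|_{L^2(m)}$, where $w=\chi_{vp}G+\mathcal W_R$. That product is controlled by:
\begin{itemize}
\item $\|\mathcal V\|_{L^\infty}\|\nabla_\eta w\|_{L^2(m)}$ for $F_1$, $F_4$ and $F_6$;
\item $\|\mathcal V_R\|_{L^\infty}\lesssim\|\mathcal W_R\|_{L^2(m)}^{1/2}\|\nabla\mathcal W_R\|_{L^2(m)}^{1/2}$ by Lemma \ref{lem: velocity weighted est}, for $F_5$, which makes $J_5$ bounded by $CE_{vp}(t)^2$.
\end{itemize}
Here $\|\nabla_\eta w\|_{L^2(m)}$ is bounded by $1+E_{vp}(t)$ since it is part of $E_{vp}$. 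On $\tau'<\tau-1$ we have $a\ge 1-e^{-1}$, so \eqref{semigroup est 4} with its decay factor $e^{-(\tau-\tau')/2}$ applies directly.

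\textbf{Estimates term by term.} We gain the same powers of $(t+\delta)$ as in Lemma \ref{lem: estimate of Evp 1}.
\begin{itemize}
\item For $F_1$, we use \eqref{fact 1 in lemma 4.3} together with the smallness of $\mathcal V^G(\eta+(0,40)e^{-\tau'/2})$ on the support of $\chi_{vp}$. This is $O(e^{\tau'/2})$, and integrating in $\tau'$ yields $C(t+\delta)^{1/2}(1+E_{vp})$.
\item For $F_6$, we decompose $BS_{\mathbb R^2_+}[(1-\chi_{vp})\omega]$ into its $\zeta_1$ part, which is bounded in $L^\infty$ by Lemma \ref{velocity estimates 2} and hence by $E_b$, and the middle part. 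The middle part is bounded in $L^\infty$ on the support of $\chi_{vp}$ by Lemma \ref{velocity estimates 3}, because the support of $\chi_{vp}$ lies at positive distance from $\operatorname{supp}\chi_m$ and from the boundary. With the prefactor $e^{\tau'/2}$ this gives $C(t+\delta)^{1/2}(1+E)^2$.
\item For $F_4$, we reuse the weighted Hölder argument with $\gamma_0$ and $q$ satisfying $\gamma_0/2-1/q=1/4$. This gives the factor $(t+\delta)^{1/4}$, now multiplied by $E_{vp}$.
\end{itemize}

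\textbf{Remaining terms.} $F_2$ and $F_3$ involve no derivative of $\mathcal W_R$. For them \eqref{semigroup est 2} with kernel $a^{-1/2}$ suffices, giving $C(t+\delta)^{1/2}(1+E_{vp})$. $F_7$ is supported on $\operatorname{supp}\nabla\chi_{vp}$. There the weight $e^{\Psi}$ in $E_m$ gives exponential smallness in $t$, and the factor $(t+\delta)^2$ absorbs the time kernel; we use Lemma \ref{velocity estimates 3} for $U$. This gives $Ct^4(1+E)^2$.

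Summing all the $J_i$ and taking the supremum over $\tau<\log t$ gives the claim for $T_0$ small. If the splitting for $F_5$ near $\tau'=\tau$ fails to close, the fallback is to estimate $\nabla\mathcal W_R$ by a weighted $L^2(m)$ energy estimate on the differentiated equation. This is the approach used in the proof of \eqref{semigroup est 2}, where the commutator terms $\pa_i\mathcal V^G\cdot\nabla w$ and $\mathcal V^w\cdot\nabla\pa_iG$ are treated as sources.
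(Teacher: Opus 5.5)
Your overall scheme matches the paper's proof. You apply $\nabla_\eta$ to the Duhamel formula and use \eqref{semigroup est 2} on the non-divergence form of the sources. For $F_5$ you use the same near/far split as the paper (which cuts at $\tau-\log 2$): $\|\mathcal V_R\|_{L^\infty}\|\nabla_\eta\mathcal W_R\|_{L^2(m)}$ near $\tau'=\tau$, and \eqref{semigroup est 4} with $p=4/3$ away from it. The extra split you make for $F_1,F_4,F_6$ is unnecessary but harmless. Their prefactors $e^{\tau'/2}$ and $e^{\tau'/4}$ already make $\int a(\tau-\tau')^{-1/2}(\cdot)\,d\tau'$ converge on the whole interval, which is why the paper applies \eqref{semigroup est 2} there without splitting.

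\textbf{The $F_6$ step.} The reason you give is false. $\operatorname{supp}\chi_{vp}$ is \emph{not} at positive distance from $\operatorname{supp}\chi_m$; indeed $\chi_m\equiv1$ on $\{4\le|(x,y)-(0,20)|\le6\}$. Moreover, the vorticity $(1-\chi_{vp}-\zeta_1)\omega$ is supported in $\{|(x,y)-(0,20)|\ge5\}$, which overlaps the support of $\nabla_\eta(\chi_{vp}G+\mathcal W_R)$. So no separation argument is available. The $L^\infty$ bound you need is nevertheless true, for example by the global estimate $T_3$ in the proof of Lemma \ref{velocity estimates 3}:
\[
\big\|BS_{\mathbb R^2_+}[(1-\chi_{vp}-\zeta_1)\omega]\big\|_{L^\infty}\le C_0\|(1-\chi_{vp}-\zeta_1)\omega\|_{L^{4/3}}^{1/2}\|(1-\chi_{vp}-\zeta_1)\omega\|_{L^{4}}^{1/2}\le C_0E_m .
\]
This uses the $L^4$ component of $E_m$. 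That matters because your non-divergence treatment needs $L^\infty$, whereas the $I_6$ estimate in Lemma \ref{lem: estimate of Evp 1} only needed the $L^4$ velocity bound from Lemma \ref{lem: velocity weighted est}.

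\textbf{The $F_7$ step.} Your treatment has a gap here, and the paper's phrase ``the same argument as for $\nabla_\eta I_1$'' passes over the same point. The term $\nabla\chi_{vp}\cdot\nabla\omega$ is controlled by $E(t)$ only through $\|e^\Psi\psi\nabla(\chi_m\omega)\|_{L^2(0,t;L^2)}$, which is square-integrable in time, not bounded pointwise. $E_{vp}$ controls $\nabla(\chi_{vp}\omega)$, but that does not give $\nabla\omega$ where $\chi_{vp}$ vanishes. Since $a(\tau-\tau')^{-1/2}$ is not square integrable at $\tau'=\tau$, Cauchy--Schwarz against an $L^2$-in-time quantity fails there (logarithmically). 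This happens whatever the size of the prefactor $(s+\delta)^2e^{-c/s}$. So $Ct^4(1+E)^2$ does not follow from the kernel bound. To close this term you need one of the following:
\begin{itemize}
\item pointwise-in-time control of $\nabla\omega$ on $\{5\le|(x,y)-(0,20)|\le6\}$, added to the energy;
\item a maximal-regularity (energy) estimate bounding $\nabla_\eta\int_{\tau-\log2}^{\tau}T_\alpha(\tau-\tau')F_7\,d\tau'$ in $L^2(m)$ by $\|F_7\|_{L^2_{\tau'}L^2(m)}$. This is essentially your fallback, applied to this source alone.
\end{itemize}
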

\begin{proof}
Applying $\nabla_\eta$ to both sides of \eqref{eq: integral eq of W R}, it remains to handle  $\nabla_\eta I_1\sim \nabla_\eta I_7$ term by term.
	
For $\nabla_\eta I_1$, based on \eqref{semigroup est 2}, we utilize \eqref{fact 1 in lemma 4.3} to have
\begin{align*}
	\|\nabla_\eta  I_1(\tau)\|_{L^2(m)}
	&\leq \int_{\tau_0}^\tau \Big\|\nabla_\eta T_\alpha(\tau-\tau') \big\{\alpha\Big(BS_{\mathbb R^2}[(1-\chi_{vp})G](\eta)
  	\\
  	&\qquad\quad-\widetilde{BS_{\mathbb R^2}[(1-\chi_{vp})G]}(\eta+\frac{(0,40)}{e^{\tau'/2}})+\mathcal V^G(\eta+\frac{(0,40)}{e^{\tau'/2}})\Big)\cdot\nabla_\eta(\chi_{vp} G+\mathcal W_R) \big\}\Big\|_{L^2(m)} d\tau'\\
  	&\leq C_0 \int_{\tau_0}^\tau a(\tau-\tau')^{-1/2} \Big\| \Big(BS_{\mathbb R^2}[(1-\chi_{vp})G](\eta)
  	-\widetilde{BS_{\mathbb R^2}[(1-\chi_{vp})G]}(\eta+\frac{(0,40)}{e^{\tau'/2}})\\
  	&\quad\quad
  	+\mathcal V^G(\eta+\frac{(0,40)}{e^{\tau'/2}})\Big)\cdot\nabla_\eta(\chi_{vp} G+\mathcal W_R) \Big\|_{L^2(m)} d\tau'\\
  	&\leq C_0 \int_{-\infty}^\tau a(\tau-\tau')^{-1/2} \Big\| BS_{\mathbb R^2}[(1-\chi_{vp})G](\eta)
  	-\widetilde{BS_{\mathbb R^2}[(1-\chi_{vp})G]}(\eta+\frac{(0,40)}{e^{\tau'/2}})\\
  	&\quad\quad
  	+\mathcal V^G(\eta+\frac{(0,40)}{e^{\tau'/2}}) \Big\|_{L^\infty}
  	\Big\|\nabla_\eta(\chi_{vp} G+\mathcal W_R) \Big\|_{L^2(m)} d\tau'\\
  	&\leq C_0 \int_{-\infty}^\tau 
  	\big(1-e^{-(\tau-\tau')} \big)^{-1/2}e^{\tau'/2}d\tau' 
  	\cdot \big(1+E_{vp}(t)\big)\\
  	&=C_0 \int_0^t (1-\frac{s+\delta}{t+\delta})^{-1/2}(s+\delta)^{-1/2}ds
  	\cdot \big(1+E_{vp}(t)\big)\\
  	&\leq C_0 t^{1/2}\big(1+E_{vp}(t)\big),
\end{align*}
where we perform the change of variables $s+\delta=e^{\tau'}, t+\delta=e^\tau$ in the last line.

For $\nabla_\eta I_5$, by \eqref{semigroup est 2} and \eqref{semigroup est 4} (taking $p=4/3$), we have
\begin{align*}
	\|\nabla_\eta I_5(\tau)\|_{L^2(m)}
	&\leq \int_{\tau-\log 2}^\tau \Big\|\nabla_\eta T_\alpha(\tau-\tau') \big\{ \Big( -\alpha\mathcal V_R+\alpha\widetilde{\mathcal V_R}(\eta+\frac{(0,40)}{e^{\tau'/2}}) \Big)\cdot\nabla_\eta \mathcal W_R \Big\|_{L^2(m)}d\tau'\\
	&\quad+\int_{\tau_0}^{\tau-\log2}
	\Big\|\nabla_\eta T_\alpha(\tau-\tau') 
	\dv \{-\alpha\mathcal V_R\mathcal W_R+\alpha\widetilde{\mathcal V_R}(\eta+\frac{(0,40)}{e^{\tau'/2}},\tau')\mathcal W_R\}\Big\|_{L^2(m)} d\tau'\\
	&\leq C_0 \int_{\tau-\log 2}^\tau a(\tau-\tau')^{-1/2}\|\mathcal V_R\|_{L^\infty}\|\nabla_\eta\mathcal W_R\|_{L^2(m)}d\tau'\\
	&\quad +C_0\int_{-\infty}^{\tau-\log2}
	\frac{e^{-\frac{\tau-\tau'}{2}}}{a(\tau-\tau')^{1/2+1/p}}
	\Big\| -\alpha\mathcal V_R\mathcal W_R+\alpha\widetilde{\mathcal V_R}(\eta+\frac{(0,40)}{e^{\tau'/2}},\tau')\mathcal W_R\Big\|_{L^p(m)}d\tau'\\
	&\leq C_0 \int_{\tau-\log 2}^\tau a(\tau-\tau')^{-1/2}\|\mathcal W_R\|_{L^{4/3}}^{1/2}\|\mathcal W_R\|_{L^4}^{1/2}\|\nabla_\eta\mathcal W_R\|_{L^2(m)}d\tau'\\
	&\quad 
	+C_0\int_{-\infty}^{\tau-\log2}
	\frac{e^{-\frac{\tau-\tau'}{2}}}{a(\tau-\tau')^{1/2+1/p}}
	\|\mathcal V_R\|_{L^{\frac{2p}{2-p}}}\|\mathcal W_R\|_{L^2(m)}d\tau'\\
	&\leq C_0 \int_{\tau-\log 2}^\tau a(\tau-\tau')^{-1/2}\|\mathcal W_R\|_{L^2(m)}^{3/4}\|\nabla_\eta\mathcal W_R\|_{L^2(m)}^{3/2}d\tau'\\
	&\quad +C_0\int_{-\infty}^{\tau-\log2}
	\frac{e^{-\frac{\tau-\tau'}{2}}}{a(\tau-\tau')^{1/2+1/p}}
	\|\mathcal W_R\|_{L^2(m)}^2d\tau'\\
	&\leq C_0\int_{t/2}^t (1-\frac{s+\delta}{t+\delta})^{-1/2}(s+\delta)^{-1}ds \cdot E_{vp}(t)^2
	\\
	&\quad +C_0 \int_0^{t/2} (\frac{s+\delta}{t+\delta})^{1/2} (1-\frac{s+\delta}{t+\delta})^{-5/4}(s+\delta)^{-1}ds \cdot E_{vp}(t)^2\\
	&\leq C_0 E_{vp}(t)^2.
\end{align*}

The remainder follows by the same argument as for $\nabla_\eta I_1$. Thus, we have
\begin{align*}
	&\|\nabla_\eta I_2(\tau)\|_{L^2(m)}
	+\|\nabla_\eta I_3(\tau)\|_{L^2(m)}
	\leq C_0 (t+\delta)^{1/2}(1+E_{vp}(t)),\\
	&\|\nabla_\eta I_4(\tau)\|_{L^2(m)}\leq C_0(t+\delta)^{1/4}E_{vp}(t), 
	\end{align*} 
	 and
	\begin{align*}
	&\|\nabla_\eta I_6(\tau)\|_{L^2(m)}
	\leq C_0 (t+\delta)^{1/2}\big(1+E_b(t)+E_m(t)\big)(1+E_{vp}(t)), \\
	&\|\nabla_\eta I_7(\tau)\|_{L^2(m)}
	\leq C t^3 E_m(t)\big( E(t)+1\big) ,
\end{align*}

Summing these estimates, we obtain the desired result.
\end{proof}

 \section{Energy estimates in the interaction region} \label{section-E_{m}}
 
This section is devoted to the proof of Proposition \ref{prop: uniform estimates for Em(t)}. First, we recall that the energy functional $E^\delta_{m}(t)$ is defined by
    \begin{align*} 
   	E_m(t):=\sup_{0<s<t}\left\|e^\Psi\psi\chi_m\omega(s)\right\|_{L^2\cap L^4}+\left\|e^\Psi\psi \na(\chi_m\omega )\right\|_{L^2(0, t;L^2)}
   	+e^{\frac{5\eps_0}{t}}\sup_{0<s<t}\|(1,x)\omega(s)\|_{H^4(\frac{7}{8}\leq y\leq 4)},
   \end{align*}
  where $\psi(x,y)$ and $\Psi(t,x,y)$ are defined in \eqref{def: psi}. We drop the superscript $\delta$ throughout to simplify the notation.
  
  \medskip
 
\noindent\textbf{Proof of Proposition \ref{prop: uniform estimates for Em(t)}.} Taking $L^2$ inner product with $e^{2\Psi_{t_0}}\chi^2_m\psi^2\omega$ on both sides of \eqref{eq: NS vorticity}, we obtain
		\begin{align}\label{ineq:E-d}
			&\frac{1}{2}\frac{d}{dt}\|e^{\Psi_{t_0} }\psi\chi_m\omega\|^2_{L^2}
			+\frac{40\eps_0\gamma}{t+t_0}\int_{\Gamma(t)}e^{2\Psi_{t_0}}|\chi_m\psi\omega|^2\big(1-\gamma t-\theta(x,y)\big)_+ dxdy\\
			&\qquad+\frac{20\eps_0}{(t+t_0)^2}\|e^{\Psi_{t_0}} \psi\chi_m\omega\big(1-\gamma t-\theta(x,y)\big)_+\|_{L^2}^2
			-\langle\Delta(\chi_m\omega),e^{2\Psi_{t_0}}\chi_m\psi^2\omega\rangle \nonumber\\
			&\quad=-\langle U\cdot\nabla(\chi_m\omega),e^{2\Psi_{t_0}}\chi_m\psi^2\omega\rangle
			+\langle U\cdot(\nabla\chi_m)\omega-2(\nabla\chi_m)\cdot\nabla\omega-(\Delta\chi_m)\omega, e^{2\Psi_{t_0}}\chi_m\psi^2\omega\rangle,\nonumber
		\end{align}
		where we used  the following fact 
		\begin{align*}
			-\pa_t\Psi_{t_0}
			=\frac{40\eps_0\gamma}{t+t_0}\big(1-\gamma t-\theta(x,y)\big)_+ 
			+\frac{20\eps_0}{(t+t_0)^2}\big(1-\gamma t-\theta(x,y)\big)_+^2.
		\end{align*}

\underline{Dissipative term.}		
First, we note that
		\begin{align}\label{nable e^Psi}
			|\nabla(e^{2\Psi_{t_0}}\psi^2)|
			\leq \frac{C_0\eps_0}{t+t_0}\big(1-\gamma t-\theta(x,y)\big)_+\cdot e^{2\Psi_{t_0}}\psi^2  
			+C_0 e^{2\Psi_{t_0}}\psi^2,
		\end{align}
		which implies
		\begin{align*}
			&-\langle\Delta(\chi_m\omega),e^{2\Psi_{t_0}}\chi_m\psi^2\omega\rangle\\
			&=\|e^{\Psi_{t_0}}\psi\nabla(\chi_m\omega)\|^2_{L^2}
			+\langle \nabla(\chi_m\omega), \nabla(e^{2\Psi_{t_0}}\psi^2)\chi_m\omega\rangle\\
			&\geq \|e^{\Psi_{t_0}}\psi\nabla(\chi_m\omega)\|^2_{L^2}
			-C_0\int_{\mathbb R^2_+}e^{2\Psi_{t_0}}\psi^2|\chi_m\omega||\nabla(\chi_m\omega)|dxdy \\
			&\quad-\frac{C_0\eps_0}{t+t_0}\int_{\Gamma(t)}e^{2\Psi_{t_0}}\psi^2\big(1-\gamma t-\theta(x,y)\big)_+|\chi_m\omega||\nabla(\chi_m\omega)|dxdy\\
			&\geq \frac{4}{5}\|e^{\Psi_{t_0}}\psi\nabla(\chi_m\omega)\|^2_{L^2}-\frac{C_0\eps_0^2}{(t+t_0)^2}\|e^{\Psi_{t_0}} \psi\chi_m\omega\big(1-\gamma t-\theta(x,y)\big)_+\|_{L^2}^2
			-C_0\|e^{\Psi_{t_0} }\psi\chi_m\omega\|^2_{L^2}.
		\end{align*}
If we take $\e_0$ small enough and $\gamma$ large enough, the last two terms on the right-hand side of the above inequality can be absorbed by the left-hand side of \eqref{ineq:E-d}.
		
\medskip

\underline{Right-hand side of \eqref{ineq:E-d}.}		
		By Lemma \ref{velocity estimates 3}, we obtain
		\begin{align*}
			|\langle U\cdot\nabla(\chi_m\omega),e^{2\Psi_{t_0}}\chi_m\psi^2\omega\rangle|
			&\leq  \|U\|_{L^\infty(\operatorname{supp}\chi_m)}
			\|e^{\Psi_{t_0}}\psi\nabla(\chi_m\omega)\|_{L^2}
			\|e^{\Psi_{t_0} }\psi\chi_m\omega\|_{L^2} \\
			&\leq \frac{1}{10}\|e^{\Psi_{t_0}}\psi\nabla(\chi_m\omega)\|_{L^2}^2
			+C_0\big( E(t)+1\big)^4,
		\end{align*}
	where we used  \eqref{nable e^Psi} in the last step.
		
		Since $\Psi_{t_0}=0$ on $\operatorname{supp}\nabla\chi_m$ and by integration by parts, we apply Lemma \ref{velocity estimates 3} to obtain
		\begin{align*}
			&|\langle U\cdot\nabla\chi_m\omega-2\nabla\chi_m\cdot\nabla\omega-\Delta\chi_m\omega,e^{2\Psi_{t_0}}\chi_m\psi^2\omega\rangle|\\
			&\leq C_0\big(\|U\|_{L^\infty(\operatorname{supp}\chi_m)}+1\big)
			\Big(\|(1+|x|)\omega\|^2_{L^2(\frac{1}{4}\leq y\leq\frac{3}{8})}
			+\|\omega\|^2_{L^2(3\leq|(x,y)-(0,20)|\leq4)}\Big)\\
			&\leq C_0 \big( E(t)+1\big)
			\Big(\|(1+|x|)\omega\|^2_{L^2(\frac{1}{4}\leq y\leq\frac{3}{8})}
			+\|\omega\|^2_{L^2(3\leq|(x,y)-(0,20)|\leq4)}\Big).
		\end{align*}
Moreover, we have
		\begin{align*}
			\|(1+|x|)\omega\|^2_{L^2(\frac{1}{4}\leq y\leq\frac{3}{8})}
			\leq C_0\|(1,x)\omega(t)\|^2_{Y_2(t)}
			\leq C_0(E_b(t)+1)^2,
		\end{align*}
		and
		\begin{align*}
			&\|\omega\|^2_{L^2(3\leq|(x,y)-(0,20)|\leq4)}
			\leq \frac{\alpha^2}{t+\delta}\int_{\frac{3}{\sqrt{t+\delta}}\leq|\eta|\leq\frac{4}{\sqrt{t+\delta}}}|\mathcal W(\eta,\tau)|^2d\eta\\
			&\leq \frac{C_0}{t+\delta}\int_{\frac{3}{\sqrt{t+\delta}}\leq|\eta|\leq\frac{4}{\sqrt{t+\delta}}}|G(\eta)|^2 d\eta
			+\frac{C_0}{t+\delta}\int_{\frac{3}{\sqrt{t+\delta}}\leq|\eta|\leq\frac{4}{\sqrt{t+\delta}}}|\mathcal W_R(\eta,\tau)|^2d\eta\\
			&\leq C_0+C_0\|\mathcal W_R\|^2_{L^2(m)}
			\leq C_0\big(1+E_{vp}(t)\big)^2.
		\end{align*}
Combing the above estimates, we obtain
		\begin{align*}
			|\langle U\cdot\nabla\chi_m\omega-2\nabla\chi_m\cdot\nabla\omega-\Delta\chi_m\omega,e^{2\Psi_{t_0}}\chi_m\psi^2\omega\rangle|
			&\leq C_0\big( E(t)+1\big)^3.
		\end{align*}
		
		\medskip
		
Armed with these estimates,  integrating from $0$ to $t$ and letting $t_0\rightarrow0$ and $\eps_1$ sufficiently small, we obtain the desired result.
		
By the same argument, we also obtain the estimate for $\sup_{[0,t]}\|e^{\Psi }\psi\chi_m\omega\|_{L^4}$ which we leave to the reader.
\ef

\medskip
For the last term in $E_m(t)$, we adopt the same method in Proposition 3.6 in \cite{HWYZ} to deduce 
\begin{align*}
	e^{\frac{5\eps_0}{t}}\sup_{0<s<t}\|(1,x)\omega(s)\|_{H^4(\frac{7}{8}\leq y\leq 4)}
	\leq Ct^{1/2}\big(1+E(t)\big)^8.
\end{align*}

%
%

  \section{Energy estimates near the boundary}  \label{section-E_{b}}
 As in the previous sections, we drop the superscript $\delta$ throughout to simplify the notation.  We recall that the energy functional $E_b(t)$ is defined by
\begin{align*}
	E_b(t)=\|(1,x)\big(\omega (t)-\omega_c(t)\big)\|_{Y_1(t)\cap Y_2(t)}.
\end{align*}
  And $\omega-\omega_c$ satisfies 
\begin{align} \label{integral eq of omega-omega c}
	(\chi_b\omega-\chi_b\omega_c)_\xi(t,y)
 	=&\int_0^{+\infty}\big(H_\xi(t,y,z)+R_\xi(t,y,z)\big)b_\xi(z)dz\\
 	\nonumber
 	&+\int_0^t\int_0^{+\infty}\big(H_\xi(t-s,y,z)+R_\xi(t-s,y,z)\big)N_\xi(s,z)dzds\\
 	\nonumber
 	&-\int_0^t\big(H_\xi(t-s,y,0)+R_\xi(t-s,y,0)\big)B_\xi(s)ds,
\end{align}
and
\begin{align}\label{integral eq of x omega-omega c}
	( x\cdot(\chi_b \omega-\chi_b \omega_c))_\xi(t,y)
	=&\int_0^{+\infty}\big(H_\xi(t,y,z)+R_\xi(t,y,z)\big)\tilde b_\xi(z)dz\\
 	\nonumber
 	&+\int_0^t\int_0^{+\infty}\big(H_\xi(t-s,y,z)+R_\xi(t-s,y,z)\big)\widetilde N_\xi(s,z)dzds\\
 	\nonumber
 	&-\int_0^t\big(H_\xi(t-s,y,0)+R_\xi(t-s,y,0)\big)\widetilde B_\xi(s)ds,
\end{align}
where
\beno
\widetilde N &=&-\chi_b U\cdot\nabla(x\omega-x\omega_c)
		+\chi_b v(\omega-\omega_c)-2\chi_b\pa_x(\omega-\omega_c)-\chi_b xU\cdot\nabla\omega_c\\
		&&+\chi_b x\pa_x^2\omega_c-x(\chi_b''\omega+2\chi_b'\pa_y\omega)	+x(\chi_b''\omega_c+2\chi_b'\pa_y\omega_c),\\
	\tilde b&=&	xu_0^\delta \chi_b\chi_b',\\
	\widetilde B_\xi	&=&\big(i\pa_\xi(B_\xi)
	-i \text{sgn}\xi(\omega-\omega_c)_\xi\big)|_{y=0}.
\eeno

To simplify the notations, we introduce the following functional space $W_{\mu,s}$, whose norm is defined by 
\begin{align}\label{def: norm N consists of two types}
	\|f(s)\|_{W_{\mu,s}}
	:=\sum_{i+j\leq2}\left\|\pa_x^i(y\pa_y)^j f(s)\right\|_{Y^1_{\mu,s}\cap Y^2_{\mu,s}}
	+e^{\frac{2\eps_0}{s}}\sum_{i+j\leq3}\left\|\|\pa_x^i\pa_y^j f(s)\|_{L^2_x}\right\|_{L^1_y(y\geq1+\mu)}.
\end{align}
 For later convenience, we also introduce
\begin{align}\label{def of mu1}
	\mu_1:=\mu+\frac{1}{2}(\mu_0-\mu-\gamma s), 
\end{align}
which implies the relation $0<\mu<\mu_1<\mu_0-\gamma s$.

\subsection{Estimates of $N, \widetilde N$}
In this subsection, we provide estimates for the force terms $N, \widetilde N$. The following proposition is the main result.
\begin{proposition}\label{prop: estimate of N in new norm}
	For $0<\mu<\mu_0-\gamma s$, it holds that
	\begin{align*}
		\|N(s)\|_{W_{\mu,s}}
		+\|\widetilde N(s)\|_{W_{\mu,s}}
		\leq C_0(\mu_0-\mu-\gamma s)^{-\beta}\big( E(s)+1\big)^2. 
	\end{align*}
\end{proposition}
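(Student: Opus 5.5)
The plan is to split $N$ (and likewise $\widetilde N$) into three groups and bound each group separately in the two pieces of $\|\cdot\|_{W_{\mu,s}}$: the analytic part on $y\le 1+\mu$, measured by $Y^1_{\mu,s}\cap Y^2_{\mu,s}$, and the exponentially weighted Sobolev part on $y\ge 1+\mu$. The groups are:
\begin{itemize}
\item the transport term $-\chi_b U\cdot\nabla(\omega-\omega_c)$;
\item the corrector terms $-\chi_b U\cdot\nabla\omega_c+\chi_b\pa_x^2\omega_c+\chi_b''\omega_c+2\chi_b'\pa_y\omega_c$;
\item the cut-off commutators $-(\chi_b''\omega+2\chi_b'\pa_y\omega)$, which are supported in $2\le y\le 3$.
\end{itemize}

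\textbf{Transport term for $y\le 1+\mu$.} Write
\[
U\cdot\nabla f=u\,\pa_x f+\frac{v}{y}\,(y\pa_y f).
\]
Then $\pa_x^i(y\pa_y)^j$ with $i+j\le 2$ distributes by Leibniz onto $(u,v/y)$ and onto $(\pa_x,y\pa_y)f$. For the Fourier-side products I use the algebra property
\[
e^{\eps_0(1+\mu-y)_+|\xi|}\le e^{\eps_0(1+\mu-y)_+|\xi-\eta|}\,e^{\eps_0(1+\mu-y)_+|\eta|}.
\]
With this, each product is bounded by the $L^1_\xi L^\infty_y$ norm of the velocity factor, controlled by Lemma \ref{velocity estimates 1}, times the $Y^k_{\mu,s}$ norm of the vorticity factor. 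The Gaussian weight $e^{\eps_0(1+\mu)y^2/s}$ sits entirely on the vorticity factor, since the velocity bounds are sup bounds.

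The main obstacle is the loss of one derivative. When all derivatives fall on $f=\omega-\omega_c$, we face terms like $u\,\pa_x^3 f$ or $\frac{v}{y}(y\pa_y)^3 f$, which are third order. These are not in the level-2 part of $Y_1$, and the velocity bounds for $\pa_x^2 v/y$ also carry the factor $(\mu_0-\mu-\gamma s)^{-\beta}$. I handle both by comparing with the larger radius $\mu_1$ from \eqref{def of mu1}. The level-3 seminorm at $\mu_1$ is bounded by $(\mu_0-\mu_1-\gamma s)^{-\beta}E(s)\sim(\mu_0-\mu-\gamma s)^{-\beta}E(s)$, and $Y^k_{\mu,s}\le Y^k_{\mu_1,s}$ because the weights increase in $\mu$ and $y\le 1+\mu$. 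This yields exactly the factor $(\mu_0-\mu-\gamma s)^{-\beta}(E+1)^2$. Lemma \ref{analytic recovery} is not needed here; it is reserved for the time integral.

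\textbf{Corrector terms.} These are bounded directly by Lemma \ref{est of omega c}. The factor $e^{C'|\xi|}e^{C'y^2/t}$ dominates our weights once $\eps_0\ll C'$, and the $x$-weight $(1,x)$ is part of that lemma. The product with $U$ is handled by the same Leibniz argument, using Lemma \ref{velocity estimates 1}. No $t^{-1/2}$ arises, since no $\pa_y$ without a $y$ factor appears.

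\textbf{Region $y\ge 1+\mu$.} The commutator terms live in $2\le y\le 3$, and $N$ restricted to $y\ge 1+\mu$ lives in $1<y\le 3$. There I use the last term of $E_m$, which gives $e^{5\eps_0/s}\|(1,x)\omega\|_{H^4(7/8\le y\le 4)}\le E$. I combine this with Lemma \ref{velocity estimates 3} for $\pa^k U$ on $1/2\le y\le 3$, and with the exponential smallness of $\omega_c$ away from $y=0$ given by Lemma \ref{est of omega c}. Since $e^{2\eps_0/s}\le e^{5\eps_0/s}$, three derivatives in $L^1_y L^2_x$ are controlled, with $(E+1)^2$ from the quadratic term.

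\textbf{The $x$-weighted version $\widetilde N$.} The same scheme applies. The new terms $\chi_b v(\omega-\omega_c)$ and $-2\chi_b\pa_x(\omega-\omega_c)$ are lower order and are bounded by the $Y$ norms at level $\le 3$, again passing to $\mu_1$ at the top order. The term $xU\cdot\nabla\omega_c$ is handled by the $(1,x)$ part of Lemma \ref{est of omega c}. Since every term is controlled, summing gives the stated bound.
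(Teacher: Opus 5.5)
Your route is the paper's own. Near the boundary it is Lemma~\ref{est of N1}: Lemma~\ref{product estimate}, Lemma~\ref{velocity estimates 1}, and the top level of $Y_k(s)$ for the third-order terms. Away from the boundary it is Lemma~\ref{est of N2}: the $H^4(\frac78\le y\le4)$ part of $E_m$ together with Lemma~\ref{velocity estimates 3}. The passage to $\mu_1$ is harmless but unnecessary. $Y_k(s)$ is a supremum over all $\mu<\mu_0-\gamma s$, so for $i+j=3$ it already gives $\|\pa_x^i(y\pa_y)^jf\|_{Y^k_{\mu,s}}\le(\mu_0-\mu-\gamma s)^{-\beta}\|f\|_{Y_k(s)}$ at $\mu$ itself.

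The genuine problem is the exterior step, where you appeal to exponential smallness of $\omega_c$ away from $y=0$. By \eqref{omega c}, the second piece of $\omega_c$ is $-u_0(x)\int_0^{\infty}\big(g(s,y-z)+g(s,y+z)\big)\chi_b'(z)\,dz$, which tends to $-u_0(x)\chi_b'(y)$ as $s\to0$. So on $2\le y\le3$ the corrector is of size one. Since $\omega$ is $e^{-5\eps_0/s}$-small there, the part of $N$ on $[2,3]$, namely $-\chi_bU\cdot\nabla\omega+\chi_b\pa_x^2\omega_c-\chi_b''(\omega-\omega_c)-2\chi_b'\pa_y(\omega-\omega_c)$, converges to $-u_0''\chi_b\chi_b'-3u_0\chi_b'\chi_b''\not\equiv0$. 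This is just the fact that $\chi_b(\omega-\omega_c)$ starts from $b=u_0\chi_b\chi_b'\neq0$. Hence, for fixed $\mu$, $e^{2\eps_0/s}\big\|\|N(s)\|_{L^2_x}\big\|_{L^1_y(y\ge1+\mu)}\to\infty$ as $s\to0$, whereas the right-hand side is bounded whenever $E$ is. The exterior half of the inequality is therefore false as written.

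You inherited this rather than introduced it. The paper's proof of Lemma~\ref{est of N2} claims $I_3\le C_0(E(s)+1)$ for exactly this quantity. Lemma~\ref{est of omega c}, on which you both rely, holds with the weight $e^{C'y^2/t}$ only if the $L^1_y$ norm is restricted to a region such as $y\le\frac32$, which stays away from $\operatorname{supp}\chi_b'\subset[2,3]$; that is all the $Y^k_{\mu,s}$ norms need.

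A consistent repair is to put the $e^{2\eps_0/s}$-weighted part of $W_{\mu,s}$ only on $1+\mu\le y\le\frac32$. There $\chi_b\equiv1$, $\omega$ is $e^{-5\eps_0/s}$-small by $E_m$, and $\omega_c$ is $e^{-c/s}$-small with $c$ independent of $\eps_0$, so your exterior argument goes through for $\eps_0$ small. The contribution of $N$ from $z\ge\frac32$ is then handled inside Lemma~\ref{est of HN}. For $y\le1+\mu$ one has $z-y\ge\frac25$ and $y+z\ge\frac32$. So by \eqref{def of H xi R xi} and Lemma~\ref{prop of R 1}(3), the kernels $\xi^i(y\pa_y)^j(H_\xi+R_\xi)$ are bounded by $Ce^{-c|\xi|}e^{-c/t}$. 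This absorbs both weights $e^{\eps_0(1+\mu-y)_+|\xi|}$ and $e^{\eps_0(1+\mu)y^2/t}$, and needs only an unweighted bound on $N$ there.
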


The proof of Proposition \ref{prop: estimate of N in new norm} follows from Lemmas \ref{est of N1} and Lemma \ref{est of N2}.

First, we deal with the first part of $\|\cdot\|_{W_{\mu,s}}$, which concerns the part near the boundary.
\begin{lemma}\label{est of N1}
	For $0<\mu<\mu_0-\gamma s$, it holds that
	\begin{align*}
		\sum_{i+j\leq2}\left\|\pa_x^i(y\pa_y)^j\big( N(s),\widetilde  N(s)\big)\right\|_{Y^1_{\mu,s}\cap Y^2_{\mu,s}}
		\leq C_0(\mu_0-\mu-\gamma s)^{-\beta}\big( E(s)+1\big)^2. 
	\end{align*}
	
\end{lemma}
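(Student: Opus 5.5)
We would estimate the terms of $N$ (and analogously $\widetilde N$) one at a time. We apply $\pa_x^i(y\pa_y)^j$ with $i+j\le2$ and bound each piece in $Y^1_{\mu,s}\cap Y^2_{\mu,s}$. Recall
\[
N=-\chi_b U\cdot\nabla(\omega-\omega_c)-\chi_b U\cdot\nabla\omega_c+\chi_b\pa_x^2\omega_c-(\chi_b''\omega+2\chi_b'\pa_y\omega)+(\chi_b''\omega_c+2\chi_b'\pa_y\omega_c).
\]
Since $Y^k_{\mu,s}$ only sees $0<y<1+\mu\le 1+\mu_0<2$, while $\chi_b'$ and $\chi_b''$ vanish for $y\le2$, the two cut-off commutator groups contribute nothing. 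The term $\chi_b\pa_x^2\omega_c$ is bounded by Lemma \ref{est of omega c}. Here the Gaussian weight $e^{C'y^2/t}$ dominates $e^{\eps_0(1+\mu)y^2/s}$ and $e^{C'|\xi|}$ dominates $e^{\eps_0(1+\mu-y)_+|\xi|}$, once $\eps_0$ is taken small relative to $C'$. So the real work is in the transport terms.

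For the transport terms we write $U\cdot\nabla f=u\,\pa_x f+\frac{v}{y}\,(y\pa_y f)$, with $f=\omega-\omega_c$ or $f=\omega_c$. This exposes only tangential and conormal derivatives of $f$. After the Leibniz rule for $\pa_x^i(y\pa_y)^j$, every term is a product $\big(\pa_x^{i_1}(y\pa_y)^{j_1}(u,\tfrac vy)\big)\cdot\big(\pa_x^{i_2}(y\pa_y)^{j_2}(\pa_x,y\pa_y)f\big)$.

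In Fourier variables a product becomes a convolution in $\xi$. The weight $e^{\eps_0(1+\mu-y)_+|\xi|}$ is subadditive in $|\xi|$, so it splits onto the two factors. The Gaussian $y$-weight falls entirely on the $f$ factor, for which we use the $\|\cdot\|_{\mu,s}$ norm (the $L^1_y$ integral). The velocity factor is taken in $\|\sup_{y<1+\mu}e^{\eps_0(1+\mu-y)_+|\xi|}|\cdot|\|_{L^1_\xi}$. Young's inequality $L^1_\xi*L^k_\xi\subset L^k_\xi$ then handles both $Y^1$ and $Y^2$. This is presumably the content of the product lemma (Lemma \ref{product estimate}) invoked earlier, and we would use it in that form.

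The velocity factors are bounded by Lemma \ref{velocity estimates 1}, giving $C_0(E(s)+1)$. The exception is $\pa_x^2(v/y)$, which costs $(\mu_0-\mu-\gamma s)^{-\beta}E(s)$. The $f$ factor carries up to three derivatives, since a total of $i+j\le2$ is applied to $(\pa_x,y\pa_y)f$. When $f=\omega-\omega_c$, the terms with $i_2+j_2+1\le 2$ are controlled by $E_b(s)$ directly. The top-order terms with three derivatives appear in $Y_k(s)$ only with the factor $(\mu_0-\mu'-\gamma s)^\beta$. We therefore bound them at level $\mu$ using the definition of $Y_k(s)$ at $\mu'=\mu$, which gives a loss $(\mu_0-\mu-\gamma s)^{-\beta}E_b(s)$. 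This is exactly the factor allowed in the statement. When $f=\omega_c$, all derivatives are bounded by Lemma \ref{est of omega c}, with the factor $t^{-k/2}$ absent because no $\pa_y$ appears bare.

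For $\widetilde N$ the additional pieces are handled in the same way. The terms $\chi_b v(\omega-\omega_c)$ and $2\chi_b\pa_x(\omega-\omega_c)$ are lower order. The terms with $x\omega_c$ use the $(1,x)$ weighting in Lemma \ref{est of omega c}. In $\chi_b U\cdot\nabla(x\omega-x\omega_c)$ the factor $x$ is carried by the $f$ factor and controlled by the $x$-part of $E_b$.

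We expect the main obstacle to be the bookkeeping at top order. The only lossy factors are the velocity factor $\pa_x^2(v/y)$ and the $f$ factor with three derivatives, and these never occur together, because $i+j\le 2$ derivatives are shared between the two factors. So only one loss of $(\mu_0-\mu-\gamma s)^{-\beta}$ appears. A second point to check is that $y$ times the Gaussian-weighted $\omega$ in $(y\pa_y)$ of $\frac vy$ behaves well, which is covered by the last estimate of Lemma \ref{velocity estimates 1}.
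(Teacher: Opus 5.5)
Your proposal is correct and follows the paper's proof: split $U\cdot\nabla f=u\,\pa_x f+\frac{v}{y}\,(y\pa_y f)$, distribute $\pa_x^i(y\pa_y)^j$ by Leibniz, and close with Lemma \ref{product estimate}, Lemma \ref{velocity estimates 1}, the $\beta$-weighted top-order part of $Y_k(s)$ taken at level $\mu$, and \eqref{boundedness of omega c}; exactly one factor $(\mu_0-\mu-\gamma s)^{-\beta}$ appears in each term, coming either from $\pa_x^2(v/y)$ or from a third-order derivative of $\omega-\omega_c$. The only loose end, which the paper's ``other cases are similar'' also leaves open, is that Lemma \ref{velocity estimates 1} does not literally cover $(y\pa_y)^2u$ (for $v/y$ you can use $y\pa_y(v/y)=-\pa_x u-v/y$); repeating its Biot--Savart computation once more involves only second-order derivatives of $\omega$ and so costs no loss.
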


\begin{proof}
Here, we only treat $\pa_x^2 N$, as the other cases are similar. By Lemma \ref{product estimate}, we obtain
\begin{align*}
	&\|\pa_x^2 N(s)\|_{Y^1_{\mu,s}\cap Y^2_{\mu,s}}\\
	&\leq  \sum_{i_1+i_2=2}\Big( \|\pa_x^{i_1}u \pa_x^{i_2+1}(\omega-\omega_c) \|_{Y^1_{\mu,s}\cap Y^2_{\mu,s}}
	+ \|\pa_x^{i_1}v \pa_x^{i_2}\pa_y(\omega-\omega_c) \|_{Y^1_{\mu,s}\cap Y^2_{\mu,s}}\\
	&\quad + \|\pa_x^{i_1}u \pa_x^{i_2+1}\omega_c \|_{Y^1_{\mu,s}\cap Y^2_{\mu,s}}
	+ \|\pa_x^{i_1}v \pa_x^{i_2}\pa_y\omega_c \|_{Y^1_{\mu,s}\cap Y^2_{\mu,s}} \Big)+\|\pa_x^4\omega_c\|_{Y^1_{\mu,s}\cap Y^2_{\mu,s}}\\
	&\leq \sum_{i_1+i_2=2}
  \|\sup_{0<y<1+\mu}e^{\eps_0(1+\mu-y)_+|\xi|}|\pa_x^{i_1} (u,\frac{v}{y})_\xi(s,y)| \|_{L^1_\xi} \\ 
		&\qquad\cdot\Big(\|\pa_x^{i_2}(\pa_x,y\pa_y)(\omega-\omega_c)(s)\|_{Y^1_{\mu,s}\cap Y^2_{\mu,s}}+\|\pa_x^{i_2}(\pa_x,y\pa_y)\omega_c\|_{Y^1_{\mu,s}\cap Y^2_{\mu,s}}\Big)
		+\|\pa_x^4\omega_c\|_{Y^1_{\mu,s}\cap Y^2_{\mu,s}}.
\end{align*}
	Using the definition of $Y_k(t)$ and Lemma \ref{velocity estimates 1}, we have
	\begin{align*}
		\|\pa_x^2 N(s)\|_{Y^1_{\mu,s}\cap Y^2_{\mu,s}}
	&\leq C_0\big( E(s)+1\big)
	\big(1+(\mu_0-\mu-\gamma s)^{-\beta}E(s)\big)+C_0\\
	&\leq C_0(\mu_0-\mu-\gamma s)^{-\beta}\big( E(s)+1\big)^2.
	\end{align*}
	where we used  \eqref{boundedness of omega c} in the last step.
\end{proof}

Next, we deal with the second part of $\|\cdot\|_{W_{\mu,s}}$, which concerns the part away from the boundary.

\begin{lemma}
	\label{est of N2}
	It holds that
	\begin{align*}
		e^{\frac{2\eps_0}{s}}\sum_{i+j\leq3}\left\|\left\|\pa_x^i\pa_y^j \big( N(s),\widetilde  N(s)\big)\right\|_{L^2_x}\right\|_{L^1_y(y\geq1)}
		\leq C_0\big( E (s)+1\big)^2.
	\end{align*}
\end{lemma}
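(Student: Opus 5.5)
Since $\chi_b$ is supported in $y\le 3$, every term of $N$ and $\widetilde N$ lives in the strip $1\le y\le 3$ once we restrict to $y\ge1$. The plan is to bound each term there in $H^3$-type norms weighted by $(1,x)$. The factor $e^{\frac{2\eps_0}{s}}$ will be absorbed by the much stronger weight $e^{\frac{5\eps_0}{s}}$ that sits in $E_m(s)$ in front of $\|(1,x)\omega\|_{H^4(\frac78\le y\le4)}$. By Cauchy--Schwarz in $y$ over the bounded interval $[1,3]$, it suffices to control
\[
\sum_{i+j\le3}\|\pa_x^i\pa_y^j N(s)\|_{L^2(1\le y\le3)},
\]
and similarly for $\widetilde N$, where the extra factor $x$ is handled by the $(1,x)$ in $E_m$.

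We split $N$ into three groups. The first group is the terms involving only $\omega$, namely $-\chi_bU\cdot\nabla\omega$ (obtained after recombining $-\chi_bU\cdot\nabla(\omega-\omega_c)-\chi_bU\cdot\nabla\omega_c$) and $-(\chi_b''\omega+2\chi_b'\pa_y\omega)$. The second group is the pure corrector terms $\chi_b\pa_x^2\omega_c$, $\chi_b''\omega_c$ and $2\chi_b'\pa_y\omega_c$. The third group consists of the extra terms in $\widetilde N$: $\chi_b v(\omega-\omega_c)$, $\chi_b\pa_x(\omega-\omega_c)$, and the $x$-weighted analogues.

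For the linear $\omega$-terms, three derivatives of $\chi_b''\omega$ and $\chi_b'\pa_y\omega$ cost at most $\|(1,x)\omega\|_{H^4(\frac78\le y\le4)}$. Hence
\[
e^{\frac{2\eps_0}{s}}\|\cdot\|\le e^{-\frac{3\eps_0}{s}}E_m(s)\le E(s).
\]
For the transport term we use Leibniz and Lemma \ref{velocity estimates 3} with $a=1$, $b=3$. This gives $\|\pa^{k}U\|_{L^\infty(1\le y\le3)}\le C_0(E+1)+C_0\|\omega\|_{H^{k+1}(\frac{99}{100}\le y\le 3.01)}$ for $k\le3$, and the last norm is bounded by $e^{-\frac{5\eps_0}{s}}E_m(s)$. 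Products of this with $\|(1,x)\nabla\omega\|_{H^3}\le e^{-\frac{5\eps_0}{s}}E_m$ then yield $C_0(E+1)^2 e^{-\frac{5\eps_0}{s}}$, which beats $e^{\frac{2\eps_0}{s}}$. The same argument handles $\chi_b v(\omega-\omega_c)$ and the $x$-weighted pieces of $\widetilde N$.

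The corrector terms, and the $\omega_c$ parts inside the $\widetilde N$ terms, will be estimated directly from the explicit formula \eqref{omega c}. For $y\ge1$ and $s\le T_0$, the first piece $-2u_0^\delta(4\pi s)^{-1/2}e^{-y^2/(4s)}$ together with all its derivatives is bounded by $C s^{-N}e^{-\frac{1}{4s}}$. The second piece is an integral against $\chi_b'$, which is supported in $[2,3]$; it is smooth in $y$ with bounds uniform in $s$, and $u_0^\delta$ is analytic in $x$ with $(1,x)$-weighted bounds uniform in $\delta$. So the first piece is harmless provided $2\eps_0<\frac14$, which holds for $\eps_0$ small.

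The main obstacle is the second piece, which is not exponentially small near $y\in[2,3]$. It cannot simply be multiplied by $e^{\frac{2\eps_0}{s}}$. I would exploit a cancellation instead: in $N$ the combination $\chi_b''(\omega-\omega_c)+2\chi_b'\pa_y(\omega-\omega_c)$ appears, and $\omega-\omega_c$ is small there. Concretely, I would verify via \eqref{omega c} that the smooth part of $\omega_c$ solves the heat equation with data $-u_0\chi_b'$, which matches $\omega$'s leading part away from the boundary only up to exponentially small errors. If that fails, I would interpret the $L^1_y(y\ge1+\mu)$ part of $W_{\mu,s}$ as applied only to the $\omega$-dependent terms, and treat the smooth $\omega_c$ contributions by \eqref{boundedness of omega c}, which suffices in Proposition \ref{prop: estimate of N in new norm}. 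Once this is settled, summing the three groups gives the bound $C_0(E(s)+1)^2$.
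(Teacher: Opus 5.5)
Your treatment of the $\omega$-dependent terms matches the paper's estimate of $I_1$ and $I_2$. You recombine $-\chi_bU\cdot\nabla(\omega-\omega_c)-\chi_bU\cdot\nabla\omega_c=-\chi_bU\cdot\nabla\omega$, bound $\pa_x^i\pa_y^jU$ on $1\le y\le 3$ by Lemma \ref{velocity estimates 3}, and pay for $e^{\frac{2\eps_0}{s}}$ with the weight $e^{\frac{5\eps_0}{s}}$ on $\|(1,x)\omega\|_{H^4(\frac78\le y\le 4)}$ in $E_m$.

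The gap is in the terms generated by $\omega_c$: the cancellation you hope for does not exist. On $\frac78\le y\le 4$ it is $\omega$ itself, not $\omega-\omega_c$, that is exponentially small. The smooth part of $\omega_c$ does solve the one-dimensional heat equation with data $-u_0\chi_b'$. However, this is an artificial vorticity created by the cut-off in \eqref{def: uc}, not the leading part of $\omega$. For $2<y<3$ one has $\omega_c(s)\to-u_0\chi_b'$ as $s\to0$, hence $\chi_b(\omega-\omega_c)\to u_0\chi_b\chi_b'=b$, which is the nonzero datum in \eqref{omega-omega c}. Since the $\omega$-terms of $N$ are $O(e^{-5\eps_0/s})$ there,
\[
N(s)\longrightarrow -u_0''\chi_b\chi_b'-3u_0\chi_b'\chi_b''\not\equiv0\qquad\text{on } 2<y<3 .
\]
So the left-hand side of the lemma grows like $e^{\frac{2\eps_0}{s}}$. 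The inequality could only hold if $E(s)\gtrsim e^{\frac{\eps_0}{s}}$, which is incompatible with the bootstrap in which it is used. The same applies to the $\omega_c$-parts of $\widetilde N$ (e.g.\ $2\chi_b\pa_x\omega_c$ and the products of $U$ with $\omega_c$). Your remark that the same argument handles $\chi_b v(\omega-\omega_c)$ is therefore also incorrect.

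The paper's own proof has exactly this defect. It collects these terms into $I_3=C_0e^{\frac{2\eps_0}{s}}\sum_{i+j\le4}\|\|\pa_x^i\pa_y^j\omega_c\|_{L^2_x}\|_{L^1_y(1\le y\le3)}$ and asserts $I_3\le C_0(E(s)+1)$. That fails, because $\|\|\omega_c(s)\|_{L^2_x}\|_{L^1_y(1\le y\le3)}\to\|u_0\|_{L^2}\|\chi_b'\|_{L^1}\neq0$.

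So you located a real obstruction, but your proposal does not prove the statement. The first route rests on a cancellation that is absent, and the fallback changes the statement. The fallback's tool is also inadequate: \eqref{boundedness of omega c} cannot hold on $2<y<3$ with the weight $e^{C'y^2/t}$, since the smooth part of $\omega_c$ is $O(1)$ there. Moreover, after such a reformulation the estimate that must be rechecked is the Duhamel bound, Lemma \ref{est of HN}, not Proposition \ref{prop: estimate of N in new norm}.

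A workable repair is to remove the $\omega_c$-generated sources from the $e^{\frac{2\eps_0}{s}}$-weighted part of $W_{\mu,s}$. Their Duhamel contribution to $\{y\le1+\mu\}$ can then be estimated directly by separation of supports, as Lemma \ref{est of HR b} does for $b$:
\begin{itemize}
\item for $1+\mu\le z\le\frac32$ these sources are $O(s^{-1/2}e^{-\frac{1}{16s}})$;
\item for $z\ge\frac32$ the kernels $H_\xi$ and $R_\xi$ (the latter via Lemma \ref{prop of R 1}(3)) carry a factor $e^{-c/(t-s)}$, which dominates $e^{\eps_0(1+\mu)\frac{y^2}{t}}$ once $\eps_0$ is small.
\end{itemize}
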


\begin{proof}
	A direct computation gives
	\begin{align*}
		&e^{\frac{2\eps_0}{s}}\sum_{i+j\leq3}\left\|\left\|\pa_x^i\pa_y^j N(s)\right\|_{L^2_x}\right\|_{L^1_y(y\geq1)}\\
		&\leq  C_0 e^{\frac{2\eps_0}{s}}\sum_{k=0}^3 \sum_{i+j\leq k}\|\pa_x^i\pa_y^j U(s)\|_{L^\infty(1\leq y\leq3)} \sum_{i+j\leq 4-k}\left\|\left\|\pa_x^i\pa_y^j\omega\right\|_{L^2_x}\right\|_{L^1_y(1\leq y\leq3)}\\
		&\quad+C_0 e^{\frac{2\eps_0}{s}}\sum_{i+j\leq4}\left\|\left\|\pa_x^i\pa_y^j\omega\right\|_{L^2_x}\right\|_{L^1_y(1\leq y\leq3)}
		+C_0 e^{\frac{2\eps_0}{s}}\sum_{i+j\leq4}\left\|\left\|\pa_x^i\pa_y^j\omega_c\right\|_{L^2_x}\right\|_{L^1_y(1\leq y\leq3)}\\
		&:=I_1+I_2+I_3.
	\end{align*}
For $I_1$, we get by  Lemma \ref{velocity estimates 1} that
	\begin{align*}
		I_1&\leq C_0 e^{\frac{2\eps_0}{s}}\sum_{k=0}^3 \big( E (s)+\|\omega(s)\|_{H^{1+k}(\frac{7}{8}\leq y\leq4)}+1\big)\|\omega(s)\|_{H^{4-k}(\frac{7}{8}\leq y\leq4)}\\
		&\leq C_0\big( E (s)+1\big)^2.
	\end{align*}
For $I_2$ and $I_3$, we have
	\begin{align*}
		I_2+I_3
		\leq C_0\big( E (s)+1\big).
	\end{align*}

For $\widetilde N$, the same argument as above yields the same bound.
\end{proof}

\medskip

\subsection{Estimates of $B, \widetilde B$}

We now estimate the boundary terms. The following proposition is the main result.

\begin{proposition}
	\label{prop: estimate of B}
	For $0<\mu<\mu_0-\gamma s$, it holds that
	\begin{align*}
		\sum_{i\leq2}\left\|e^{\eps_0(1+\mu)|\xi|}\xi^i (B_\xi(s),\widetilde B_\xi(s)\big)\right\|_{L^1_\xi\cap L^2_\xi}
		\leq C\big( (\mu_0-\mu-\gamma s)^{-\beta}+s^{-1/2}\big)
		\big(1+ E(s)\big)^2.
	\end{align*}
\end{proposition}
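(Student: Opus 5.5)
We split $B$ into its two pieces, the nonlinear part $B_1:=\pa_y\Delta_D^{-1}(U\cdot\nabla\omega)|_{y=0}$ and the corrector part $B_2:=-(\pa_y+|D_x|)\omega_c|_{y=0}$. The corrector part is handled directly by \eqref{boundedness of omega c 2}: since $\eps_0(1+\mu)<C'$ once $\eps_0$ is small, it gives $\|e^{\eps_0(1+\mu)|\xi|}\xi^i(B_2)_\xi\|_{L^1_\xi\cap L^2_\xi}\le C_0 s^{-1/2}$. This is where the $s^{-1/2}$ in the statement comes from. The same bound for $\pa_\xi$ of it, which enters $\widetilde B$, follows from the $(1,x)$ weight in Lemma \ref{est of omega c}.

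For $B_1$ we use the representation already employed in the proof of Lemma \ref{lem: boundary convergence overall}:
\[
(B_1)_\xi=-\int_0^{+\infty}e^{-|\xi|z}(U\cdot\nabla\omega)_\xi(s,z)\,dz .
\]
We split the $z$-integral at $z=1+\mu$.

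\emph{Near part, $z\le 1+\mu$.} Here we write $U\cdot\nabla\omega=u\pa_x\omega+\frac{v}{y}\,y\pa_y\omega$ and decompose $\omega=(\omega-\omega_c)+\omega_c$. The key elementary inequality is
\[
e^{\eps_0(1+\mu)|\xi|}e^{-|\xi|z}\le e^{\eps_0(1+\mu-z)_+|\xi|},
\]
valid for $\eps_0\le1$. With it, the boundary weight is dominated by the weight in $Y^1_{\mu,s}$ and $Y^2_{\mu,s}$, since $e^{\eps_0(1+\mu)z^2/s}\ge1$. The extra factor $\xi^i$, $i\le2$, is moved onto the product as $\pa_x^i$.

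By the product estimate (Lemma \ref{product estimate}) combined with Lemma \ref{velocity estimates 1}, each term is bounded by
\[
\Big\|\sup_y e^{\eps_0(1+\mu-y)_+|\xi|}\big|\pa_x^{i_1}(u,\tfrac vy)_\xi\big|\Big\|_{L^1_\xi}\cdot\big\|\pa_x^{i_2}(\pa_x,y\pa_y)\omega\big\|_{Y^1_{\mu,s}\cap Y^2_{\mu,s}}, \qquad i_1+i_2\le2,
\]
exactly as in Lemma \ref{est of N1}. Third-order derivatives of $\omega-\omega_c$ appear when $i_2=2$, and $\pa_x^2 v$ appears when $i_1=2$. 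Both cost the factor $(\mu_0-\mu-\gamma s)^{-\beta}E(s)$ by the definition of $Y_k$ and by Lemma \ref{velocity estimates 1}. This produces $C(\mu_0-\mu-\gamma s)^{-\beta}(1+E)^2$.

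\emph{Far part, $z\ge1+\mu$.} We integrate by parts as in Lemma \ref{lem: boundary convergence overall}, writing $U\cdot\nabla\omega=\dv(U\omega)$. This turns the integral into one of $\nabla(e^{ix\xi-|\xi|z})\cdot U\omega$ together with a boundary term at $z=1+\mu$. Since $1+\mu>\eps_0(1+\mu)$, the factor $e^{-|\xi|z}$ absorbs both $e^{\eps_0(1+\mu)|\xi|}$ and the powers of $\xi$, with room to spare. It then suffices to bound $\|U\omega\|_{L^1(y\ge1+\mu)}$ and a trace of $\omega$ and $U$ on $y=1+\mu$, measured in $L^1_\xi\cap L^2_\xi$. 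For the region $1+\mu\le y\le4$ we use the $H^4$ part of $E_m$ and Lemma \ref{velocity estimates 3}. For $y\ge2$ we use $\|U\|_{L^\infty(y\ge1)}\le Cs^{-1/2}(E+1)$ and the $L^1$ control from $E_m$ and $E_{vp}$. Together these give $Cs^{-1/2}(1+E)^2$.

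\emph{The twisted term $\widetilde B$.} Here $\widetilde B_\xi=i\pa_\xi B_\xi-i\,\mathrm{sgn}\xi\,(\omega-\omega_c)_\xi|_{y=0}$. The derivative $\pa_\xi$ acting on $e^{-|\xi|z}$ produces a factor $z$, which is harmless. Acting on $(U\cdot\nabla\omega)_\xi$, it corresponds to multiplying by $x$, and this is controlled by the $(1,x)$ weights in $E_b$ and $E_m$. The trace $(\omega-\omega_c)_\xi|_{y=0}$ is bounded in weighted $L^1_\xi\cap L^2_\xi$ by $\|f\|_{L^\infty_y}\le\|\pa_yf\|_{L^1_y}$, applied with $\pa_y=y^{-1}(y\pa_y)$ near $y\sim s^{1/2}$. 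A cleaner route is to use the integral representation \eqref{integral eq of omega-omega c} to get $\|\pa_y(\ldots)\|_{L^1_y}\lesssim s^{-1/2}$. This again gives $s^{-1/2}(1+E)^2$.

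I expect this trace term to be the main obstacle, since it requires a time-singular trace estimate for the weighted norm that does not feed back circularly into $B$.
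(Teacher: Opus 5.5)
Your treatment of $B$ and of $\pa_\xi B_\xi$ is essentially the paper's Lemma~\ref{lem: estimate of B1}. The pieces match:
\begin{itemize}
\item the corrector is handled via \eqref{boundedness of omega c 2};
\item the region $z\le 1+\mu$ is handled via $e^{\eps_0(1+\mu)|\xi|}e^{-|\xi|z}\le e^{\eps_0(1+\mu-z)_+|\xi|}$, Lemma~\ref{product estimate} and Lemma~\ref{velocity estimates 1};
\item the region $z\ge 1+\mu$ is handled via $U\cdot\nabla\omega=\dv(U\omega)$ and $\|U\|_{L^\infty(y\ge1)}\le Cs^{-1/2}(1+E)$.
\end{itemize}
The paper disposes of the boundary term $e^{-(1+\mu)|\xi|}(v\omega)_\xi(s,1+\mu)$ differently: it rewrites it as $\int_0^{1+\mu}\pa_z(v\omega)_\xi\,dz$, using $v|_{y=0}=0$ and $\pa_yv=-\pa_xu$. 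Your use of the exponentially small $H^4(\frac78\le y\le4)$ part of $E_m$ together with Lemma~\ref{velocity estimates 3} also works. The genuine gap is the trace $(\omega-\omega_c)_\xi|_{y=0}$ in $\widetilde B$. You correctly identify it as the main obstacle, but neither of your two suggestions actually estimates it.

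\emph{First suggestion.} This fails outright. $E_b$ only controls $\pa_x^i(y\pa_y)^j(\omega-\omega_c)$, and conormal bounds never control a trace, because $\pa_y f=y^{-1}(y\pa_yf)$ need not be integrable at $y=0$. For example, $f=\log(1+\log(1/y))$ has $f$, $y\pa_yf$, $(y\pa_y)^2f$ bounded and integrable on $(0,1)$, yet $f(0)=\infty$. Working ``near $y\sim s^{1/2}$'' does not help.

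\emph{Second suggestion.} This is the right starting point, but the essential mechanism is missing. Evaluate \eqref{integral eq of omega-omega c} directly at $y=0$; no detour through $\|\pa_y\cdot\|_{L^1_y}$ is needed.
\begin{itemize}
\item The $b$-term is $O(1)$.
\item The $H_\xi$-parts are harmless, since $e^{\eps_0(1+\mu)|\xi|}e^{-\xi^2(s-\tau)}e^{-z^2/(4(s-\tau))}\le e^{\eps_0(1+\mu-z)_+|\xi|}$ for $\eps_0\le1$.
\item The kernel $R_\xi$ comes from the $|D_x|$ part of the boundary condition and loses one power of $|\xi|$ (Lemma~\ref{prop of R 1}). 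Combined with your $\xi^i$, $i\le2$, you then need third-order tangential control of $N(\tau)$ and $B(\tau)$ in the full analytic weight at radius $\mu$. Neither Proposition~\ref{prop: estimate of N in new norm} nor Lemma~\ref{lem: estimate of B1} gives this.
\end{itemize}
The paper recovers the lost derivative with Lemma~\ref{analytic recovery}, passing to $\mu_2=\mu+\frac12(\mu_0-\mu-\gamma\tau)$ at the cost of a factor $(\mu_0-\mu-\gamma\tau)^{-1}$. It then closes the time integral $\int_0^s\big((s-\tau)^{-1/2}+(\mu_0-\mu-\gamma\tau)^{-1}\big)\big((\mu_0-\mu-\gamma\tau)^{-\beta}+\tau^{-1/2}\big)d\tau$ with Lemma~\ref{integral computation}. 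The outcome is $C(\mu_0-\mu-\gamma s)^{-\beta}(1+E(s))^2$, not the $s^{-1/2}$ you assert.

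This also settles your circularity concern. The representation at $y=0$ involves only $b$, and $N$ and $B$ on $(0,s)$. The bounds on $N$ and $B$ are proved independently of $\widetilde B$, so using them at earlier times is legitimate. You cannot shortcut this via Corollary~\ref{cor:E}: its proof uses the present proposition, and it carries only $\langle\xi\rangle^{1+i}$ weights rather than the analytic one.
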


The proof of Proposition \ref{prop: estimate of B} follows from Lemmas \ref{lem: estimate of B1} and Lemma \ref{lem: estimate of B2}.

\begin{lemma}\label{lem: estimate of B1}
	For $0<\mu<\mu_0-\gamma s$, it holds that
	\begin{align*}
		\sum_{i\leq2}\left\|e^{\eps_0(1+\mu)|\xi|}\xi^i B_\xi(s)\right\|_{L^1_\xi\cap L^2_\xi}
		\leq C\big( (\mu_0-\mu-\gamma s)^{-\beta}+s^{-1/2}\big)
		\big(1+ E(s)\big)^2.
	\end{align*}
\end{lemma}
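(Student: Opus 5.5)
We split $B=\pa_y\Delta_D^{-1}(U\cdot\nabla\omega)|_{y=0}-(\pa_y+|D_x|)\omega_c|_{y=0}$ into three pieces, as in the proof of Lemma \ref{lem: boundary convergence overall}. The corrector piece $(\pa_y+|\xi|)(\omega_c)_\xi|_{y=0}$ is handled directly by \eqref{boundedness of omega c 2}. Since $\eps_0(1+\mu)\le 2\eps_0\le C'$ once $\eps_0$ is small, and $|\xi|^i e^{\eps_0(1+\mu)|\xi|}\le Ce^{C'|\xi|}$, this piece is bounded by $Cs^{-1/2}$. For the nonlinear part we write
\begin{align*}
\pa_y\Delta_D^{-1}(U\cdot\nabla\omega)|_{y=0}
=\pa_y\Delta_D^{-1}\big(U\cdot\nabla(\chi_b\omega)\big)\big|_{y=0}+\pa_y\Delta_D^{-1}\big(U\cdot\nabla((1-\chi_b)\omega)\big)\big|_{y=0},
\end{align*}
and use the Fourier representation $\big(\pa_y\Delta_D^{-1}f\big)_\xi|_{y=0}=-\int_0^\infty e^{-|\xi|z}f_\xi(z)\,dz$, up to a harmless normalisation.

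\emph{Near part.} With $f=U\cdot\nabla(\chi_b\omega)=u\,\pa_x(\chi_b\omega)+\frac{v}{y}\,y\pa_y(\chi_b\omega)$, we split the $z$-integral into $z\le 1+\mu$ and $z\ge1+\mu$. On $z\le1+\mu$ we use $e^{\eps_0(1+\mu)|\xi|}e^{-|\xi|z}\le e^{\eps_0(1+\mu-z)_+|\xi|}$, valid since $\eps_0<1$. The factor $\xi^i$ is absorbed by writing $\xi^i f_\xi=(\pa_x^i f)_\xi$ up to constants. After Leibniz, this gives bounds of the form
\begin{align*}
\sum_{i_1+i_2\le 2}\Big\|\sup_{0<y<1+\mu}e^{\eps_0(1+\mu-y)_+|\xi|}\big|\pa_x^{i_1}\big(u,\tfrac vy\big)_\xi\big|\Big\|_{L^1_\xi}\,\big\|\pa_x^{i_2}(\pa_x,y\pa_y)(\chi_b\omega)\big\|_{Y^1_{\mu,s}\cap Y^2_{\mu,s}},
\end{align*}
where the $L^1_\xi$/$L^2_\xi$ product structure is handled by Young's inequality for the convolution in $\xi$, as in Lemma \ref{product estimate}. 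The velocity factor is bounded by Lemma \ref{velocity estimates 1}. The case $i_1=2$ for $v/y$ carries the weight $(\mu_0-\mu-\gamma s)^{-\beta}$. We then split $\omega=(\omega-\omega_c)+\omega_c$. The $\omega-\omega_c$ part is bounded by $E_b$; when $i_2+1=3$ derivatives fall on it, the definition of $Y_k$ again costs $(\mu_0-\mu-\gamma s)^{-\beta}$. The $\omega_c$ part is bounded by Lemma \ref{est of omega c}; since the weight $e^{\eps_0(1+\mu)y^2/s}$ is dominated by $e^{C'y^2/s}$, this contributes $C_0$. On $z\ge 1+\mu$ we have $e^{\eps_0(1+\mu)|\xi|-|\xi|z}\le e^{-|\xi|/2}$, so this region is controlled by the Sobolev bounds $\|\omega\|_{H^3(1\le y\le 3)}$ from $E_m$ together with Lemma \ref{velocity estimates 3}.

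\emph{Far part.} We integrate by parts in the Fourier representation, exactly as in the proof of Lemma \ref{lem: boundary convergence overall}. This puts the derivative on $e^{2\pi ix'\xi-2\pi y'|\xi|}$ and gives a factor $|\xi|$. Since $y'\ge2$ on $\operatorname{supp}(1-\chi_b)$, the weight $e^{\eps_0(1+\mu)|\xi|}|\xi|^{i+1}e^{-2\pi y'|\xi|}\le Ce^{-|\xi|}$ is integrable in $\xi$. Hence this piece is bounded by $C\|U\omega\|_{L^1(y\ge2)}\le C\|U\|_{L^\infty(y\ge2)}\|\omega\|_{L^1(y\ge2)}$. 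Lemma \ref{velocity estimates 3} gives $\|U\|_{L^\infty(y\ge2)}\le Cs^{-1/2}(E+1)$, and the $L^1$ norm is controlled by $E_m$ plus $\|G+\mathcal W_R\|_{L^1}$. This produces the $s^{-1/2}(1+E)^2$ term.

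The main obstacle is the near part, specifically controlling three $x$-derivatives on $\chi_b\omega$ at the top level $i=2$. This is exactly where the analytic weight must be used and where the small divisor $(\mu_0-\mu-\gamma s)^{-\beta}$ enters. The subtle points are that $v/y$ with two $x$-derivatives, and $\pa_x^3\omega$, only have the weakened bounds, and that the boundary weight $e^{\eps_0(1+\mu)|\xi|}$ must be matched by the interior weight $e^{\eps_0(1+\mu-z)_+|\xi|}$ through the kernel $e^{-|\xi|z}$. If the $Y^2$ ($L^2_\xi$) estimate of the product causes trouble, we would fall back on the $L^1_\xi\times L^2_\xi$ Young inequality, placing the velocity in $L^1_\xi$.
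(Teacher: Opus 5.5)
Your argument is correct, but it handles the contribution away from the boundary differently from the paper.

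\textbf{The paper's route.} The paper does not use the cutoff $\chi_b$ in this proof. It splits the kernel integral sharply at $z=1+\mu$.
\begin{itemize}
\item On $[0,1+\mu]$ it argues exactly as in your near part: the weight transfer $e^{\eps_0(1+\mu)|\xi|}e^{-|\xi|z}\le e^{\eps_0(1+\mu-z)_+|\xi|}$, then Lemma \ref{product estimate} and Lemma \ref{velocity estimates 1}.
\item On $[1+\mu,\infty)$ it writes $U\cdot\nabla\omega=\dv(U\omega)$ and integrates by parts in $z$. This leaves a boundary term $e^{-(1+\mu)|\xi|}(v\omega)_\xi(1+\mu)$. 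Using $v|_{y=0}=0$ and $\pa_y v=-\pa_x u$, the paper rewrites it as $e^{-(1+\mu)|\xi|}\int_0^{1+\mu}(-\pa_x u\,\omega+v\pa_z\omega)_\xi\,dz$ and controls it again by the analytic norms.
\item The remaining integral over $[1+\mu,\infty)$ involves only $U\omega$. It is bounded by $\|U\|_{L^\infty(y\ge1)}\|\omega\|_{L^1(y\ge1)}\le Cs^{-1/2}(1+E)^2$.
\end{itemize}

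\textbf{Your route.} Your cutoff decomposition is the one the paper uses later in Lemma \ref{lem: boundary convergence overall}. It produces no boundary term, because $1-\chi_b$ vanishes for $y\le2$, so the two-dimensional integration by parts is clean. The cost is the transition strip $1+\mu\le z\le3$. There you need first-derivative bounds on $\omega$ in $L^1$ and $L^\infty$ bounds on $U$ in the interior. These are available from the $H^4(\frac78\le y\le4)$ part of $E_m$ and from Lemma \ref{velocity estimates 3}, and the factor $e^{-|\xi|/2}$ makes the $L^1_\xi\cap L^2_\xi$ norms harmless on the strip.

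\textbf{Comparison.} The paper's route needs only zeroth-order information outside the analytic region, namely $U$ in $L^\infty$ and $\omega$ in $L^1$. Yours is slightly more direct and reuses the decomposition required for the convergence argument. Both give the same bound. The single factor $(\mu_0-\mu-\gamma s)^{-\beta}$ arises because $i_1+i_2\le 2$ prevents the weakened bound on $\pa_x^2 v/y$ and the third-order $Y_k$ bound from appearing in the same product.
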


\begin{proof}
	According to the definition of $B$, we utilize Lemma \ref{lem: velocity formula} to obtain
	\begin{align}\label{decomposition of B xi}
		B_\xi(s)&=\big(\pa_y\Delta_D^{-1}(U\cdot\nabla\omega)\big)_\xi|_{y=0}(s)
		-(\pa_y+|\xi|)(\omega_c)_\xi|_{y=0}(s)\\
		\nonumber
		&=-\int_0^{1+\mu}e^{-|\xi|z}(U\cdot\nabla\omega)_\xi(s,z)dz
		-\int_{1+\mu}^{+\infty}e^{-|\xi|z}(U\cdot\nabla\omega)_\xi(s,z)dz\\
		\nonumber
		&\quad-(\pa_y+|\xi|)(\omega_c)_\xi|_{y=0}(s)\\
		&:=I_1+I_2+I_3.	\nonumber
	\end{align}
	
	For $I_1$, the following fact
	\begin{align}\label{transfer 1}
		e^{\eps_0(1+\mu)|\xi|}e^{-|\xi|z}
		\leq e^{\eps_0(1+\mu-z)_+|\xi|}
	\end{align}
	implies
	\begin{align*}
		\left|e^{\eps_0(1+\mu)|\xi|}I_1\right|
		\leq \left\|e^{\eps_0(1+\mu-z)_+|\xi|} (u\pa_x\omega)_\xi(s,z)\right\|_{\mu,s}
		+\left\|e^{\eps_0(1+\mu-z)_+|\xi|} (v\pa_y\omega)_\xi(s,z)\right\|_{\mu,s}.
	\end{align*}
	Thus, we use Lemma \ref{product estimate} and Lemma \ref{velocity estimates 1} to obtain
	\begin{align*}
		&\sum_{i\leq2}\left\|e^{\eps_0(1+\mu)|\xi|}\xi^i I_1\right\|_{L^1_\xi\cap L^2_\xi}\\
		&\leq \sum_{i\leq2}\sum_{j+k\leq i}\left\|\sup_{0<y<1+\mu}e^{\eps_0(1+\mu-y)_+|\xi|}|(\pa_x^j u)_\xi(s,y)|\right\|_{L^1_\xi}
		\|\pa_x^{1+k}(\omega-\omega_c+\omega_c)(s)\|_{Y^1_{\mu,s}\cap Y^2_{\mu,s}}\\
		&\quad+\sum_{i\leq1}\sum_{j+k\leq i}\left\|\sup_{0<y<1+\mu}e^{\eps_0(1+\mu-y)_+|\xi|}\frac{|(\pa_x^j v)_\xi(s,y)|}{y}\right\|_{L^1_\xi}
		\|\pa_x^k(y\pa_y)(\omega-\omega_c+\omega_c)(s)\|_{Y^1_{\mu,s}\cap Y^2_{\mu,s}}\\
		&\leq C_0(\mu_0-\mu-\gamma s)^{-\beta}\big(1+ E(s) \big)^2.
	\end{align*}
	
	For $I_2$, integration by parts yields
	\begin{align*}
		I_2&=-\int_{1+\mu}^{+\infty}e^{-|\xi|z}\big(\dv(U\omega)\big)_\xi(s,z)dz\\
		&=-\int_{1+\mu}^{+\infty}e^{-|\xi|z}\big((i\xi)(u\omega)_\xi(s,z)+|\xi|(v\omega)_\xi(s,z)\big)dz
		+e^{-(1+\mu)|\xi|}(v\omega)_\xi(s,1+\mu)\\
		&=-\int_{1+\mu}^{+\infty}e^{-|\xi|z}\big((i\xi)(u\omega)_\xi(s,z)+|\xi|(v\omega)_\xi(s,z)\big)dz
		+e^{-(1+\mu)|\xi|}\int_0^{1+\mu}\pa_z(v\omega)_\xi(s,z)dz\\
		&=-\int_{1+\mu}^{+\infty}e^{-|\xi|z}\big((i\xi)(u\omega)_\xi(s,z)+|\xi|(v\omega)_\xi(s,z)\big)dz\\
		&\quad-e^{-(1+\mu)|\xi|}\int_0^{1+\mu}(\pa_x u\omega)_\xi(s,z)dz
		+e^{-(1+\mu)|\xi|}\int_0^{1+\mu}(v\pa_z\omega)_\xi(s,z)dz.
	\end{align*}
	Then we have for $i\leq2$
	\begin{align*}
		\left|e^{\eps_0(1+\mu)|\xi|}\xi^i I_2\right|
		&\leq C_0\int_{1+\mu}^{+\infty}e^{-\frac{|\xi|z}{2}}|(U\omega)_\xi(s,z)|dz
		+C_0\left\|e^{\eps_0(1+\mu-z)_+|\xi|}(\pa_x u\omega)_\xi(s,z)\right\|_{\mu,s}\\
		&\quad+C_0\left\|e^{\eps_0(1+\mu-z)_+|\xi|}( v \pa_z\omega)_\xi(s,z)\right\|_{\mu,s},
	\end{align*}
	which implies
	\begin{align*}
		&\sum_{i\leq2}\left\|e^{\eps_0(1+\mu)|\xi|}\xi^i I_2\right\|_{L^1_\xi\cap L^2_\xi}\\
		&\leq C_0 \int_{1+\mu}^{+\infty}\left\| e^{-\frac{|\xi|z}{2}}\right\|_{L^1_\xi\cap L^2_\xi} \|(U\omega)_\xi(s,z)\|_{L^\infty_\xi}dz
		+C_0\|(\pa_x u \omega)\|_{Y^1_{\mu,s}}
		+C_0\|( v \pa_y\omega)\|_{Y^1_{\mu,s}}\\
		&\leq C_0\|U\omega(s)\|_{L^1(y\geq1)}
		+C_0 \left\|\sup_{0<y<1+\mu}e^{\eps_0(1+\mu-y)_+|\xi|}|(\pa_x u)_\xi(s,y)|\right\|_{L^1_\xi}
		\|\omega(s)\|_{Y^1_{\mu,s}\cap Y^2_{\mu,s}}\\
		&\quad+C_0\left\|\sup_{0<y<1+\mu}e^{\eps_0(1+\mu-y)_+|\xi|}\frac{|v_\xi(s,y)|}{y}\right\|_{L^1_\xi}
		\|(y\pa_y)\omega(s)\|_{Y^1_{\mu,s}\cap Y^2_{\mu,s}}\\
		&\leq C_0\|U(s)\|_{L^\infty(y\geq1)}\|\omega(s)\|_{L^1(y\geq1)}
		+C_0\big(1+ E(s) \big)^2.
	\end{align*}
	Obviously, it holds that
	\begin{align*}
		\|\omega(s)\|_{L^1(y\geq1)}
		&\leq C_0\|\chi_{m}\omega(s)\|_{L^1}
		+C_0\|\chi_{vp}\omega(s)\|_{L^1}\\
		&\leq C_0\|e^\Psi \psi\chi_m\omega(s)\|_{L^2}
		+C_0 \|G+\mathcal W_R\|_{L^1}
		\leq C_0 \big( E(s)+1\big),
	\end{align*}
which along with Lemma \ref{velocity estimates 3} implies
\begin{align*}
		\sum_{i\leq2}\left\|e^{\eps_0(1+\mu)|\xi|}\xi^i I_2\right\|_{L^1_\xi\cap L^2_\xi}
		\leq C_0 s^{-1/2}\big( E(s)+1\big)^2.
	\end{align*}
	
	For $I_3$, due to \eqref{boundedness of omega c 2}, we have
	\begin{align*}
		\sum_{i\leq2}\left\|e^{\eps_0(1+\mu)|\xi|}\xi^i I_3\right\|_{L^1_\xi\cap L^2_\xi}
		\leq C_0 s^{-1/2}.
	\end{align*}
	
	Summing these estimates yields the desired result.
\end{proof}

\smallskip

\begin{lemma}\label{lem: estimate of B2}
	For $0<\mu<\mu_0-\gamma s$, it holds that
	\begin{align*}
		\sum_{i\leq2}\left\|e^{\eps_0(1+\mu)|\xi|}\xi^i \widetilde B_\xi(s)\right\|_{L^1_\xi\cap L^2_\xi}
		\leq C\big( (\mu_0-\mu-\gamma s)^{-\beta}+s^{-1/2}\big)
		\big(1+ E(s)\big)^2.
	\end{align*}
\end{lemma}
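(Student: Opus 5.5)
We split $\widetilde B_\xi=i\pa_\xi(B_\xi)|_{y=0}-i\,\mathrm{sgn}\xi\,(\omega-\omega_c)_\xi|_{y=0}$ and estimate the two pieces separately. For the first piece, taking $\pa_\xi$ in the Fourier formula is the same as multiplying by $-2\pi i x$ in physical space. So $i\pa_\xi(B_\xi)$ has the same structure as $B_\xi$ with the integrands replaced by their $x$-weighted versions, plus the derivative falling on the kernel $e^{-|\xi|z}$. We reuse the decomposition \eqref{decomposition of B xi}, $B_\xi=I_1+I_2+I_3$, and differentiate each term in $\xi$.

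\textbf{The terms $\pa_\xi I_1$ and $\pa_\xi I_2$.} When $\pa_\xi$ hits $e^{-|\xi|z}$ it produces a factor $z\,\mathrm{sgn}\xi\, e^{-|\xi|z}$. On $z\le 1+\mu$ this factor is bounded, and the bound \eqref{transfer 1} still applies. The resulting term is then controlled exactly as in Lemma \ref{lem: estimate of B1}, using Lemma \ref{product estimate} and Lemma \ref{velocity estimates 1}. When $\pa_\xi$ hits the product, we get $(x\,U\cdot\nabla\omega)_\xi$, which we write as $u\,\pa_x(x\omega)-u\omega+v\,\pa_y(x\omega)$. The weight $x$ is then carried by $\omega$, and $\|(1,x)(\omega-\omega_c)\|_{Y_1\cap Y_2}$ is part of $E_b$. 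The corrector part is handled by \eqref{boundedness of omega c}, which already includes the weight $(1,x)$. For $I_2$ on $z\ge1+\mu$ we repeat the integration by parts of Lemma \ref{lem: estimate of B1}. The far-field contribution then needs $\|xU\omega\|_{L^1(y\ge1)}\le \|U\|_{L^\infty(y\ge1)}\|x\omega\|_{L^1(y\ge1)}$. The first factor is at most $Cs^{-1/2}(E+1)$ by Lemma \ref{velocity estimates 3}. The second is controlled by $\|e^\Psi\psi\chi_m\omega\|_{L^2}$, since $\psi=y^2(1+|x|)$, together with $\|\chi_{vp}x\omega\|_{L^1}\lesssim\|G+\mathcal W_R\|_{L^1}$, since $x$ is bounded on $\operatorname{supp}\chi_{vp}$. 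The extra factor $z$ from the kernel costs only $e^{-|\xi|z/4}$. For $\pa_\xi I_3$ we use \eqref{boundedness of omega c 2} with the weight $x$, which gives $C_0s^{-1/2}$.

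\textbf{The trace term.} Here we must bound $e^{\eps_0(1+\mu)|\xi|}|\xi|^i|(\omega-\omega_c)_\xi(s,0)|$ for $i\le2$, in $L^1_\xi\cap L^2_\xi$; this is the step I expect to be the main obstacle. We have $(1+\mu-y)_+=1+\mu$ at $y=0$. The fundamental theorem of calculus on $[0,1+\mu]$, combined with a cutoff in $y$, gives
\[
|f_\xi(0)|\lesssim \int_0^{1+\mu}\bigl(|f_\xi|+|\pa_yf_\xi|\bigr)\,dy .
\]
However, $\pa_y$ is not conormal, and Corollary \ref{cor:E} only gives $t^{-1/2}$ for $\pa_y$. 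The bound must also absorb the factor $|\xi|^2$ with the full weight $e^{\eps_0(1+\mu)|\xi|}$. To handle both, we first apply the analytic recovery Lemma \ref{analytic recovery} with $\mu<\mu_1$, as in \eqref{def of mu1}. This converts one power of $\xi$ into a loss $(\mu_1-\mu)^{-1}$, and that loss is bounded by the admissible $(\mu_0-\mu-\gamma s)^{-\beta}$ up to using the $\beta$-weighted third-order term in $Y_k$. We then control the trace through the integral representation \eqref{integral eq of omega-omega c} evaluated at $y=0$. The kernels $H_\xi+R_\xi$ at $y=0$ are bounded by $(t-s)^{-1/2}$ and $\langle\xi\rangle$ (Lemma \ref{prop of R 1}), and Proposition \ref{prop: estimate of N in new norm} and Lemma \ref{lem: estimate of B1} bound the sources. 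The alternative route is to pay $s^{-1/2}$ directly via the Corollary \ref{cor:E}-type estimate $\|\pa_y(\omega-\omega_c)_\xi\|_{L^1_y}\lesssim s^{-1/2}(E+1)$. Either route fits the allowed right-hand side $C\big((\mu_0-\mu-\gamma s)^{-\beta}+s^{-1/2}\big)(1+E(s))^2$, and summing the three contributions gives the lemma.
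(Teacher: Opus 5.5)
You have the right skeleton: the split of $\widetilde B_\xi$, the treatment of $i\pa_\xi B_\xi$, and evaluating \eqref{integral eq of omega-omega c} at $y=0$ for the trace all match the paper. But the mechanism you give for closing the trace step is wrong, and that step is the crux.

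You claim that analytic recovery with $\mu<\mu_1$ costs $(\mu_1-\mu)^{-1}$ and that this loss ``is bounded by the admissible $(\mu_0-\mu-\gamma s)^{-\beta}$''. It is not. We have $(\mu_1-\mu)^{-1}=2(\mu_0-\mu-\gamma s)^{-1}$, and since $\beta<1$ this is much larger than $(\mu_0-\mu-\gamma s)^{-\beta}$ as $\mu\uparrow\mu_0-\gamma s$.

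Nor can the $\beta$-weighted third-order part of $Y_k$ absorb it. That part is already used up in Proposition \ref{prop: estimate of N in new norm}: the factor $(\mu_0-\mu-\gamma\tau)^{-\beta}$ there comes from terms like $u\,\pa_x^3(\omega-\omega_c)$ in $\pa_x^2N$. Meanwhile the kernel bound $e^{\eps_0(1+\mu)|\xi|}|\xi|^i|R_\xi(s-\tau,0,z)|\lesssim \big(|\xi|^i(s-\tau)^{-1/2}+|\xi|^{1+i}\big)e^{\eps_0(1+\mu-z)_+|\xi|}$ with $i=2$ asks for $|\xi|^3N_\xi$, and for $|\xi|^3B_\xi$ in the boundary piece. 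That is one derivative beyond what $W_{\mu,\tau}$ and Lemma \ref{lem: estimate of B1} control.

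The missing idea is to perform the recovery inside the Duhamel integral, at each time $\tau$, with the $\tau$-dependent radius $\mu_2=\mu+\frac12(\mu_0-\mu-\gamma\tau)$. This produces integrands $(\mu_0-\mu-\gamma\tau)^{-1-\beta}$ and $(\mu_0-\mu-\gamma\tau)^{-1}\tau^{-1/2}$. These are not pointwise bounded by $(\mu_0-\mu-\gamma s)^{-\beta}$, but their time integrals are at most $C\gamma^{-1/2}(\mu_0-\mu-\gamma s)^{-\beta}$ by Lemma \ref{integral computation}. The shrinking radius $\mu_0-\gamma\tau$, together with this time integration, is what pays for the lost derivative. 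If you apply Lemma \ref{analytic recovery} to the trace at the fixed time $s$, as your ordering suggests, the estimate does not close.

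Your ``alternative route'' does not replace this. Corollary \ref{cor:E} is deduced from Proposition \ref{prop: uniform estimates for E(t)}, and its proof invokes Proposition \ref{prop: estimate of B} directly, so using it here is circular. It also carries only the weight $\langle\xi\rangle^{1+i}$, whereas the trace must be controlled with the full weight $e^{\eps_0(1+\mu)|\xi|}$. Nothing in $E$ controls the non-conormal derivative $\pa_y(\omega-\omega_c)$ with that weight. Likewise, the fundamental theorem of calculus on $[0,1+\mu]$ loses the factor $e^{\eps_0 y|\xi|}$ between $e^{\eps_0(1+\mu)|\xi|}$ and $e^{\eps_0(1+\mu-y)_+|\xi|}$. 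Any $E$-dependent version of such a bound would itself have to be proved from the same Duhamel formula.

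A smaller slip in the first piece: $\|x\omega\|_{L^1(y\geq1)}$ is not controlled by $\|e^\Psi\psi\chi_m\omega\|_{L^2}$, because $\|x/\psi\|_{L^2(\operatorname{supp}\chi_m)}=\infty$. Instead, keep the $\chi_m$-part of the far-field term in $L^2_x$ via Plancherel, which needs only $\|y^2x\chi_m\omega\|_{L^2}$. Use $L^1$ only on $\operatorname{supp}\chi_{vp}$, where $x$ is bounded.
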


\begin{proof}
	Due to the relation \eqref{decomposition of B xi}: 
	\beno
	\widetilde B_\xi(s)
		=i\pa_\xi\big(B_\xi(s)\big)
		-isgn\xi(\omega-\omega_c)_\xi|_{y=0}(s),
\eeno
we have
		\begin{align*}
			&\sum_{i\leq2}\left\|e^{\eps_0(1+\mu)|\xi|}\xi^i \widetilde B_\xi(s)\right\|_{L^1_\xi\cap L^2_\xi}\\
			&\leq \sum_{i\leq2}\left\|e^{\eps_0(1+\mu)|\xi|}\xi^i \pa_\xi\big(B_\xi(s)\big)\right\|_{L^1_\xi\cap L^2_\xi}
			+\sum_{i\leq2}\left\|e^{\eps_0(1+\mu)|\xi|}\xi^i (\omega-\omega_c)_\xi|_{y=0}(s)\right\|_{L^1_\xi\cap L^2_\xi}.
		\end{align*}
By the same arguments as in the previous lemma, we have
	\begin{align*}
		\sum_{i\leq2}\left\|e^{\eps_0(1+\mu)|\xi|}\xi^i \pa_\xi\big(B_\xi(s)\big)\right\|_{L^1_\xi\cap L^2_\xi}
		\leq C\big( (\mu_0-\mu-\gamma s)^{-\beta}+s^{-1/2}\big)
	\big(1+ E(s) \big)^2.
	\end{align*}

It remains to handle $ (\omega-\omega_c)_\xi|_{y=0}$. Taking $y=0$ in \eqref{integral eq of omega-omega c} yields
	\begin{align*}
		(\omega-\omega_c)_\xi|_{y=0}(s)
		=&\int_0^{+\infty}\big(H_\xi(s,0,z)+R_\xi(s,0,z)\big)b_\xi(z)dz\\
		&+\int_0^s\int_0^{+\infty}\big(H_\xi(s-\tau,0,z)+R_\xi(s-\tau,0,z)\big)N_\xi(s,z)dzd\tau\\
		&-\int_0^s\big(H_\xi(s-\tau,0,0)+R_\xi(s-\tau,0,0)\big)B_\xi(\tau)d\tau\\
		&:=J_1+J_2+J_3.
	\end{align*}
	
	Based on the expression of $b_\xi$, $H_\xi$ \eqref{def of H xi R xi} and Lemma \ref{prop of R 1} (3), we have
	\begin{align*}
		\sum_{i\leq2}\left\|e^{\eps_0(1+\mu)|\xi|}\xi^i J_1\right\|_{L^1_\xi\cap L^2_\xi}
		\leq C_0.
	\end{align*}
	
	Utilizing \eqref{def of H xi R xi} and Lemma \ref{prop of R 1}, we deduce through a direct computation that
	\begin{align*}
		e^{\eps_0(1+\mu)|\xi|}|\xi|^i|H_\xi(s-\tau,0,z)|
		\leq 
		\left\{
		\begin{aligned}
			&\frac{C|\xi|^i}{(s-\tau)^{1/2}}e^{\eps_0(1+\mu-z)_+|\xi|},\qquad z<1+\mu,\\
			&C,\qquad z\geq1+\mu,
		\end{aligned}
		\right.
	\end{align*}
	and
	\begin{align*}
		e^{\eps_0(1+\mu)|\xi|}|\xi|^i|R_\xi(s-\tau,0,z)|
		\leq 
		\left\{
		\begin{aligned}
			&C\big(\frac{|\xi|^i}{(s-\tau)^{1/2}}+|\xi|^{1+i} \big) e^{\eps_0(1+\mu-z)_+|\xi|},\qquad z<1+\mu,\\
			&C,\qquad z\geq1+\mu.
		\end{aligned}
		\right.
	\end{align*}
	Thus, it holds that
	\begin{align*}
		&\sum_{i\leq2}\left\|e^{\eps_0(1+\mu)|\xi|}\xi^i J_2\right\|_{L^1_\xi\cap L^2_\xi}
		\leq C\int_0^s(s-\tau)^{-1/2}\sum_{i\leq2}\|\pa_x^i N(\tau)\|_{Y^1_{\mu,\tau}\cap Y^2_{\mu,\tau}}d\tau\\
		&\qquad +C\int_0^s \sum_{i\leq3}\|\pa_x^i N(\tau)\|_{Y^1_{\mu,\tau}\cap Y^2_{\mu,\tau}}d\tau
		+C\int_0^s \sum_{i\leq2}
		\left\|\|(\pa_x^i N)_\xi(\tau,z)\|_{L^1_z(z\geq1+\mu)}\right\|_{L^1_\xi\cap L^2_\xi}d\tau\\
		&\leq C\int_0^s \big((s-\tau)^{-1/2}
		+(\mu_0-\mu-\gamma \tau)^{-1}\big)
		\sum_{i\leq2}\|\pa_x^i N(\tau)\|_{Y^1_{\mu_2,\tau}\cap Y^2_{\mu_2,\tau}}d\tau\\
		&\qquad+C\int_0^s\sum_{i\leq3}\left\|\|\pa_x^i N(\tau,z)\|_{L^2_\xi}\right\|_{L^1_z(z\geq1+\mu)}d\tau\\
		&\leq C\int_0^s \big((s-\tau)^{-1/2}
		+(\mu_0-\mu-\gamma \tau)^{-1}\big)
		\|N(\tau)\|_{W_{\mu_2,\tau}}d\tau,
	\end{align*}
	where we take $\mu_2=\mu+\f12(\mu_0-\mu-\gamma \tau)$ and use Lemma \ref{analytic recovery} in the last two steps. By Proposition \ref{prop: estimate of N in new norm} and Lemma \ref{integral computation}, we have
	\begin{align*}
		\sum_{i\leq2}\left\|e^{\eps_0(1+\mu)|\xi|}\xi^i J_2\right\|_{L^1_\xi\cap L^2_\xi}
		&\leq \int_0^s \big((s-\tau)^{-1/2}
		+(\mu_0-\mu-\gamma \tau)^{-1}\big)
		(\mu_0-\mu-\gamma \tau)^{-\beta}
		\big(E(\tau)+1\big)^2d\tau\\
		&\leq \frac{C}{\gamma^{1/2}}(\mu_0-\mu-\gamma s)^{-\beta}
		\big(E(s)+1\big)^2 .
	\end{align*}
	
	For $J_3$, as in $J_2$, we use Lemma \ref{lem: estimate of B1} to obtain
	\begin{align*}
		&\quad\sum_{i\leq2}\left\|e^{\eps_0(1+\mu)|\xi|}\xi^i J_3\right\|_{L^1_\xi\cap L^2_\xi}\\
		&\leq C\int_0^s(s-\tau)^{-1/2}\sum_{i\leq2}\left\|e^{\eps_0(1+\mu)|\xi|}\xi^i B_\xi(\tau)\right\|_{L^1_\xi\cap L^2_\xi}d\tau
		+C\int_0^s \sum_{i\leq3}\left\|e^{\eps_0(1+\mu)|\xi|}\xi^i B_\xi(\tau)\right\|_{L^1_\xi\cap L^2_\xi}d\tau\\
		&\leq C\int_0^s \big((s-\tau)^{-1/2}
		+(\mu_0-\mu-\gamma \tau)^{-1}\big)\sum_{i\leq2}\left\|e^{\eps_0(1+\mu_2)|\xi|}\xi^i B_\xi(\tau)\right\|_{L^1_\xi\cap L^2_\xi}d\tau\\
		&\leq C\int_0^s \big((s-\tau)^{-1/2}
		+(\mu_0-\mu-\gamma \tau)^{-1}\big)
		\big((\mu_0-\mu-\gamma \tau)^{-\beta}+\tau^{-1/2}\big)\big(E(\tau)+1\big)^2 d\tau\\
		&\leq C(\mu_0-\mu-\gamma s)^{-\beta}\big(E(s)+1\big)^2,
	\end{align*}
	where we take $\mu_2=\mu+\f12(\mu_0-\mu-\gamma \tau)$ and use Lemma \ref{analytic recovery} in the last two steps.
	
	Summing these estimates, we obtain the desired result.
	\end{proof}

\subsection{Estimate of $E_b(t)$} 
 Before presenting the uniform estimates for $E_b(t)$, we state some semigroup estimates obtained similarly as  in \cite{HWYZ}.  
 
%

%
%

\begin{lemma}\label{est of HN}
	For $\mu<\mu_0-\gamma t$ and $\mu_1=\mu+\frac{1}{2}(\mu_0-\mu-\gamma s)$, we have
	\begin{align*}
		&\sum_{i+j\leq2}\left\|\pa_x^i(y\pa_y)^j\int_0^t\int_0^{+\infty}\big( H(t-s,y,z)+R(t-s,y,z)\big)N(s,z)dzds\right\|_{Y^1_{\mu,t}\cap Y^2_{\mu,t}}\\
		&\leq C\int_0^t \|N(s)\|_{W_{\mu,s}} ds,
	\end{align*}
	and
	\begin{align*}
		&\sum_{i+j\leq3}\left\|\pa_x^i(y\pa_y)^j\int_0^t\int_0^{+\infty}\big( H(t-s,y,z)+R(t-s,y,z)\big)N(s,z)dzds\right\|_{Y^1_{\mu,t}\cap Y^2_{\mu,t}}\\
		&\leq C\int_0^t\big((\mu_0-\mu-\gamma s)^{-1}+(\mu_0-\mu-\gamma s)^{-\frac{1}{2}}(t-s)^{-\frac{1}{2}}\big) \|N(s)\|_{W_{\mu_1,s}} ds.
	\end{align*}
\end{lemma}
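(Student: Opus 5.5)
The plan is to work frequency by frequency, passing the norm inside the time integral:
\[
\Big\|\int_0^t\!\!\int_0^\infty(\cdots)\Big\|_{Y^k_{\mu,t}} \le \int_0^t \Big\|\int_0^\infty(\cdots)\Big\|_{Y^k_{\mu,t}}\,ds .
\]
The $z$-integral is then split into the near-boundary part $z<1+\mu$ and the far part $z\ge 1+\mu$.

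\textbf{Near part, $z<1+\mu$.} I need pointwise bounds of the form
\[
e^{\eps_0(1+\mu)\frac{y^2}{t}}\, e^{\eps_0(1+\mu-y)_+|\xi|}\,\big|(y\pa_y)^j H_\xi(t-s,y,z)\big| \;\lesssim\; e^{\eps_0(1+\mu)\frac{z^2}{s}}\, e^{\eps_0(1+\mu-z)_+|\xi|}\, g\big(C(t-s),y-z\big)+\cdots .
\]
For the Gaussian weight, use the standard inequality $\frac{y^2}{t}\le\frac{z^2}{s}+\frac{(y-z)^2}{t-s}$. With $\eps_0$ small, the factor $e^{\eps_0(1+\mu)(y-z)^2/(t-s)}$ is absorbed by the heat kernel $g(t-s,y\pm z)$. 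The analytic weight is transported by $e^{\eps_0(1+\mu-y)_+|\xi|}\le e^{\eps_0(1+\mu-z)_+|\xi|}e^{\eps_0|\xi||y-z|}$. The extra factor $e^{\eps_0|\xi||y-z|}$ is controlled either by $e^{-\xi^2(t-s)}$ in $H_\xi$ (completing the square against the Gaussian) or, for $R_\xi$, by the $e^{-\theta_0\langle\xi\rangle(y+z)}$ decay in Lemma \ref{prop of R 1} (2).

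The conormal derivatives are handled as follows. Commute $y\pa_y$ through the kernel using $y\pa_y = (y-z)\pa_y + z\pa_y$. Use $\pa_yR_\xi=\pa_zR_\xi$ from Lemma \ref{prop of R 1} (1) together with $\pa_y g(y-z)=-\pa_z g(y-z)$, and integrate by parts in $z$ to move $z\pa_z$ onto $N$. This produces $(z\pa_z)^jN$, which is controlled by the $W_{\mu,s}$ norm. Horizontal derivatives $\pa_x^i$ commute with the kernels.

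Each kernel then has $L^1_y$ norm bounded uniformly for $H$. For $R$ the bound is $\langle\xi\rangle e^{-\theta_0\langle\xi\rangle z/2}$ plus an $L^1$-bounded Gaussian. The $\langle\xi\rangle$ in the first term is a loss of one derivative, but it is paired with decay in $z$. Since $(1+\mu-z)_+\le 1+\mu$, I would try to absorb it using the gap between the weight exponent $\eps_0$ and $\theta_0/2$: $\langle\xi\rangle e^{-\theta_0\langle\xi\rangle(y+z)/2}e^{\eps_0(1+\mu-y)_+|\xi|}$ is bounded by $C e^{\eps_0(1+\mu-z)_+|\xi|}$ times something integrable in $y$ with constant independent of $\xi$ when $\eps_0\ll\theta_0$. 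That gives the first inequality with $\sum_{i+j\le2}$ and no loss.

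\textbf{Far part, $z\ge 1+\mu$.} There the kernels evaluated at $y\le 1+\mu$ are smooth, since the analytic weight is at most $1$ near $y\approx 1+\mu$. This part is controlled by $\|\pa^\alpha N\|_{L^2_x L^1_y(y\ge1+\mu)}$. The factor $e^{\eps_0(1+\mu)y^2/t}$ can be as large as $e^{C\eps_0/t}$, and it is compensated by the $e^{2\eps_0/s}$ in $W_{\mu,s}$, after adjusting constants, when $y$ is near $z$. When $|y-z|$ is large, the Gaussian decay $e^{-(y-z)^2/(t-s)}$ compensates instead.

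\textbf{Third derivatives.} This is the main obstacle. For $i+j=3$, the extra derivative is either put on the kernel or recovered analytically. If one $(y\pa_y)$ or $\pa_x$ falls on the Gaussian kernel, it costs $(t-s)^{-1/2}$ times a factor $y$. Near the boundary, $y/\sqrt{t-s}$ must be traded against $e^{y^2/t}$, and I expect a loss of the form $(\mu_0-\mu-\gamma s)^{-1/2}(t-s)^{-1/2}$. The alternative is to apply Lemma \ref{analytic recovery} from level $\mu$ to $\mu_1$, costing $(\mu_1-\mu)^{-1}\sim(\mu_0-\mu-\gamma s)^{-1}$ and producing $\|N\|_{W_{\mu_1,s}}$. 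I would choose between the two by interpolating, depending on whether the derivative is tangential (use analytic recovery) or conormal (use the kernel). Making the $y/\sqrt{t-s}$ versus weight bookkeeping uniform in $\xi$, and checking that the shifted weights $e^{\eps_0(1+\mu_1-z)_+|\xi|}$ still dominate after the $R_\xi$ derivative loss, is the delicate point.
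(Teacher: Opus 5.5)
\textbf{Gap: the residual kernel $R_\xi$ near the boundary.} To move the weight $e^{\eps_0(1+\mu)y^2/t}$ of $\|\cdot\|_{\mu,t}$ onto $e^{\eps_0(1+\mu)z^2/s}$, you use $\frac{y^2}{t}\le\frac{z^2}{s}+\frac{(y-z)^2}{t-s}$. That only helps if the kernel carries a factor $e^{-c(y-z)^2/(t-s)}$. The heat kernel $H_\xi$ has one, and so does the second piece of the bound in Lemma \ref{prop of R 1} (2). The first piece, $\langle\xi\rangle e^{-\theta_0\langle\xi\rangle(y+z)}$, does not: it has no decay on the scale $\sqrt{t-s}$.

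Your $\eps_0\ll\theta_0$ argument only absorbs the analytic weight $e^{\eps_0(1+\mu-y)_+|\xi|}$. Take $|\xi|\lesssim1$ and $z\lesssim\sqrt s$, where the weight on $N$ is $O(1)$. Bounding $R_\xi$ by that first piece leaves you with
\[
\int_0^{1+\mu}e^{\eps_0(1+\mu)\frac{y^2}{t}}\langle\xi\rangle e^{-\theta_0\langle\xi\rangle(y+z)}\,dy\gtrsim e^{c/t},
\]
and nothing in the near-boundary part of $W_{\mu,s}$ compensates for this. In the far region $z\ge 1+\mu$ your argument is fine: $(1+\mu)^3<2$ and $s\le t$ give $e^{\eps_0(1+\mu)y^2/t}\le e^{2\eps_0/s}$ directly. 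So with the pointwise bound (2) alone the first inequality does not close, and the same defect carries over to the third-derivative estimate.

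\textbf{The missing ingredient} is the representation in Lemma \ref{prop of R 1} (3). It writes $R_\xi(t-s,y,z)$ as an integral over $r\in(0,t-s)$ of $\xi^2e^{-r\xi^2}g(r,y+z)$ and $|\xi|e^{-r\xi^2}\pa_y g(r,y+z)$. Since $r\le t-s$, both terms have $e^{-(y+z)^2/(4(t-s))}$-type decay. This absorbs $e^{\eps_0(1+\mu)(y-z)^2/(t-s)}$, and, after completing the square against $e^{-r\xi^2}$, also $e^{\eps_0|\xi||y-z|}$.

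The apparent derivative loss disappears in $L^1_y$:
\[
\xi^2\int_0^{t-s}e^{-r\xi^2}dr\le 1,\qquad |\xi|\int_0^{t-s}e^{-r\xi^2}\|\pa_y g(r,\cdot+z)\|_{L^1_y}dr\le|\xi|\int_0^\infty e^{-r\xi^2}g(r,z)\,dr\le\tfrac12 .
\]
Your splitting $y\pa_y=(y+z)\pa_y-z\pa_z$ then goes through, because $(y+z)\pa_y$ is scale invariant on these kernels. This Gaussian decay of $R$ is exactly what the paper's kernel bounds rely on; compare the bound used in Lemma \ref{est of HR b}.

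\textbf{The third-derivative step} works once this is in place, but it is a case split rather than an interpolation. If $i\ge1$, put one $\pa_x$ on $N$ via the first inequality of Lemma \ref{analytic recovery}, at cost $(\mu_1-\mu)^{-1}$. If $(i,j)=(0,3)$, keep one $y\pa_y$ on the heat kernel and use the second inequality of that lemma, trading $y/\sqrt{t-s}$ against the Gaussian weight gain from $\mu$ to $\mu_1$. This produces the $(\mu_0-\mu-\gamma s)^{-1/2}(t-s)^{-1/2}$ term.
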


\begin{lemma}\label{est of HB}
	For $\mu<\mu_0-\gamma t$ and $\mu_1=\mu+\frac{1}{2}(\mu_0-\mu-\gamma s)$, we have 
	\begin{align*}
		&\sum_{i+j\leq2}\left\|\pa_x^i(y\pa_y)^j\int_0^t \big( H(t-s,y,0)+R(t-s,y,0)\big)B(s)ds\right\|_{Y^1_{\mu,t}\cap Y^2_{\mu,t}}\\
		&\leq C\int_0^t\sum_{i\leq2}\left\| e^{\eps_0(1+\mu)|\xi|}\xi^i B_\xi(s)\right\|_{L^1_\xi\cap L^2_\xi}ds,
	\end{align*}
	and
	\begin{align*}
		&\quad\sum_{i+j\leq3}\left\|\pa_x^i(y\pa_y)^j\int_0^t \big( H(t-s,y,0)+R(t-s,y,0)\big)B(s)ds\right\|_{Y^1_{\mu,t}\cap Y^2_{\mu,t}}\\
		&\leq C\int_0^t(\mu_0-\mu-\gamma s)^{-1}
		\sum_{i\leq2}\left\| e^{\eps_0(1+\mu_1)|\xi|}\xi^i B_\xi(s)\right\|_{L^1_\xi\cap L^2_\xi}ds.
	\end{align*}
\end{lemma}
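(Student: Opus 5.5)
The plan is to estimate the boundary Duhamel term one frequency at a time, by bounding the weighted $L^1_y$ norm of the kernels $H_\xi(t-s,y,0)$ and $R_\xi(t-s,y,0)$ uniformly in $s$, $t$ and $\xi$. After that, both inequalities follow from Minkowski's inequality in $s$ and in $\xi$ (for $L^1_\xi$ and $L^2_\xi$ alike).

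\emph{Step 1: the weights.} For $y\le 1+\mu$ we have $(1+\mu-y)_+\le 1+\mu$, so $e^{\eps_0(1+\mu-y)_+|\xi|}\le e^{\eps_0(1+\mu)|\xi|}$. This factor will be carried by $B_\xi(s)$, and the applied $\pa_x^i$ contributes $|\xi|^i$. It then suffices to show
\begin{align*}
\sup_{0<s<t}\int_0^{1+\mu}e^{\eps_0(1+\mu)\frac{y^2}{t}}\big|(y\pa_y)^j\big(H_\xi+R_\xi\big)(t-s,y,0)\big|\,dy\le C,\qquad j\le 3,
\end{align*}
with $C$ independent of $\xi$.

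\emph{Step 2: the $H$ part.} We have $H_\xi(t-s,y,0)=2e^{-\xi^2(t-s)}g(t-s,y)$. Since $t-s\le t$ and $\eps_0(1+\mu)\le 2\eps_0<1/8$ for small $\eps_0$, the weight satisfies $e^{\eps_0(1+\mu)y^2/t}e^{-y^2/(4(t-s))}\le e^{-y^2/(8(t-s))}$. Moreover, $(y\pa_y)^j g(r,y)$ is bounded by $C r^{-1/2}e^{-y^2/(5r)}$. Its $L^1_y$ norm is therefore $O(1)$, as required.

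\emph{Step 3: the $R$ part.} The pointwise bound in Lemma \ref{prop of R 1}(2) is not directly usable here: its term $\langle\xi\rangle e^{-\theta_0\langle\xi\rangle y}$ does not absorb the Gaussian weight $e^{\eps_0 y^2/t}$ for small $t$. I will instead use the representation in Lemma \ref{prop of R 1}(3),
\begin{align*}
R_\xi(t-s,y,0)=-2\int_0^{t-s}\big(-\xi^2+|\xi|\pa_y\big)\big(e^{-r\xi^2}g(r,y)\big)\,dr .
\end{align*}
Each $g(r,\cdot)$ has $r\le t$, so the Gaussian weight is absorbed exactly as in Step 2. Commuting $(y\pa_y)^j$ through the integral and using $\|(y\pa_y)^j g(r)\|_{L^1}\le C$ and $\|(y\pa_y)^j\pa_y g(r)\|_{L^1}\le Cr^{-1/2}$, we get
\begin{align*}
\int_0^{t-s}\xi^2e^{-r\xi^2}dr+|\xi|\int_0^{t-s}e^{-r\xi^2}r^{-1/2}dr\le 1+\int_0^\infty e^{-u}u^{-1/2}du\le C .
\end{align*}
This bound is uniform in $\xi$ and $s$. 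It proves the first inequality, in which $i\le 2$ so $|\xi|^i$ is matched by the $\xi^i B_\xi$ on the right.

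\emph{Step 4: third order.} For $i+j=3$ the only new case is $i=3$ (the cases $j\ge1$ are already covered by Step 3 with $i\le2$). There the extra factor of $|\xi|$ is absorbed by the analytic recovery Lemma \ref{analytic recovery}:
\begin{align*}
e^{\eps_0(1+\mu)|\xi|}|\xi|\le \frac{C}{\mu_1-\mu}e^{\eps_0(1+\mu_1)|\xi|},\qquad \mu_1-\mu=\tfrac12(\mu_0-\mu-\gamma s).
\end{align*}
This gives the factor $(\mu_0-\mu-\gamma s)^{-1}$. Note that $\mu<\mu_0-\gamma t\le\mu_0-\gamma s$, so $\mu_1<\mu_0-\gamma s$ is admissible for every $s\in(0,t)$.

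I expect the main obstacle to be Step 3. Bounding the $|D_x|$-generated kernel $R_\xi$ in the Gaussian-weighted norm requires the time-integral representation to recover uniformity in $\xi$ without losing a derivative. It also requires checking that $\eps_0$ is small enough for every Gaussian $g(r,\cdot)$ with $r\le t$ to dominate $e^{\eps_0(1+\mu)y^2/t}$.
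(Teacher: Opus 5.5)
Your proof is correct. The paper gives no argument for this lemma; it only says it is obtained as in \cite{HWYZ}, so your proposal serves as a self-contained proof rather than a variant of a written one.

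The step that does the real work is Step~3. There you use the time-integral representation of Lemma~\ref{prop of R 1}(3), which writes $R_\xi(t-s,y,0)$ as a superposition of $g(r,y)$ and $\pa_y g(r,y)$ with $r\le t-s\le t$. The Gaussian weight $e^{\eps_0(1+\mu)y^2/t}$ is then absorbed exactly as for $H_\xi$. The integrals $\int_0^\infty\xi^2e^{-r\xi^2}dr=1$ and $\int_0^\infty|\xi|r^{-1/2}e^{-r\xi^2}dr=\sqrt{\pi}$ then give a weighted $L^1_y$ kernel bound uniform in $\xi$, $s$, $t$.

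You are right that the pointwise bound of Lemma~\ref{prop of R 1}(2) is too lossy for this norm. Its term $\langle\xi\rangle e^{-\theta_0\langle\xi\rangle y}$, paired with $e^{\eps_0(1+\mu)y^2/t}$, produces growth like $e^{c/t}$ at low frequencies. The true kernel has no such problem: $R_\xi(r,y,0)=|\xi|e^{-|\xi|y}\,\mathrm{erfc}\big(\frac{y-2|\xi|r}{2\sqrt{r}}\big)$ is Gaussian for $y>2|\xi|r$. For $y\le 2|\xi|r$ the weight is at most $e^{2\eps_0(1+\mu)|\xi|y}$, which $e^{-|\xi|y}$ absorbs. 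Your representation captures this automatically.

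Your argument also shows that for the boundary kernel ($z=0$) the conormal derivatives $(y\pa_y)^j$, $j\le 3$, cost nothing. So the factor $(\mu_0-\mu-\gamma s)^{-1}$ in the second estimate comes only from the case $i=3$, through $|\xi|e^{-\eps_0(\mu_1-\mu)|\xi|}\le C(\mu_1-\mu)^{-1}$.

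One point is left implicit: in the second inequality, the terms with $i\le 2$ must be dominated by the stated right-hand side. This holds because $\mu_1>\mu$ and $(\mu_0-\mu-\gamma s)^{-1}\ge\mu_0^{-1}$ for $\mu\ge 0$.
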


\begin{remark}
We stress that Lemma \ref{est of HN} and Lemma \ref{est of HB} remain valid if we replace $N$ by $\widetilde N$ or $B$ by $\widetilde B$.

\end{remark}

Now we are in a position to prove Proposition \ref{prop: uniform estimates for Eb(t)}.\smallskip

\noindent\textbf{Proof of Proposition \ref{prop: uniform estimates for Eb(t)}.} Based on Lemma \ref{est of HN}, Lemma \ref{est of HB}, Lemma \ref{est of HR b} and Proposition \ref{prop: estimate of N in new norm}, Proposition \ref{prop: estimate of B}, Proposition \ref{prop: Sobolev estimate}, we deduce that
\begin{align*}
	&\quad\|\omega(t)-\omega_c(t)\|_{Y_1(t)\cap Y_2(t)}\\
	&\leq Ce^{-\frac{C'}{ t}}
	+C\int_0^t \Big(\|N(s)\|_{W_{\mu,s}}
	+\sum_{i\leq1}\left\|e^{\eps_0(1+\mu)|\xi|}\xi^i B_\xi(s)\right\|\Big)ds\\
	&\quad+C(\mu_0-\mu-\gamma t)^\beta \int_0^t\big((\mu_0-\mu-\gamma s)^{-1}
	+(\mu_0-\mu-\gamma s)^{-1/2}(t-s)^{-1/2}\big)  \|N(s)\|_{W_{\mu_1,s}} ds\\
	&\quad+C(\mu_0-\mu-\gamma t)^\beta \int_0^t (\mu_0-\mu-\gamma s)^{-1}
	\sum_{i\leq2}\left\|e^{\eps_0(1+\mu_1)|\xi|}\xi^i B_\xi(s)\right\|ds\\
	&\leq Ce^{-\frac{C'}{ t}}
	+ \int_0^t \big((\mu_0-\mu-\gamma s)^{-\beta}+s^{-1/2}\big)
	 \big( E(s)+1\big)^2 ds\\
		&\quad+C(\mu_0-\mu-\gamma t)^\beta \int_0^t\big((\mu_0-\mu-\gamma s)^{-1}
	+(\mu_0-\mu-\gamma s)^{-1/2}(t-s)^{-1/2}\big)\\
	&\quad\quad\quad\cdot (\mu_0-\mu-\gamma s)^{-\beta}
	 \big( E(s)+1\big)^2 ds\\
		&\quad+C(\mu_0-\mu-\gamma t)^\beta \int_0^t (\mu_0-\mu-\gamma s)^{-1} \big((\mu_0-\mu-\gamma s)^{-\beta}+s^{-1/2}\big) \big( E(s)+1\big)^2 ds\\
		&\leq C(\gamma^{-1/2}+t^{1/2})  \big( E(t)+1\big)^2,
\end{align*}
where $\mu_1=\mu+\f12(\mu_0-\mu-\gamma s)$ and we used Lemma \ref{integral computation} in the last step.

By the same way, we deduce that $\|x\omega(t)-x\omega_c(t)\|_{Y_1(t)\cap Y_2(t)}$ admits the same bound, which completes the proof.
\ef

\section{Proof of Theorem \ref{main theorem}:  uniqueness part}

In this section, we prove the uniqueness part of Theorem \ref{main theorem}. The main result is as follows.
\begin{proposition}\label{prop: uniqueness part}
Let the initial vorticity $\omega_0$ be given by  \eqref{initial data}. Assume that there exist two solutions $(\omega_1, U_1)$ and $(\omega_2, U_2)$ to \eqref{eq: NS vorticity} such that 
	\begin{align}
		\lim_{t\rightarrow0}E_1(t)= \lim_{t\rightarrow0}E_2(t) =0,
	\end{align}
where $E_i(t)$ is the energy functional defined in \eqref{def: E(t)} with $\delta$ dropped and corresponds to the solution $(\omega_i, U_i)$. Then, it holds that
	$\omega_1=\omega_2$ and $U_1=U_2$ for all $t\geq 0$.
 
\end{proposition}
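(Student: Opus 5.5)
We propose to prove Proposition \ref{prop: uniqueness part} by a difference estimate in the same three-region framework used for existence. Write $\omega_d=\omega_1-\omega_2$ and $U_d=U_1-U_2=BS_{\mathbb R^2_+}[\omega_d]$. Both solutions share the Lamb--Oseen profile near $X_0$ and the corrector $\omega_c$ near the boundary, since $\omega_c$ depends only on $u_0$. Hence in the decompositions
\[
\chi_{vp}\omega_i=\tfrac{\alpha}{t}\big(\chi_{vp}G+\mathcal W_{R,i}\big),\qquad \chi_b\omega_i=\chi_b\omega_c+\chi_b(\omega_i-\omega_c),
\]
the leading parts cancel in the difference. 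We therefore define a difference functional $D(t)$ with three pieces. The first is $\sup_{\tau'<\log t}\big(\|\mathcal W_{R,1}-\mathcal W_{R,2}\|_{L^2(m)}+\|\nabla_\eta(\mathcal W_{R,1}-\mathcal W_{R,2})\|_{L^2(m)}\big)$. The second is the weighted $L^2\cap L^4$ and $L^2_tH^1$ norms of $e^\Psi\psi\chi_m\omega_d$, together with the $H^4$ term weighted by $e^{5\eps_0/t}$. The third is $\|(1,x)\omega_d\|_{Y_1(t)\cap Y_2(t)}$. The hypothesis $E_i(t)\to0$ gives $D(t)\le E_1(t)+E_2(t)\to0$ as $t\to0$; in particular $D$ is finite for small $t$.

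\textbf{Step 1: vortex region.} $\mathcal W_d:=\mathcal W_{R,1}-\mathcal W_{R,2}$ satisfies the linear equation of Lemma \ref{lem: eq of WR} with zero data at $\tau=-\infty$, here in the variable $\tau=\log t$ with $\delta=0$. The forcing is the difference of $F_1,\dots,F_7$. Every such difference is linear in $(\mathcal W_d,U_d,\omega_d)$, with coefficients bounded by $E_1+E_2+1$. For example, $F_5$ yields $\mathcal V_{R,1}\cdot\nabla\mathcal W_d+\mathcal V_d\cdot\nabla\mathcal W_{R,2}$, and $F_6$ yields $BS_{\mathbb R^2_+}[(1-\chi_{vp})\omega_d]\cdot\nabla(\chi_{vp}G+\mathcal W_{R,1})$ plus a term with $\mathcal W_d$. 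We then repeat the Duhamel estimates of Lemmas \ref{lem: estimate of Evp 1}--\ref{lem: estimate of Evp 2}, using Proposition \ref{semigroup estimates of T alpha} and, for $U_d$ near the boundary, Lemma \ref{velocity estimates 2}. This gives
\[
D_{vp}(t)\le C\big(E_1(t)+E_2(t)+t^{1/4}\big)D(t).
\]
The crucial point, inherited from the existence proof, is that the boundary-generated velocity is bounded by the $Y_1$ norm. Its contribution therefore carries the factor $t^{1/2}$ and does not require $\alpha$ small.

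\textbf{Steps 2--3: middle and boundary regions.} In the middle region we redo the weighted energy estimate of Section \ref{section-E_{m}} for $\omega_d$. The new term $U_d\cdot\nabla\omega_2$ is controlled using $\|U_d\|_{L^\infty(\operatorname{supp}\chi_m)}\lesssim D$, which follows from the proof of Lemma \ref{velocity estimates 3} applied to $\omega_d$. This yields $D_m\le Ct^{1/4}(1+E_1+E_2)^8D$. Near the boundary, $\omega_d$ satisfies \eqref{integral eq of omega-omega c} with $b=0$ and forcings
\[
N_d=-\chi_b\big(U_1\cdot\nabla\omega_d+U_d\cdot\nabla\omega_2\big)+\text{cut-off terms},\qquad B_d=\pa_y\Delta_D^{-1}\big(U_1\cdot\nabla\omega_d+U_d\cdot\nabla\omega_2\big)\big|_{y=0}.
\]
We would run the bilinear versions of Propositions \ref{prop: estimate of N in new norm}--\ref{prop: estimate of B}, using Lemma \ref{velocity estimates 1} for $U_d$. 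Lemmas \ref{est of HN}, \ref{est of HB} and \ref{integral computation} then give $D_b\le C(\gamma^{-1/2}+t^{1/2})(1+E_1+E_2)^2D$.

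\textbf{Conclusion and main obstacle.} Summing the three estimates gives $D(t)\le \big(C(E_1+E_2)+o(1)+C\gamma^{-1/2}\big)D(t)$. Taking $\gamma$ large and $t$ small makes the coefficient smaller than $1$, so $D\equiv0$ on $(0,T_0]$. For $t\ge T_0$ the data is regular, and uniqueness follows from Theorem \ref{thm: Ken} or standard theory. The main obstacle is the term $U_d\cdot\nabla\omega_2$ near the point vortex: $\nabla\omega_2$ is of size $t^{-3/2}$ and $U_d$ is a nonlocal functional of $\omega_d$. We would handle it, as with $F_4$, by writing $\mathcal V_d\cdot\nabla_\eta G$ inside the linearized operator and putting the $\mathcal W_{R,2}$ part in divergence form to apply \eqref{semigroup est 3}. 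A secondary difficulty is the $\xi$-derivative term $\widetilde B$ in the $x$-weighted norm.
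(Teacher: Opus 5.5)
Your proposal is correct and follows essentially the same route as the paper. The paper likewise uses difference functionals $\overline E_{vp},\overline E_m,\overline E_b$ mirroring the existence energies, and a Duhamel formula for $\overline{\mathcal W}=\mathcal W_{1,R}-\mathcal W_{2,R}$ from $\tau=-\infty$ with $\alpha\overline{\mathcal V}\cdot\nabla_\eta G$ kept in the linearized operator. It applies the boundary integral representation, in which $\omega_c$ and $b$ cancel, and closes with an inequality of the form $\overline E\le C(t^{3/8}+E_1)\overline E+C\gamma^{-1/2}\overline E+\dots$, absorbed for small $t$ and large $\gamma$ and then continued in time. Your bookkeeping writes each forcing difference as a solution factor times a difference factor, so your estimate is linear in $D$ with small coefficients $E_1+E_2$; this is a slightly cleaner presentation of the same argument than the paper's, whose estimate carries higher powers of $\overline E$.
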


To prove Proposition \ref{prop: uniqueness part},  we set 
\beno
\overline\omega=\omega_1-\omega_2, \quad \overline U=U_1-U_2.
\eeno
We aim to prove 
\beno
\overline\omega=\overline U=0.
\eeno
Since the argument closely follows the proof of the existence part, we only give a sketch here.  

\subsection{Functional framework}

Since $\omega_1, \omega_2$ both satisfy \eqref{eq: NS near vortex point}, thus $(\overline\omega, \overline U)$ satisfies
\begin{align}\label{eq: NS differ near the vortex point}
	&\pa_t(\chi_{vp}\overline\omega)
	+\{ BS_{\mathbb R^2_+}[\chi_{vp}\omega_1]\cdot\nabla(\chi_{vp}\overline\omega)
	+BS_{\mathbb R^2_+}[\chi_{vp}\overline\omega]\cdot\nabla(\chi_{vp}\omega_2)\}
	-\Delta(\chi_{vp}\overline\omega)\\
	\nonumber
	&=-\{ BS_{\mathbb R^2_+}[(1-\chi_{vp})\omega_1]\cdot\nabla(\chi_{vp}\overline\omega)
	+BS_{\mathbb R^2_+}[(1-\chi_{vp})\overline\omega]\cdot\nabla(\chi_{vp}\omega_2)\}\\
	\nonumber
	&\quad+\{ U_1\cdot\nabla\chi_{vp}\overline\omega
	+\overline U\cdot\nabla\chi_{vp} \omega_2\}
	-2\nabla\chi_{vp}\cdot\nabla\overline\omega
	-\Delta\chi_{vp}\overline\omega.
\end{align}
As in \eqref{def: self-similar}, we introduce self-similar transformation for $i=1,2$
\begin{align}\label{SS transform for omega differ}
\begin{split}
	&\chi_{vp}\omega_i(t,x,y)
	=\frac{\alpha}{t}\mathcal W_i(\eta,\tau),
	\quad \chi_{vp}\overline\omega(t,x,y)
	=\frac{\alpha}{t}\overline{\mathcal W}(\eta,\tau),\\
	&\eta=\frac{(x,y)-(0,20)}{t^{1/2}},\quad
	\tau=\log t.
	\end{split}
\end{align}
Then it obviously holds that
\begin{align*}
	\operatorname{supp}\mathcal W_i, \overline{\mathcal W}\subseteq \{|\eta|\leq 6t^{-1/2}\},\quad
	\lim_{\tau\rightarrow-\infty}\mathcal W_i=\chi_{vp} G,\quad
	\lim_{\tau\rightarrow-\infty}\overline{\mathcal W}=0.
\end{align*}
For the velocity, Lemma \ref{derivation of velocity formula} yields
\begin{align*}
	&BS_{\mathbb R^2_+}[\chi_{vp}\omega_i](t,x,y)
	=\frac{\alpha}{t^{1/2}}
	\{ \mathcal V_i(\eta,\tau)
	-\widetilde{\mathcal V_i}(\eta+\frac{(0,40)}{t^{1/2}},\tau) \},\quad \mathcal V_i:=BS_{\mathbb R^2}[\mathcal W_i], \\
	&BS_{\mathbb R^2_+}[\chi_{vp}\overline\omega](t,x,y)
	=\frac{\alpha}{t^{1/2}}
	\{ \overline{\mathcal V}(\eta,\tau)
	-\widetilde{\overline{\mathcal V}}(\eta+\frac{(0,40)}{t^{1/2}},\tau) \},\quad \overline{\mathcal V}:=BS_{\mathbb R^2}[\overline{\mathcal W}].
\end{align*}
We introduce the following decomposition
\begin{align*}
	&\mathcal W_i:=\chi_{vp} G+\mathcal W_{i,R},\quad
	\overline{\mathcal W}:=\mathcal W_{1,R}-\mathcal W_{2,R},\\
	&\mathcal V_i:=\mathcal V^G-BS_{\mathbb R^2}[(1-\chi_{vp})G]+\mathcal V_{i,R},\quad
	\overline{\mathcal V}:=\mathcal V_{1,R}-\mathcal V_{2,R}.
\end{align*}
Then $\overline{\mathcal W}$ satisfies
\begin{align*}
	&\pa_\tau\overline{\mathcal W}
	+\alpha\mathcal V^G\cdot\nabla_\eta\overline{\mathcal W}
	+\alpha\overline{\mathcal V}\cdot\nabla_\eta G
	-\mathcal L\overline{\mathcal W}
	:= f_1+f_2+f_3+f_4.
\end{align*}
where $f_1 \sim f_4$ are defined by
\begin{align*}
	f_1=&\alpha\{\mathcal V^G(\eta+\frac{(0,40)}{e^{\tau/2}})
	+BS_{\mathbb R^2}[(1-\chi_{vp})G](\eta)
	-\widetilde{BS_{\mathbb R^2}[(1-\chi_{vp})G]}(\eta+\frac{(0,40)}{e^{\tau/2}})\\
	&-\mathcal V_{1,R}(\eta,\tau)
	+\widetilde{\mathcal V_{1,R}}(\eta+\frac{(0,40)}{e^{\tau/2}},\tau)\}\cdot\nabla_\eta \overline{\mathcal W},
\end{align*}
and 
\begin{align*}
	&f_2=-\alpha\{ \overline{\mathcal V}(\eta,\tau)
	-\widetilde{\overline{\mathcal V}}(\eta+\frac{(0,40)}{e^{\tau/2}},\tau) \}
	\cdot\nabla_\eta\{\mathcal W_{2,R}-(1-\chi_{vp})G\},\\
	&f_3=-e^{\tau/2}\{BS_{\mathbb R^2_+}[(1-\chi_{vp})\omega_1]\cdot\nabla_\eta\overline{\mathcal W}
	+BS_{\mathbb R^2_+}[(1-\chi_{vp})\overline\omega]\cdot\nabla_\eta\mathcal W_2 \},\\
&f_4=\frac{t^2}{\alpha}\{U_1\cdot\nabla\chi_{vp}\overline\omega
	+\overline U\cdot\chi_{vp}\omega_2
	-2\nabla\chi_{vp}\cdot\nabla\overline\omega
	-\Delta\chi_{vp}\overline\omega\}.
\end{align*}
Thus, the integral equation of $\overline{\mathcal W}$ becomes
\begin{align*}
	\overline{\mathcal W}(\tau)
	=\int_{-\infty}^\tau T_\alpha(\tau-\tau')(f_1+f_2+f_3+f_4)(\tau')d\tau'.
\end{align*}

We define 
\begin{align}\label{differ Evp(t)}
	\overline E_{vp}(t)
	:=\sup_{\tau'<\log t}\big(\left\|\overline{\mathcal W}(\tau')\right\|_{L^2(m)}
	+\left\|\nabla_\eta\overline{\mathcal W}(\tau')\right\|_{L^2(m)}\big),
\end{align}
and $E_{i,vp}(t)$ by replacing $\overline{\mathcal W}$ by $\mathcal W_{i,R}$.

For the middle region, we define
\begin{align}\label{differ Em(t)}
	\overline E_m(t)
	:=&\sup_{0<s<t}\left\|e^\Psi\psi\chi_m\overline\omega(s)\right\|_{L^2\cap L^4}+  \|e^\Psi\psi\nabla(\chi_m\overline\omega)\|_{L^2(0, t; L^2)}\\
	\nonumber
	&+e^{\frac{5\eps_0}{t}}\sup_{0<s<t}\|(1,x)\overline\omega(s)\|_{H^4(\frac{7}{8}\leq y\leq 4)},
\end{align}
and $E_{i,m}(t)$ by replacing $\overline\omega$ by $\omega_i$.

Near the boundary, we define
\begin{align}\label{differ Eb(t)}
	\overline E_b(t)
	:=\left\|(1,x)\overline\omega\right\|_{Y_1(t)\cap Y_2(t)},\quad
	E_{i,b}(t):=\|(1,x)(\omega_i-\omega_c)\|_{Y_1(t)\cap Y_2(t)},
\end{align}
here $\omega_c$ is the initial layer corrector defined by letting $\delta\rightarrow0^+$ in \eqref{omega c}. Obviously, Lemma \ref{est of omega c} remains valid with the index $\delta$ dropped.

The total energy functionals are defined by
\begin{align}\label{def: differ E(t)}
	\overline{ E}(t):=\overline E_{vp}(t)+\overline E_{m}(t)+\overline E_b(t),\qquad
	\ E_i(t)=E_{i,vp}(t)+E_{i,m}(t)+E_{i,b}(t).
\end{align}
We  aim to show $\overline{ E}(t)=0$ in $0<t\leq T_0$, therefore $\omega_1=\omega_2$ for all $t>0$.

\subsection{Proof of Proposition \ref{prop: uniqueness part}}

To establish the uniqueness part, the key point is to derive the uniform estimates for 
$\overline E(t)$. By arguments similar to those in Proposition \ref{prop: uniform estimates for Evp(t)}, Proposition \ref{prop: uniform estimates for Em(t)} and
Proposition \ref{prop: uniform estimates for Eb(t)}, we obtain the uniform estimates of $\overline E_{vp}(t)$, $\overline E_m(t)$ and $\overline E_b(t)$. Here, we omit the details. 

\begin{proposition}\label{prop: uniform estimate for overline E(t)}
For $t\in(0, T_0)$ with $T_0$ small enough, it holds that
	\begin{align*}
	\overline{ E}(t)
	\leq C \big(t^{3/8}
	+E_1(t)\big)\overline{ E}(t)
	+C t^{1/4}\overline{ E}(t)\big(\overline{ E}(t)+1\big)
	+\frac{C\overline{ E}(t)}{\gamma^{1/2}}
		+Ct^{1/2}\overline{ E}(t)^8.
\end{align*}
\end{proposition}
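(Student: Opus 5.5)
The plan is to estimate $\overline E_{vp}$, $\overline E_m$ and $\overline E_b$ separately. Each argument follows the corresponding existence estimate (Propositions \ref{prop: uniform estimates for Evp(t)}, \ref{prop: uniform estimates for Em(t)}, \ref{prop: uniform estimates for Eb(t)}), with every nonlinear product written as a difference. Each quadratic term $F(\omega_1)-F(\omega_2)$ splits into a piece linear in the difference with coefficient built from $\omega_1$ or $\omega_2$, and a piece quadratic in the difference. The background coefficients are controlled by $E_i(t)$ and by the explicit profiles $G$ and $\omega_c$; note that $\omega_c$ cancels in $\overline\omega=\omega_1-\omega_2$. No initial datum or corrector appears, since $\lim_{\tau\to-\infty}\overline{\mathcal W}=0$ and $\overline\omega$ has no initial layer. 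So every term on the right-hand side is at least linear in $\overline E$, which is exactly the structure of the claimed inequality.

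\emph{Near the vortex.} I would use the Duhamel formula $\overline{\mathcal W}=\int_{-\infty}^\tau T_\alpha(\tau-\tau')(f_1+\dots+f_4)\,d\tau'$ together with Proposition \ref{semigroup estimates of T alpha}.
\begin{itemize}
\item In $f_1$, the explicit part $\mathcal V^G(\eta+40e^{-\tau/2})+BS_{\mathbb R^2}[(1-\chi_{vp})G]-\dots$ has $L^\infty$ norm $\lesssim e^{\tau/2}$ on the support. Treating it exactly as $I_1$ gives $Ct^{1/2}\overline E$. The $\mathcal V_{1,R}$ part, handled as in $I_5$, gives $CE_1\overline E$.
\item For $f_2$, I apply \eqref{semigroup est 3} with $p=4/3$. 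The factor $\nabla_\eta\mathcal W_{2,R}$ gives $CE_2\overline E$. The factor $\nabla_\eta((1-\chi_{vp})G)$ is supported where $|\eta|\ge 5e^{-\tau/2}$, so it gives a factor $t^{1/2}$. The reflected velocity $\widetilde{\overline{\mathcal V}}(\eta+40e^{-\tau/2})$ is treated as in $I_4$ and yields $t^{1/4}\overline E$.
\item For $f_3$, I use Lemma \ref{velocity estimates 2}, Lemma \ref{velocity estimates 3} and the weighted velocity lemma. This bounds $BS_{\mathbb R^2_+}[(1-\chi_{vp})\overline\omega]$ by $\overline E_b+\overline E_m$ and $BS_{\mathbb R^2_+}[(1-\chi_{vp})\omega_1]$ by $1+E_1$. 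With the prefactor $e^{\tau/2}$, as in $I_6$, I obtain $Ct^{1/2}(1+E_1+E_2)\overline E$.
\item $f_4$ lives on $\operatorname{supp}\nabla\chi_{vp}$. Using the $e^{\Psi}$ weight as in $I_7$, it is bounded by a power of $t$ times $\overline E_m(1+E_1+\overline E)$.
\end{itemize}
Gradient bounds follow by repeating this with \eqref{semigroup est 2} and \eqref{semigroup est 4}, splitting the time integral at $\tau-\log 2$ as in Lemma \ref{lem: estimate of Evp 2}. Since the problem is now linear, this step produces no $\overline E^2$ term.

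\emph{Middle region.} I run the weighted $L^2$ and $L^4$ energy estimate of Section \ref{section-E_{m}} for $\chi_m\overline\omega$. The transport term $U_1\cdot\nabla(\chi_m\overline\omega)$ is handled by Lemma \ref{velocity estimates 3} with bound $C(1+E_1)$. The term $\overline U\cdot\nabla(\chi_m\omega_2)$ uses $\|\overline U\|_{L^\infty(\operatorname{supp}\chi_m)}\lesssim\overline E$ against the weighted gradient of $\omega_2$, which is controlled by $E_2$. The cutoff commutators are bounded by $\overline E_b$ and $\overline E_{vp}$. Integrating in time gives $Ct^{1/4}\overline E(\overline E+1)$. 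The $H^4$ part on $\{7/8\le y\le 4\}$ is done as in \cite{HWYZ}, which produces the crude term $Ct^{1/2}\overline E^8$ (coming from the high-order products).

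\emph{Near the boundary.} I write the integral equation \eqref{integral eq of omega-omega c} for $\chi_b\overline\omega$. Here $b=0$, the forcing is $\overline N=N_1-N_2$, and the boundary term is $\overline B=\pa_y\Delta_D^{-1}(U_1\cdot\nabla\omega_1-U_2\cdot\nabla\omega_2)|_{y=0}$; the $\omega_c$ contributions cancel. Lemma \ref{velocity estimates 1} and the product lemma give $\|\overline N\|_{W_{\mu,s}}\lesssim(\mu_0-\mu-\gamma s)^{-\beta}(1+E_1+E_2)\overline E$. The analogue of Proposition \ref{prop: estimate of B} gives $\overline B\lesssim((\mu_0-\mu-\gamma s)^{-\beta}+s^{-1/2})(1+E_1+E_2)\overline E$.

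I expect this to be the main obstacle. One would like the $s^{-1/2}$ loss from $\|\overline U\|_{L^\infty(y\ge1)}\|\omega_1\|_{L^1(y\ge 1)}$ to come with a small factor. I would first try to place the $s^{-1/2}$ on the difference part, which after time integration gives $t^{1/2}\overline E$. The derivative loss is then closed by Lemmas \ref{est of HN}, \ref{est of HB} and Lemma \ref{integral computation}, which produce $\gamma^{-1/2}\overline E$.

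Summing the three estimates, and using $E_1,E_2\lesssim$ small together with the powers of $t$, gives the stated inequality.
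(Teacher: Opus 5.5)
Your proposal follows the same route as the paper, which only asserts that $\overline E_{vp}$, $\overline E_m$ and $\overline E_b$ are estimated by repeating the proofs of Propositions \ref{prop: uniform estimates for Evp(t)}--\ref{prop: uniform estimates for Eb(t)} on the difference equations, with zero initial data and the $G$ and $\omega_c$ contributions cancelling; the $s^{-1/2}$ loss in $\overline B$ that you flag is not a real obstacle, since time integration together with Lemma \ref{integral computation} turns it into $t^{1/2}$ and $\gamma^{-1/2}$ factors exactly as in the existence proof. One caveat: your bounds come out linear in $\overline E$, with small coefficients involving $E_2$ as well as $E_1$ (for instance from $\overline{\mathcal V}\cdot\nabla_\eta\mathcal W_{2,R}$), so they do not literally reproduce the stated terms $Ct^{1/2}\overline E^8$ and $E_1$ alone, but this is harmless for the uniqueness argument because $E_1,E_2\to0$ as $t\to0$.
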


\medskip

\noindent{\bf Proof of Proposition \ref{prop: uniqueness part}:}
By Proposition \ref{prop: uniform estimate for overline E(t)}, we take $\eps$ in Proposition \ref{prop: uniqueness part} small enough, $t$ small and $\gamma$ large to obtain 
\beno
\overline E(t)=0, \quad t\in[0, T_0],
\eeno
where $T_0$ small enough.  Thus, by a continuous argument, we obtain $\omega_1=\omega_2$ for all time. This completes the uniqueness part.
\ef

\medskip

\smallskip

\appendix

\section{Some technical lemmas}

We enumerate several useful lemmas in the Appendix.\smallskip

The following  lemma is used to treat the loss of derivative and is proved in \cite{HWYZ}.
\begin{lemma}\label{analytic recovery}
	For $\widetilde \mu>\mu\geq0$, we have
	\begin{align*}
		e^{\eps_0(1+\mu-y)_+|\xi|}|(\pa_x f)_\xi(y)|
		&\leq \frac{C}{\widetilde\mu-\mu}
		e^{\eps_0(1+\widetilde\mu-y)_+|\xi|}|f_\xi(y)|,\\
		e^{\eps_0(1+\mu)\frac{y^2}{t}}\left|y\pa_y H_\xi(t-s,y,z)\right|&\leq\frac{C}{\sqrt{(\widetilde\mu-\mu)(t-s)}}
		e^{\eps_0(1+\widetilde\mu)\frac{y^2}{t}}
		\frac{1}{\sqrt{\nu(t-s)}}
		e^{-\frac{(y-z)^2}{5(t-s)}}
		e^{-\xi^2(t-s)}.
	\end{align*}
 
\end{lemma}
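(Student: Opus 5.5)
The first inequality is elementary. Since $\widetilde\mu>\mu$, we have $(1+\widetilde\mu-y)_+\ge (1+\mu-y)_+$, and the plan is to obtain a gain of order $(\widetilde\mu-\mu)|\xi|$ in the exponent on the region where it is available. Using $(\pa_x f)_\xi=2\pi i\xi f_\xi$, the claim reduces to the pointwise bound
\begin{align*}
|\xi|\,e^{\eps_0(1+\mu-y)_+|\xi|}\le \frac{C}{\widetilde\mu-\mu}\,e^{\eps_0(1+\widetilde\mu-y)_+|\xi|}.
\end{align*}
For $y\le 1+\mu$, the difference of the two exponents is exactly $\eps_0(\widetilde\mu-\mu)|\xi|$. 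The bound then follows from $\sup_{r\ge0} r e^{-\eps_0(\widetilde\mu-\mu)r}=\frac{1}{e\eps_0(\widetilde\mu-\mu)}$, so $C\sim\eps_0^{-1}$.

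For $y>1+\mu$ no exponential gain is available in the full range. There are two ways I would handle this. One is to observe that in every application only $0<y<1+\mu$ enters, since the $Y^k_{\mu,t}$ norms integrate over $(0,1+\mu)$, so I would record the estimate on that range. The other is to note that for $1+\mu<y<1+\widetilde\mu$ the gain degenerates, so the inequality should be read with $y\le 1+\mu$. I expect this range restriction to be the only subtle point in the first part, and I would state it explicitly.

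For the second inequality, I would differentiate $H_\xi=e^{-\xi^2(t-s)}(g(t-s,y-z)+g(t-s,y+z))$. This gives
\begin{align*}
y\pa_y g(t-s,y\mp z)=-\frac{y(y\mp z)}{2(t-s)}\,g(t-s,y\mp z).
\end{align*}
The factor $|y\mp z|/\sqrt{t-s}$ is absorbed by the Gaussian, at the cost of replacing $e^{-(y\mp z)^2/4(t-s)}$ with $e^{-(y\mp z)^2/5(t-s)}$; for the $y+z$ term we use $|y+z|\ge|y-z|$. What remains is a factor $y/\sqrt{t-s}$.

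To control this factor, I would trade weights in $y$. Since $s<t$, we have $y^2/t\le y^2/(t-s)$ only in the wrong direction, so instead I use
\begin{align*}
\frac{y}{\sqrt{t-s}}\le \frac{y}{\sqrt{t-s}}\cdot\frac{\sqrt t}{\sqrt{t-s}}\cdot\frac{\sqrt{t-s}}{\sqrt t}.
\end{align*}
More directly, the key step is
\begin{align*}
\frac{y}{\sqrt t}\,e^{\eps_0(1+\mu)\frac{y^2}{t}}\le \frac{C}{\sqrt{\widetilde\mu-\mu}}\,e^{\eps_0(1+\widetilde\mu)\frac{y^2}{t}},
\end{align*}
which follows from $\sup_{r} r e^{-\eps_0(\widetilde\mu-\mu)r^2}\lesssim(\eps_0(\widetilde\mu-\mu))^{-1/2}$ with $r=y/\sqrt t$. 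Writing $\frac{y}{\sqrt{t-s}}\cdot\frac{1}{\sqrt{t-s}}=\frac{y}{\sqrt t}\cdot\frac{\sqrt t}{\sqrt{t-s}}\cdot\frac1{\sqrt{t-s}}$ leaves an extra factor $\sqrt{t/(t-s)}$. I expect the main obstacle to be absorbing this factor into the stated form $\frac{1}{\sqrt{(\widetilde\mu-\mu)(t-s)}}\cdot\frac{1}{\sqrt{\nu(t-s)}}$.

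To handle it, I would instead split $y\le |y-z|+z$. When $y\le 2|y-z|$, the factor $y/\sqrt{t-s}$ is eaten by the Gaussian, yielding a bound of order $(t-s)^{-1/2}$. When $y\le 2z$, I would use the weight trade above with $\sqrt t$ bounded by a constant ($t\le T_0$). The $\sqrt{\widetilde\mu-\mu}$ loss then appears and the $(t-s)^{-1/2}$ normalization is kept. This is the step to check carefully against the normalization $\nu=1$ used here.
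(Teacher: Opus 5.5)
Your proof is correct. The paper gives no proof of this lemma and defers to \cite{HWYZ}; your argument is the standard direct computation. Two remarks.

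\textbf{The range restriction in the first inequality.} Restricting to $0\le y\le 1+\mu$ is necessary, not just convenient. For $y\ge 1+\widetilde\mu$ both weights equal $1$, and $2\pi|\xi|\le C/(\widetilde\mu-\mu)$ fails for large $|\xi|$, so the inequality is false as literally written. The restricted version is the only one the paper uses: the $\|\cdot\|_{\mu,t}$-based norms live on $(0,1+\mu)$, and the boundary-term applications (the weight $e^{\eps_0(1+\mu)|\xi|}$ on $B_\xi$) are the case $y=0$.

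\textbf{The factor you flagged in the second inequality.} The factor $\sqrt{t/(t-s)}$ is not an obstacle. The right-hand side carries $(t-s)^{-1/2}\cdot(\nu(t-s))^{-1/2}$, which is $(t-s)^{-1}$ when $\nu=1$. Your direct computation therefore already closes:
\[
\frac{y}{\sqrt{t-s}}\cdot\frac{1}{\sqrt{t-s}}=\frac{y}{\sqrt t}\cdot\frac{\sqrt t}{t-s},\qquad \frac{y}{\sqrt t}\,e^{\eps_0(1+\mu)\frac{y^2}{t}}\le \frac{C}{\sqrt{\eps_0(\widetilde\mu-\mu)}}\,e^{\eps_0(1+\widetilde\mu)\frac{y^2}{t}},\qquad \sqrt t\le \sqrt{T_0}.
\]
This is exactly why the lemma has the form it has.

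\textbf{The case split is unnecessary.} The split $y\le 2|y-z|$ versus $y\le 2z$ adds nothing; your second case never actually uses $y\le 2z$. It is also slightly weaker. Your first case yields $C(t-s)^{-1/2}$ with no factor $(\widetilde\mu-\mu)^{-1/2}$. That is dominated by the right-hand side only because $t-s\le T_0$ and $\widetilde\mu-\mu\le\mu_0$ are bounded. The unsplit argument holds for every $\widetilde\mu>\mu$.

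The resulting constants are $C\sim\eps_0^{-1}$ and $C\sim\sqrt{T_0/\eps_0}$, consistent with the paper's convention that $C$ may depend on $\eps_0$.
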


We require the following product estimates to treat the nonlinear terms.
\begin{lemma}\label{product estimate}
	For $0<\mu<\mu_0-\gamma s$, we have
	\begin{align*}
		\|fg\|_{Y^1_{\mu,s}\cap Y^2_{\mu,s}}
		\leq \left\|\sup_{0<y<1+\mu}e^{\eps_0(1+\mu-y)_+|\xi|}|f_\xi(s,y)|\right\|_{L^1_\xi}\cdot\|g(s)\|_{Y^1_{\mu,s}\cap Y^2_{\mu,s}}.
	\end{align*}
\end{lemma}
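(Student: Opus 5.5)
The product estimate in Lemma \ref{product estimate} follows fiberwise in $\xi$: multiplication in $x$ becomes convolution in frequency, $(fg)_\xi(y)=\int_{\mathbb R} f_{\xi-\xi'}(y)g_{\xi'}(y)\,d\xi'$. I will move the analytic weight inside this convolution, take the supremum in $y$ on the $f$ factor, and then use Young's inequality in $\xi$ with $L^1_\xi$ on $f$ and $L^1_\xi$ or $L^2_\xi$ on $g$. This produces the $Y^1$ and $Y^2$ parts simultaneously.

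\textbf{Step 1 (splitting the weight).} Since $|\xi|\le|\xi-\xi'|+|\xi'|$ and $(1+\mu-y)_+\ge0$, we have
\begin{align*}
e^{\eps_0(1+\mu-y)_+|\xi|}\le e^{\eps_0(1+\mu-y)_+|\xi-\xi'|}\,e^{\eps_0(1+\mu-y)_+|\xi'|}.
\end{align*}
Hence, for $0<y<1+\mu$,
\begin{align*}
e^{\eps_0(1+\mu-y)_+|\xi|}|(fg)_\xi(y)|\le\int_{\mathbb R} F(\xi-\xi')\,e^{\eps_0(1+\mu-y)_+|\xi'|}|g_{\xi'}(y)|\,d\xi',
\end{align*}
where $F(\eta):=\sup_{0<y<1+\mu}e^{\eps_0(1+\mu-y)_+|\eta|}|f_\eta(s,y)|$.

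\textbf{Step 2 (the $\|\cdot\|_{\mu,s}$ norm).} Next I multiply by the Gaussian weight $e^{\eps_0(1+\mu)y^2/s}$ and integrate over $y\in(0,1+\mu)$. The weight depends only on $y$, and $F$ does not depend on $y$, so Fubini gives
\begin{align*}
\|e^{\eps_0(1+\mu-y)_+|\xi|}(fg)_\xi\|_{\mu,s}\le\int_{\mathbb R}F(\xi-\xi')\,\|e^{\eps_0(1+\mu-y)_+|\xi'|}g_{\xi'}\|_{\mu,s}\,d\xi'=(F*G_\mu)(\xi),
\end{align*}
where $G_\mu(\xi'):=\|e^{\eps_0(1+\mu-y)_+|\xi'|}g_{\xi'}\|_{\mu,s}$. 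Placing the $y$-supremum on $f$ is what lets the whole $y$-weight and $L^1_y$ integral fall on $g$.

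\textbf{Step 3 (Young in $\xi$).} Taking the $L^1_\xi$ norm gives $\|F*G_\mu\|_{L^1}\le\|F\|_{L^1}\|G_\mu\|_{L^1}$, which is the $Y^1_{\mu,s}$ bound. Taking the $L^2_\xi$ norm gives $\|F*G_\mu\|_{L^2}\le\|F\|_{L^1}\|G_\mu\|_{L^2}$, which is the $Y^2_{\mu,s}$ bound. Adding the two yields the stated inequality with constant $1$, up to the normalization constant of the Fourier transform.

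\textbf{Main points to verify.} There is no real obstacle; only two details need care. First, the convolution identity must be checked for the Fourier convention used in the paper (the $e^{2\pi i x\xi}$ convention has constant $1$). Second, $f$ and $g$ need enough integrability in $x$ for the convolution and Fubini to be justified; I would establish the estimate for Schwartz functions and conclude by density. The restriction $\mu<\mu_0-\gamma s$ plays no role in the argument beyond fixing the range $0<y<1+\mu$ on which everything is evaluated.
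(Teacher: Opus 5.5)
Your proof is correct and is essentially the standard argument the paper defers to \cite{HWYZ}: you write $(fg)_\xi$ as a convolution in $\xi$, split the analytic weight via $|\xi|\le|\xi-\xi'|+|\xi'|$, put the $y$-supremum on $f$ so that the whole $\|\cdot\|_{\mu,s}$ norm falls on $g$, and close with Young's inequality ($L^1*L^1\to L^1$ for $Y^1_{\mu,s}$, $L^1*L^2\to L^2$ for $Y^2_{\mu,s}$). With the paper's $2\pi$ Fourier convention the constant is exactly $1$, and since every integrand is nonnegative Tonelli already justifies the interchange, so the density step is not really needed.
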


The following lemma  is used to close the uniform boundedness of $\omega$ near the boundary.

\begin{lemma}\label{integral computation}
	For $\frac{1}{2}<\beta<1$, $0<\zeta<1$, $\gamma>0$ and $\mu<\mu_0-\gamma t$, it holds that
	\begin{align*}
		(\mu_0-\mu-\gamma t)^\beta\int_0^t(\mu_0-\mu-\gamma s)^{-1-\beta}ds&
		\leq \frac{C}{\gamma},\\
		(\mu_0-\mu-\gamma t)^\beta\int_0^t(\mu_0-\mu-\gamma s)^{-\frac{1}{2}-\beta}(t-s)^{-\f12}ds&\leq \frac{C}{\gamma^{\f12}},\\
		\sup_{\mu<\mu_0-\gamma t}(\mu_0-\mu-\gamma t)^\zeta \ln\frac{\mu_0-\mu}{\mu_0-\mu-\gamma t}&\leq C(\gamma t)^\zeta,\\
		(\mu_0-\mu-\gamma t)^\beta\int_0^t(\mu_0-\mu-\gamma s)^{-1}s^{-1/2}ds &\leq \frac{C}{\gamma^{\f12}}.
	\end{align*}
	here $C$ is a constant depending on $\mu_0$, $\beta$ and $\zeta$.
\end{lemma}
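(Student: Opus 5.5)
All four inequalities are elementary one-dimensional integral bounds, and I would prove them separately. Throughout I set
\[
A:=\mu_0-\mu-\gamma t>0,\qquad \mu_0-\mu-\gamma s=A+\gamma(t-s)\quad(0\le s\le t),
\]
and change variables $r=t-s$ where convenient. As in every application in the paper ($0<\mu<\mu_0-\gamma s$), I take $\mu\ge 0$, so that $A\le A+\gamma t\le\mu_0$. This is where the dependence of $C$ on $\mu_0$ enters.

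\emph{First inequality.} I integrate explicitly:
\[
\int_0^t (A+\gamma r)^{-1-\beta}\,dr=\frac{1}{\gamma\beta}\Big(A^{-\beta}-(A+\gamma t)^{-\beta}\Big)\le \frac{A^{-\beta}}{\gamma\beta}.
\]
Multiplying by $A^\beta$ gives the bound $1/(\gamma\beta)$.

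\emph{Second inequality.} After $r=t-s$, I substitute $r=Au/\gamma$:
\[
\int_0^t (A+\gamma r)^{-\frac12-\beta}r^{-\frac12}\,dr\le \gamma^{-\frac12}A^{-\beta}\int_0^\infty (1+u)^{-\frac12-\beta}u^{-\frac12}\,du .
\]
The last integral converges: it is integrable at $u=0$ because of $u^{-1/2}$, and at infinity the integrand decays like $u^{-1-\beta}$ with $\beta>0$. Multiplying by $A^\beta$ yields $C\gamma^{-1/2}$.

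\emph{Third inequality.} With $x=\gamma t/A$, the logarithm equals $\ln(1+x)$. I use the elementary bound $\ln(1+x)\le C_\zeta x^\zeta$ for $x\ge0$ and $0<\zeta<1$. This gives
\[
A^\zeta\ln(1+x)\le C_\zeta A^\zeta(\gamma t/A)^\zeta=C_\zeta(\gamma t)^\zeta,
\]
uniformly in $\mu$.

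\emph{Fourth inequality.} This is the only one needing care. A naive estimate gives $\gamma^{\beta-1}t^{\beta-1/2}$ rather than $\gamma^{-1/2}$, so I split the integral at $s=t/2$.

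On $s<t/2$, I use $\mu_0-\mu-\gamma s\ge A+\gamma t/2$ and $\int_0^{t/2}s^{-1/2}\,ds\le 2t^{1/2}$. Together with $A^\beta\le (A+\gamma t/2)^\beta$, this part is bounded by
\[
2(A+\gamma t/2)^{\beta-1}t^{1/2}\le 2(A+\gamma t/2)^{\beta-\frac12}(\gamma t/2)^{-\frac12}t^{\frac12}\le C\mu_0^{\beta-\frac12}\gamma^{-\frac12}.
\]
The key choice here is to spend only half a power of $(A+\gamma t/2)^{-1}$ against $(\gamma t)^{-1/2}$. The remaining power $\beta-\tfrac12$ is positive and harmless because $A+\gamma t/2\le\mu_0$.

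On $s>t/2$, I bound $s^{-1/2}\le (2/t)^{1/2}$, and the remaining integral is $\gamma^{-1}\ln\big((A+\gamma t/2)/A\big)$. Applying $\ln(1+x)\le x^{1/2}$, that is the third inequality with $\zeta=\tfrac12$, this part is at most
\[
A^\beta\,(2/t)^{\frac12}\gamma^{-1}\Big(\frac{\gamma t}{2A}\Big)^{\frac12}=A^{\beta-\frac12}\gamma^{-\frac12}\le \mu_0^{\beta-\frac12}\gamma^{-\frac12}.
\]
This uses $\beta>\tfrac12$ once more. Adding the two parts gives $C\gamma^{-1/2}$.

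The main obstacle is this splitting in the fourth bound, together with the choice of exponents that trades the factor $A^\beta$ for $\gamma^{-1/2}$ rather than $\gamma^{\beta-1}$. The other three inequalities are direct computations.
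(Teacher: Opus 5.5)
Your proof is correct. The paper does not prove this lemma itself; it defers to \cite{HWYZ}. Your computations are the natural ones: explicit integration for the first bound, the scaling $r=Au/\gamma$ that reduces the second to the convergent integral $\int_0^\infty(1+u)^{-1/2-\beta}u^{-1/2}\,du$, and $\ln(1+x)\le C_\zeta x^\zeta$ for the third. None of these three needs $\mu\ge0$.

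I have two comments on the fourth bound.

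First, your restriction $\mu\ge 0$ (so that $\gamma t\le\mu_0$) is necessary there, not just convenient. With only $\mu<\mu_0-\gamma t$ you may take $A=\gamma t$. The left-hand side then equals $c\,\gamma^{-1/2}(\gamma t)^{\beta-1/2}$ with $c=\int_0^1(2-\sigma)^{-1}\sigma^{-1/2}\,d\sigma>0$, which is unbounded as $\gamma t\to\infty$. So importing $0<\mu$ from the places where the lemma is applied is the right reading. It is also exactly where the $\mu_0$-dependence of $C$ comes from.

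Second, the splitting at $s=t/2$ is unnecessary. The ``naive'' bound $A^\beta(A+\gamma(t-s))^{-1}\le(\gamma(t-s))^{\beta-1}$ gives
\[
\gamma^{\beta-1}\int_0^t(t-s)^{\beta-1}s^{-1/2}\,ds=\mathrm{B}(\beta,\tfrac12)\,\gamma^{-1/2}(\gamma t)^{\beta-1/2}\le \mathrm{B}(\beta,\tfrac12)\,\mu_0^{\beta-1/2}\,\gamma^{-1/2},
\]
where $\mathrm{B}$ is the Beta function. This uses the same two facts your split relies on, namely $\gamma t\le\mu_0$ and $\beta>\tfrac12$. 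So the quantity $\gamma^{\beta-1}t^{\beta-1/2}$ you call naive is already of size $\gamma^{-1/2}$ in the relevant regime. Your two-piece argument is valid, just longer than needed.
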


For the proofs of Lemmas \ref{analytic recovery}, \ref{product estimate}, and  \ref{integral computation}, we refer to \cite{HWYZ}.

\medskip

Next, we establish the relationship between $BS_{\mathbb R^2}[w]$ and $w$ via classical elliptic estimates.

\begin{lemma}\label{lem: velocity weighted est}
(1)
	For $m\in(0,1)$ and $2<q<+\infty$, let $U=BS_{\mathbb R^2}[w]$, there exists $C_{m,q}$ such that
	\begin{align*}
		\|\langle\eta\rangle^{m-\frac{2}{q}}U(\eta)\|_{L^q}
		\leq C_{m,q}\|w\|_{L^2(m)}.
	\end{align*} 
	
	(2) Let $U=BS_{\mathbb R^2}[w]$ or $U=BS_{\mathbb R^2_+}[w]$. Then we have
	\begin{align*}
		\|U\|_{L^{\frac{2p}{2-p}}}
		\leq C_0\|\omega\|_{L^p},\quad for \quad 1<p<2,
	\end{align*}
	\begin{align*}
		\|U\|_{L^{2,\infty}}
		\leq C_0\|\omega\|_M.
	\end{align*}
	and
	\begin{align*}
		\|U\|_{L^\infty}
		\leq C_0 \|\omega\|_{L^{4/3}}^{1/2}\|\omega\|_{L^4}^{1/2}.
	\end{align*}
	Moreover, if a subset $A\subseteq \mathbb R^2(or \ \mathbb R^2_+)$ satisfies $d=\operatorname{dist}(A, \operatorname{supp} \omega)>0$, then it holds that
	\begin{align}
		\|U\|_{L^\infty(A)}\leq C(d)\|\omega\|_{L^1}.
	\end{align}
\end{lemma}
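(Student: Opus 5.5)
We prove the two parts separately. Part (2) is classical, and we do it first because part (1) reduces to it. Throughout we use the pointwise kernel bound $|K(X,Y)|\le C|X-Y|^{-1}$. For the half-plane this holds since $|X-Y^*|\ge|X-Y|$ when $X,Y\in\mathbb R^2_+$, so each of the two terms in \eqref{BS law formulation 2} is bounded by $(2\pi|X-Y|)^{-1}$. Hence in both cases
\[
|U(X)|\le C\int|X-Y|^{-1}|\omega(Y)|\,dY=C\,I_1|\omega|(X),
\]
where $I_1$ is the Riesz potential of order one. (In the half-plane case we extend $\omega$ by zero.)

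\textbf{Part (2).} We obtain the four bounds as follows.
\begin{enumerate}
\item The Hardy--Littlewood--Sobolev inequality for $I_1$ in $\mathbb R^2$ gives $\|U\|_{L^{2p/(2-p)}}\le C\|\omega\|_{L^p}$ for $1<p<2$.
\item Since $|X|^{-1}\in L^{2,\infty}(\mathbb R^2)$, convolving with a finite measure gives $\|U\|_{L^{2,\infty}}\le C\|\omega\|_M$ (Minkowski's inequality in the normable space $L^{2,\infty}$).
\item For the $L^\infty$ bound, fix $X$ and $r>0$ and split the integral at $|X-Y|=r$. By H\"older with $|\cdot|^{-1}\in L^{4/3}(B_r)$ and $|\cdot|^{-1}\in L^{4}(B_r^c)$,
\[
|U(X)|\le C\,r^{1/2}\|\omega\|_{L^4}+C\,r^{-1/2}\|\omega\|_{L^{4/3}}.
\]
Optimizing in $r$ gives $\|U\|_{L^\infty}\le C_0\|\omega\|_{L^{4/3}}^{1/2}\|\omega\|_{L^4}^{1/2}$.
\item If $\operatorname{dist}(A,\operatorname{supp}\omega)=d>0$, then $|X-Y|\ge d$ on the domain of integration for every $X\in A$, so $|U(X)|\le Cd^{-1}\|\omega\|_{L^1}$.
\end{enumerate}

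\textbf{Part (1).} Here $m\in(0,1)$, $2<q<\infty$, and $U=BS_{\mathbb R^2}[w]$. The target weight is $\langle\eta\rangle^{m-2/q}$, and we plan to put it onto $w$. Write $\langle\eta\rangle^{s}\le C(\langle\eta-\zeta\rangle^{s}+\langle\zeta\rangle^{s})$ with $s=m-2/q$. If $s\le 0$ the weight is at most $1$; we then only need $\|U\|_{L^q}\le C\|w\|_{L^p}$ with $1/p=1/q+1/2$, and $\|w\|_{L^p}\le C\|w\|_{L^2(m)}$ by H\"older, because $\langle\eta\rangle^{-m}\in L^r$ with $1/r=1/q$ exactly when $mq>2$, i.e.\ $s>0$.

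So the real case is $s>0$. We split
\[
\langle\eta\rangle^{s}|U(\eta)|\le C\int\frac{\langle\zeta\rangle^{s}|w(\zeta)|}{|\eta-\zeta|}\,d\zeta+C\int\frac{|w(\zeta)|}{|\eta-\zeta|^{1-s}}\,d\zeta .
\]
The first term is HLS with order $1$ applied to $\langle\zeta\rangle^{s}w$. The second is HLS with order $1+s$ applied to $w$; when $\langle\eta-\zeta\rangle\le 2$ it is instead controlled by the order-one potential. It remains to put $\langle\zeta\rangle^{s}w$ and $w$ into the Lebesgue spaces that HLS needs to output $L^q$, and to check these exponents are admissible.
\begin{enumerate}
\item For the first term we need $\langle\zeta\rangle^{s}w\in L^{p}$ with $1/p=1/q+1/2$. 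Writing $\langle\zeta\rangle^{s}w=\langle\zeta\rangle^{m}w\cdot\langle\zeta\rangle^{-2/q}$, H\"older requires $\langle\zeta\rangle^{-2/q}\in L^{q}$. This fails only logarithmically, which suggests a weak-type or Lorentz-space version of HLS is needed here.
\item For the second term we need $w\in L^{p'}$ with $1/p'=1/q+(1+s)/2$. This follows from $\langle\zeta\rangle^{-m}\in L^{\rho}$ with $1/\rho=1/q+s/2<m/2$, which holds.
\end{enumerate}

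I expect this exponent bookkeeping to be the main obstacle, specifically the borderline logarithmic failure in the first term. I would first try to absorb it by splitting the kernel into the regions $|\eta-\zeta|\le\langle\eta\rangle/2$ and its complement. On the near region $\langle\zeta\rangle\sim\langle\eta\rangle$, and we use unweighted HLS locally on dyadic annuli, summing with the weight. On the far region $|\eta-\zeta|\gtrsim\langle\eta\rangle$, and we bound pointwise by $\langle\eta\rangle^{-1}\|w\|_{L^1}$, using $L^2(m)\subset L^1$ for $m>1$. For $m<1$, $L^2(m)\not\subset L^1$, so this must instead use the $L^{p}$ control from H\"older, which has a strict inequality to spare. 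Those dyadic sums converge geometrically thanks to the strict gaps $m<1$ and $q>2$. Failing that, the result is stated in \cite{Gallay 1}, and we would cite it.
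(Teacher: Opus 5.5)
Your Part (2) is correct and standard. The paper itself gives no proof of this lemma and calls it classical.

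\textbf{The gap in Part (1).} Part (1) does not close, and the reason is structural. With $s=m-\frac2q$, the estimate is the inhomogeneous form of the scale-invariant Stein--Weiss inequality
$\big\| |\eta|^{m-2/q} I_1 f\big\|_{L^q}\le C\big\||\eta|^m f\big\|_{L^2}$.
So the weighted pieces cannot be handled by Lebesgue-space H\"older with power weights followed by unweighted HLS: each such step lands exactly on the endpoint. Concretely:
\begin{enumerate}
\item[(i)] For $s\le 0$ you discard the weight and use $\langle\eta\rangle^{-m}\in L^q$. As you note yourself, that requires $mq>2$, i.e.\ $s>0$. So the range $2<q\le 2/m$ is not proved. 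Worse, for $q<2/m$ the unweighted bound you reduce to is false. The radial $w=|\zeta|^{-1-m}(\log|\zeta|)^{-\epsilon}$ on $\{|\zeta|>2\}$ (zero elsewhere), with $\epsilon>\frac12$, lies in $L^2(m)$. Yet it gives $|U(\eta)|\sim|\eta|^{-m}(\log|\eta|)^{-\epsilon}\notin L^q$.
\item[(ii)] For $s>0$ your second term is no better than the first. One has $\frac1q+\frac s2=\frac m2$ exactly, not strictly less. So $\langle\zeta\rangle^{-m}\in L^{2/m}$ fails logarithmically, just as $\langle\zeta\rangle^{-2/q}\in L^q$ does.
\item[(iii)] In your dyadic fallback, the far-region bound through a global $L^p$ norm has no slack. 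Getting $w\in L^p$ from $L^2(m)$ needs $p>\frac{2}{1+m}$, whereas $\langle\eta\rangle^{s+1-2/p}\in L^q$ needs $p<\frac{2}{1+m}$.
\end{enumerate}

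\textbf{How to repair it.} You need a tool that is sharp at the critical exponent; any of the following works.
\begin{enumerate}
\item[(a)] Apply Stein--Weiss directly. Its hypotheses reduce to $0<m<1$, $q\ge 2$, and weight exponents summing to $\frac2q\ge 0$.
For $s\le 0$, use $\langle\eta\rangle^{s}\le|\eta|^{s}$ and $|\eta|^m\le\langle\eta\rangle^m$.
For $s>0$, use $\langle\eta\rangle^s\le C(1+|\eta|^s)$; the constant part is exactly your H\"older--HLS step, which is valid since $mq>2$.
\item[(b)] Follow your Lorentz-space hunch, but systematically: the room lies in the second index, since $L^{q,2}\subset L^q$ for $q>2$.
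By O'Neil's inequality, $w\in L^2\cdot L^{2/m,\infty}\subset L^{\frac{2}{1+m},2}$, hence $U\in L^{\frac2m,2}$.
If $s<0$, then $\langle\eta\rangle^sU\in L^{2/|s|,\infty}\cdot L^{\frac2m,2}\subset L^{q,2}$; the case $s=0$ is immediate.
If $s>0$, both of your terms close the same way: $\langle\zeta\rangle^s w\in L^{p,2}$ with $\frac1p=\frac12+\frac1q$, and then $I_1$, respectively $I_{1+s}$ applied to $w\in L^{\frac2{1+m},2}$, maps into $L^{q,2}$.
\item[(c)] Carry out the dyadic argument with the sequence $a_j=\|\langle\zeta\rangle^m w\|_{L^2(A_j)}$ on annuli $A_j=\{\langle\zeta\rangle\sim 2^j\}$, not with global norms of $w$.
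The near-diagonal part contributes $\lesssim a_{k-1}+a_k+a_{k+1}$ in $L^q(A_k)$ and is summed using $\ell^2\subset\ell^q$.
The far parts contribute $\sum_{j\le k}2^{-(k-j)(1-m)}a_j+\sum_{j>k}2^{-(j-k)m}a_j$ and are summed by the discrete Young inequality, using $0<m<1$.
\end{enumerate}
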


\medskip

The following formulation is to characterize the velocity near the point vortex. 
\begin{lemma}\label{derivation of velocity formula}
	It holds that
	\begin{align}
  	BS_{\mathbb R^2_+}[\chi_{vp}\omega^\delta](t,x,y)
  	=\frac{\alpha}{(t+\delta)^{1/2}}\{\mathcal V^\delta\big(\eta,\tau)-
  	\widetilde{\mathcal V^\delta}(\eta+\frac{(0,40)}{(t+\delta)^{1/2}},\tau)\},
  \end{align}
 where $\mathcal V^\delta:=BS_{\mathbb R^2}[\mathcal W^\delta]$ with  $\mathcal W^\delta \big(\eta,\tau\big)=\frac{t+\delta}{\alpha} \chi_{vp}\omega^\delta$ and $\widetilde {\mathcal V^\delta}(x,y)=(- \mathcal V^\delta_1(x,-y),  \mathcal V^\delta_2(x,-y)).$
\end{lemma}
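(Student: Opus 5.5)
The identity is a change of variables in the half-plane Biot--Savart kernel, exploiting the method of images. I start from \eqref{BS law formulation 2} applied to $f=\chi_{vp}\omega^\delta$. The kernel splits into a direct part $\frac{(X-Y)^\perp}{|X-Y|^2}$ and a reflected part $-\frac{(X-Y^*)^\perp}{|X-Y^*|^2}$. Substituting the definition \eqref{def: self-similar}, $\chi_{vp}\omega^\delta(Y)=\frac{\alpha}{t+\delta}\mathcal W^\delta(\eta',\tau)$ with $Y=(0,20)+(t+\delta)^{1/2}\eta'$, gives $dY=(t+\delta)\,d\eta'$. Since $\operatorname{supp}\chi_{vp}\subset\mathbb R^2_+$, the integral may be taken over all of $\mathbb R^2$ in $\eta'$.

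\emph{Direct part.} With $X=(0,20)+(t+\delta)^{1/2}\eta$, we have $X-Y=(t+\delta)^{1/2}(\eta-\eta')$. The kernel is homogeneous of degree $-1$, so
\[
\frac{1}{2\pi}\int\frac{(X-Y)^\perp}{|X-Y|^2}\,\chi_{vp}\omega^\delta(Y)\,dY=\frac{\alpha}{(t+\delta)^{1/2}}\,BS_{\mathbb R^2}[\mathcal W^\delta](\eta)=\frac{\alpha}{(t+\delta)^{1/2}}\,\mathcal V^\delta(\eta,\tau).
\]

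\emph{Reflected part.} Write $Y^*=(y_1,-y_2)$. Then
\[
X-Y^*=(t+\delta)^{1/2}\Big(\eta+\frac{(0,40)}{(t+\delta)^{1/2}}-\eta'^{*}\Big),
\]
using $20+20=40$ and the reflection $\eta'^{*}=(\eta'_1,-\eta'_2)$. Set $\zeta=\eta+\frac{(0,40)}{(t+\delta)^{1/2}}$. The key step is to change variables $\eta'\mapsto\eta'^{*}$, which has Jacobian $1$. This produces
\[
\frac{1}{2\pi}\int\frac{(\zeta-\eta'')^\perp}{|\zeta-\eta''|^2}\,\mathcal W^\delta(\eta''^{*})\,d\eta''=BS_{\mathbb R^2}[\mathcal W^\delta\circ R](\zeta),
\]
where $R$ denotes reflection.

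It remains to identify $BS_{\mathbb R^2}[w\circ R]$ with $-\widetilde{BS_{\mathbb R^2}[w]}$. I verify this directly from the kernel. Let $V=BS_{\mathbb R^2}[w]$ and substitute $\zeta\mapsto\zeta^*$. Because $(a^*)^\perp=(a_2,-a_1)^{*}$ up to sign, one gets
\[
BS[w\circ R]_1(\zeta)=V_1(\zeta^*),\qquad BS[w\circ R]_2(\zeta)=-V_2(\zeta^*).
\]
Equivalently, reflection reverses orientation, so $\nabla^\perp\Delta^{-1}$ anticommutes with $R$ in the appropriate componentwise sense. Therefore $BS[w\circ R](\zeta)=(V_1(\zeta^*),-V_2(\zeta^*))=-\widetilde V(\zeta)$, matching the definition $\widetilde F=(-f_1(x,-y),f_2(x,-y))$.

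Combining this with the minus sign in front of the reflected kernel gives $-\frac{\alpha}{(t+\delta)^{1/2}}\,\widetilde{\mathcal V^\delta}(\zeta,\tau)$. Adding the two parts yields the claim. The only delicate point is the sign bookkeeping in this reflection identity. I would check it on a radial example: for $w=G$ the identity must give $\widetilde{\mathcal V^G}=\mathcal V^G$, consistent with the remark after \eqref{decomp 1}. That check confirms the signs.
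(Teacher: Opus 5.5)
Your route is the paper's route: substitute the self-similar variables into the image-method kernel, handle the direct part by homogeneity, and identify the reflected part with $\widetilde{\mathcal V^\delta}$ at $\zeta=\eta+(0,40)(t+\delta)^{-1/2}$. The paper simply asserts the last identification. Your direct part is correct, and so is your reduction of the reflected integral to $BS_{\mathbb R^2}[\mathcal W^\delta\circ R](\zeta)$. However, the reflection identity, which is the only nontrivial content of this lemma, has the wrong sign. The relation is exact, not ``up to sign'':
\[
(a^*)^\perp=-(a^\perp)^*.
\]
Together with $\zeta-Z^*=(\zeta^*-Z)^*$ and $|\zeta-Z^*|=|\zeta^*-Z|$, it gives
\[
BS_{\mathbb R^2}[w\circ R]_1(\zeta)=-V_1(\zeta^*),\qquad BS_{\mathbb R^2}[w\circ R]_2(\zeta)=V_2(\zeta^*),
\]
so $BS_{\mathbb R^2}[w\circ R]=+\widetilde V$, not $-\widetilde V$. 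Equivalently, $\Delta^{-1}$ commutes with $R$, and $\nabla^\perp(\psi\circ R)=\big((\partial_2\psi)\circ R,(\partial_1\psi)\circ R\big)=\widetilde{\nabla^\perp\psi}$. Your own proposed test detects the error rather than confirming your signs. For radial $w=G$ one has $G\circ R=G$, so your identity would force $\mathcal V^G=-\widetilde{\mathcal V^G}$. Combined with $\widetilde{\mathcal V^G}=\mathcal V^G$, this gives $\mathcal V^G\equiv 0$, which is false.

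You reach the correct final formula only through a second, compensating slip. Under your identity, the minus sign in front of the image kernel gives $-\frac{\alpha}{(t+\delta)^{1/2}}\cdot\big(-\widetilde{\mathcal V^\delta}(\zeta)\big)=+\frac{\alpha}{(t+\delta)^{1/2}}\widetilde{\mathcal V^\delta}(\zeta)$. That is not the $-\frac{\alpha}{(t+\delta)^{1/2}}\widetilde{\mathcal V^\delta}(\zeta)$ you wrote. With the corrected identity $BS_{\mathbb R^2}[w\circ R]=\widetilde{BS_{\mathbb R^2}[w]}$, the image-kernel minus sign yields $-\frac{\alpha}{(t+\delta)^{1/2}}\widetilde{\mathcal V^\delta}(\zeta,\tau)$ directly, and the argument closes exactly as in the paper. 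Because this lemma is pure sign bookkeeping, both errors should be corrected rather than left to cancel.
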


\begin{proof}
Denote $X=(x_1,x_2) ,Y=(y_1,y_2)$. Based on \eqref{BS law formulation 2} and definition of $\chi_{vp}$, we obtain
	\begin{align*}
		BS_{\mathbb R^2_+}[\chi_{vp}\omega^\delta](t,X)
		&=\frac{1}{2\pi}\int_{\mathbb R^2}\frac{(X-Y)^\perp}{|X-Y|^2}\cdot\frac{\alpha}{t+\delta}\mathcal W^\delta\big(\frac{Y-(0,20)}{(t+\delta)^{1/2}},\tau)dY\\
		&\quad-\frac{1}{2\pi}\int_{\mathbb R^2}\frac{(X-Y^*)^\perp}{|X-Y^*|^2}\cdot\frac{\alpha}{t+\delta}\mathcal W^\delta\big(\frac{Y-(0,20)}{(t+\delta)^{1/2}},\tau)dY\\
		&=\frac{\alpha}{(t+\delta)^{1/2}}\cdot\frac{1}{2\pi}\int_{\mathbb R^2}\frac{\big(\frac{X-(0,20)}{(t+\delta)^{1/2}}-Z\big)^\perp}{\left|\frac{X-(0,20)}{(t+\delta)^{1/2}}-Z\right|^2}\mathcal W^\delta(Z,\tau)dZ\\
		&\quad-\frac{\alpha}{(t+\delta)^{1/2}}\cdot\frac{1}{2\pi}\int_{\mathbb R^2}\frac{\big(\frac{X+(0,20)}{(t+\delta)^{1/2}}-Z^*\big)^\perp}{\left|\frac{X+(0,20)}{(t+\delta)^{1/2}}-Z^*\right|^2}\mathcal W^\delta(Z,\tau)dZ\\
		&=\frac{\alpha}{(t+\delta)^{1/2}}\{\mathcal V^\delta\big(\eta,\tau)-
  	\widetilde{\mathcal V^\delta}(\eta+\frac{(0,40)}{(t+\delta)^{1/2}},\tau)\},
	\end{align*}
	where we used $\eta=\frac{X-(0,20)}{(t+\delta)^{1/2}}$ and the definition of $\widetilde{\mathcal V^\delta}$.
	\end{proof}

\section*{Acknowledgement}

C. Wang is supported by NSF of China under Grant 12471189. Z. Zhang is supported by NSF of China under Grant 12288101.


\begin{thebibliography}{90}
 
 

\bibitem{Ken} K. Abe, {\it The vorticity equations in a half plane with measures as initial data},  Ann.Inst.H.Poincar\'{e} C Anal. Non Lin\'{e}aire, 38(2021), 1055-1094.
 
 
 \bibitem{BA}  M. Ben-Artzi, {\it Global solutions of two-dimensional Navier-Stokes and Euler equations}, Arch. Rational Mech. Anal., 128 (1994), 329-358.
 




\bibitem{Cottet} G. H. Cottet,{\it Equations de Navier-Stokes dans le plan avec tourbillon initial mesure}, C. R. Acad. Sci. Paris Ser. I Math., 303 (1986), 105-108.

\bibitem{Dalibard} A. L. Dalibard and T. Gallay, {\it Viscous evolution of a point vortex in a half-plane}, arXiv:2603.21796.


\bibitem{Gallagher} I. Gallagher and T. Gallay, {\it Uniqueness for the two-dimensional Navier-Stokes equation with a measure as initial vorticity}, Math. Ann., 332(2005), 287-327.


\bibitem{Gallay 1} T. Gallay and C. E. Wayne, {\it Global stability of vortex solutions of the two dimensional Navier-Stokes equation}, Comm. Math. Phys., 255(2005), 97-129.




\bibitem{GMO}Y. Giga, T. Miyakawa, and H. Osada, {\it Two-dimensional Navier-Stokes flow with measures as initial vorticity}, Arch. Rational Mech. Anal., 104 (1988), 223-250.






\bibitem{HWYZ} J. Huang, C. Wang, J. Yue and Z. Zhang, {\it The interaction between rough vortex patch and boundary layer}, arXiv:2412.03198.









\bibitem{Kukavica} I. Kukavica, V. Vicol  and F. Wang, {\it The inviscid limit for the Navier-Stokes quations with data analytic only near the boundary}, Arch. Ration. Mech. Anal., 237(2020), 779-827.





\bibitem{Maekawa} Y. Maekawa, {\it On the inviscid limit problem of the vorticity equations for viscous incompressible flows in the half-plane}, Commun. Pure Appl. Math., 67(2014), 1045-1128.




\bibitem{TT Nguyen} T. T. Nguyen and  T. T. Nguyen, {\it The Inviscid Limit of Navier-Stokes Equations for analytic data on the half-space}, Arch. Ration. Mech. Anal., 230(2018), 1103-1129.








	
\end{thebibliography}
\end{document}